
\documentclass[11pt]{elsarticle}



\usepackage{amssymb}
\usepackage{xcolor}
\usepackage{mathrsfs}
\usepackage{amscd}
\usepackage{appendix}
\usepackage{geometry}
\usepackage{setspace}
\usepackage{amsfonts}
\usepackage{amsmath}
\usepackage{lineno,hyperref}
\usepackage{tikz}
\usetikzlibrary{matrix,arrows}

\usepackage{amssymb,amscd,amsthm,verbatim}

\geometry{a4paper, margin=1in}

\usepackage{bbm}

\date{\today}


















\newtheorem{theorem}{Theorem}[section]
\newtheorem{lemma}[theorem]{Lemma}

\newtheorem{coro}[theorem]{Corollary}

\theoremstyle{definition}

\newtheorem*{definition 1}{Definition 1}
\newtheorem*{definition 2}{Definition 2}
\newtheorem*{definition 3}{Definition 3}
\newtheorem*{definition 4}{Definition 4}
\newtheorem*{definition 5}{Definition 5}
\newtheorem*{definition 6}{Definition 6}

\newtheorem{remark}[theorem]{Remark}

\theoremstyle{plain}

\sloppy

\allowdisplaybreaks \numberwithin{equation}{section}





\journal{arXiv}

\begin{document}
\title{Finite and full scale localization for the multi-frequency\\ quasi-periodic CMV matrices}

\author[1]{Daxiong Piao}
\ead{dxpiao@ouc.edu.cn}
\address[1]{School of Mathematical Sciences, Ocean University of China, Qingdao 266100, P.R.China}

\author[2]{Bei Zhang\corref{cor1}}
\ead{beizhang@nankai.edu.cn}
\address[2]{Chern Institute of Mathematics and LPMC, Nankai University, Tianjin 300071, P.R.China}

\cortext[cor1]{Corresponding author}

\begin{abstract}
This paper establishes finite and full scale localization for multi-frequency quasi-periodic CMV matrices, filling a key gap as the CMV counterpart to Goldstein-Schlag-Voda's multi-frequency quasi-periodic Schr\"{o}dinger operator results \cite{GSV16-arXiv, GSV19-Inventiones}. The multi-frequency case presents significant additional challenges due to the complex five-diagonal structure of CMV matrices, the unitary (rather than Hermitian) nature of their finite-volume restrictions. We overcome these obstacles by proving a large deviation theorem for the entries of the transfer matrix, deriving a covering form of the large deviation theorem applicable to CMV matrices. Furthermore, we provide a detailed treatment of double resonance elimination via the theory of semialgebraic sets and no double resonances condition, which is more intricate than in the single-frequency case.

\begin{keyword}
CMV matrix; Anderson localization; Multi-frequency quasi-periodic potentials; Lyapunov exponents; Spectral theory.

\medskip
\MSC[2020]  37A05 \sep 42C05 \sep 70G60
\end{keyword}

\end{abstract}

\maketitle


\section{Introduction}
Anderson localization (AL), originating from Anderson's seminal work on disordered systems \cite{Anderson-1958}, describes the transition of a medium from a conductor to an insulator. Mathematically, it manifests as the operator possessing pure point spectrum with exponentially decaying eigenfunctions, a property of profound mathematical significance \cite{ChSu14, LTW09}. Here we would like to mention that AL problems for Schr\"{o}dinger operators are widely studied and fruitful results have been obtained, especially for quasi-periodic potentials with single-frequency; see for example, \cite{AJ10-GEMS, Bourgain-book, BG00-Annals, BS01-CMP, ChSu14, GY20-GAFA, MJ} and references therein.

Given the well-developed AL theory for single-frequency quasi-periodic Schr\"{o}dinger operators, extending it to the multi-frequency case is a natural yet challenging direction. The analysis of shifts on a higher-dimensional torus $\mathbb{T}^{d}$ introduces significant complexities, resulting in a less developed theory for multi-frequency quasi-periodic (MF-QP) operators. Chulaevsky and Sina\v{\i} \cite{CS89-CMP} originally established AL  for two-frequency Schr\"{o}dinger operator almost everywhere in phase space. For the frequency of more than two dimensional case, the same result was obtained in \cite{Bourgain-book}. However, these results were established directly in infinite volume, whereas localization in finite volume was not studied. A major breakthrough was achieved by Goldstein, Schlag, and Voda \cite{GSV16-arXiv, GSV19-Inventiones}, who pioneered analytical techniques to establish both finite and full scale localization for MF-QP Schr\"{o}dinger operators. Based on the tools obtained in \cite{GSV16-arXiv}, they further derived a surprising result that the spectrum of this kind of operators is actually a single interval.  Here we would also like to mention the recent papers \cite{ZL22-DCDS} and \cite{Zhao20-JFA} on MF-QP operators.

Substantial progress has also been made for CMV matrices. Localization has been established for various driving dynamics, including skew-shifts \cite{Cedzich-2021,LPG23-JFA}, single-frequency quasi-periodic potentials \cite{WD19-JFA}, random perturbations \cite{Zhu-arXiv}, and hyperbolic toral automorphisms \cite{LPG24-arXiv}. Meanwhile, the spectral theory in the small quasi-periodic regime has been explored, revealing purely absolutely continuous spectrum \cite{Li-Damanik-Zhou} and Cantor spectrum \cite{Li-Damanik-Zhou-C} in different settings. However, the understanding of MF-QP CMV matrices remains notably limited.

To our knowledge, the finite and full scale localization for MF-QP CMV matrices remains unaddressed. To do that, we need large deviation theorem (LDT) for the entries of the transfer matrix. Comparing with the results in \cite{GS08-GAFA}, there are some difficulties we have to overcome. This is primarily due to the more complex structure of CMV matrices. It is known to all, Schr\"{o}dinger operator is a real tri-diagonal matrix, while the CMV matrix is a complex five-diagonal matrix, then the analysis about the corresponding characteristic determinant (entries of the transfer matrix) is more complicated. To treat this issue, we decompose the matrix into two parts and consider two cases; see the proof of Lemma \ref{Lemma3.16} in detail. Consequently, the large deviation theorem for the entries of the transfer matrix follows.

In addition, considering the restriction of the extended CMV matrix to a finite interval, from \cite{Kruger13-IMRN} and \cite{Zhu-arXiv}, the Poisson formula of CMV matrices depends on the parity of the endpoints of this interval. This also makes the analysis of the covering form of LDT more difficult to some extent. Thus we consider a subinterval of the above finite interval instead, then we get the covering lemma. Since the subinterval is a closed interval, the covering lemma can be applied to the original interval. Then the covering form of LDT can be obtained.

Furthermore, the restriction of the Schr\"{o}dinger operator to a finite interval results in a Hermitian matrix with many favorable properties. In contrast, the restriction of the CMV matrix results in a unitary matrix after modifying the boundary conditions, and its properties are less favorable than those of a Hermitian matrix. For example, there is a famous Cauchy Interlacing Theorem for the eigenvalues of Hermitian matrices. But we can not expect such a result for unitary matrices. This directly leads to a challenge in estimating the number of zeros for the characteristic determinant within a disk. Thus, the factorization of the characteristic determinant becomes more complex.

Furthermore, eliminating double resonances in the multi-frequency setting requires a more intricate scheme than in the single-frequency case \cite{WD19-JFA}. Instead of removing a set of frequencies for a fixed phase, we must independently control and eliminate two separate sets of bad frequencies and phases, a process that is inherently more complex.

The rest of this paper is organized as follows. We describe the settings in Section \ref{section2}. In Section \ref{section3}, we give the estimates of the Lyapunov exponent and prove LDT for the transfer matrix and the entries of the transfer matrix respectively. Furthermore, with the aid of Cartan's estimate and Poisson formula, we get the spectral and covering form of LDT. Finally, we prove the \textit{finite scale localization} of the MF-QP CMV matrices, see Theorem \ref{finite-scale-localization}. In Section \ref{section4}, we derive a result on elimination of double resonances by applying the theory of semialgebraic sets. In Section \ref{section5}, we study the elimination of double resonances between  no double resonances intervals. Based on the analysis in previous sections, we obtain the \textit{full scale localization} of the MF-QP CMV matrices in Section \ref{section6}, see Theorem \ref{main-result}. We put some useful definitions and lemmas in the appendix section, that is, Section \ref{section7}.

\section{Preliminaries}\label{section2}
In this section, we introduce the preliminaries of orthogonal polynomials on the unit circle (OPUC) that can be seen in detail in \cite{Simon-book} and describe the settings we study.

Let $\mathbb{D}=\{z: |z|<1\}$ denote the open unit disk in $\mathbb{C}$,  and let $\mu$ be a nontrivial probability measure on $\partial \mathbb{D}=\{z: |z|=1\}$. Consequently, the functions $1, z, z^{2}, \ldots$ are linearly independent in the Hilbert space $L^{2}(\partial \mathbb{D}, d\mu)$. Let $\Phi_{n}(z)$ be the monic orthogonal polynomials, that is,
$\Phi_{n}(z)=P_{n}[z^{n}]$,  $P_{n}\equiv$ projection onto $\{1, z, z^{2}, \ldots, z^{n-1}\}^{\bot}$ in $L^{2}(\partial \mathbb{D}, d\mu)$. The orthonormal polynomials are then given by
$$\varphi_{n}(z)=\frac{\Phi_{n}(z)}{\|\Phi_{n}(z)\|_{\mu}},$$
where $\|\cdot\|_{\mu}$ denotes the norm of $L^{2}(\partial \mathbb{D}, d\mu)$.

For any polynomial $Q_{n}(z)$ of degree $n$, its Szeg\H{o} dual $Q_{n}^{*}(z)$ is defined as
$$Q_{n}^{*}(z)=z^{n}\overline{Q_{n}(1/\overline{z})}.$$
The Verblunsky coefficients $\{\alpha_{n}\}_{n=0}^{\infty}$ ($\alpha_{n} \in \mathbb{D}$) obey the  equation
\begin{equation}\label{Szego-1}
\Phi_{n+1}(z)=z\Phi_{n}(z)-\overline{\alpha}_{n}\Phi_{n}^{*}(z)
\end{equation}
which is known as the Szeg\H{o} recursion.

 Applying the Gram-Schmidt procedure to $\{1, z, z^{-1}, z^{2}, z^{-2},\cdots\}$, we obtain a CMV basis $\{\chi_{j}\}_{j=0}^{\infty}$, and the matrix representation of multiplication by $z$ relative to the CMV basis gives rise to the CMV matrix $\mathcal{C}$,
$$\mathcal{C}_{ij}=\langle \chi_{i},z\chi_{j} \rangle.$$
The matrix $\mathcal{C}$ takes the following form
\begin{equation*}
\mathcal{C}=
\left(
\begin{array}{ccccccc}
\overline{\alpha}_{0}&\overline{\alpha}_{1}\rho_{0}&\rho_{1}\rho_{0}&  &  & & \\
\rho_{0}&-\overline{\alpha}_{1}\alpha_{0}&-\rho_{1}\alpha_{0}& & & &\\
& \overline{\alpha}_{2}\rho_{1} &-\overline{\alpha}_{2}\alpha_{1}& \overline{\alpha}_{3}\rho_{2}&\rho_{3}\rho_{2}& & \\
& \rho_{2}\rho_{1}& -\rho_{2}\alpha_{1}&-\overline{\alpha}_{3}\alpha_{2}&-\rho_{3}\alpha_{2}& & \\
& & &\overline{\alpha}_{4}\rho_{3}&-\overline{\alpha}_{4}\alpha_{3}&\overline{\alpha}_{5}\rho_{4}& \\
& & &\rho_{4}\rho_{3}&-\rho_{4}\alpha_{3}&-\overline{\alpha}_{5}\alpha_{4}& \\
& & & & \ddots & \ddots & \ddots
\end{array}
\right),
\end{equation*}
where $\rho_{j}=\sqrt{1-|\alpha_{j}|^{2}}$, $j\geq 0$.


The extended CMV matrix is a special five-diagonal doubly-infinite matrix in the standard
basis of $L^{2}(\partial \mathbb{D}, d\mu)$ as described in \cite[Subsection 4.5]{Simon-book} and \cite[Subsection 10.5]{Simon-book2}, and is given by
\begin{equation*}
\mathcal{E}=
\left(
\begin{array}{cccccccc}
\ddots&\ddots&\ddots& & & & & \\
& -\overline{\alpha}_{0}\alpha_{-1}&\overline{\alpha}_{1}\rho_{0}&\rho_{1}\rho_{0}&  &  & & \\
& -\rho_{0}\alpha_{-1}&-\overline{\alpha}_{1}\alpha_{0}&-\rho_{1}\alpha_{0}& & & &\\
& & \overline{\alpha}_{2}\rho_{1} &-\overline{\alpha}_{2}\alpha_{1}& \overline{\alpha}_{3}\rho_{2}&\rho_{3}\rho_{2}& & \\
& & \rho_{2}\rho_{1}& -\rho_{2}\alpha_{1}&-\overline{\alpha}_{3}\alpha_{2}&-\rho_{3}\alpha_{2}& & \\
& & & &\overline{\alpha}_{4}\rho_{3}&-\overline{\alpha}_{4}\alpha_{3}&\overline{\alpha}_{5}\rho_{4}& \\
& & & &\rho_{4}\rho_{3}&-\rho_{4}\alpha_{3}&-\overline{\alpha}_{5}\alpha_{4}& \\
& & & & & \ddots & \ddots & \ddots
\end{array}
\right).
\end{equation*}

In this paper, we consider a sequence of Verblunsky coefficients generated by an analytic function $\alpha(\cdot):\mathbb{T}^{d}\rightarrow \mathbb{D}$, $\alpha_{n}(x)=\alpha(T^{n}x)=\alpha(x+n\omega)$, $d>1$. Here, $T$ is the shift map defined by $Tx=x+\omega$, $\mathbb{T}:=\mathbb{R}/\mathbb{Z}$, and $x, \omega\in \mathbb{T}^{d}$ are referred to as phase and frequency respectively. Moreover, the frequency vector $\omega\in \mathbb{T}^{d}$ satisfies the standard Diophantine condition
\begin{equation}\label{standard-DC}
\|k\cdot \omega\|\geq \frac{p}{|k|^{q}}
\end{equation}
for all nonzero $k\in \mathbb{Z}^{d}$, where $p>0$, $q>d$ are some constants. $\|\cdot\|$ denotes the distance to the nearest integer and $|\cdot|$ stands for the sup-norm on $\mathbb{Z}^{d}$, that is, $|k|=|k_{1}|+|k_{2}|+\cdots+|k_{d}|$, $k_{i}$ is the $i$-th element of the vector $k$. For the sake of convenience, we denote $\mathbb{T}^{d}(p,q)\subset \mathbb{T}^{d}$ be the set of $\omega$ satisfying \eqref{standard-DC}.

Assume that the sampling function $\alpha(x)$ satisfies
\begin{equation}\label{sampling-function}
\int_{\mathbb{T}^{d}}\log(1-|\alpha(x)|)dx>-\infty
\end{equation}
and can be extend complex analytically to
\begin{equation*}
\mathbb{T}^{d}_{h}:=\{x+iy:x\in \mathbb{T}^{d}, y\in \mathbb{R}^{d}, |y|<h\}
\end{equation*}
with some $h>0$. Define its norm as
\begin{equation*}
\|\alpha\|_{h}=\sup_{|y|<h}|\alpha(x+iy)|,
\end{equation*}
and let
\begin{equation}\label{C-alpha}
C_{\alpha}=\log\frac{2}{\sqrt{1-\|\alpha\|^{2}_{h}}}.
\end{equation}

The Szeg\H{o} recursion is then equivalent to
\begin{equation}\label{Szego-2}
\rho_{n}(x)\varphi_{n+1}(z)=z\varphi_{n}(z)-\overline{\alpha}_{n}(x)\varphi_{n}^{*}(z),
\end{equation}
where $\rho_{n}(x)=\rho(x+n\omega)$ and $\rho(x)=(1-|\alpha(x)|^{2})^{1/2}$.

Taking the Szeg\H{o} dual to both sides of (\ref{Szego-2}), we obtain
\begin{equation}\label{Szego-3}
\rho_{n}(x)\varphi_{n+1}^{*}(z)=\varphi_{n}^{*}(z)-\alpha_{n}(x)z\varphi_{n}(z).
\end{equation}
Equations (\ref{Szego-2}) and (\ref{Szego-3}) can be can be expressed as
\begin{equation*}
\left(
\begin{array}{c}
\varphi_{n+1}\\
\varphi_{n+1}^{*}
\end{array}
\right)
=
S(\omega,z;x+n\omega)
\left(
\begin{array}{c}
\varphi_{n}\\
\varphi_{n}^{*}
\end{array}
\right),
\end{equation*}
where
\begin{equation*}
S(\omega,z;x)=\frac{1}{\rho(x)}
\left(
\begin{array}{cc}
z&-\overline{\alpha}(x)\\
-\alpha(x)z&1
\end{array}
\right).
\end{equation*}
Since $\det S(\omega,z;x)=z$, it is preferable to study the determinant $1$ matrix
\begin{equation*}
M(\omega,z;x)=\frac{1}{\rho(x)}
\left(
\begin{array}{cc}
\sqrt{z}&-\frac{\overline{\alpha}(x)}{\sqrt{z}}\\
-\alpha(x)\sqrt{z}&\frac{1}{\sqrt{z}}
\end{array}
\right)\in \mathbb{S}\mathbb{U}(1,1),
\end{equation*}
known as the Szeg\H{o} cocycle map.

The monodromy matrix (or $n$-step transfer matrix) is then defined by
\begin{equation}\label{n-step}
M_{n}(\omega,z;x)=\prod_{j=n-1}^{0}M(\omega,z;x+j\omega).
\end{equation}
From (\ref{n-step}), it follows directly that
\begin{equation*}
M_{n_{1}+n_{2}}(\omega,z;x)=M_{n_{2}}(\omega,z;x+n_{1}\omega)M_{n_{1}}(\omega,z;x)
\end{equation*}
and
\begin{equation}\label{log-subadditive}
\log\|M_{n_{1}+n_{2}}(\omega,z;x)\|\leq \log\|M_{n_{1}}(\omega,z;x)\|+\log\|M_{n_{2}}(\omega,z;x+n_{1}\omega)\|.
\end{equation}
Define
\begin{equation}\label{un}
u_{n}(\omega,z;x):=\frac{1}{n}\log\|M_{n}(\omega,z;x)\|
\end{equation}
and
\begin{equation}\label{un}
L_{n}(\omega,z):=\int_{\mathbb{T}^{d}}u_{n}(\omega,z;x)dx.
\end{equation}

Integrating the inequality (\ref{log-subadditive}) with respect to $x$ over $\mathbb{T}^{d}$ yields
\begin{equation*}
L_{n_{1}+n_{2}}(\omega,z)\leq \frac{n_{1}}{n_{1}+n_{2}}L_{n_{1}}(\omega,z)+\frac{n_{2}}{n_{1}+n_{2}}L_{n_{2}}(\omega,z).
\end{equation*}
This implies that
\begin{equation*}
L_{n}(\omega,z)\leq L_{m}(\omega,z)\ \ {\rm if}\ \  m<n,\; m|n
\end{equation*}
and
\begin{equation*}
L_{n}(\omega,z)\leq L_{m}(\omega,z)+C\frac{m}{n} \ \ {\rm if}\ \ m<n.
\end{equation*}

For any irrational $\omega\in \mathbb{T}^{d}$, the transformation $x\rightarrow x+\omega$ is ergodic. Notice that (\ref{log-subadditive}) implies that $\log \|M_{n}(\omega,z;x)\|$ is subadditive, Kingman's subadditive ergodic theorem guarantees the existence of the limit
\begin{equation}\label{Lyapunov-exponent}
L(\omega,z)=\lim_{n\rightarrow\infty}L_{n}(\omega,z),
\end{equation}
known as the Lyapunov exponent. Throughout this paper,  $\gamma>0$ denotes the lower bound of the Lyapunov exponent.

Moreover, the Furstenberg-Kesten theorem states that the limit
\begin{equation}\label{Lyapunov-exponent-1}
\lim_{n\rightarrow\infty}u_{n}(\omega,z;x)=\lim_{n\rightarrow\infty}L_{n}(\omega,z)=L(\omega,z)
\end{equation}
exists for $ a.e.\;x$.
Note that
\begin{equation}\label{log-Mn}
0\leq\log \|M_{n}(\omega,z;x)\|\leq C(\alpha_{i},z)n
\end{equation}
and hence
\begin{equation}\label{Ln-bound}
0\leq L_{n}(\omega,z)\leq C(\alpha_{i},z)
\end{equation}
for $i\in \mathbb{Z}$.

\section{Some Useful Lemmas}\label{section3}
\subsection{LDT Estimate for the Monodromy}
To establish LDT, we rely on the following lemma.
\begin{lemma}\label{Lemma3.1}\cite[Proposition 9.1]{GS01-Annals}
Let $d$ be a positive integer. Suppose $u: D(0,2)^{d}\rightarrow [-1,1]$ is subharmonic in each variable; i.e., $z\rightarrow u(z_{1},z_{2},\ldots,z_{d})$ is subharmonic for any choice of $(z_{2},\ldots,z_{d})\in D(0,2)^{d-1}$ and similarly for each of the other variables. Assume furthermore that for some $n\geq 1$
\begin{equation}\label{GS01-(1)}
\sup_{\theta \in \mathbb{T}^{d}}|u(\theta+\omega)-u(\theta)|<\frac{1}{n}.
\end{equation}
Then there exist $\sigma>0$, $\tau>0$, and $c_{0}$ only depending on $d$ and $\varepsilon_{1}$ such that
\begin{equation}\label{GS01-(2)}
\mathrm{mes}\{\theta\in \mathbb{T}^{d}: |u(\theta)-\langle u\rangle|> n^{-\tau}\}<\exp(-c_{0}n^{\sigma}).
\end{equation}
Here $\langle u\rangle=\int_{\mathbb{T}^{d}}u(\theta)d\theta$. If $d=2$ then the range is $0<\tau<\frac{1}{3}-\varepsilon_{2}$ and $\sigma=\frac{1}{3}-\tau-\varepsilon_{2}$ where $\varepsilon_{2}\rightarrow 0$ as $\varepsilon_{1}\rightarrow 0$.
\end{lemma}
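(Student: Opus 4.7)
The plan is to prove Lemma \ref{Lemma3.1} via a Fourier-analytic argument that exploits subharmonicity on each slice together with the shift hypothesis \eqref{GS01-(1)}. First I would expand $u$ into its Fourier series $u(\theta)=\sum_{k\in\mathbb{Z}^{d}}\hat u(k)\,e^{2\pi i k\cdot\theta}$ and note that the shift assumption translates into the Fourier-side bound
\begin{equation*}
\bigl|\hat u(k)\bigr|\,\bigl|e^{2\pi i k\cdot\omega}-1\bigr|
=\bigl|\widehat{u(\cdot+\omega)-u(\cdot)}(k)\bigr|\le \frac{1}{n},
\end{equation*}
so that $\|k\cdot\omega\|\,|\hat u(k)|\lesssim 1/n$ for every nonzero $k$. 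In parallel, subharmonicity in each coordinate (plus the uniform bound $\|u\|_{\infty}\le 1$) gives, via the one-variable Riesz representation $u(\cdot,z_{2},\dots,z_{d})=\int\log|\,\cdot-\zeta\,|\,d\mu_{z'}(\zeta)+\text{harmonic}$, the coefficient decay $|\hat u(k)|\lesssim 1/|k_{j}|$ in each separate variable, and hence $|\hat u(k)|\lesssim 1/\max_{j}|k_{j}|$ for all nonzero $k$.

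Next I would split the frequency domain at a scale $N=n^{\beta}$, chosen so as to optimize later. The tail $|k|>N$ is controlled in $L^{2}$ by Parseval and the coefficient decay, yielding a contribution to $\|u-\langle u\rangle\|_{2}$ of order $N^{-1/2}$ up to logarithmic factors. For the bulk $0<|k|\le N$, I would combine the two Fourier bounds: either $\|k\cdot\omega\|\ge N^{-\alpha}$, in which case $|\hat u(k)|\lesssim N^{\alpha}/n$, or $\|k\cdot\omega\|<N^{-\alpha}$, in which case $k$ belongs to a ``resonant'' sublattice whose cardinality one bounds by a volume / pigeonhole argument depending only on $d$. Summing these contributions in $L^{2}$ and choosing $\beta,\alpha$ to balance terms would give an estimate of the form $\|u-\langle u\rangle\|_{2}\le n^{-\tau'}$ with some $\tau'>0$.

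To upgrade this $L^{2}$ bound to the measure estimate \eqref{GS01-(2)}, I would use the standard BMO/John--Nirenberg-type machinery available for subharmonic functions with bounded mean oscillation: a subharmonic function whose $L^{2}$ deviation from its mean is at most $\delta$ has super-polynomially small level sets $\{|u-\langle u\rangle|>\delta^{\sigma'}\}$. Concretely, one may iterate the $L^{2}$ estimate on dyadic subrectangles, or apply Bourgain's semi-algebraic/covering lemma in $d$ variables, to pass from an averaged estimate to a pointwise sub-Gaussian-type deviation bound. The exponents $\sigma,\tau$ then emerge by tracking the interplay of $\alpha,\beta$ and the BMO amplification.

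The main obstacle I anticipate is the passage from the integrated $L^{2}$ control to the exponential measure bound $\exp(-c_{0}n^{\sigma})$, since the standard Chebyshev inequality alone yields only polynomially small exceptional sets. The usual way around this is to combine the subharmonic BMO estimates of Bourgain with a careful tensorization over coordinates, which accounts for the dependence of $\sigma,\tau$ on the dimension $d$ and the sharper two-dimensional range $0<\tau<\tfrac{1}{3}-\varepsilon_{2}$, $\sigma=\tfrac{1}{3}-\tau-\varepsilon_{2}$ announced in the statement.
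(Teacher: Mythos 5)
The paper offers no proof of this lemma at all: it is quoted verbatim from \cite[Proposition 9.1]{GS01-Annals}, so the only question is whether your sketch would reconstruct the Goldstein--Schlag argument. In outline it would: the slice-wise Riesz representation giving $|\hat u(k)|\lesssim 1/\max_j|k_j|$, the almost-invariance bound $\|k\cdot\omega\|\,|\hat u(k)|\lesssim 1/n$, a low/high frequency splitting, an $L^2$ (or $L^1$) deviation bound, and the upgrade to an exponentially small exceptional set via the BMO splitting lemma and John--Nirenberg, carried out coordinate by coordinate, is exactly the chain of ideas in \cite{GS01-Annals} (and the same machinery is used later in this paper, e.g. in the proof of Lemma \ref{Lemma3.20}).

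The genuine gap is your treatment of the resonant regime. For $k$ with $\|k\cdot\omega\|<N^{-\alpha}$ the shift hypothesis gives nothing, and the only available bound is $|\hat u(k)|\lesssim 1/\max_j|k_j|$; hence controlling merely the \emph{cardinality} of the resonant set ``by a volume/pigeonhole argument depending only on $d$'' cannot make its $L^2$ contribution small. Indeed, if $\omega_1$ is close to $1/2$, a positive proportion of the $k$ with $|k|\le N$ are resonant and the sum of $1/|k|^2$ over them is of order one; worse, the statement itself fails for such $\omega$ (take $u$ a nonconstant bounded subharmonic function of $2\theta_1$, which is essentially invariant under the shift, yet $u-\langle u\rangle$ is not small on a large set). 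What rescues the argument --- and what the parameter $\varepsilon_1$ in the statement silently refers to --- is the Diophantine hypothesis on $\omega$, present in \cite{GS01-Annals} though suppressed in the quotation: $\|k\cdot\omega\|\ge c|k|^{-A}$ for all $k\neq 0$ forces $|\hat u(k)|\le |k|^{A}/(cn)\le N^{A}/(cn)$ for every $0<|k|\le N=n^{\beta}$, so that with $\alpha,\beta$ chosen in terms of $A$ the resonant case is simply vacuous at the relevant scales. You must inject this arithmetic input explicitly; as written, your dichotomy does not close. The remaining steps (the $1/|k_j|$ decay and the $L^2\to\exp(-c_0n^{\sigma})$ upgrade via BMO and John--Nirenberg, tensorized over the $d$ coordinates) are sound as a sketch, and with the Diophantine condition restored the exponent bookkeeping does produce ranges of the stated form, including $0<\tau<\tfrac13-\varepsilon_2$ when $d=2$.
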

Based on the above lemma, one can obtain the following LDT, a fundamental tool in localization theory.
\begin{theorem}\label{LDT-matrix}
Suppose $\omega\in \mathbb{T}^{d}(p,q)$, $z\in \partial \mathbb{D}$, and $L(\omega,z)>\gamma>0$. There exist  constants $\sigma=\sigma(p,q)$, $\tau=\tau(p,q)$ with $\sigma, \tau \in (0,1)$, and $C_{0}=C_{0}(p,q,h)$ such that for $n\geq 1$,
\begin{equation}\label{LDT1}
\mathrm{mes}\{x\in \mathbb{T}^{d}: |\log\|M_{n}(\omega,z;x)\|-nL_{n}(\omega,z)|> n^{1-\tau}\}<\exp(-C_{0}n^{\sigma}).
\end{equation}
\end{theorem}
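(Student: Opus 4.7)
The plan is to verify the hypotheses of Lemma 3.1 for a suitably normalized version of $u_n(\omega, z; \cdot)$ and read off (\ref{LDT1}).

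The first and main step is the complex-analytic extension of the Szeg\H{o} cocycle. The matrix $M(\omega, z; x)$ contains the non-holomorphic factors $\overline{\alpha}(x)$ and $\rho(x) = (1 - |\alpha(x)|^2)^{1/2}$, so I would replace them by $\alpha^{\#}(x) := \overline{\alpha(\overline{x})}$, which is analytic on $\mathbb{T}^d_h$ because $\alpha$ is, and $\tilde\rho(x) := (1 - \alpha(x)\alpha^{\#}(x))^{1/2}$, with the principal branch of the square root; this is well-defined on $\mathbb{T}^d_h$ since $|\alpha(x)\alpha^{\#}(x)| \le \|\alpha\|_h^2 < 1$ there (a condition implicit in (\ref{C-alpha})). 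The resulting matrix $\tilde M(\omega, z; x)$ agrees with $M$ on the real torus, is analytic in each coordinate of $x \in \mathbb{T}^d_h$, still satisfies $\det \tilde M = 1$, and is uniformly bounded by $e^{C_\alpha}$ on $\mathbb{T}^d_h$ for $|z|=1$; since $\det \tilde M = 1$, also $\|\tilde M^{-1}\| = \|\tilde M\| \le e^{C_\alpha}$. Consequently $e^{-nC_\alpha} \le \|\tilde M_n(\omega, z; x)\| \le e^{nC_\alpha}$ throughout $\mathbb{T}^d_h$. Since $\log\|F\|$ of a matrix-valued analytic function $F$ is plurisubharmonic, after passing to the polydisk picture $w_j = e^{2\pi i x_j}$ and rescaling to $D(0,2)^d$, $u_n$ fits the subharmonic-in-each-variable framework required by Lemma 3.1.

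For the shift near-invariance condition (\ref{GS01-(1)}), I would exploit the two factorizations
\begin{equation*}
\tilde M_n(\omega, z; x+\omega) = \tilde M(\omega, z; x + n\omega)\,\tilde M_{n-1}(\omega, z; x + \omega), \quad \tilde M_n(\omega, z; x) = \tilde M_{n-1}(\omega, z; x + \omega)\,\tilde M(\omega, z; x),
\end{equation*}
which together with $\|\tilde M^{\pm 1}\| \le e^{C_\alpha}$ and submultiplicativity yield
\begin{equation*}
\bigl|\log\|\tilde M_n(\omega, z; x+\omega)\| - \log\|\tilde M_n(\omega, z; x)\|\bigr| \le 2C_\alpha,
\end{equation*}
hence $|u_n(\omega, z; x+\omega) - u_n(\omega, z; x)| \le 2C_\alpha/n$. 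Setting $K := 2C_\alpha + 1$ and $\tilde u := u_n/K$ produces a function subharmonic in each variable, taking values in $[-1,1]$, with $\sup_x |\tilde u(x+\omega) - \tilde u(x)| < 1/n$. Applying Lemma 3.1 to $\tilde u$ gives
\begin{equation*}
\mathrm{mes}\{x \in \mathbb{T}^d : |\tilde u(x) - \langle \tilde u\rangle| > n^{-\tau}\} < \exp(-c_0 n^\sigma),
\end{equation*}
and un-normalizing (multiplying the threshold by $nK$ and absorbing the constant $K$ into a slight decrease of $\tau$) yields (\ref{LDT1}) with $\sigma, \tau \in (0,1)$ inherited from Lemma 3.1 (and thereby dependent on the Diophantine exponents $p, q$) and $C_0 = C_0(p, q, h)$.

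The main obstacle is the analytic continuation of the cocycle: one must install the holomorphic surrogates $\alpha^{\#}, \tilde\rho$ so that both $\det \tilde M = 1$ is preserved (which is what makes $\|\tilde M\| = \|\tilde M^{-1}\|$ and keeps everything tied to the single constant $C_\alpha$) and the uniform upper bound $\|\tilde M\| \le e^{C_\alpha}$ holds on all of $\mathbb{T}^d_h$; once this is in place, the transition to the polydisk, the verification of the one-step shift estimate, and the final invocation of Lemma 3.1 are routine.
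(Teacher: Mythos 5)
Your proposal is correct and follows essentially the same route as the paper: both reduce the statement to Lemma \ref{Lemma3.1} by viewing $u_{n}=\frac{1}{n}\log\|M_{n}(\omega,z;\cdot)\|$ as a function subharmonic in each variable, bounded, and nearly shift-invariant, and then reading off \eqref{LDT1}. In fact you supply the details the paper's one-line proof leaves implicit (the holomorphic surrogates $\alpha^{\#}$, $\tilde\rho$ preserving $\det=1$, the uniform bound $e^{C_{\alpha}}$, the $2C_{\alpha}/n$ shift estimate, and the normalization into $[-1,1]$), so no substantive difference in approach.
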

\begin{proof}
Fix a dimension $d$ and $z$. For any $x\in D(0,2)^{d}$, define
$$u_{n}(x)=\frac{1}{n}\log\|M_{n}(\omega,z;x)\|.$$
Under the framework of analytic continuation in several complex variables, we note that in Szeg\H{o} cocycle map $\overline{\alpha}(x)=\overline{\alpha(\overline{x}_{1},\overline{x}_{2},\ldots,\overline{x}_{d})}$. Thus, $\overline{\alpha}(x)$ is analytic. Then $u_{n}$ is a continuous subharmonic function bounded by $1$ in $D(0,1)^{d}$. Moreover, $u_{n}$ satisfies the conditions in the above lemma, and  \eqref{LDT1} follows directly from \eqref{GS01-(2)}.
\end{proof}

\subsection{Estimation of the Lyapunov Exponent}
The Lyapunov exponent is a core indicator characterizing the spectral properties and localization behavior of operators. First, we need to clarify the deviation estimate between the finite scale approximation of the Lyapunov exponent $L_{n}(\omega,z)$ and its limit $L(\omega,z)$.

\begin{lemma}\label{Lemma3.4}
Assume $\omega\in \mathbb{T}^{d}(p,q)$, $z\in \partial \mathbb{D}$, and $L(\omega,z)>\gamma>0$. For any $n\geq 2$, we have
\begin{equation*}
0\leq L_{n}(\omega,z)-L(\omega,z)<C\frac{(\log n)^{1/\sigma}}{n},
\end{equation*}
where $C=C(p,q,z,\gamma)$ and $\sigma$ is as in LDT.
\end{lemma}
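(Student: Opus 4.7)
The plan is to prove the two inequalities separately. The lower bound $L_n(\omega, z) \ge L(\omega, z)$ is immediate: integrating the subadditivity relation \eqref{log-subadditive} in $x$ yields $(n_1+n_2) L_{n_1+n_2} \le n_1 L_{n_1} + n_2 L_{n_2}$, so Fekete's lemma identifies $L(\omega, z) = \inf_n L_n(\omega, z)$, whence $L_n - L \ge 0$ for every $n$.

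For the upper bound my approach is to use the Avalanche Principle of Goldstein--Schlag for products of $2\times 2$ matrices, applied to $n$-step transfer matrices. Fix a scale $n$ and a large integer $k$, and set $A_j = M_n(\omega, z; x + (j-1) n \omega) \in \mathbb{S}\mathbb{U}(1,1)$, so that $M_{kn}(\omega, z; x) = A_k \cdots A_1$ and $A_{j+1} A_j = M_{2n}(\omega, z; x + (j-1) n \omega)$. By Theorem~\ref{LDT-matrix} at scales $n$ and $2n$, outside a set of measure at most $2k\exp(-C_0 n^\sigma)$ one has, for every $j$,
\begin{align*}
\bigl|\log \|A_j\| - n L_n(\omega,z)\bigr| &< n^{1-\tau}, \\
\bigl|\log \|A_{j+1} A_j\| - 2n L_{2n}(\omega,z)\bigr| &< (2n)^{1-\tau},
\end{align*}
which forces the non-degeneracy ratio $\|A_{j+1} A_j\|/(\|A_{j+1}\|\, \|A_j\|) \ge \exp\bigl(-2n(L_n - L_{2n}) - O(n^{1-\tau})\bigr)$. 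Provided the inductive hypothesis that $L_n - L_{2n}$ is already small enough to meet the AP threshold, the Avalanche Principle yields, on this good set,
\[
\log \|M_{kn}(\omega, z; x)\| \;=\; \sum_{j=1}^{k-1} \log \|A_{j+1} A_j\| \;-\; \sum_{j=2}^{k-1} \log \|A_j\| \;+\; O\!\left(k e^{-c n \gamma}\right).
\]
Integrating in $x$ over the good set and letting $k \to \infty$ (so that $L_{kn} \to L$), I extract the telescoping estimate $L \ge 2 L_{2n} - L_n - O(n^{-\tau})$, which is equivalent to the halving recursion
\[
L_{2n}(\omega, z) - L(\omega, z) \;\le\; \tfrac{1}{2}\bigl(L_n(\omega, z) - L(\omega, z)\bigr) + O(n^{-\tau}).
\]

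I would then iterate this doubling inequality starting at a base scale $n_0$ of size $(C\log n)^{1/\sigma}$, chosen so that $k \exp(-C_0 n_0^\sigma) = o(1)$ remains negligible throughout all $\log_2(n/n_0)$ doublings needed to reach scale $n$. Summing the geometric series of errors contributed at each doubling step gives
\[
L_n(\omega,z) - L(\omega,z) \;\le\; 2^{-\log_2(n/n_0)}\bigl(L_{n_0} - L\bigr) + \sum_{j \ge 0} 2^{-j}\, O\!\left((2^j n_0)^{-\tau}\right) \;\le\; \frac{C\, n_0}{n} \;\le\; \frac{C (\log n)^{1/\sigma}}{n}.
\]

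The main obstacle is initializing the Avalanche Principle: the non-degeneracy hypothesis $L_{n_0} - L_{2 n_0} \ll 1$ at the base scale must be verified before the doubling scheme can begin, but this itself demands an a priori rate of convergence of $L_n$ to $L$. The way around this is a bootstrap: first establish a crude rate $L_n - L = O(n^{-\tau})$ by combining the pointwise bound $u_{kn}(x) \le k^{-1} \sum_{j=0}^{k-1} u_n(x + jn\omega)$ (coming from \eqref{log-subadditive}) with Theorem~\ref{LDT-matrix} applied to each Birkhoff term, and then use that crude rate to verify the AP hypothesis at scale $n_0$ before upgrading to the sharp rate $(\log n)^{1/\sigma}/n$ via the doubling iteration above. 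A minor technical point is that the transfer matrices lie in $\mathbb{S}\mathbb{U}(1,1)$ rather than $\mathrm{SL}(2,\mathbb{R})$, but the Avalanche Principle carries over verbatim through the standard isomorphism between these Lie groups.
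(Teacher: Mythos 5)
Your overall strategy (LDT plus Avalanche Principle plus a dyadic iteration) is in the spirit of the paper, but the execution has genuine gaps. First, the ``halving recursion'' $L_{2n}-L\le\tfrac12(L_n-L)+O(n^{-\tau})$ is not validly derived. Integrating the AP identity for $A_j=M_n(\cdot+(j-1)n\omega)$ only holds off an exceptional set of measure $\lesssim k\exp(-C_0n^{\sigma})$, and on that set $\log\|M_{kn}\|$ can be of size $kn$; hence the integrated error contains a term of order $k\exp(-C_0n^{\sigma})$, which blows up as $k\to\infty$, so you cannot ``let $k\to\infty$ so that $L_{kn}\to L$''. For admissible finite $k$ you only get the approximate identity $L_{kn}\approx 2L_{2n}-L_n$, and since $L\le L_{kn}$ this yields $2L_{2n}-L_n\le L_{kn}+\mathrm{err}$; replacing $L_{kn}$ by $L$ requires an upper bound on $L_{kn}-L$, i.e.\ exactly the rate you are trying to prove at the larger scale -- the argument is circular. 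Your proposed bootstrap does not repair this: the pointwise inequality $u_{kn}(x)\le k^{-1}\sum_j u_n(x+jn\omega)$ combined with the LDT only bounds $L_{kn}$ from \emph{above} by $L_n+{\rm small}$, which is the trivial direction (it follows from subadditivity alone) and gives no control of $L_n-L$ from above; a ``crude rate $L_n-L=O(n^{-\tau})$'' cannot be extracted this way. Finally, even granting the recursion, the bookkeeping fails quantitatively: the sum of per-step errors $\sum_j 2^{-j}O((2^jn_0)^{-\tau})$ is of order $n_0^{-\tau}$ (and with the correct weighting of order $n^{-\tau}$), not $O(n_0/n)$; since $\tau\in(0,1)$, this is far larger than the target $C(\log n)^{1/\sigma}/n$, so the stated conclusion does not follow from your scheme.

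The paper avoids all of this by a different arrangement of the same tools: it applies the AP with \emph{short} blocks of length $l\asymp(\log n)^{1/\sigma}$ (so that $(n/l)\exp(-C_0l^{\sigma})<n^{-2}$) simultaneously to the products of length $n$ and $2n$, and subtracts the two expansions; the interior block sums cancel, leaving only boundary contributions of size $O(l)$ plus a negligible AP error, whence $|L_{2n}-L_n|\le C(\log n)^{1/\sigma}/n$ after integration. Summing this over the dyadic scales $2^kn$ and using $L=\lim_k L_{2^kn}$ gives the lemma, with per-scale errors that are summable to $(\log n)^{1/\sigma}/n$ rather than $n^{-\tau}$. The non-degeneracy needed for the AP at the block scale is secured not by a bootstrap but by a pigeonhole: among the boundedly many dyadic lengths $l_j=2^jl_0$ one must have $L_{l_j}-L_{l_{j+1}}<\gamma/16$, since $L_m$ is uniformly bounded. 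If you want to salvage your two-scale scheme (blocks of length $n$ inside length $kn$), you would need the Goldstein--Schlag inductive comparison of $L_N+L_n-2L_{2n}$ at carefully coupled scales, not a naive $k\to\infty$ limit.
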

\begin{proof}
Clearly, $0\leq L_{n}(\omega,z)\leq C(z)$ for all $n$. Let $k$ be a positive integer satisfying $k\gamma>16C(z)$. For $n>10$, set $l=[C_{1}(\log n)^{1/\sigma}]$ with some large $C_{1}$. Consider the integers $l_{0}, 2l_{0}, \ldots,2^{k}l_{0}$. There exists some $0\leq j<k$ such that (with $l_{j}=2^{j}l_{0}$)
\begin{equation}\label{lem3.4-(1)}
L_{l_{j}}(z)-L_{l_{j+1}}(z)<\frac{\gamma}{16}.
\end{equation}
Otherwise, then $C(z)>L_{l_{0}}(z)-L_{2^{k}l_{0}}(z)\geq \frac{k\gamma}{16}>C(z)$, leading to a contradiction. Set $l=2^{j}l_{0}$ where $j$ satisfies \eqref{lem3.4-(1)}.

It is straightforward to verify that to verify that $M(\omega,z;x)$ is conjugate to an $\mathbb{SL}(2,\mathbb{R})$ matrix $T(\omega,z;x)$  via
\begin{equation*}
Q=-\frac{1}{1+i}
\left(
\begin{array}{cc}
1&-i\\
1&i
\end{array}
\right)\in \mathbb{U}(2),
\end{equation*}
that is,
\begin{equation*}
T(\omega,z;x)=Q^{*}M(\omega,z;x)Q\in \mathbb{SL}(2,\mathbb{R}),
\end{equation*}
where ``$*$'' denotes the conjugate transpose of a matrix.

We now apply the Avalanche Principle (AP) (see Appendix) to the matrices $B_{j}=T_{l}(\omega,z;x+(j-1)l\omega)$ for $1\leq j\leq m=[n/l]$, where $\mu=\exp(l\gamma/2)$. Note that $\mu>n^{2}$ if $C_{1}>4/\gamma$. By LDT, the phases $x$ satisfying
\begin{equation*}
\min_{1\leq j\leq m} \|B_{j}\|\geq \exp(lL_{l}(\omega,z)-l^{1-\tau})>\exp(l\gamma/2)=\mu>n^{2}
\end{equation*}
form a set $\mathcal{G}_{1}$ with measure at most $m \exp(-C_{0}l^{\sigma})\leq n\exp(-C_{0}(2^{j}l_{0})^{\sigma})<n^{-2}$, provided $C_{1}>n^{1/\sigma}$.

Moreover, combining \eqref{lem3.4-(1)} and LDT, we have
\begin{align*}
\max_{1\leq j< m}\Big|\log\|B_{j+1}\|+\log\|B_{j}\|-\log\|B_{j+1}B_{j}\| \Big|
&\leq 2l(L_{l}(z)+\frac{\gamma}{32})-2l(L_{2l}(z)-\frac{\gamma}{32})\\
&=2l[(L_{l}(z)-L_{2l}(z))+\frac{\gamma}{16})]\\
&\leq \exp(l\gamma/4)=\frac{1}{2}\log \mu
\end{align*}
up to a set $\mathcal{G}_{2}$ of $x$ with measure at most $2m \exp(-C_{0}l^{\sigma})\leq 2n\exp(-C_{0}(l)^{\sigma})<n^{-2}$, provided $C_{1}>n^{1/\sigma}$. Let $\mathcal{G}=\mathcal{G}_{1}\cup \mathcal{G}_{2}$. Then $\mathrm{mes}\mathcal{G}<2n^{-2}$, and \eqref{AP-1}, \eqref{AP-2} hold for all $x \in\mathcal{G}$.

By (AP), we have
\begin{equation*}
\Big|\log\|B_{m}\cdots B_{1}\|+\sum_{j=2}^{m-1}\log\|B_{j}\|-\sum_{j=1}^{m-1}\log\|B_{j+1}B_{j}\|\Big|<C\frac{n}{\mu}<Cn^{-1}
\end{equation*}
for all $x\in\mathcal{G}$. That is,
\begin{align*}
&\Big|\log\|T_{lm}(\omega,z;x)+\sum_{j=2}^{m-1}\log\|T_{l}(\omega,z;x+jl\omega)\|\\
&\ \ \ -\sum_{j=1}^{m-1}\log\| T_{l}(\omega,z;x+(j+1)l\omega)T_{l}(\omega,z;x+jl\omega) \|\Big|< Cn^{-1}
\end{align*}
for all $x\in\mathcal{G}$.

Similarly, for $\log \|T_{lm}(\omega,z;x+lm\omega)\|$ and $\log \|T_{lm}(\omega,z;x)\|$, the phases $x$ satisfying
\begin{align}	
&\Big| \|\log\| T_{2lm}(\omega,z;x)\|-\|\log\| T_{lm}(\omega,z;x+lm\omega)\|-\|\log\| T_{lm}(\omega,z;x)\| \nonumber\\
&+\|\log\| T_{l}(\omega,z;x+lm\omega)\|+\|\log \|T_{l}(\omega,z;x+(m-1)l\omega)\| \nonumber \\
&-\|\log \|T_{l}(\omega,z;x+lm\omega)T_{l}(\omega,z;x+(m-1) l\omega)\|\Big|\leq \frac{C}{n} \label{Lem3.4-(2)}
\end{align}
form a set with measure  at most $Cn^{-2}$.\\
Since
\begin{equation}\label{Lem3.4-(3)}
\big|\|\log\| T_{n}(\omega,z;x)\|-\log\| T_{lm}(\omega,z;x)\| \big|\leq C(\alpha_{n},\alpha_{lm},z)n
\end{equation}
and
\begin{equation}\label{Lem3.4-(4)}
\big|\|\log\| T_{l}(\omega,z;x)\| \big|\leq C(z)l,
\end{equation}
we can conclude from \eqref{Lem3.4-(2)} that
\begin{equation*}
\big|\|\log\| T_{2n}(\omega,z;x)\|-\log\| T_{n}(\omega,z;x+n\omega)\|-\log\| T_{n}(\omega,z;x)\| \big|\leq C(\log n)^{1/\sigma}
\end{equation*}
up to a set of $x$ not exceeding $Cn^{-2}$ in measure.

Integrating the above inequality over $x$, then we have
\begin{equation*}
|L_{2n}(z)-L_{n}(z)|\leq C\frac{(\log n)^{1/\sigma}}{n},
\end{equation*}
where $C=C(\gamma,z,\omega)$.

We can finally prove this lemma by summing over $2^{k}n$ .
\end{proof}
Based on the continuity result in the above lemma, we can further refine the stability of the logarithm of the transfer matrix norm under small perturbations.

\begin{lemma}\label{Lemma3.5}
Let $n\geq 1$, $(\omega_{i},z_{i},x_{i})\in \mathbb{C}^{d}\times\partial \mathbb{D}\times \mathbb{C}^{d}$, $i=1,2$, such that
\begin{equation*}
|\mathrm{Im} x_{i}|<h,\quad n|\mathrm{Im} \omega_{i}|<h.
\end{equation*}
Then
\begin{equation*}
\|M_{n}(\omega_{1},z_{1};x_{1})-M_{n}(\omega_{2},z_{2};x_{2})\|\leq \Big(\frac{4}{\sqrt{1-\|\alpha\|}_{h}}\Big)^{n}(|\omega_{1}-\omega_{2}|+|z_{1}-z_{2}|+|x_{1}-x_{2}|).
\end{equation*}
In particular, we have
\begin{equation*}
\big|\log\|M_{n}(\omega_{1},z_{1};x_{1})\|-\log \|M_{n}(\omega_{2},z_{2};x_{2})\| \big|\leq \Big(\frac{4}{\sqrt{1-\|\alpha\|}_{h}}\Big)^{n}(|\omega_{1}-\omega_{2}|+|z_{1}-z_{2}|+|x_{1}-x_{2}|),
\end{equation*}
provided the right-hand side is less than $\frac{1}{2}$.
\end{lemma}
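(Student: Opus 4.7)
The plan is to reduce the estimate on the product $M_n$ to a telescoping estimate on the individual factors. Writing $A_j=M(\omega_1,z_1;x_1+j\omega_1)$ and $B_j=M(\omega_2,z_2;x_2+j\omega_2)$, the standard identity
\begin{equation*}
A_{n-1}\cdots A_0-B_{n-1}\cdots B_0=\sum_{k=0}^{n-1}A_{n-1}\cdots A_{k+1}(A_k-B_k)B_{k-1}\cdots B_0
\end{equation*}
reduces everything to two ingredients: (i) a uniform per-factor norm bound and (ii) a Lipschitz estimate for $M$ in its arguments. The hypotheses $|\mathrm{Im}\,x_i|<h$ and $n|\mathrm{Im}\,\omega_i|<h$ guarantee that every argument $x_i+j\omega_i$ with $0\le j\le n-1$ lies in a complex strip on which $\alpha$ is analytic with $|\alpha|\le\|\alpha\|_h$.

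For (i), using the standard analytic extension of $\rho$ (defined via $\rho^2=1-\alpha\widetilde{\beta}$, where $\widetilde{\beta}$ is the analytic continuation of $\overline{\alpha(\bar{\cdot}\,)}$), together with $|\sqrt{z}|=1$ for $z\in\partial\mathbb{D}$, one obtains $\|A_j\|,\|B_j\|\le 2/\sqrt{1-\|\alpha\|_h^2}$, so each flanking product in the telescoping sum has norm at most $(2/\sqrt{1-\|\alpha\|_h^2})^{n-1}$. For (ii), after choosing a common branch of $\sqrt{z}$ on a short arc through $z_1$ and $z_2$ (the case when $|z_1-z_2|$ is of order one is absorbed directly into the absolute bound $(2/\sqrt{1-\|\alpha\|_h^2})^n$ on $\|M_n\|$), each entry of $M$ is analytic in $(z,x)$ on the relevant polydomain; Cauchy's estimates on a polydisk of radius comparable to $h$ then give
\begin{equation*}
\|A_k-B_k\|\le C\bigl(|z_1-z_2|+|x_1-x_2|+k|\omega_1-\omega_2|\bigr)
\end{equation*}
with $C=C(\|\alpha\|_h,h)$, where the factor $k|\omega_1-\omega_2|$ appears because $\omega$ enters $M(\omega,z;x+j\omega)$ only through the shift. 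Summing over $0\le k<n$ and using $\sum_{k<n}k\le n^2$ produces a total bound of the form $n^2 C\cdot(2/\sqrt{1-\|\alpha\|_h^2})^{n-1}$ times the parameter difference. This is absorbed into $(4/\sqrt{1-\|\alpha\|_h})^n$ via $\sqrt{1-\|\alpha\|_h^2}\ge\sqrt{1-\|\alpha\|_h}$ and doubling the base from $2$ to $4$ to swallow the polynomial factor $n^2 C$.

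Finally, the log version follows from the elementary inequality $|\log a-\log b|\le|a-b|/\min(a,b)$: since $M_n$ is conjugate via the fixed unitary $Q$ from the previous subsection to an $SL(2,\mathbb{R})$ matrix, one has $\|M_n\|\ge 1$, so when the matrix-norm difference is at most $1/2$ the log-norm difference is controlled by the same quantity. The main technical obstacle I foresee lies in (i)–(ii): cleanly handling the analytic continuation of $\rho$ and a consistent local branch of $\sqrt{z}$ on the complex strip so that the per-factor norm bound and the Cauchy-type Lipschitz estimate are simultaneously available with the advertised constants; once that is in place, the remaining work is telescoping arithmetic and the elementary $\log$ estimate.
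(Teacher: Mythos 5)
Your argument follows the paper's proof essentially step for step: the same telescoping identity over the factors, the same per-factor bound $2/\sqrt{1-\|\alpha\|_h^{2}}$ (which the paper obtains via the $\|M\|+\|M\|^{-1}$ trace identity after conjugating by $Q$), a Lipschitz estimate on a single factor (the paper invokes the mean value theorem where you use Cauchy estimates), and the same elementary logarithm inequality combined with $\|M_{n}\|\geq 1$ for the second assertion. The only difference is bookkeeping of constants, which both you and the paper absorb into enlarging the base from $2$ to $4$.
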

\begin{proof}
Let
\begin{equation*}
M_{k,n}=\frac{1}{\rho(x_{k}+n\omega_{k})}
\left(
\begin{array}{cc}
\sqrt{z_{k}}&-\frac{\overline{\alpha}(x_{k}+n\omega_{k})}{\sqrt{z_{k}}}\\
-\alpha(x_{k}+n\omega_{k})&1/\sqrt{z_{k}}
\end{array}
\right),\quad k=1,2.
\end{equation*}
Then
\begin{align*}	
 M_{n}(\omega_{1},z_{1};x_{1})-M_{n}(\omega_{2},z_{2};x_{2})&=\sum_{j=1}^{n}M_{2,n}\cdots M_{2,j+1}(M_{1,j}-M_{2,j})M_{1,j-1}\cdots M_{1,1} \\
&=\sum_{j=1}^{n}M_{n-j}(\omega_{2},z_{2};x_{2})(M_{1,j}-M_{2,j})M_{j-1}(\omega_{1},z_{1};x_{1}).
\end{align*}
It is not difficult to check that $M_{k,n}$ is conjugate to an $\mathbb{SL}(2,\mathbb{R})$ matrix $T_{k,n}$ through
$Q$, that is,
\begin{equation*}
T_{k,n}=Q^{*}M_{k,n}Q.
\end{equation*}

According to the polar decomposition of $\mathbb{SL}(2,\mathbb{C})$ matrices,
\begin{equation*}
\mathrm{tr}(T_{k,n}^{*}T_{k,n})=|a|^{2}+|b|^{2}+|c|^{2}+|d|^{2}=\|T_{k,n}\|^{2}+\|T_{k,n}\|^{-2},
\end{equation*}
where $a,b,c,d$ are the entries of matrix $T_{k,n}$. Then a direct calculation implies
\begin{align*}	
\|T_{k,n}\|+\|T_{k,n}\|^{-1}&=\|M_{k,n}\|+\|M_{k,n}\|^{-1} \\
&=\sqrt{\mathrm{tr}(M_{k,n}^{*}M_{k,n})+2}  \\
&= \frac{\sqrt{2+2|\alpha(x_{k}+n\omega_{k})|^{2}+2\rho^{2}(x_{k}+n\omega_{k})}}{\rho(x_{k}+n\omega_{k})}\\
&=\frac{2}{\sqrt{1-|\alpha(x_{k}+n\omega_{k})|^{2}}}.
\end{align*}
Thus, $\sup\limits_{|y|<h}\|T_{k,n}(\omega_{k},z_{k};x_{k}+iy_{k})\|$, $\sup\limits_{|y|<h}\|T_{k,n}(\omega_{k},z_{k};x_{k}+iy_{k})\|^{-1}\leq \frac{2}{\sqrt{1-\|\alpha\|_{h}^{2}}}$ and
\begin{equation*}
\sup\limits_{|y|<h}\frac{1}{n}\log\|T_{n}(\omega,z;x)\|=\sup\limits_{|y|<h}\frac{1}{n}\log\|M_{n}(\omega,z;x)\|\leq
\log\frac{2}{\sqrt{1-\|\alpha\|_{h}^{2}}}=C_{\alpha},
\end{equation*}
where $T_{n}(\omega,z;x)=\mathop{\prod}\limits_{j=n-1}^{0}T(\omega,z;x+j\omega)$, $T(\omega,z;x+j\omega)=Q^{*}M(\omega,z;x+j\omega)Q$.

Then according to the Mean Value Theorem, one can obtain that
\begin{equation*}
\|M_{n}(\omega_{1},z_{1};x_{1})-M_{n}(\omega_{2},z_{2};x_{2})\|\leq \Big(\frac{4}{\sqrt{1-\|\alpha\|^{2}}_{h}}\Big)^{n}(|\omega_{1}-\omega_{2}|+|z_{1}-z_{2}|+|x_{1}-x_{2}|).
\end{equation*}
Due to the fact that $|\log x|\leq 2|x-1|$ provided $|x-1|\leq\frac{1}{2}$, we get
\begin{align*}
\big|\log\|M_{n}(\omega_{1},z_{1};x_{1})\|-\log \|M_{n}(\omega_{2},z_{2};x_{2})\|\big| &\leq \Big|\frac{\|M_{n}(\omega_{1},z_{1};x_{1})\|}{\|M_{n}(\omega_{2},z_{2};x_{2})\|}-1\Big|\\
&\leq \frac{\|M_{n}(\omega_{1},z_{1};x_{1})-M_{n}(\omega_{2},z_{2};x_{2})\|}{\|M_{n}(\omega_{2},z_{2};x_{2})\|}
\end{align*}
provided the right-hand side is less than $\frac{1}{2}$.

Since $\|M_{n}(\omega,z;x)\|\geq 1$ for any $\omega,z,x$, then we have
\begin{align*}
&\big|\log\|M_{n}(\omega_{1},z_{1};x_{1})\|-\log \|M_{n}(\omega_{2},z_{2};x_{2})\|\big|\\
&\leq \|M_{n}(\omega_{1},z_{1};x_{1})-M_{n}(\omega_{2},z_{2};x_{2})\|\\
&\leq \Big(\frac{4}{\sqrt{1-\|\alpha\|^{2}}_{h}}\Big)^{n}(|\omega_{1}-\omega_{2}|+|z_{1}-z_{2}|+|x_{1}-x_{2}|).
\end{align*}
\end{proof}

Based on (AP), we can refine the above lemma.
\begin{lemma}\label{Lemma3.6}
Suppose $\omega_{0}\in \mathbb{T}^{d}(p,q)$, $z_{0}\in \partial \mathbb{D}$, and $L(\omega_{0},z_{0})>\gamma>0$. Let $\sigma$ be as in LDT and let $A$ be a constant such that $\sigma A\geq 1$. Then there exists a set $\mathcal{B}_{n,\omega_{0},z_{0}}$, $\mathrm{mes}(\mathcal{B}_{n,\omega_{0},z_{0}})<\exp(-C_{0}(\log n)^{\sigma A})$, such that the following holds. For all $n\geq N_{0}(\alpha_{i},p,q,\gamma,A)$,
\begin{equation}\label{lem3.6-(1)}
\big|\log\|M_{n}(\omega,z;x)\|-\log \|M_{n}(\omega_{0},z_{0};x_{0})\|\big|< \exp(-(\log n)^{A})
\end{equation}
for any $x_{0}\in \mathbb{T}^{d}\backslash \mathcal{B}_{n,\omega_{0},z_{0}}$ and $(\omega,z,x)\in \mathbb{C}^{d}\times \partial \mathbb{D}\times\mathbb{C}^{d}$ such that
\begin{equation}\label{lem3.6-(2)}
|\omega-\omega_{0}|,\,|z-z_{0}|,\,|x-x_{0}|<\exp(-(\log n)^{4A}).
\end{equation}
\end{lemma}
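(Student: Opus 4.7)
The plan is to refine the crude Lipschitz bound of Lemma \ref{Lemma3.5}, whose constant $\bigl(4/\sqrt{1-\|\alpha\|_h^2}\bigr)^n$ is exponential in $n$, by chopping $M_n$ into short blocks and invoking the Avalanche Principle, following the philosophy already used in the proof of Lemma \ref{Lemma3.4}. Choose a block length
\[
l = \bigl[C_{1}(\log n)^{A}\bigr]
\]
with $C_{1}$ large (depending on $\gamma$, $p$, $q$, $C_\alpha$), and set $m=[n/l]$, $\mu = \exp(l\gamma/2)$. Define the blocks $B_{j}(\omega,z;x) = M_{l}(\omega,z;x+(j-1)l\omega)$. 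Using Theorem \ref{LDT-matrix} on each of the $m$ blocks $B_{j}(\omega_{0},z_{0};\cdot)$ and on each product $B_{j+1}B_{j}=M_{2l}(\omega_{0},z_{0};\cdot + (j-1)l\omega_{0})$, together with Lemma \ref{Lemma3.4} (which gives $L_{l}(\omega_{0},z_{0}),\,L_{2l}(\omega_{0},z_{0}) \ge \gamma - o(1)$), one obtains an exceptional set $\mathcal{B}_{n,\omega_{0},z_{0}}$ of measure at most $2m\exp(-C_{0}l^{\sigma}) \le \exp(-C_{0}'(\log n)^{\sigma A})$ off of which the two Avalanche Principle hypotheses $\|B_{j}\|\ge \mu$ and $|\log\|B_{j+1}\|+\log\|B_{j}\|-\log\|B_{j+1}B_{j}\|| \le \tfrac{1}{2}\log\mu$ both hold.

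Next I transfer these Avalanche Principle hypotheses from the reference parameters $(\omega_{0},z_{0},x_{0})$ to any perturbation $(\omega,z,x)$ satisfying \eqref{lem3.6-(2)}. The point is that each block $B_{j}$ and each product $B_{j+1}B_{j}$ has length at most $2l$, and with $l|\mathrm{Im}\,\omega|\le l\exp(-(\log n)^{4A}) < h$ for $n$ large, Lemma \ref{Lemma3.5} yields
\[
\bigl|\log\|B_{j}(\omega,z;x)\|-\log\|B_{j}(\omega_{0},z_{0};x_{0})\|\bigr|
\le \Bigl(\tfrac{4}{\sqrt{1-\|\alpha\|_{h}^{2}}}\Bigr)^{2l}\exp\!\bigl(-(\log n)^{4A}\bigr)
\le \exp\!\bigl(-\tfrac{1}{2}(\log n)^{4A}\bigr),
\]
which is $o(1)$ and in particular much smaller than $\tfrac{1}{4}\log\mu$. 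So AP applies for both $M_{n}(\omega_{0},z_{0};x_{0})$ and $M_{n}(\omega,z;x)$, and I may write
\[
\log\|M_{n}\| = \sum_{j=1}^{m-1}\log\|B_{j+1}B_{j}\|-\sum_{j=2}^{m-1}\log\|B_{j}\| + O(n/\mu)
\]
for each choice of parameters, with the $O(n/\mu)$ term bounded by $Cn\exp(-l\gamma/2)$.

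Subtracting the two AP expansions and bounding each of the at most $3m$ difference terms by the Lemma \ref{Lemma3.5} estimate above, together with the AP remainder, yields
\[
\bigl|\log\|M_{n}(\omega,z;x)\|-\log\|M_{n}(\omega_{0},z_{0};x_{0})\|\bigr|
\le 3m\exp\!\bigl(-\tfrac{1}{2}(\log n)^{4A}\bigr)+Cn\exp(-l\gamma/2).
\]
Since $A\ge 1/\sigma>1$, choosing $C_{1}$ so that $C_{1}\gamma/2 > A+1$ makes both terms dominated by $\exp(-(\log n)^{A})$ once $n\ge N_{0}$, giving \eqref{lem3.6-(1)}.

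The main obstacle is the balancing act between the three competing scales: $l$ must be large enough that the union $\mathcal{B}_{n,\omega_{0},z_{0}}$ of LDT exceptional sets satisfies the desired measure bound $\exp(-C_{0}(\log n)^{\sigma A})$, large enough that the AP remainder $n/\mu$ is negligible compared with $\exp(-(\log n)^{A})$, yet small enough that the block-level Lipschitz constant $\exp(Cl)$ does not overwhelm the perturbation size $\exp(-(\log n)^{4A})$. The exponents $\sigma A\ge 1$ and the gap between $(\log n)^{A}$ and $(\log n)^{4A}$ in the hypothesis are precisely tuned so that these three requirements can be met simultaneously by the single choice $l\asymp(\log n)^{A}$.
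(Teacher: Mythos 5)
Your proposal follows essentially the same route as the paper's proof: a block decomposition at polylogarithmic scale, LDT together with Lemma \ref{Lemma3.4} to build the exceptional set at the reference parameters $(\omega_0,z_0,x_0)$, Lemma \ref{Lemma3.5} applied at block scale to transfer the Avalanche Principle hypotheses to the perturbed parameters, and subtraction of the two AP expansions (the paper takes $l\simeq(\log n)^{A+1}$ instead of your $l\asymp C_1(\log n)^{A}$, which is immaterial). The argument is correct; the only bookkeeping you elide is that the phase of the $j$-th block drifts by up to $(j-1)l\,|\omega-\omega_0|\lesssim n\exp(-(\log n)^{4A})$, which merely inserts a harmless factor $\lesssim n$ into your block-level Lipschitz bound and does not affect the final estimate.
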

\begin{proof}
Let $l\simeq (\log n)^{A+1}$, $m=[n/l]$, $B_{j}=T_{l}(\omega,z;x+(j-1)l\omega)$ be as in the proof of Lemma \ref{Lemma3.4}. For convenience, we also assume that $n=ml$.

Take $\mathcal{B}_{n,\omega_{0},z_{0}}\subset \mathbb{T}^{d}$ be the set in \eqref{LDT1}. By LDT and Lemma \ref{Lemma3.4}, we get that
\begin{equation*}
\min_{1\leq j\leq m}\|B_{j}(\omega_{0},z_{0};x_{0})\|\geq \exp(lL_{l}(\omega_{0},z_{0})-l^{1-\tau})\geq \exp(l\gamma/2)>m,
\end{equation*}
\begin{equation*}
\max_{1\leq j< m}(\log\|B_{j+1}(\omega_{0},z_{0};x_{0})\|+ \log\|B_{j}(\omega_{0},z_{0};x_{0})\|-\log\|B_{j+1}(\omega_{0},z_{0};x_{0})B_{j}(\omega_{0},z_{0};x_{0})\|)\leq \frac{l\gamma}{16}
\end{equation*}
for all $x_{0}$ outside $\mathcal{B}_{n,\omega_{0},z_{0}}$ with
\begin{equation*}
\mathrm{mes}(\mathcal{B}_{n,\omega_{0},z_{0}})\lesssim m\exp(-C_{0}l^{\sigma})<\exp(-C_{0}(\log n)^{\sigma A}),
\end{equation*}
provided that $\sigma A\geq 1$. Take $x_{0}\in \mathbb{T}^{d}\backslash\mathcal{B}_{n,\omega_{0},z_{0}} $. Applying (AP) with $\mu=\exp(l\gamma/2)$, we have
\begin{align}\label{lem3.6-(3)}
&\Big|\log\|M_{m}(\omega_{0},z_{0};x_{0})\|+\sum_{j=2}^{m-1}\log\|B_{j}(\omega_{0},z_{0};x_{0})\|-
\sum_{j=1}^{m-1}\log\|B_{j+1}(\omega_{0},z_{0};x_{0})B_{j}(\omega_{0},z_{0};x_{0})\|\Big|\nonumber\\
&<Cn\exp(-l\gamma/2).
\end{align}
Take $\omega,z,x$ satisfying \eqref{lem3.6-(2)}. From Lemma \ref{Lemma3.5}, we know that
\begin{equation*}
\min_{1\leq j\leq m}\|B_{j}(\omega,z;x)\|\geq \exp(l\gamma/4)>m
\end{equation*}
and
\begin{equation*}
\max_{1\leq j< m}(\log\|B_{j+1}(\omega,z;x)\|+\log\|B_{j}(\omega,z;x)\|-\log\|B_{j+1}(\omega,z;x)B_{j}(\omega,z;x)\|)\leq\frac{l\gamma}{8}.
\end{equation*}
Now applying (AP) again with $\mu=\exp(l\gamma/4)$, we have
\begin{align}\label{lem3.6-(4)}
&\Big|\log\|M_{m}(\omega,z;x)\|+\sum_{j=2}^{m-1}\log\|B_{j}(\omega,z;x)\|-
\sum_{j=1}^{m-1}\log\|B_{j+1}(\omega,z;x)B_{j}(\omega,z;x)\|\Big|\nonumber\\
&<Cn\exp(-l\gamma/4).
\end{align}
Subtracting \eqref{lem3.6-(3)} from \eqref{lem3.6-(4)} and applying Lemma \ref{Lemma3.5}, we get
\begin{align*}
\big|\log\|M_{m}(\omega,z;x)\|-\log\|M_{m}(\omega_{0},z_{0};x_{0})\|\big|
&\leq \Big(\frac{4}{\sqrt{1-\|\alpha\|}_{h}}\Big)^{n}(|\omega-\omega_{0}|+|z-z_{0}|+|x-x_{0}|)\\
&\leq 3\Big(\frac{4}{\sqrt{1-\|\alpha\|}_{h}}\Big)^{n}\exp(-(\log n)^{4A})\\
&<\exp(-(\log n)^{A}).
\end{align*}
\end{proof}

Now we study continuity from the transfer matrix level to the Lyapunov exponent level.

\begin{lemma}\label{Lemma3.7}
Suppose $\omega_{0}\in \mathbb{T}^{d}(p,q)$, $z_{0}\in \partial \mathbb{D}$, and $L(\omega_{0},z_{0})>\gamma>0$. Let $\sigma$ be as in LDT and $A$ be a constant satisfying $\sigma A\geq1$. Then for all $n\geq N_{0}(p,q,\gamma,A)$,
\begin{equation*}
|L_{n}(\omega,z)-L_{n}(\omega_{0},z_{0})|<\exp(-(\log n)^{\sigma A})
\end{equation*}
provided that $|\omega-\omega_{0}|$, $|z-z_{0}|<\exp(-(\log n)^{4A})$.
\end{lemma}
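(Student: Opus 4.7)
The strategy is to integrate the pointwise comparison of Lemma \ref{Lemma3.6} over the phase space $\mathbb{T}^{d}$, using the smallness of the exceptional set from that lemma together with the a priori polynomial bound $0 \leq \log\|M_{n}\| \leq Cn$ from \eqref{log-Mn} to control the integrand on the complement.

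First I would write, using the definition of $L_{n}$,
\[
L_{n}(\omega,z) - L_{n}(\omega_{0},z_{0}) = \frac{1}{n}\int_{\mathbb{T}^{d}} \Bigl[\log\|M_{n}(\omega,z;x)\| - \log\|M_{n}(\omega_{0},z_{0};x)\|\Bigr]\,dx,
\]
so that the problem reduces to pointwise estimates on the integrand. Let $\mathcal{B} := \mathcal{B}_{n,\omega_{0},z_{0}} \subset \mathbb{T}^{d}$ be the exceptional set furnished by Lemma \ref{Lemma3.6}, which satisfies $\mathrm{mes}(\mathcal{B}) < \exp(-C_{0}(\log n)^{\sigma A})$. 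For $x \in \mathbb{T}^{d}\setminus \mathcal{B}$, I would apply Lemma \ref{Lemma3.6} with the choice $x_{0}=x$; then $|x-x_{0}|=0 < \exp(-(\log n)^{4A})$ holds trivially, while $|\omega-\omega_{0}|, |z-z_{0}| < \exp(-(\log n)^{4A})$ is precisely the hypothesis of the present lemma. This produces
\[
\bigl|\log\|M_{n}(\omega,z;x)\| - \log\|M_{n}(\omega_{0},z_{0};x)\|\bigr| < \exp(-(\log n)^{A}).
\]
On $\mathcal{B}$ the trivial bound from \eqref{log-Mn} gives a difference of at most $Cn$.

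Combining these two regimes,
\[
|L_{n}(\omega,z) - L_{n}(\omega_{0},z_{0})| \leq \frac{1}{n}\exp(-(\log n)^{A}) + \frac{1}{n}\cdot Cn \cdot \exp(-C_{0}(\log n)^{\sigma A}),
\]
which simplifies to $\exp(-(\log n)^{A}) + C\exp(-C_{0}(\log n)^{\sigma A})$. Since $\sigma \in (0,1)$ (from LDT) we have $(\log n)^{A} \geq (\log n)^{\sigma A}$ for $n$ large, so the first term is dominated by the second; the constants $C$ and $C_{0}$ are then absorbed into the threshold $n \geq N_{0}(p,q,\gamma,A)$ by, for instance, replacing $A$ by a slightly larger constant so that $C\exp(-C_{0}(\log n)^{\sigma A}) < \exp(-(\log n)^{\sigma A})$ eventually. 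This yields the stated bound.

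\textbf{Main obstacle.} There is no serious obstacle; the argument is a routine "integrate the pointwise bound, control the bad set trivially" step. The only technical point is the requirement $\sigma A \geq 1$, which guarantees that the measure bound $\exp(-C_{0}(\log n)^{\sigma A})$ beats the polynomial factor $n$ from \eqref{log-Mn} and hence allows the bad set to be discarded; but this is precisely the standing hypothesis inherited from Lemma \ref{Lemma3.6}.
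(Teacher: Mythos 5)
Your proposal is correct and follows essentially the same route as the paper: write $L_{n}(\omega,z)-L_{n}(\omega_{0},z_{0})$ as $\frac{1}{n}$ times the integral of the difference of the log-norms and apply Lemma \ref{Lemma3.6} pointwise (with $x_{0}=x$). In fact you are more explicit than the paper, which silently discards the exceptional set $\mathcal{B}_{n,\omega_{0},z_{0}}$; your splitting into the good set (bound $\exp(-(\log n)^{A})$) and the bad set (trivial bound $Cn$ times measure $\exp(-C_{0}(\log n)^{\sigma A})$), with the constants absorbed into the threshold $N_{0}$, is exactly the intended argument.
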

\begin{proof}
Due to
\begin{align*}
|L_{n}(\omega,z)-L_{n}(\omega_{0},z_{0})|&=\Big|\frac{1}{n} \Big(\int_{\mathbb{T}^{d}}\log\|M_{n}(\omega,z;x)\|dx-\int_{\mathbb{T}^{d}}\log\|M_{n}(\omega_{0},z_{0};x_{0})\|dx\Big)\Big|\\
&\leq \frac{1}{n}\int_{\mathbb{T}^{d}}\Big|\log\|M_{n}(\omega,z;x)\|-\log\|M_{n}(\omega_{0},z_{0};x_{0})\|\Big| dx,
\end{align*}
then according to Lemma \ref{Lemma3.6} we have
\begin{equation*}
|L_{n}(\omega,z)-L_{n}(\omega_{0},z_{0})|<\exp(-(\log n)^{\sigma A}).
\end{equation*}
\end{proof}

Now we focus on the local continuity of the Lyapunov exponent with respect to frequency and spectral parameters.

\begin{lemma}\label{Lemma3.8}
Assume $\omega\in\mathbb{T}^{d}(p,q)$, $z\in \partial \mathbb{D}$, and $L(\omega,z)>\gamma>0$. Let $\sigma$ be as in LDT. There exists $\varepsilon_{0}=\varepsilon_{0}(p,q,z_{0},\gamma)$ such that if $|z-z_{0}|<\varepsilon_{0}$, then $L(\omega_{0},z)>\frac{\gamma}{2}$ and
\begin{equation*}
|L(\omega_{0},z)-L(\omega_{0},z_{0})|\leq \exp(\frac{1}{2}(-\log|z-z_{0}|)^{\sigma/4}).
\end{equation*}
Furthermore, if $\omega\in \mathbb{T}^{d}(p,q)\cap (\omega_{0}-\varepsilon_{0},\omega_{0}+\varepsilon_{0})$, then $L(\omega,z_{0})>\frac{\gamma}{2}$ and
\begin{equation*}
|L(\omega,z_{0})-L(\omega_{0},z_{0})|\leq \exp(\frac{1}{2}(-\log|\omega-\omega_{0}|)^{\sigma/4}).
\end{equation*}
\end{lemma}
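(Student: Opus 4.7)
The strategy is to interpolate between $L(\omega_{0},z)$ and $L(\omega_{0},z_{0})$ through the finite-$n$ quantity $L_{n}(\omega_{0},\cdot)$, choosing the scale $n=n(\delta)$ optimally in terms of $\delta:=|z-z_{0}|$. Setting $A=1/\sigma$ (the smallest value permitted by Lemma \ref{Lemma3.7}) and picking $n\simeq\exp\bigl((-\log\delta)^{\sigma/4}\bigr)$ equates the two relevant error rates: the one from Lemma \ref{Lemma3.7} becomes $\exp(-(\log n)^{\sigma A})=1/n$, and the one from Lemma \ref{Lemma3.4} is $C(\log n)^{1/\sigma}/n$, each of order $\exp\bigl(-(-\log\delta)^{\sigma/4}\bigr)$ up to a sub-exponential factor, which is absorbed into the $\tfrac{1}{2}$ in the exponent of the target bound.

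For the upper direction, $L(\omega_{0},z)\le L_{n}(\omega_{0},z)$ follows from subadditivity of $nL_{n}$; chaining this with Lemma \ref{Lemma3.7} (at fixed $\omega=\omega_{0}$, transferring $z\mapsto z_{0}$) and Lemma \ref{Lemma3.4} applied at $(\omega_{0},z_{0})$, where the hypothesis $L>\gamma$ is given, produces
$$
L(\omega_{0},z)-L(\omega_{0},z_{0})\le \exp\bigl(-(\log n)^{\sigma A}\bigr)+C(\log n)^{1/\sigma}/n\le \exp\bigl(-\tfrac{1}{2}(-\log\delta)^{\sigma/4}\bigr)
$$
for $\delta$ smaller than some $\varepsilon_{0}=\varepsilon_{0}(p,q,z_{0},\gamma)$.

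The reverse inequality is the main obstacle, because $L\le L_{n}$ goes only one way. The plan is a bootstrap: Lemma \ref{Lemma3.7} applied to the known $L_{n}(\omega_{0},z_{0})\ge L(\omega_{0},z_{0})>\gamma$ first yields $L_{n}(\omega_{0},z)>\gamma/2$ at our scale $n$, provided $\varepsilon_{0}$ is small. With this a priori positivity at scale $n$, I would rerun the avalanche-principle scheme from the proof of Lemma \ref{Lemma3.4} directly at the perturbed point $(\omega_{0},z)$: Theorem \ref{LDT-matrix} combined with (AP) gives $|L_{2n}(\omega_{0},z)-L_{n}(\omega_{0},z)|<C(\log n)^{1/\sigma}/n$, and iterating along the doubling scales $n,2n,4n,\ldots$ and summing the telescoping differences produces $|L(\omega_{0},z)-L_{n}(\omega_{0},z)|<C(\log n)^{1/\sigma}/n$. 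The inner scale required by (AP) is only polylogarithmic in the outer scale, so the hypothesis $|z-z_{0}|<\exp(-(\log l)^{4A})$ of Lemma \ref{Lemma3.7} remains satisfied throughout the iteration once $\varepsilon_{0}$ is chosen small enough. Combining with the upper direction yields the desired two-sided bound and the positivity $L(\omega_{0},z)>\gamma/2$. The $\omega$-perturbation is handled in exactly the same way, since Lemma \ref{Lemma3.7} is symmetric in $\omega$ and $z$; the main difficulty remains the bootstrapping of positivity at the perturbed parameter, which is required before Lemma \ref{Lemma3.4} can be invoked there.
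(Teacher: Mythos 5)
Your main chain is exactly the paper's: fix $\delta=|z-z_{0}|$, choose $n$ with $\exp(-(\log(n+1))^{4/\sigma})\le\delta\le\exp(-(\log n)^{4/\sigma})$, and combine Lemma \ref{Lemma3.7} (with $A=1/\sigma$) with the convergence rate of Lemma \ref{Lemma3.4}; the paper's own proof is essentially this one-line combination with the same choice of scale. You additionally isolate, correctly, the point the paper is silent about: Lemma \ref{Lemma3.4} cannot be invoked at the perturbed point $(\omega_{0},z)$ before positivity of the Lyapunov exponent there is known, so the direction $L(\omega_{0},z_{0})-L(\omega_{0},z)\le\cdots$ requires an extra argument, and a bootstrap is indeed the standard way to supply it.

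However, the justification you give for that bootstrap has a gap. You claim that along the doubling scales $n,2n,4n,\ldots$ ``the hypothesis $|z-z_{0}|<\exp(-(\log l)^{4A})$ of Lemma \ref{Lemma3.7} remains satisfied throughout the iteration once $\varepsilon_{0}$ is chosen small enough.'' It does not: the inner scale attached to an outer scale $m$ in the proof of Lemma \ref{Lemma3.4} is $l(m)\asymp(\log m)^{1/\sigma}$, which grows without bound along the iteration, and with $A=1/\sigma$ the hypothesis of Lemma \ref{Lemma3.7} at inner scale $l$ amounts to $(\log l)^{4/\sigma}<-\log\delta=(\log n)^{4/\sigma}$, i.e.\ $l<n$. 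Hence the transfer of positivity from $z_{0}$ to $z$ via Lemma \ref{Lemma3.7} is available only while $m\lesssim\exp(cn^{\sigma})$; beyond that it fails for the fixed $\delta$, and shrinking $\varepsilon_{0}$ cannot help, since $\delta$ is already tied to $n$. Because $L(\omega_{0},z)=\lim_{m}L_{m}(\omega_{0},z)$, the telescoping must reach arbitrarily large scales, so the argument as written does not close. The repair is the usual self-referential bootstrap: the first stage already yields $L_{m}(\omega_{0},z)\ge\gamma/2-O((\log n)^{1/\sigma}/n)$ for all $m\le\exp(cn^{\sigma})$, i.e.\ finite-scale positivity at the perturbed point itself on a huge range; from then on draw the inner scales from this already-controlled range (no further comparison with $z_{0}$ is needed) and iterate, the per-stage losses $C(\log l)^{1/\sigma}/l$ at super-exponentially growing $l$ being summable and dominated by the first-stage loss. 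A cosmetic point in the same vein: Theorem \ref{LDT-matrix} is stated under $L(\omega,z)>\gamma$, so citing it at $(\omega_{0},z)$ is formally circular; its proof shows the deviation estimate needs no positivity, but this should be said rather than assumed.
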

\begin{proof}
Combining Lemma \ref{Lemma3.4} and Lemma \ref{Lemma3.7}, we have that
\begin{align*}
|L(\omega_{0},z)-L(\omega_{0},z_{0})|&<C\frac{(\log n)^{1/\sigma}}{n}\leq \exp(-\frac{1}{2}\log(n+1))\\
&\leq\exp(\frac{1}{2}(-\log|z-z_{0}|)^{\sigma/4})<\frac{\gamma}{2},
\end{align*}
provided
\begin{equation*}
\exp(-(\log(n+1))^{4/\sigma})\leq |z-z_{0}|\leq \exp(-(\log n)^{4/\sigma})
\end{equation*}
for $n\geq n_{0}(p,q,z_{0},\gamma)$. Then we get the first statement by taking $\varepsilon_{0}=\exp(-(\log n_{0})4/\sigma)$. Similarly, we can obtain the second conclusion.
\end{proof}

We further extend to the case of perturbations in the imaginary part of the phase with the help of the integral properties of subharmonic functions.

\begin{lemma}\label{Lemma-GAFA}\cite[Lemma 4.1]{GS08-GAFA}
Let $0<\rho<1$ and suppose $f$ is subharmonic on
\begin{equation*}
\mathcal{A}_{\rho}:=\{x+iy:x\in \mathbb{T}, |y|<\rho\},
\end{equation*}
such that $\sup\limits_{\mathcal{A}_{\rho}}f \leq 1$ and $\int_{\mathbb{T}}f(x)dx\geq 0$. Then for any $y$, $y'$ so that $-\frac{\rho}{2}<y,y'<\frac{\rho}{2}$ one has
\begin{equation*}
\Big|\int_{\mathbb{T}}f(x+iy)dx-\int_{\mathbb{T}}f(x+iy')dx\Big|\leq C_{\rho}|y-y'|.
\end{equation*}
\end{lemma}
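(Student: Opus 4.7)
The plan is to reduce the inequality to a one-dimensional Lipschitz estimate for the function
\[
g(y) \;:=\; \int_{\mathbb{T}} f(x+iy)\, dx, \qquad y\in(-\rho,\rho).
\]
I will first show that $g$ is convex, then combine $\sup_{\mathcal{A}_\rho} f\le 1$ and $g(0)\ge 0$ with convexity to bound $g$ and its one-sided derivatives uniformly on $[-\rho/2,\rho/2]$; the desired inequality then follows by integrating the derivative bound.

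\emph{Convexity of $g$.} The key observation is that once $\Delta f$ is integrated over a full $\mathbb{T}$-period in $x$, only the $\partial_y^2$ piece survives, so subharmonicity of $f$ forces convexity in $y$ of the $x$-average. To make this rigorous for a possibly nonsmooth $f$, I would mollify: choose a nonnegative radial mollifier $\phi_\epsilon\in C_c^\infty(\mathbb{R}^2)$ supported in $\{|z|\le\epsilon\}$ with $\int\phi_\epsilon=1$. Then $f_\epsilon := f\ast\phi_\epsilon$ is smooth and subharmonic on $\mathcal{A}_{\rho-\epsilon}$, and $f_\epsilon\downarrow f$ pointwise by the submean-value property. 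Writing $g_\epsilon(y) := \int_{\mathbb{T}} f_\epsilon(x+iy)\,dx$, periodicity in $x$ yields
\[
g_\epsilon''(y) \;=\; \int_{\mathbb{T}}\partial_y^2 f_\epsilon(x+iy)\,dx \;=\; \int_{\mathbb{T}}\Delta f_\epsilon(x+iy)\,dx \;\ge\; 0,
\]
so $g_\epsilon$ is convex. Decreasing passage to the limit $\epsilon\downarrow 0$ gives convexity of $g$ on $(-\rho,\rho)$.

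\emph{Bounds and Lipschitz constant.} The assumption $\sup f\le 1$ gives $g\le 1$ on $(-\rho,\rho)$. For $y\in[-3\rho/4,3\rho/4]\setminus\{0\}$, pick $y^{\ast} := -\tfrac{7\rho}{8}\sgn(y)$ and write $0$ as a convex combination $0=\lambda y+(1-\lambda)y^{\ast}$; convexity together with $g(0)\ge 0$ and $g(y^{\ast})\le 1$ then force $g(y)\ge -C_\rho$ for a constant depending only on $\rho$. With $|g|\le C_\rho$ on $[-3\rho/4,3\rho/4]$ and $g$ convex, the standard monotonicity of secant slopes gives
\[
\frac{g(y)-g(-3\rho/4)}{y+3\rho/4} \;\le\; g'_{\pm}(y) \;\le\; \frac{g(3\rho/4)-g(y)}{3\rho/4-y}
\]
for every $y\in[-\rho/2,\rho/2]$, so $|g'_\pm(y)|\le C'_\rho$ on this interval. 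Integrating this derivative bound produces $|g(y)-g(y')|\le C'_\rho|y-y'|$, which is exactly the statement of the lemma.

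The main obstacle is entirely technical and concentrated in the convexity step: the mollification and decreasing passage to the limit must be handled carefully because $f$ is a priori only upper semicontinuous and possibly unbounded below, so one has to argue via monotone convergence (in its decreasing version, using $f_\epsilon\le \|f\|_\infty$ on compact subsets) rather than dominated convergence. Once convexity is secured, the remainder is standard one-variable convex analysis, and the Lipschitz constant $C'_\rho$ depends only on $\rho$, as required.
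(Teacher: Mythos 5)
Your proof is correct, but note that the paper itself gives no argument for this lemma at all: it is imported verbatim as \cite[Lemma 4.1]{GS08-GAFA}, and the proof in that source runs along different lines. Goldstein--Schlag argue through the Riesz representation of a subharmonic function on the strip, $f=\int\log|z-\zeta|\,d\mu(\zeta)+h$, where the hypotheses $\sup f\le 1$ and $\int_{\mathbb{T}}f\,dx\ge 0$ are used to bound the total mass of $\mu$ and $\|h\|_{\infty}$; the Lipschitz estimate then follows from the explicit Jensen-type formula for $\int_{\mathbb{T}}\log|x+iy-\zeta|\,dx$ (a function of $y$ that is uniformly Lipschitz in $y$, with the harmonic part contributing an affine function of $y$). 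Your route replaces all of this by the classical observation that the $x$-average $g(y)=\int_{\mathbb{T}}f(x+iy)\,dx$ of a subharmonic function periodic in $x$ is convex in $y$ (the Hadamard three-lines mechanism), after which the hypotheses $g\le 1$, $g(0)\ge 0$ plus elementary convex analysis on the slightly larger interval $[-3\rho/4,3\rho/4]$ give a two-sided bound on $g$ and hence a bound $|g'_{\pm}|\lesssim \rho^{-1}$ on $[-\rho/2,\rho/2]$. The technical points you flag are handled correctly: $f\not\equiv-\infty$ (because $g(0)\ge 0$) gives local integrability, radial mollification yields smooth subharmonic $f_\epsilon\downarrow f$, the decreasing monotone convergence theorem applies since everything is bounded above by $1$, and a decreasing limit of convex functions is convex in the extended sense, which is all your final step needs. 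What your approach buys is a shorter, self-contained argument that avoids the Riesz decomposition and mass estimates entirely; what it does not give (and what the representation-theoretic proof is set up to provide in \cite{GS08-GAFA}) is the quantitative control of $\mu$ and $h$ that the source reuses elsewhere — indeed the present paper itself invokes exactly that Riesz representation with mass bounds in the proof of Lemma \ref{Lemma3.20}, so the citation route is not redundant in context.
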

\begin{coro}\label{Coro3.10}
Let $\omega\in \mathbb{T}^{d}$, $z\in \partial \mathbb{D}$, and $L(\omega,z)>\gamma>0$. There exists $C=C(z)$ such that
\begin{equation*}
|L_{n}(\omega,z;y)-L_{n}(\omega,z)|\leq C\sum_{i=1}^{d}|y_{i}|
\end{equation*}
for all $|y|<h$ uniformly in $n$. Particularly, the bound holds equally true with $L$ in place of $L_{n}$.
\end{coro}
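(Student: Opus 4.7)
The plan is to reduce to the one-dimensional estimate in Lemma~\ref{Lemma-GAFA} by telescoping in the $d$ coordinates of $y$ and invoking Fubini. The crucial preparatory fact is that $u_n(\omega,z;w)\geq 0$ on the whole complex strip $\mathbb{T}^d_h$: each factor $M(\omega,z;w)$ has determinant one (the numerator $1-\alpha(w)\bar\alpha(w)$ cancels $\rho(w)^2$ under analytic continuation with $\bar\alpha(w)=\overline{\alpha(\bar w)}$), hence $\det M_n\equiv 1$, and for a $2\times 2$ matrix of unit determinant the singular values satisfy $\sigma_1\sigma_2=1$, forcing $\|M_n\|=\sigma_1\geq 1$. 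The proof of Lemma~\ref{Lemma3.5} supplies the matching upper bound $u_n\leq C_\alpha$ on the strip, and $u_n$ is subharmonic in each variable separately.

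Setting $y^{(0)}=0$ and $y^{(k)}=(y_1,\ldots,y_k,0,\ldots,0)$, I would write
\begin{equation*}
L_n(\omega,z;y)-L_n(\omega,z)=\sum_{k=1}^{d}\bigl[L_n(\omega,z;y^{(k)})-L_n(\omega,z;y^{(k-1)})\bigr]
\end{equation*}
and treat each summand separately. For each $k$, after freezing the remaining real coordinates $x_{-k}=(x_1,\ldots,x_{k-1},x_{k+1},\ldots,x_d)$, the slice
\begin{equation*}
f_{x_{-k}}(w)=u_n(\omega,z;x_1+iy_1,\ldots,x_{k-1}+iy_{k-1},w,x_{k+1},\ldots,x_d)
\end{equation*}
is subharmonic in $w$ on $\mathcal{A}_h$, bounded above by $C_\alpha$, and has non-negative mean $\int_\mathbb{T} f_{x_{-k}}(x_k)\,dx_k\geq 0$ on account of the pointwise non-negativity of $u_n$. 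Lemma~\ref{Lemma-GAFA} applied to $f_{x_{-k}}/C_\alpha$ on some strip $\mathcal{A}_\rho$ with $\rho<\min(h,1)$ then yields, for $|y_k|<\rho/2$, a Lipschitz bound $C_\rho C_\alpha |y_k|$ on the $x_k$-integral which is uniform in $x_{-k}$. Integrating over $x_{-k}\in\mathbb{T}^{d-1}$ by Fubini preserves the bound, and summing over $k$ produces $|L_n(\omega,z;y)-L_n(\omega,z)|\leq C\sum_i|y_i|$. For the boundary regime $|y_k|\geq\rho/2$ the trivial estimate $2C_\alpha\leq(4C_\alpha/\rho)|y_k|$ suffices; absorbing both regimes into $C=\max(C_\rho C_\alpha,4C_\alpha/\rho)$ finishes the $L_n$ version.

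For the analog with $L$, the sequence $\{nL_n(\omega,z;y)\}$ is subadditive in $n$ for every fixed $|y|<h$ (by \eqref{log-subadditive} evaluated at shifted arguments, together with translation invariance of Lebesgue measure on $\mathbb{T}^d$), so Fekete's lemma delivers $L(\omega,z;y):=\lim_n L_n(\omega,z;y)$, and the $n$-free Lipschitz bound passes to the limit. The genuinely delicate step in the whole argument will be the pointwise non-negativity of $u_n$ on the complex strip; without it, the mean-non-negativity hypothesis of Lemma~\ref{Lemma-GAFA} on a single slice would fail to be uniform in the frozen shifts $y_j$, $j\neq k$, and the resulting Lipschitz constant would inflate as more coordinates are turned on, defeating the telescoping.
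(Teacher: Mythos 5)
Your proposal is correct and follows essentially the same route as the paper: normalize $u_n$ by its uniform bound, use pointwise non-negativity of $u_n$ (from $\det M_n\equiv 1$, hence $\|M_n\|\geq 1$) to meet the mean hypothesis of Lemma~\ref{Lemma-GAFA}, and telescope coordinate by coordinate, applying that one-variable Lipschitz bound to each slice and integrating out the remaining variables via Fubini (the paper does exactly this for $d=1,2$ and invokes induction for general $d$). Your added details — the explicit verification of non-negativity on the complex strip, the treatment of $|y_k|\geq\rho/2$, and the Fekete argument for the $L$ version — only flesh out points the paper leaves implicit.
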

\begin{proof}
When $d=1$, let $f(x+iy)=\frac{1}{C(z)n}\log\|M_{n}(\omega,z;y)\|$. According to the above lemma,
\begin{align*}
&|L_{n}(\omega,z;y)-L_{n}(\omega,z)|\\
&=\Big|\int_{\mathbb{T}}\frac{1}{n} \log\|M_{n}(\omega,z;x+iy)\|dx-\int_{\mathbb{T}}\frac{1}{n} \log\|M_{n}(\omega,z;x)\|dx\Big|\\
&=C(z)\Big|\int_{\mathbb{T}}\frac{1}{C(z)n} \log\|M_{n}(\omega,z;x+iy)\|dx-\int_{\mathbb{T}}\frac{1}{C(z)n} \log\|M_{n}(\omega,z;x)\|dx\Big|\\
&\leq C(z)C_{h}|y|.
\end{align*}
Now we check the statement for $d=2$. Let
\begin{equation*}
f(x_{1}+iy_{1};y_{2}):=\int_{\mathbb{T}}\frac{1}{C(z)n} \log\|M_{n}(\omega,z;x_{1}+iy_{1},x_{2}+iy_{2})\|dx_{2}.
\end{equation*}
By Lemma \ref{Lemma-GAFA},
\begin{equation*}
|f(x_{1}+iy_{1};y_{2})-f(x_{1}+iy_{1};0)|\leq C|y_{2}|.
\end{equation*}
Fix $y_{2}$. $f(x_{1}+iy_{1};y_{2})$ is a subharmonic function. Lemma \ref{Lemma-GAFA} implies
\begin{equation*}
\Big|\int_{\mathbb{T}}f(x_{1}+iy_{1};y_{2})dx_{1}-\int_{\mathbb{T}}f(x_{1};y_{2})dx_{1}\Big|\leq C|y_{1}|.
\end{equation*}
Thus,
\begin{align*}
&|L_{n}(\omega,z;y)-L_{n}(\omega,z)|\\
&=\Big|\int_{\mathbb{T}^{2}}\frac{1}{n} \log\|M_{n}(\omega,z;x+iy)\|dx-\int_{\mathbb{T}^{2}}\frac{1}{n} \log\|M_{n}(\omega,z;x)\|dx\Big|\\
&=C(z)\Big|\int_{\mathbb{T}}\int_{\mathbb{T}}\frac{1}{C(z)n}\log\|M_{n}(\omega,z;x_{1}+iy_{1},x_{2}+iy_{2})\|dx_{2}dx_{1}\\
&\quad-\int_{\mathbb{T}}\int_{\mathbb{T}}\frac{1}{C(z)n}\log\|M_{n}(\omega,z;x_{1},x_{2})\|dx_{2}dx_{1}\Big|\\
&=C(z)\Big|\int_{\mathbb{T}}f(x_{1}+iy_{1};y_{2})dx_{1}-\int_{\mathbb{T}}f(x_{1};0)dx_{1}\Big|\\
&=C(z)\Big|\int_{\mathbb{T}}(f(x_{1}+iy_{1};y_{2})-f(x_{1}+iy_{1};0))dx_{1}+\int_{\mathbb{T}}(f(x_{1}+iy_{1};0)-f(x_{1};0))dx_{1}\Big|\\
&\leq C(z)\Big(\int_{\mathbb{T}}|f(x_{1}+iy_{1};y_{2})-f(x_{1}+iy_{1};0)|dx_{1}+\int_{\mathbb{T}}|f(x_{1}+iy_{1};0)-f(x_{1};0)|dx_{1}\Big)\\
&\leq C(|y_{1}+y_{2}|).
\end{align*}

For general $d$, the proof is similar with the aid of induction over $d$.
\end{proof}

The following lemma establishes the upper bound estimate of the logarithm of the transfer matrix norm.

\begin{lemma}\label{Lemma3.11}
Assume $\omega\in\mathbb{T}^{d}(p,q)$, $z\in \partial \mathbb{D}$, and $L(\omega,z)>\gamma>0$. Then for all $n\geq 1$,
\begin{equation}\label{Lem3.11-(1)}
\sup\limits_{x\in \mathbb{T}^{d}}\log\|M_{n}(\omega,z;x)\|\leq nL_{n}(\omega,z)+Cn^{1-\tau},
\end{equation}
where $C=C(p,q,z,\gamma)$ and $\tau$ as in LDT.
\end{lemma}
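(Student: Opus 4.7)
The plan is to derive the sup bound from the LDT (Theorem~\ref{LDT-matrix}) via the sub-mean-value property applied to the subharmonic function $u_n(\omega,z;x) = \frac{1}{n}\log\|M_n(\omega,z;x)\|$. Since the entries of $M_n$ are analytic in $x$ on $\mathbb{T}^d_h$, $u_n$ extends to a function subharmonic in each complex variable on $\mathbb{T}^d_h$ and is uniformly bounded above by the constant $C_\alpha$ from \eqref{C-alpha}.

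Fix $x_0 \in \mathbb{T}^d$ and $r \in (0, h/2)$ to be chosen later. Iterating the sub-mean-value inequality in each complex variable one at a time gives
\[
u_n(\omega,z;x_0) \leq \frac{1}{(\pi r^2)^d} \int_{\prod_{j=1}^{d} D(x_{0,j},\, r)} u_n(\omega,z;\zeta)\, dA(\zeta_1)\cdots dA(\zeta_d).
\]
Writing $\zeta_j = x_{0,j} + a_j + i y_j$ with $a_j^2 + y_j^2 < r^2$ and applying Fubini, one rewrites the right-hand side as an iterated integral over the imaginary heights $y \in (-r,r)^d$ and the corresponding real $a$-slices.

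For each fixed $y$, the function $x \mapsto u_n(\omega,z;x+iy)$ is again subharmonic and bounded by $C_\alpha$, with mean $L_n(\omega,z;y) := \int_{\mathbb{T}^d} u_n(\omega,z;x+iy)\, dx$. Applying Theorem~\ref{LDT-matrix} to this shifted function (the hypotheses of Lemma~\ref{Lemma3.1} are preserved under imaginary translation) and invoking Corollary~\ref{Coro3.10} to compare $L_n(\omega,z;y)$ with $L_n(\omega,z)$, one obtains
\[
\mathrm{mes}\bigl\{x \in \mathbb{T}^d : u_n(\omega,z;x+iy) > L_n(\omega,z) + Cr + n^{-\tau}\bigr\} < \exp(-C_0 n^\sigma).
\]
Splitting the inner $a$-integral into the good set (where this bound holds) and its exceptional complement (where $u_n \leq C_\alpha$), then integrating over $y$ and dividing by $(\pi r^2)^d$, yields
\[
u_n(\omega,z;x_0) \leq L_n(\omega,z) + Cr + n^{-\tau} + C r^{-d} \exp(-C_0 n^\sigma).
\]

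Choosing $r = n^{-\tau}$ makes the $Cr$ correction of order $n^{-\tau}$, and for $n \geq N_0(p,q,\gamma,h,d)$ the exponential error term is negligible; multiplying by $n$ gives the claimed bound $\sup_x \log\|M_n(\omega,z;x)\| \leq n L_n(\omega,z) + C n^{1-\tau}$. For $n$ small the bound follows trivially from \eqref{Ln-bound} by enlarging $C$. The main delicate point is the third paragraph: one has to verify that Theorem~\ref{LDT-matrix} applies uniformly on the imaginary-shifted slices $\mathbb{T}^d + iy$ (which is immediate since plurisubharmonicity, the sup bound, and the cocycle shift estimate \eqref{log-subadditive} are all translation-invariant in $y$), and that the Corollary~\ref{Coro3.10} deviation $C|y| \leq Cr$ is absorbed into the polynomial error $n^{-\tau}$ by the matched choice $r \sim n^{-\tau}$.
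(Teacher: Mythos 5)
Your proposal is correct and follows essentially the same route as the paper's proof: apply the LDT on imaginary-shifted slices, use Corollary \ref{Coro3.10} to replace $L_{n}(\omega,z;y)$ by $L_{n}(\omega,z)$, and then invoke the sub-mean value property of the plurisubharmonic function $\log\|M_{n}\|$ on a small polydisk, splitting the average into the good set and the exponentially small exceptional set. The only difference is your choice of radius $r=n^{-\tau}$ (absorbing the $C|y|$ correction into $Cn^{1-\tau}$) versus the paper's $r=1/n$ (which makes that correction $O(1)$), a cosmetic variation that does not affect the argument.
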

\begin{proof}
To check \eqref{Lem3.11-(1)}, we only need to consider the case that $n$ is large enough because for smaller $n$ we can take $C$ large enough.

According to LDT,
\begin{equation}\label{Lem3.11-(2)}
\mathrm{mes}\{x\in\mathbb{T}^{d}:|\log\|M_{n}(\omega,z;x+iy)\|-nL_{n}(\omega,z;y)|>n^{1-\tau}\}<\exp(-C_{0}n^{\sigma}).
\end{equation}
By Corollary \ref{Coro3.10}, we can get
\begin{equation}\label{Lem3.11-(3)}
\mathrm{mes}\{x\in\mathbb{T}^{d}:|\log\|M_{n}(\omega,z;x+iy)\|-nL_{n}(\omega,z)|>2n^{1-\tau}\}<\exp(-C_{0}n^{\sigma})
\end{equation}
provided $|y|<\frac{1}{n}$. With the help of the sub-mean value property applicable to subharmonic functions, we obtain that
\begin{equation}\label{Lem3.11-(4)}
\log\|M_{n}(\omega,z;x)\|\leq (\pi r^{2})^{-d}\int_{\mathcal{P}}\log \|M_{n}(\omega,z;\xi+iy)\|d\xi dy,
\end{equation}
where the polydisk $\mathcal{P}=\mathop{\prod}\limits_{j=1}^{d}D(x_{j},r)$, $x=(x_{1},\ldots,x_{d})$, $r=\frac{1}{n}$.\\
Let $\mathcal{A}_{y}\subset \mathbb{T}^{d}$ be the set in \eqref{Lem3.11-(3)}, i.e.,
\begin{equation*}
\mathcal{A}_{y}=\{x\in\mathbb{T}^{d}:|\log\|M_{n}(\omega,z;x+iy)\|-nL_{n}(\omega,z)|>2n^{1-\tau}\}.
\end{equation*}
Let $\mathcal{A}=\{(\xi,y)\in [0,1]^{d}\times (-r,r)^{d}:\xi\in \mathcal{A}_{y}\}$. From \eqref{Lem3.11-(3)}, we have
\begin{equation}\label{Lem3.11-(5)}
(\pi r^{2})^{-d}\int_{\mathcal{P}\backslash\mathcal{A}}\log \|M_{n}(\omega,z;\xi+iy)\|d\xi dy\leq nL_{n}(\omega,z)+2n^{1-\tau}.
\end{equation}
Due to \eqref{log-Mn},
\begin{small}
\begin{equation}\label{Lem3.11-(6)}
(\pi r^{2})^{-d}\int_{\mathcal{P}\cap\mathcal{A}}\log \|M_{n}(\omega,z;\xi+iy)\|d\xi dy\leq (\pi r^{2})^{-d}C(z)n\mathrm{mes}(\mathcal{A})<\exp(-C_{0}n^{\sigma}/2).
\end{equation}
\end{small}
Combining \eqref{Lem3.11-(4)}, \eqref{Lem3.11-(5)} and \eqref{Lem3.11-(6)}, we obtain the conclusion.
\end{proof}

Lemma \ref{Lemma3.11} establishes the global upper bound of the logarithm of the transfer matrix norm, and the following corollary extends this result to the case of small perturbations.

\begin{coro}\label{Coro3.12}
Suppose $\omega_{0}\in \mathbb{T}(p,q)$, $z_{0}\in \partial \mathbb{D}$ and $L(\omega_{0},z_{0})>\gamma>0$. Let $\sigma$, $\tau$ be as in LDT. Then for all $n\geq N_{0}(p,q,z_{0},\gamma)$ and $(\omega,z,y)\in \mathbb{C}^{d}\times \partial \mathbb{D}\times \mathbb{R}^{d}$ such that
\begin{equation*}
|y|<\frac{1}{n}, \quad |\omega-\omega_{0}|,\quad |z-z_{0}|<\exp(-(\log n)^{8/\sigma}),
\end{equation*}
we have
\begin{equation*}
\sup\limits_{x\in \mathbb{T}^{d}}\log\|M_{n}(\omega,z;x+iy)\|\leq nL_{n}(\omega_{0},z_{0})+Cn^{1-\tau},
\end{equation*}
where $C=C(p,q,z_{0},\gamma)$.
\end{coro}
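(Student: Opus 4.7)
The plan is to derive Corollary \ref{Coro3.12} by combining a complex-argument upgrade of Lemma \ref{Lemma3.11} with the $L_n$-stability estimate from Lemma \ref{Lemma3.7}. As a preliminary, since $L(\omega_0,z_0)>\gamma$ and $(\omega,z)$ lies in the ball of radius $\exp(-(\log n)^{8/\sigma})$ around $(\omega_0,z_0)$, Lemma \ref{Lemma3.8} ensures $L(\omega,z)>\gamma/2$ for all $n\geq N_0$, so every tool of Section 3 may be applied at $(\omega,z)$.

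The first step is to show that Lemma \ref{Lemma3.11} remains valid when the real phase $x$ is replaced by a complex phase $x+iy$ with $|y|<1/n$, in the form
\begin{equation*}
\sup_{x\in\mathbb{T}^d}\log\|M_n(\omega,z;x+iy)\|\leq nL_n(\omega,z)+Cn^{1-\tau}.
\end{equation*}
The argument is a direct transplant of the proof of Lemma \ref{Lemma3.11}: center the polydisk $\mathcal{P}=\prod_{j=1}^d D(x_j+iy_j,r)$, $r=1/n$, at the complex point $x+iy$ (which is still contained in $\mathbb{T}^d_h$ since $|y|+r\leq 2/n<h$ for $n$ large), and apply the sub-mean value property of $\zeta\mapsto\log\|M_n(\omega,z;\zeta)\|$ in each variable. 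For each fixed imaginary part $\eta$ with $|\eta-y|<r$, Theorem \ref{LDT-matrix} applied to $\xi\mapsto\log\|M_n(\omega,z;\xi+i\eta)\|$ yields a good set on which the integrand is at most $nL_n(\omega,z;\eta)+n^{1-\tau}$, whereas Corollary \ref{Coro3.10} gives $|L_n(\omega,z;\eta)-L_n(\omega,z)|\leq C|\eta|\leq 2C/n$, so the good-set contribution is $\leq nL_n(\omega,z)+n^{1-\tau}+O(1)$. The bad set has measure at most $\exp(-C_0 n^\sigma)$ and combines with the trivial bound $\log\|M_n\|\leq C(z)n$ from \eqref{log-Mn} to produce an exponentially small term, exactly as in \eqref{Lem3.11-(5)}--\eqref{Lem3.11-(6)}.

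The second step compares $L_n(\omega,z)$ with $L_n(\omega_0,z_0)$. Apply Lemma \ref{Lemma3.7} with the choice $A=2/\sigma$, which satisfies $\sigma A=2\geq 1$ and for which the hypothesis $|\omega-\omega_0|,|z-z_0|<\exp(-(\log n)^{4A})=\exp(-(\log n)^{8/\sigma})$ is precisely the assumption of Corollary \ref{Coro3.12}. The conclusion is $|L_n(\omega,z)-L_n(\omega_0,z_0)|<\exp(-(\log n)^{2})$, so that $n|L_n(\omega,z)-L_n(\omega_0,z_0)|<n\exp(-(\log n)^2)\leq n^{1-\tau}$ for $n\geq N_0(p,q,z_0,\gamma)$. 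Substituting into the display from the first step gives the claimed bound
\begin{equation*}
\sup_{x\in\mathbb{T}^d}\log\|M_n(\omega,z;x+iy)\|\leq nL_n(\omega_0,z_0)+Cn^{1-\tau}.
\end{equation*}

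The only nontrivial point is the complex-phase extension of Lemma \ref{Lemma3.11}, but this is essentially a re-run of that lemma's proof with the polydisk re-centered at $x+iy$; Corollary \ref{Coro3.10} is tailored to absorb the imaginary shift on the good part, and the parameter choice $A=2/\sigma$ in Lemma \ref{Lemma3.7} is forced by matching the perturbation scale $\exp(-(\log n)^{8/\sigma})$ in the hypothesis. Everything else is bookkeeping.
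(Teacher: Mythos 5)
Your second step (Lemma \ref{Lemma3.7} with $A=2/\sigma$, so that $4A=8/\sigma$ matches the hypothesis and $n\exp(-(\log n)^{2})\le n^{1-\tau}$) is sound, but your first step has a genuine gap. You apply Theorem \ref{LDT-matrix} (and, in the preliminary remark, Lemma \ref{Lemma3.8}) at the perturbed pair $(\omega,z)$, yet the corollary only assumes $\omega\in\mathbb{C}^{d}$ with $|\omega-\omega_{0}|<\exp(-(\log n)^{8/\sigma})$: such an $\omega$ is in general complex, and even when real it need not lie in $\mathbb{T}^{d}(p,q)$. Both Theorem \ref{LDT-matrix} and Lemma \ref{Lemma3.8} are stated only for real Diophantine frequencies with $L(\omega,z)>\gamma>0$, so neither the positivity of $L(\omega,z)$ nor the large deviation estimate at $(\omega,z)$ is available at the point where you invoke them; your re-run of the proof of Lemma \ref{Lemma3.11} therefore has no LDT input to feed into the good-set estimate. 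Nor can you recover an LDT at $\omega$ by perturbing from $\omega_{0}$ with Lemma \ref{Lemma3.5}: its Lipschitz constant $(4/\sqrt{1-\|\alpha\|_{h}})^{n}$ is exponentially large in $n$, while $|\omega-\omega_{0}|$ is only subexponentially small, so the error term diverges. This obstruction is exactly why the perturbation scale $\exp(-(\log n)^{8/\sigma})$ is delicate here.

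The paper's proof circumvents this by never using large deviations at $(\omega,z)$: with $A=2/\sigma$ it combines Lemma \ref{Lemma3.6} (whose admissible perturbation size $\exp(-(\log n)^{4A})$ is precisely $\exp(-(\log n)^{8/\sigma})$ and which allows complex $\omega$ and complex phase) with the LDT/upper bound at the unperturbed Diophantine pair $(\omega_{0},z_{0})$ (at imaginary shift $y$, as in \eqref{Lem3.11-(2)}), obtaining $\log\|M_{n}(\omega,z;x+iy)\|\le nL_{n}(\omega_{0},z_{0};y)+Cn^{1-\tau}$ for all $x$ outside a set $B_{y}$ of measure $<\exp(-(\log n)^{2})$; Corollary \ref{Coro3.10} replaces $L_{n}(\omega_{0},z_{0};y)$ by $L_{n}(\omega_{0},z_{0})$ at cost $C/n$, and the sub-mean-value averaging of the proof of Lemma \ref{Lemma3.11} then removes the exceptional set, since $n^{2d}\cdot C(z)n\cdot\exp(-(\log n)^{2})$ is negligible. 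So your averaging step and your use of Corollary \ref{Coro3.10} are the right bookkeeping, and Lemma \ref{Lemma3.7} is not even needed; what must change is that the large-deviation input has to be taken at $(\omega_{0},z_{0})$ and transferred to $(\omega,z)$ by the avalanche-principle comparison of Lemma \ref{Lemma3.6}, rather than asserted at $(\omega,z)$ itself.
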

\begin{proof}
Take $\omega, z, y$ satisfying the assumptions. Let $A=2/\sigma$. According to Lemma \ref{Lemma3.11} and Lemma \ref{Lemma3.6}, we obtain
\begin{equation*}
\log \|M_{n}(\omega,z;x+iy)\|\leq nL_{n}(\omega_{0},z_{0};y)+Cn^{1-\tau}
\end{equation*}
for all $x$ outside a set $B_{y}\in \mathbb{T}^{d}$ with $\mathrm{mes}(B_{y})<\exp(-(\log n)^{2})$.

Similar to the proof of Lemma \ref{Lemma3.11}, we can get the conclusion.
\end{proof}

Then we can further obtain the norm estimate of the difference between transfer matrices.

\begin{coro}\label{Coro3.13}
Assume $\omega_{0}\in \mathbb{T}(p,q)$, $z_{0}\in\partial \mathbb{D}$, and $L(\omega_{0},z_{0})>\gamma>0$. Let $\sigma, \tau$ be as in LDT. For all $n>N_{0}(p,q,z_{0},\gamma)$ and $(\omega_{i},z_{i},x_{i})\in \mathbb{C}^{d}\times \partial \mathbb{D}\times \mathbb{R}^{d}$ such that
\begin{equation*}
|\mathrm{Im} x_{i}|<\frac{1}{n},\, |\omega_{i}-\omega_{0}|,\, |z_{i}-z_{0}|<\exp(-(\log n)^{8/\sigma}),\,i=1,2,
\end{equation*}
we have
\begin{equation*}
\|M_{n}(\omega_{1},z_{1};x_{1})-M_{n}(\omega_{2},z_{2};x_{2})\|\leq (|\omega_{1}-\omega_{2}|+|z_{1}-z_{2}|+|x_{1}-x_{2}|)\exp(nL(\omega_{0},z_{0})+Cn^{1-\tau}),
\end{equation*}
where $C=C(p,q,z_{0},\gamma)$. Particularly,
\begin{small}
\begin{equation}\label{Coro3.12-(1)}
\big|\log\|M_{n}(\omega_{1},z_{1};x_{1})\|-\log \|M_{n}(\omega_{2},z_{2};x_{2})\|\big|\leq (|\omega_{1}-\omega_{2}|+|z_{1}-z_{2}|+|x_{1}-x_{2}|)\exp(nL(\omega_{0},z_{0})+Cn^{1-\tau}),
\end{equation}
\end{small}
provided the right-hand side of \eqref{Coro3.12-(1)} is less than $\frac{1}{2}$.
\end{coro}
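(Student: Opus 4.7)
The proof refines Lemma \ref{Lemma3.5} by replacing the crude bound $(4/\sqrt{1-\|\alpha\|_h^2})^n$ on individual matrix norms with the sharper LDT-based estimate from Corollary \ref{Coro3.12}. My starting point is the telescoping identity already derived in the proof of Lemma \ref{Lemma3.5}:
\begin{equation*}
M_n(\omega_1,z_1;x_1)-M_n(\omega_2,z_2;x_2)=\sum_{j=1}^{n} M_{n-j}(\omega_2,z_2;x_2+j\omega_2)\,(M_{1,j}-M_{2,j})\,M_{j-1}(\omega_1,z_1;x_1),
\end{equation*}
where $M_{k,j}$ denotes the single-step Szeg\H{o} cocycle at position $j$ with parameters $(\omega_k,z_k,x_k)$. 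The single-step differences are bounded, by the mean value theorem applied to the analytic function $\alpha$ on $\mathbb{T}^d_h$, by $\|M_{1,j}-M_{2,j}\|\leq C_h(|\omega_1-\omega_2|+|z_1-z_2|+|x_1-x_2|)$ with $C_h$ depending only on $h$ and $\|\alpha\|_h$.

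Next I would bound the two cofactors $M_{j-1}$ and $M_{n-j}$ using Corollary \ref{Coro3.12}. The crucial point is that the hypotheses of that corollary are preserved uniformly in the sublength $k \le n$: the imaginary parts of the arguments $x_i+j\omega_i$ remain bounded by $|\mathrm{Im}\,x_i|+n|\mathrm{Im}\,\omega_i|\leq \frac{1}{n}+n\exp(-(\log n)^{8/\sigma}) < \frac{1}{k}$, and the proximity thresholds satisfy $\exp(-(\log k)^{8/\sigma})\geq \exp(-(\log n)^{8/\sigma})$. Therefore, for every $k\leq n$ (with $k\geq N_0$),
\begin{equation*}
\|M_k(\omega_i,z_i;w)\|\leq \exp\bigl(kL_k(\omega_0,z_0)+Ck^{1-\tau}\bigr)\leq \exp\bigl(kL(\omega_0,z_0)+Ck^{1-\tau}\bigr),
\end{equation*}
where in the last step I absorb the difference $L_k-L \lesssim (\log k)^{1/\sigma}/k$ coming from Lemma \ref{Lemma3.4} into the $Ck^{1-\tau}$ error. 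For the finitely many initial/terminal terms with $k<N_0$, the crude bound from Lemma \ref{Lemma3.5} gives a uniform constant and contributes negligibly.

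Assembling the pieces, each of the $n$ summands in the telescoping sum is dominated by $C_h(|\omega_1-\omega_2|+|z_1-z_2|+|x_1-x_2|)\exp((n-1)L(\omega_0,z_0)+Cn^{1-\tau})$; summing yields an extra factor of $n$, which is absorbed into $\exp(Cn^{1-\tau})$ after enlarging $C$. This gives the norm estimate. The logarithmic version then follows by the same device used at the end of Lemma \ref{Lemma3.5}: since $\|M_n\|\geq 1$, the inequality $|\log x|\leq 2|x-1|$ (for $|x-1|\leq 1/2$) gives
\begin{equation*}
\bigl|\log\|M_n(\omega_1,z_1;x_1)\|-\log\|M_n(\omega_2,z_2;x_2)\|\bigr|\leq \frac{\|M_n(\omega_1,z_1;x_1)-M_n(\omega_2,z_2;x_2)\|}{\|M_n(\omega_2,z_2;x_2)\|}\leq \|M_n(\omega_1,z_1;x_1)-M_n(\omega_2,z_2;x_2)\|,
\end{equation*}
and the first inequality concludes. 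The only real technical point is verifying that Corollary \ref{Coro3.12} applies uniformly across all sublengths $k\leq n$; once that is in place, the remainder is a routine telescoping estimate.
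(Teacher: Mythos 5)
Your proposal is correct and follows essentially the same route as the paper, which simply says the conclusion follows "similar to the proof of Lemma \ref{Lemma3.5}, based on Corollary \ref{Coro3.12}": you carry out that telescoping argument with the LDT-based bound on the cofactors and the $|\log x|\leq 2|x-1|$ device, just in more detail. The only point the paper leaves implicit, and you handle adequately, is the uniform applicability of Corollary \ref{Coro3.12} over sublengths $k\leq n$.
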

\begin{proof}
Similar to the proof of Lemma \ref{Lemma3.5}, based on the Corollary \ref{Coro3.12}, the conclusion follows.
\end{proof}
\subsection{LDT Estimate for the Entries of the Monodromy}
Define the unitary matrices
\begin{equation*}
\Theta_{n}=
\left(
\begin{array}{cc}
\overline{\alpha}_{n}&\rho_{n}\\
\rho_{n}&-\alpha_{n}
\end{array}
\right).
\end{equation*}
Then it is valid to factorize the matrix $\mathcal{C}$ as follows,
\begin{equation*}
\mathcal{C}=\mathcal{L}_{+}\mathcal{M}_{+},
\end{equation*}
where
\begin{equation*}
\mathcal{L}_{+}=\left(
\begin{matrix}
\Theta_0 &~ & ~\\
~& \Theta_2 & ~\\
~ & ~& \ddots
\end{matrix}
\right),\quad
\mathcal{M}_{+}=\left(
\begin{matrix}
\mathbf{1} &~ & ~\\
~& \Theta_1 & ~\\
~ & ~& \ddots
\end{matrix}
\right).
\end{equation*}

Similarly, the extended CMV matrix can be written as
\begin{equation*}
\mathcal{E}=\mathcal{L}\mathcal{M},
\end{equation*}
where
\begin{equation*}
\mathcal{L}=\bigoplus_{j\in \mathbb{Z}}\Theta_{2j}, \quad \mathcal{M}=\bigoplus_{j\in \mathbb{Z}}\Theta_{2j+1}.
\end{equation*}

We let $\mathcal{E}_{[a,b]}$ denote the restriction of an extended CMV matrix to the finite interval $[a,b]$, defined by
\begin{equation*}
\mathcal{E}_{[a,b]}=P_{[a,b]}\mathcal{E}(P_{[a,b]})^{*},
\end{equation*}
where $P_{[a,b]}$ is the projection $\ell^{2}(\mathbb{Z})\rightarrow \ell^{2}([a,b])$. $\mathcal{L}_{[a,b]}$ and $\mathcal{M}_{[a,b]}$ are defined similarly.

However, the matrix $\mathcal{E}_{[a,b]}$ will no longer be unitary due to the fact that $|\alpha_{a}|<1$ and $|\alpha_{b}|<1$. Thus, we need to modify the boundary conditions briefly. With $\beta,\eta \in \partial \mathbb{D}$, define the sequence of Verblunsky coefficients
\begin{equation*}
\hat{\alpha}_{n}=
\begin{cases}
\beta, \quad & n =a;\\
\eta,& n =b;\\
\alpha_{n},& n \notin  \{a,b\}.
\end{cases}
\end{equation*}
Without loss of generality, we restrict the extended CMV matrix to the interval $[0,n-1]$. We denote the characteristic determinant of matrix $\mathcal{E}^{\beta,\eta}_{[0,n-1]}$ as $\varphi^{\beta,\eta}_{[0,n-1]}(\omega,z;x)=\det \big(z-\mathcal{E}^{\beta,\eta}_{[0,n-1]}\big)$. According to the results in \cite[Theorem 2]{Wang-JMAA}, the relation between this characteristic determinant and the $n$-step transfer matrix is
\begin{equation}\label{relation}
M_n(\omega,z;x)=(\sqrt{z})^{-n}\Big(\prod_{j=0}^{n-1}\frac{1}{\rho_j}\Big)
\left(
\begin{matrix}
z\varphi^{\beta,\eta}_{[1,n-1]} & \frac{z\varphi^{\beta,\eta}_{[1,n-1]}-\varphi^{\beta,\eta}_{[0,n-1]}}{\alpha_{-1}}\\
z\big(\frac{z\varphi^{\beta,\eta}_{[1,n-1]}-\varphi^{\beta,\eta}_{[0,n-1]}}{\alpha_{-1}}\big)^* &(\varphi^{\beta,\eta}_{[1,n-1]})^{*}
\end{matrix}
\right).
\end{equation}
In order to obtain the large deviation theorem for the entries of the monodromy matrix, we need to estimate the above characteristic determinant.

\begin{remark}\label{Remark3.14}
According to the above matrix, we have the following two conclusions
$$\sup\limits_{x\in \mathbb{T}^{d}}\log|\varphi^{\beta,\eta}_{[0,n-1]}(x+iy)|\leq nL_{n}(\omega_{0},z_{0})+Cn^{1-\tau}$$
and
\begin{align}
&\big|\log|\varphi^{\beta,\eta}_{[0,n-1]}(\omega_{1},z_{1};x_{1})|-\log |\varphi^{\beta,\eta}_{[0,n-1]}(\omega_{2},z_{2};x_{2})|\big|\nonumber\\
&\leq (|\omega_{1}-\omega_{2}|+|z_{1}-z_{2}|+|x_{1}-x_{2}|)\frac{\exp(nL(\omega_{0},z_{0})+Cn^{1-\tau})}{\mathop{\max}\limits_{i}|\varphi^{\beta,\eta}_{[1,n-1]}(\omega_{i},z_{i};x_{i})|}\label{Remark3.14-(1)}
\end{align}
provided the right-hand side of \eqref{Remark3.14-(1)} is less than $\frac{1}{2}$, which can be derived from Corollary \ref{Coro3.12} and Corollary \ref{Coro3.13}.
\end{remark}

To ensure the characteristic determinant does not vanish on a large set, we first establish a lower bound on its measure of small values using properties of real-analytic functions and matrix perturbations.

\begin{lemma}\label{Lemma3.15}\cite[Lemma 11.4]{GS01-Annals}
Suppose $V$ is a nonconstant real-analytic function on $Q_{0}=[-2,2]^{d}$ with $\sup\limits_{Q_{0}}|V|\leq 1$. Then there exists $\varepsilon=\varepsilon(V,d)>0$ and $C=C(V,d)$ so that
\begin{equation*}
\mathrm{mes}\{(x_{1},\ldots,x_{d})\in [-1,1]^{d}:|V(x_{1},\ldots,x_{d})-E|<t\}\leq Ct^{\varepsilon}
\end{equation*}
for all $-1\leq E\leq 1$ and $0<t<1$.
\end{lemma}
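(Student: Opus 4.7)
The plan is to prove this Łojasiewicz-type transversality estimate by induction on the dimension $d$, leveraging compactness and real analyticity to obtain the uniformity in $E$.

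For the base case $d=1$, one has a real analytic, nonconstant function $V$ on $[-2,2]$ with $\sup|V|\leq 1$. I would extend $V$ holomorphically to a complex neighborhood of $[-1,1]$ and apply Jensen's formula (equivalently, a Weierstrass preparation argument at each point of $[-1,1]$) to show that there is an integer $N=N(V)$ such that, for every $E\in[-1,1]$, the equation $V(x)=E$ has at most $N$ solutions in $[-1,1]$ and at each solution $x_0$ the order of vanishing of $V-E$ is at most $N$. Compactness of $[-1,1]$ is crucial: the order of vanishing of $V-E$ at any point is finite because $V$ is nonconstant analytic, and a finite cover argument gives a uniform bound independent of $E$. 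Near each zero, $V(x)-E=(x-x_0)^{k}g(x)$ with $g(x_0)\neq 0$ and $|g|$ bounded below on a fixed neighborhood, so $\{|V-E|<t\}$ is covered by at most $N$ intervals, each of length $\lesssim t^{1/N}$. This yields the result with $\varepsilon=1/N$ and $C=C(V)$.

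For the inductive step, suppose the result holds in dimension $d-1$. Since $V$ is nonconstant, after possibly permuting coordinates, $\partial_{1}V$ is not identically zero. For $(x_{2},\ldots,x_{d})\in[-1,1]^{d-1}$, let $V_{y}(x_{1}):=V(x_{1},x_{2},\ldots,x_{d})$. Split the slice parameter space into the ``good'' set $G$ where $V_{y}$ is a nonconstant analytic function in $x_{1}$ whose complex-analytic norms are controlled (and whose uniform vanishing order is bounded by some $N$) and the ``bad'' set $B$ where $V_{y}$ is essentially constant. The bad set is contained in $\{y:\|V_{y}-c_{y}\|_{\infty}$ small$\}$ for some measurable $c_{y}$, which, via an auxiliary analytic function (e.g., a suitable coefficient in a Chebyshev-type expansion of $V_{y}$), can be absorbed into the inductive hypothesis in dimension $d-1$ to estimate $\mathrm{mes}(B)\leq Ct^{\varepsilon'}$. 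For $y\in G$, the one-dimensional estimate gives $\mathrm{mes}\{x_{1}:|V(x_{1},y)-E|<t\}\leq C t^{1/N}$, and integrating in $y$ and combining with the bound on $B$ yields the desired $Ct^{\varepsilon}$ estimate with some $\varepsilon=\varepsilon(V,d)>0$.

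The main obstacle is the uniformity in $E$, which is exactly the reason the hypothesis of real analyticity (rather than just smoothness) is essential: one must rule out, quantitatively, the scenario where higher and higher order tangencies of $\{V=E\}$ accumulate as $E$ varies. In the base case this uniformity comes from the complex-analytic extension and the compactness of $[-1,1]$ that forces a global bound on multiplicities via Jensen's formula. In the inductive step, the obstacle reappears as the need to control the measure of slices along which $V_{y}$ is ``almost constant'' at scale $t$, uniformly in $E$. Handling this requires recognizing the latter set as a sublevel set of another real analytic function (built out of Taylor coefficients of $V$ in $x_{1}$) and applying the $(d-1)$-dimensional inductive hypothesis to it, which is precisely the mechanism that propagates the estimate in a quantitative, $E$-independent way.
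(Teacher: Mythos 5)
The paper itself does not prove this lemma: it is quoted from \cite[Lemma 11.4]{GS01-Annals}, so your argument can only be judged on its own merits. Your base case $d=1$ is essentially correct, although the uniformity in $E$ of the statements ``at most $N$ zeros, each of order at most $N$, with $|g|$ bounded below on a fixed neighborhood'' needs a compactness argument in $(x_{0},E)$; the cleanest route is to note that $\partial^{j}(V-E)=\partial^{j}V$ for $j\geq 1$, choose $N$ and $c>0$ with $\max_{1\leq j\leq N}|V^{(j)}(x)|\geq c$ on $[-1,1]$, and invoke the standard van der Corput--type sublevel lemma, which is manifestly $E$-independent.

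The genuine gap is in the inductive step, at the good-slice estimate. If your good/bad threshold is $t$-dependent (say the bad set is $\{y:\ \mathrm{dist}_{\sup}(V_{y},\mathrm{constants})<t^{\delta}\}$, which your Chebyshev-coefficient witness does handle), then on a good slice the only information available in general is $\max_{[-1,1]}|V_{y}-E|\gtrsim t^{\delta}$; Jensen/Cartan then bounds the number of zeros of $V_{y}-E$ near $[-1,1]$ only by $C\log(1/t)$, and the resulting one-dimensional bound of the form $(Ct)^{1/n}$ with $n\sim\delta\log(1/t)$ is a constant, not a power of $t$ (optimizing the threshold in $t$ gives at best a bound like $\exp(-c\sqrt{\log(1/t)})$, weaker than $Ct^{\varepsilon}$). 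If instead the threshold is a fixed constant, the bad set has fixed positive measure and the induction yields nothing as $t\to 0$. What you assert as ``uniform vanishing order bounded by some $N$ with controlled complex-analytic norms'' on the good set is precisely the missing ingredient: to recover a power of $t$ one needs the slice constants to degenerate at most polynomially in the distance of $V_{y}$ from the constants (a Weierstrass-preparation-with-parameters / parametric \L{}ojasiewicz statement), and near parameters $y_{0}$ with $V_{y_{0}}\equiv E_{0}$ preparation in the $x_{1}$-variable is impossible, so this cannot be extracted from the one-dimensional case alone --- it is essentially the content of the lemma, making the induction circular at its crucial point. The standard argument (and, in spirit, the cited one) avoids fixed-direction slicing: for every $x_{0}\in[-1,1]^{d}$ some $\partial^{\alpha}V(x_{0})\neq 0$ with $1\leq|\alpha|\leq N$; hence, after a rotation, a pure directional derivative of order at most $N$ is bounded below on a ball around $x_{0}$; a finite subcover, the one-dimensional sublevel lemma along lines in that direction, and Fubini give the bound $Ct^{1/N}$, with uniformity in $E$ for free since derivatives of order $\geq 1$ of $V-E$ do not depend on $E$. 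You should either adopt that route or supply the parametric preparation argument your slicing scheme implicitly relies on.
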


\begin{lemma}\label{Lemma3.16}
Fix some $z$ and let $\varphi_{[0,n-1]}^{\beta,\eta}=\det \big(z-\mathcal{E}^{\beta,\eta}_{[0,n-1]}\big)$. Then
\begin{equation}\label{lem3.16-(1)}
\mathrm{mes}\{x\in \mathbb{T}^{d}:|\varphi_{[0,n-1]}^{\beta,\eta}|\leq \exp(-n^{d+2})\}\leq \exp(-n)
\end{equation}
for large $n$.
\end{lemma}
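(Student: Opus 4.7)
The plan is to apply Lemma \ref{Lemma3.15} to a suitably normalized version of $\varphi^{\beta,\eta}_{[0,n-1]}$. Since $\mathcal{E}^{\beta,\eta}_{[0,n-1]}$ is a unitary $n\times n$ matrix after the boundary modification, its eigenvalues $\lambda_1(x),\ldots,\lambda_n(x)$ lie on $\partial\mathbb{D}$, and for $z\in\partial\mathbb{D}$ we have $|\varphi^{\beta,\eta}_{[0,n-1]}(x)|=\prod_j|z-\lambda_j(x)|\leq 2^n$. Setting $V(x):=2^{-n}\varphi^{\beta,\eta}_{[0,n-1]}(x)$ produces a real-analytic function on $\mathbb{T}^d$ with $|V|\leq 1$, which places us within the scope of Lemma \ref{Lemma3.15}.

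The core of the argument is to show that the Łojasiewicz exponent $\varepsilon(V)$ of Lemma \ref{Lemma3.15} can be chosen at least $cn^{-(d+1)}$. Once such a bound is established, the rescaled smallness threshold $t=2^n e^{-n^{d+2}}$ gives
\begin{equation*}
\mathrm{mes}\{x:|\varphi^{\beta,\eta}_{[0,n-1]}(x)|\leq e^{-n^{d+2}}\}=\mathrm{mes}\{x:|V(x)|\leq t\}\leq Ct^{\varepsilon(V)}\leq Ce^{-cn}
\end{equation*}
for all sufficiently large $n$, which is \eqref{lem3.16-(1)}. To obtain such quantitative control on $\varepsilon(V)$, I would follow the strategy flagged in the introduction and decompose the extended CMV matrix via $\mathcal{E}=\mathcal{L}\mathcal{M}$, splitting into two cases according to the parity of the endpoints of the restriction. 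In each case, $\varphi^{\beta,\eta}_{[0,n-1]}$ is a polynomial of degree at most $n$ in the entries of the $2\times 2$ blocks $\Theta_j$. After truncating the Fourier expansion of the analytic sampling function $\alpha(\cdot)$ at level $\sim n^{\kappa}$ (with $\kappa$ chosen so the truncation error in $\varphi$ is much smaller than $e^{-n^{d+2}}$, which is possible by analyticity on $\mathbb{T}^d_h$), the normalized $V$ becomes essentially a trigonometric polynomial of degree $\lesssim n^{d+1}$ in the torus variables $e^{2\pi ix_j}$. Standard polynomial small-value bounds (the quantitative form of Lemma \ref{Lemma3.15} for polynomials of controlled degree) then yield the required exponent.

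The main obstacle is the parity-dependent asymmetry of the CMV factorization. Unlike the tridiagonal Schr\"odinger case, the blocks $\Theta_j$ occupy different positions inside $\mathcal{L}=\bigoplus\Theta_{2j}$ and $\mathcal{M}=\bigoplus\Theta_{2j+1}$, so the restriction $\mathcal{E}^{\beta,\eta}_{[0,n-1]}$ admits two distinct structural forms depending on the parity of $n$ and of the boundary indices. The boundary modification forces $\hat{\alpha}_0=\beta$ and $\hat{\alpha}_{n-1}=\eta$ to live on $\partial\mathbb{D}$, where $\rho$ vanishes; this must be reconciled with the polynomial structure in each case. Verifying in both cases that $V$ is genuinely nonconstant (ruling out degenerate vanishing of $\varphi$ on a positive-measure subset of $\mathbb{T}^d$, which would otherwise contradict the positivity of the Lyapunov exponent $L(\omega,z)>\gamma$ via the relation \eqref{relation}) and that the approximating trigonometric polynomial genuinely has degree $\lesssim n^{d+1}$ constitutes the bulk of the technical work; this is precisely the two-case decomposition alluded to in the introduction.
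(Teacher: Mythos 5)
There is a genuine gap: your whole argument hinges on the claim that the exponent in Lemma \ref{Lemma3.15} can be taken $\varepsilon(V)\gtrsim n^{-(d+1)}$ for $V=2^{-n}\varphi^{\beta,\eta}_{[0,n-1]}$, and nothing in your sketch delivers this. Lemma \ref{Lemma3.15} is purely qualitative: both $\varepsilon(V,d)$ and $C(V,d)$ depend on the function, and here the function changes with $n$, so the lemma cannot be invoked directly. Your proposed quantitative substitute (truncate $\alpha$, view $V$ as a trigonometric polynomial, use a Remez/small-sublevel-set bound) cannot be made to work for any choice of truncation level $K$: a degree-$D$ polynomial obeys a sublevel bound with exponent $\asymp 1/D$ (this is sharp, e.g.\ for $\cos^{D}$), and the truncated determinant has degree $D\sim nK$, so to reach measure $e^{-n}$ you would need the working level $t$ to satisfy $t\lesssim e^{-cn^{2}K}\sup|\varphi|$, while the truncation error alone forces $t\gtrsim C^{n}e^{-hK}$; these two requirements are incompatible for every $K$ (in particular your degree count $\lesssim n^{d+1}$ is not attainable, since keeping the determinant error below $e^{-n^{d+2}}$ already forces $K\gtrsim n^{d+2}$ and hence $D\gtrsim n^{d+3}$, at which point the bound degenerates to $e^{-c/n}$). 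Finally, ``$V$ is genuinely nonconstant'' is far too weak: any quantitative small-value estimate needs a lower bound of the form $\sup_{\mathbb{T}^{d}}|\varphi^{\beta,\eta}_{[0,n-1]}|\geq e^{-Cn}$, and producing that lower bound is precisely the nontrivial content of the lemma, which your proposal never addresses (the Lyapunov-exponent remark via \eqref{relation} does not yield it, since $\|M_n\|\geq 1$ only controls a combination of entries, not $\varphi^{\beta,\eta}_{[0,n-1]}$ itself).

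For comparison, the paper proceeds by contradiction and never needs a degree-based exponent: if the sublevel set had measure $>e^{-n}$, the Cartan-type estimate for separately subharmonic functions (Lemma \ref{Lemma3.14}) applied to $u=\log|\varphi^{\beta,\eta}_{[0,n-1]}|$ with $M\leq Cn$, $L=n^{d+2}$, $\delta=e^{-n}$ forces $\sup_{\mathbb{T}^{d}}|\varphi^{\beta,\eta}_{[0,n-1]}|\leq e^{-C_{1}n^{2}}$; here the relevant ``mass'' is $\sup u\lesssim n$, not the polynomial degree, which is why the numerology closes. This is then contradicted by the decomposition $\varphi^{\beta,\eta}_{[0,n-1]}=\det(D_{[0,n-1]}+B_{[0,n-1]})$ with $D_{[0,n-1]}=\mathrm{diag}(z+\overline{\alpha}_{j}\alpha_{j-1})$ and $\|B_{[0,n-1]}\|<3$, which yields a two-sided bound $e^{-Cn}\leq|\varphi^{\beta,\eta}_{[0,n-1]}|\leq e^{Cn}$; the ``two cases'' alluded to in the introduction are $\hat{\alpha}\geq\tfrac12$ versus $\hat{\alpha}<\tfrac12$ (where $\hat{\alpha}=\sup_{j,x}|\overline{\alpha}_{j}\alpha_{j-1}|$), not the parity of the endpoints, and Lemma \ref{Lemma3.15} enters only for the fixed, $n$-independent function $\overline{\alpha}(Tx)\alpha(x)$, where its constants are harmless. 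To repair your argument you would need to import exactly this subharmonicity mechanism together with the diagonal-dominance lower bound, at which point you are reproducing the paper's proof rather than giving an alternative one.
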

\begin{proof}
Suppose \eqref{lem3.16-(1)} fails and set $u(x)=\log\big|\varphi_{[0,n-1]}^{\beta,\eta}\big|$. According to Lemma \ref{Lemma3.14} with $M\leq C(\alpha_{n},z)n$, $L=n^{d+2}$ and $\delta=\exp(-n)$, then we have
\begin{equation*}
\sup\limits_{\mathbb{T}^{d}}u\leq C_{\rho,d}C(\alpha_{n},z)n-\frac{n^{d+2}}{C_{\rho,d}\log^{d}(C_{d}/\delta)}\leq-C_{1}n^{2}
\end{equation*}
for some small constant $C_{1}$. Thus,
\begin{equation}\label{Lem3.16-(2)}
\sup\limits_{\mathbb{T}^{d}}\big|\varphi_{[0,n-1]}^{\beta,\eta}\big|\leq \exp(-C_{1}n^{2}).
\end{equation}
On the other hand, we can write $\varphi_{[0,n-1]}^{\beta,\eta}$ as $\det(D_{[0,n-1]}+B_{[0,n-1]})$,
where
\begin{equation*}
D_{[0,n-1]}=\mathrm{diag}\{z+\overline{\alpha}_{0}\alpha_{-1},z+\overline{\alpha}_{1}\alpha_{0},\ldots,z+\overline{\alpha}_{n-1}\alpha_{n-2}\}.
\end{equation*}
Due to $|\alpha_{j}|, \rho_{j}<1$, we know that $\|B_{[0,n-1]}\|<3$.

Therefore,
\begin{align*}
\Big|\frac{1}{n}\log\big|\varphi_{[0,n-1]}^{\beta,\eta}\big|\Big|&=\Big|\frac{1}{n}\log\big|\det(D_{[0,n-1]}+B_{[0,n-1]}) \big|\Big|\\
&\leq\Big|\frac{1}{n}\log\big|\det D_{[0,n-1]}\big|\Big|+\Big|\frac{1}{n}\log\big|\det(I+D^{-1}_{[0,n-1]}B_{[0,n-1]}) \big|\Big|\\
&=\Big|\frac{1}{n}\sum_{j=0}^{n-1}\log|z+\overline{\alpha}_{j}\alpha_{j-1}|\Big|+\Big|\frac{1}{n}\log\big|\det(I+D^{-1}_{[0,n-1]}B_{[0,n-1]}) \big|\Big|.
\end{align*}
Let $\hat{\alpha}:=\sup\limits_{j\in \mathbb{Z},\, x\in \mathbb{T}^{d}}|\overline{\alpha}_{j}\alpha_{j-1}|$. Now we consider the following two cases.\\
\textbf{Case 1}. Suppose $\hat{\alpha}\geq \frac{1}{2}$. According to Lemma \ref{Lemma3.15}, we have that
\begin{equation*}
\mathrm{mes}\{x\in \mathbb{T}^{d}:|\overline{\alpha}_{j}\alpha_{j-1}-1|<e^{-\rho}\}<C e^{-\varepsilon\rho}
\end{equation*}
for some $\rho>0$.

Since
\begin{equation*}
\|D^{-1}_{[0,n-1]}\|\leq N_{0}^{-M}(z+\overline{\alpha}_{j}\alpha_{j-1})|^{-1},
\end{equation*}
we get
\begin{align*}
\mathrm{mes}\Big\{x\in \mathbb{T}^{d}:\|D^{-1}_{[0,n-1]}\|>\frac{2}{3}\Big\}&\leq n \mathrm{mes}\Big\{x\in \mathbb{T}^{d}:|N_{0}^{-M}(z+\overline{\alpha}(Tx)\alpha(x))|<\frac{3}{2}N_{0}^{-M}\Big\}\\
&=n \mathrm{mes}\Big\{x\in \mathbb{T}^{d}:|N_{0}^{-M}z^{-1}\overline{\alpha}(Tx)\alpha(x)-N_{0}^{-M}|<\frac{3}{2}N_{0}^{-M}\Big\}\\
&\leq CnN_{0}^{-\varepsilon M}.
\end{align*}
Thus,
\begin{equation*}
\mathrm{mes}\{x\in \mathbb{T}^{d}:\|D^{-1}_{[0,n-1]}B_{[0,n-1]}\|\geq 2\}\leq CnN_{0}^{-\varepsilon M}.
\end{equation*}
Then we have
\begin{align*}
\Big|\frac{1}{n}\log\big|\varphi_{[0,n-1]}^{\beta,\eta}\big|\Big|&\leq\Big|\frac{1}{n}\log\big|\det D_{[0,n-1]}\big|\Big|+\Big|\frac{1}{n}\log\big|\det(I+D^{-1}_{[0,n-1]}B_{[0,n-1]}) \big|\Big|\\
&\leq \log(1+\hat{\alpha})+\log 3=\log(3(1+\hat{\alpha}))=:C(\hat{\alpha}).
\end{align*}
That is, $e^{-C(\hat{\alpha})n}\leq |\varphi_{[0,n-1]}^{\beta,\eta}|\leq e^{C(\hat{\alpha})n}$, which contradicts \eqref{Lem3.16-(2)}.\\
\textbf{Case 2}. Suppose $\hat{\alpha}< \frac{1}{2}$. According to the definition of $D_{[0,n-1]}$,
\begin{equation*}
\Big|\frac{1}{n}\log|\det D_{[0,n-1]}|\Big|=\Big|\frac{1}{n}\sum_{j=0}^{n-1}\log|z+\overline{\alpha}_{j}\alpha_{j-1}|\Big|.
\end{equation*}
If $|z+\overline{\alpha}_{j}\alpha_{j-1}|<1$, then
\begin{equation*}
|z+\overline{\alpha}_{j}\alpha_{j-1}|\geq |z|-|\overline{\alpha}_{j}\alpha_{j-1}|\geq |z|-\hat{\alpha}\geq \frac{1}{2}.
\end{equation*}
If $|z+\overline{\alpha}_{j}\alpha_{j-1}|>1$, then
\begin{equation*}
|z+\overline{\alpha}_{j}\alpha_{j-1}|\leq |z|+|\overline{\alpha}_{j}\alpha_{j-1}|\leq 2.
\end{equation*}
Thus, $\Big|\frac{1}{n}\log|\det D_{[0,n-1]}|\Big|<\log 2<2$ and
\begin{align*}
\Big|\frac{1}{n}\log\big|\varphi_{[0,n-1]}^{\beta,\eta}\big|\Big|&\leq\Big|\frac{1}{n}\log\big|\det D_{[0,n-1]}\big|\Big|+\Big|\frac{1}{n}\log\big|\det(I+D^{-1}_{[0,n-1]}B_{[0,n-1]}) \big|\Big|\\
&\leq 2+\log 7<4.
\end{align*}
This implies that $e^{-4n}\leq |\varphi_{[0,n-1]}^{\beta,\eta}|\leq e^{4n}$, which contradicts \eqref{Lem3.16-(2)}.

In conclusion, $\mathrm{mes}\{x\in \mathbb{T}^{d}:|\varphi_{[0,n-1]}^{\beta,\eta}|\leq \exp(-n^{d+2})\}\leq \exp(-n)$.
\end{proof}

Extending the previous result, we generalize the lower bound to smaller thresholds of the characteristic determinant, ensuring robustness across a range of small values that are relevant for LDT.

\begin{lemma}\label{Lemma3.17}
Let $N_{0}$ be as in Lemma \ref{Lemma3.16} and $0<\theta<1$. Fix some $z$ and let $\varphi_{[0,n-1]}^{\beta,\eta}=\det \big(z-\mathcal{E}^{\beta,\eta}_{[0,n-1]}\big)$. Then for any $N_{0}\leq n<N^{\frac{1-\theta}{2}}$,
\begin{equation}\label{Lem3.17-(1)}
\mathrm{mes}\{x\in \mathbb{T}^{d}:|\varphi_{[0,n-1]}^{\beta,\eta}|\leq \exp(-N^{1-\theta})\}\leq \exp(-N^{\frac{1-\theta}{d}}n^{-\frac{2}{d}}),
\end{equation}
where $N$ is large.
\end{lemma}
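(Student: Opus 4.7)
The plan is to run the contradiction strategy from the proof of Lemma \ref{Lemma3.16}, with the Cartan-type subharmonic estimate re-calibrated to the new scales $L = N^{1-\theta}$ and $\delta = \exp(-N^{(1-\theta)/d} n^{-2/d})$. Note that in Lemma \ref{Lemma3.16} the balance was $L/\log^{d}(1/\delta) \asymp n^{d+2}/n^{d} = n^{2}$; the new choice of $(L,\delta)$ is precisely the other pair satisfying $L/\log^{d}(1/\delta) \asymp n^{2}$, so the same contradiction should go through, only with a weaker (and more useful) upper bound on the exceptional measure.

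Suppose \eqref{Lem3.17-(1)} fails for some $n$ in the allowed range, and set $u(x) = \log\bigl|\varphi^{\beta,\eta}_{[0,n-1]}(x)\bigr|$. By \eqref{log-Mn} together with the identity \eqref{relation}, the function $u$ is subharmonic in each variable on $\mathbb{T}^{d}_{h}$ with $\sup u \leq Cn$. The constraint $n < N^{(1-\theta)/2}$ is exactly what forces $N^{(1-\theta)/d} n^{-2/d} > 1$, hence $\delta < 1$, so that the Cartan-type estimate used in Lemma \ref{Lemma3.16} is applicable with $M = Cn$, $L = N^{1-\theta}$, and this $\delta$. It yields
\begin{equation*}
\sup_{\mathbb{T}^{d}} u
\;\leq\; C_{\rho,d}\,Cn \;-\; \frac{N^{1-\theta}}{C_{\rho,d}\,\log^{d}(C_{d}/\delta)}
\;=\; C_{\rho,d}\,Cn \;-\; \frac{n^{2}}{C'},
\end{equation*}
since $\log^{d}(C_{d}/\delta) \asymp N^{1-\theta}n^{-2}$. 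For $n \geq N_{0}$ the quadratic term dominates, giving
\begin{equation*}
\sup_{\mathbb{T}^{d}} \bigl|\varphi^{\beta,\eta}_{[0,n-1]}\bigr| \;\leq\; \exp(-C_{1} n^{2})
\end{equation*}
for some $C_{1} > 0$.

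To close the argument I reuse the decomposition $\varphi^{\beta,\eta}_{[0,n-1]} = \det\bigl(D_{[0,n-1]} + B_{[0,n-1]}\bigr)$ from the proof of Lemma \ref{Lemma3.16}, with the diagonal part $D_{[0,n-1]}$ carrying the factors $z+\overline{\alpha}_{j}\alpha_{j-1}$ and $\|B_{[0,n-1]}\| < 3$. The same two-case split on $\hat{\alpha} = \sup_{j,x}|\overline{\alpha}_{j}(x)\alpha_{j-1}(x)|$ produces, on a subset of $\mathbb{T}^{d}$ of positive measure (uniformly in Case 2, and outside a set of measure $\lesssim n N_{0}^{-\varepsilon M}$ in Case 1), the pointwise lower bound $|\varphi^{\beta,\eta}_{[0,n-1]}(x)| \geq \exp(-Cn)$. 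This contradicts $\sup|\varphi^{\beta,\eta}_{[0,n-1]}| \leq \exp(-C_{1}n^{2})$ once $n$ is large, establishing \eqref{Lem3.17-(1)}.

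The main bookkeeping obstacle is ensuring that a single threshold $N_{0}$ simultaneously makes (i) the Cartan bound effective (i.e.\ $\delta < 1$, which is free from $n < N^{(1-\theta)/2}$), (ii) $C_{1} n^{2} > Cn$ in the final step, and (iii) the exceptional set in Case 1 strictly smaller than $\mathbb{T}^{d}$ so that the lower bound $e^{-Cn}$ is actually attained. All three requirements are absorbed into the $N_{0}$ inherited from Lemma \ref{Lemma3.16}, and the rest of the proof is a mechanical re-run of that argument with the new $(L,\delta)$.
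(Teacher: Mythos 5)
Your proposal is correct and follows essentially the same route as the paper: assume \eqref{Lem3.17-(1)} fails, apply the subharmonic (Cartan-type) estimate of Lemma \ref{Lemma3.14} with $M\lesssim n$, $L=N^{1-\theta}$, $\delta=\exp(-N^{\frac{1-\theta}{d}}n^{-\frac{2}{d}})$ to get $\sup|\varphi^{\beta,\eta}_{[0,n-1]}|\leq\exp(-C_{1}n^{2})$, and then contradict this with the lower bound $\exp(-Cn)$ obtained from the diagonal-plus-perturbation decomposition in the proof of Lemma \ref{Lemma3.16}. Your added bookkeeping (the role of $n<N^{\frac{1-\theta}{2}}$ in ensuring $\delta<1$ and the balance $L/\log^{d}(1/\delta)\asymp n^{2}$) matches what the paper leaves implicit.
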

\begin{proof}
Suppose \eqref{Lem3.17-(1)} fails for some $n$ and $N$. Applying Lemma \ref{Lemma3.14} with $M=n$, $\delta=\exp(-N^{\frac{1-\theta}{d}}n^{-\frac{2}{d}})$ and $L=N^{1-\theta}$, we obtain
\begin{equation*}
\sup\limits_{\mathbb{T}^{d}}u\leq Cn-\frac{N^{1-\theta}}{C\log^{d}(C/\theta)}\leq -C_{1}n^{2}
\end{equation*}
for some constant $C_{1}$. In other words,
\begin{equation*}
\sup\limits_{\mathbb{T}^{d}}|\varphi_{[0,n-1]}^{\beta,\eta}| \leq \exp( -C_{1}n^{2}).
\end{equation*}
Then this leads to a contradiction in the same way as the derivation of the contradiction in the  proof of Lemma \ref{Lemma3.16}. Similar to the proof of Lemma \ref{Lemma3.16}, the conclusion follows.
\end{proof}
Now we introduce the properties of matrices in $\mathbb{SL}(2,\mathbb{C})$. For any $A\in \mathbb{SL}(2,\mathbb{C})$, there are unit vectors $\underline{u}^{+}_{A}$, $\underline{u}^{-}_{A}$, $\underline{v}^{+}_{A}$, $\underline{v}^{-}_{A}$ such that $A\underline{u}^{+}_{A}=\|A\|\underline{v}^{+}_{A}$, $A\underline{u}^{-}_{A}=\|A\|^{-1}\underline{v}^{-}_{A}$. Moreover, $\underline{u}^{+}_{A}\bot\underline{u}^{-}_{A}$ and $\underline{v}^{+}_{A}\bot\underline{v}^{-}_{A}$. In what follows, we denote that $a\wedge b:=\min\{a,b\}$ for positive numbers $a, b$ and $\varphi\wedge\psi$ stands for the exterior product of vectors $\varphi, \psi$.

\begin{lemma}\label{Lemma3.18}
From the Theorem \ref{LDT-matrix}, we have
\begin{equation}\label{Lem3.18-(1)}
\mathrm{mes}\{x\in \mathbb{T}^{d}: |\log\|M_{n}(\omega,z;x)\|-nL_{n}(\omega,z)|> n^{1-\tau}\}<\exp(-C_{0}n^{\sigma}).
\end{equation}
Then
\begin{align}
&\mathrm{mes}\{x\in \mathbb{T}^{d}: |\varphi_{[0,n-1]}^{\beta,\eta}(x)|+|\varphi_{[0,n-1]}^{\beta,\eta}(T^{j_{1}}x)|+
|\varphi_{[0,n-1]}^{\beta,\eta}(T^{j_{2}}x)|\leq \exp(nL_{n}(\omega,z)-100n^{1-\sigma})\}\nonumber\\
&\leq \exp(-n^{\frac{1-\sigma}{2d}}\wedge\sigma)\label{Lem3.18-(2)}
\end{align}
for any $l_{0}=l_{0}(T,z,\sigma)\leq j_{1}\leq j_{1}+l_{0}\leq j_{2}\leq n^{\frac{1-\sigma}{8}}$ and $n>N(T,C_{0},z,\sigma)$. Moreover, to obtain \eqref{Lem3.18-(2)} for some $n$ only requires \eqref{Lem3.18-(1)} with the same $n$.
\end{lemma}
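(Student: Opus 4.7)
The strategy I would pursue combines three applications of the LDT estimate (\ref{Lem3.18-(1)}) with the explicit matrix formula (\ref{relation}) and a standard geometric argument for $SL(2,\mathbb{C})$ cocycles. The hint that only (\ref{Lem3.18-(1)}) at the same $n$ is needed tells us that the proof is local in $n$: no scale induction or previous lower bound like Lemma~\ref{Lemma3.16}/\ref{Lemma3.17} is involved, only LDT and geometry.

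First I would invoke (\ref{Lem3.18-(1)}) at each of the phases $x, T^{j_1}x, T^{j_2}x$. By translation invariance of Lebesgue measure on $\mathbb{T}^d$, one obtains a single exceptional set $\mathcal{B}_0 \subset \mathbb{T}^d$ with $\mathrm{mes}(\mathcal{B}_0) \le 3\exp(-C_0 n^\sigma)$ such that for $x \notin \mathcal{B}_0$,
$$\log\|M_n(\omega,z;T^{j_i}x)\| \ge nL_n(\omega,z) - n^{1-\tau}, \quad i=0,1,2 \quad (j_0:=0).$$
On the complement of $\mathcal{B}_0$, each $M_n(T^{j_i}x)$ is within a factor $1+e^{-cn^{1-\tau}}$ of its rank-one approximation $\|M_n(T^{j_i}x)\|\,\underline{v}_i^+ (\underline{u}_i^+)^*$, so the singular directions $\underline{u}_i^\pm, \underline{v}_i^\pm$ are well defined.

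Next, a direct manipulation of (\ref{relation}) yields
$$\varphi_{[0,n-1]}^{\beta,\eta}(T^{j_i}x) = (\sqrt{z})^n\prod_{j=0}^{n-1}\rho_j(T^{j_i}x)\cdot \bigl[(M_n(T^{j_i}x))_{1,1} - \alpha_{-1}(T^{j_i}x)(M_n(T^{j_i}x))_{1,2}\bigr],$$
so up to an $O(1)$ prefactor, $\varphi_{[0,n-1]}^{\beta,\eta}(T^{j_i}x) = \langle \underline{e}_1,\, M_n(T^{j_i}x)\,\underline{w}_i\rangle$ with $\underline{w}_i = (1,-\alpha_{-1}(T^{j_i}x))^T$, a unit-size vector depending analytically on $x$. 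Expanding $\underline{w}_i = \langle\underline{w}_i,\underline{u}_i^+\rangle\underline{u}_i^+ + \langle\underline{w}_i,\underline{u}_i^-\rangle\underline{u}_i^-$ and using the polar form of $M_n$, one gets
$$|\varphi_{[0,n-1]}^{\beta,\eta}(T^{j_i}x)| \gtrsim \|M_n(T^{j_i}x)\|\cdot|\langle\underline{w}_i,\underline{u}_i^+\rangle|\cdot|\langle\underline{e}_1,\underline{v}_i^+\rangle| - O(1).$$
Hence the bound $|\varphi_{[0,n-1]}^{\beta,\eta}(T^{j_i}x)| \le \tfrac{1}{3}\exp(nL_n - 100 n^{1-\sigma})$ at each of $i=0,1,2$ forces, at each shift, one of the exterior products $\underline{w}_i\wedge\underline{u}_i^-$ or $\underline{e}_1\wedge\underline{v}_i^-$ to have modulus at most $\exp(-50 n^{1-\sigma})$.

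The three-point geometric step is the crux. The simultaneous occurrence of these wedge inequalities at $T^{j_0}x, T^{j_1}x, T^{j_2}x$ is a real-analytic constraint on $x$. The condition $j_2-j_1 \ge l_0$, with $l_0$ chosen so that the shift $T^{l_0}$ rotates the singular direction of $M_n$ by a nontrivial amount, ensures that $\underline{u}_0^-, \underline{u}_{j_1}^-, \underline{u}_{j_2}^-$ cannot simultaneously align with the prescribed (phase-dependent) boundary vectors except on a thin set; quantitatively one extracts a subharmonic function of $x$ whose sublevel set at height $-n^{1-\sigma}$ is controlled by a Cartan-type estimate in $d$ variables (this is the source of the exponent $(1-\sigma)/(2d)$). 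The upper bound $j_2 \le n^{(1-\sigma)/8}$ enters precisely to keep the perturbative losses in relating the three $M_n(T^{j_i}x)$ via the cocycle identity $M_n(T^{j_i}x) = M_{n+j_i}(x)M_{j_i}(x)^{-1}$ negligible compared to the gap $100 n^{1-\sigma}$. Combining this geometric estimate with the LDT set $\mathcal{B}_0$ gives the total measure bound $\exp(-n^{(1-\sigma)/(2d)\wedge\sigma})$ claimed in (\ref{Lem3.18-(2)}). The main obstacle in this plan is the geometric three-point step — verifying the quantitative spread of the three contracting directions and the resulting sublevel-set bound for the associated subharmonic function — with the rest being standard bookkeeping of cocycle perturbations and LDT applications.
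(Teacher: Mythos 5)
Your setup follows the same path as the paper: apply the LDT \eqref{Lem3.18-(1)} at the three phases $x,T^{j_1}x,T^{j_2}x$, use \eqref{relation} to express $\varphi^{\beta,\eta}_{[0,n-1]}$ through entries of the monodromy, and decompose along the singular directions $\underline{u}^{\pm},\underline{v}^{\pm}$. Your algebraic identity $\varphi^{\beta,\eta}_{[0,n-1]}=(\sqrt z)^{n}\big(\prod\rho_j\big)\big[(M_n)_{1,1}-\alpha_{-1}(M_n)_{1,2}\big]$ is correct. However, the paper does not stop at a pairing against the phase-dependent vector $\underline{w}_i=(1,-\alpha_{-1}(T^{j_i}x))^{T}$: it uses the Szeg\H{o}-dual relations and the expansion $\varphi^{\beta,\eta}_{[0,n-1]}=(z+\overline{\alpha}_0\alpha_{-1})\varphi^{\beta,\eta}_{[1,n-1]}-\rho_0\alpha_{-1}\det\mathcal{P}_{n-1}$ to bound $|\varphi^{\beta,\eta}_{[0,n-1]}|$ by the fixed-vector quantity $|T_n(\omega,z;x)\underline{e}_1\wedge\underline{e}_2|$, precisely so that the proof of Lemma 2.9 of \cite{GS08-GAFA} can then be quoted verbatim with $f_N$ replaced by $\varphi^{\beta,\eta}_{[0,n-1]}$.

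The genuine gap is that the core of the lemma --- the quantitative three-point elimination producing the bound $\exp(-n^{\frac{1-\sigma}{2d}}\wedge\sigma)$ in \eqref{Lem3.18-(2)} --- is not carried out in your proposal; you yourself flag it as ``the main obstacle.'' Everything before it (LDT at three points, singular-value decomposition, the observation that smallness of $\varphi$ forces one of two wedge products to be small) is routine; the content of the statement lies exactly in the step you describe only qualitatively. Moreover, the mechanism you sketch (``$T^{l_0}$ rotates the singular direction by a nontrivial amount,'' then a Cartan estimate for a subharmonic function) is asserted rather than derived, and it is not obviously the correct one: in the Goldstein--Schlag argument the constraints $l_0\le j_1$, $j_1+l_0\le j_2\le n^{\frac{1-\sigma}{8}}$ enter through the cocycle relations linking the contracting directions at the shifted phases to the short transfer matrices $M_{j_1},M_{j_2-j_1}$, combined with \L ojasiewicz-type sublevel-set bounds for analytic functions (the role played here by Lemma \ref{Lemma3.15}), and this is what produces the exponent $\frac{1-\sigma}{2d}$; the $\sigma$ in the minimum comes from the LDT exceptional set. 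To make your argument complete you would either have to reproduce that argument (adapted to your phase-dependent $\underline{w}_i$, which is an extra complication the paper's reduction to $\underline{e}_1\wedge\underline{e}_2$ avoids) or perform the paper's reduction and then invoke \cite{GS08-GAFA} as the paper does.
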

\begin{proof}
From \eqref{relation}, the transfer matrix $M_{n}(\omega,z;x)\in \mathbb{S}\mathbb{U}(1,1)$, then we can verify that
\begin{equation}\label{relation-SL(2,R)}
T_n(\omega,z;x)=Q^{*}M_n(\omega,z;x)Q=(\sqrt{z})^{-n}\Big(\prod_{j=0}^{n-1}\frac{1}{\rho_j}\Big)
\left(
\begin{matrix}
t_{1} & t_{2}\\
t_{3} & t_{4}
\end{matrix}
\right)\in \mathbb{SL}(2,\mathbb{R}),
\end{equation}
where
\begin{align*}
&t_{1}=\frac{1}{2}z\varphi^{\beta,\eta}_{[1,n-1]}+\frac{1}{2}z\Big(\frac{z\varphi^{\beta,\eta}_{[1,n-1]}-\varphi^{\beta,\eta}_{[0,n-1]}}{\alpha_{-1}}\Big)^*
+\frac{1}{2}\frac{z\varphi^{\beta,\eta}_{[1,n-1]}-\varphi^{\beta,\eta}_{[0,n-1]}}{\alpha_{-1}}+\frac{1}{2}(\varphi^{\beta,\eta}_{[1,n-1]})^{*},\\
&t_{2}=-\frac{i}{2}z\varphi^{\beta,\eta}_{[1,n-1]}-\frac{i}{2}z\Big(\frac{z\varphi^{\beta,\eta}_{[1,n-1]}-\varphi^{\beta,\eta}_{[0,n-1]}}{\alpha_{-1}}\Big)^*
+\frac{i}{2}\frac{z\varphi^{\beta,\eta}_{[1,n-1]}-\varphi^{\beta,\eta}_{[0,n-1]}}{\alpha_{-1}}+\frac{i}{2}(\varphi^{\beta,\eta}_{[1,n-1]})^{*},\\
&t_{3}=\frac{i}{2}z\varphi^{\beta,\eta}_{[1,n-1]}-\frac{i}{2}z\Big(\frac{z\varphi^{\beta,\eta}_{[1,n-1]}-\varphi^{\beta,\eta}_{[0,n-1]}}{\alpha_{-1}}\Big)^*
+\frac{i}{2}\frac{z\varphi^{\beta,\eta}_{[1,n-1]}-\varphi^{\beta,\eta}_{[0,n-1]}}{\alpha_{-1}}-\frac{i}{2}(\varphi^{\beta,\eta}_{[1,n-1]})^{*},\\
&t_{4}=\frac{1}{2}z\varphi^{\beta,\eta}_{[1,n-1]}-\frac{1}{2}z\Big(\frac{z\varphi^{\beta,\eta}_{[1,n-1]}-\varphi^{\beta,\eta}_{[0,n-1]}}{\alpha_{-1}}\Big)^*
-\frac{1}{2}\frac{z\varphi^{\beta,\eta}_{[1,n-1]}-\varphi^{\beta,\eta}_{[0,n-1]}}{\alpha_{-1}}+\frac{1}{2}(\varphi^{\beta,\eta}_{[1,n-1]})^{*}
\end{align*}
and ``$*$" stands for the Szeg\H{o} dual of the corresponding polynomial.

According to \cite[Section 3]{Wang-JMAA}, we know that
\begin{equation}\label{Lem3.18-(0)}
\varphi^{\beta,\eta}_{[0,n-1]}=(z+\overline{\alpha}_{0}\alpha_{-1})\varphi^{\beta,\eta}_{[1,n-1]}-\rho_{0}\alpha_{-1}\det \mathcal{P}_{n-1},
\end{equation}
where
\begin{equation*}
\mathcal{P}_{n-1}=
\left(
\begin{array}{ccccccc}
-\overline{\alpha}_{1}\rho_{0}&-\rho_{1}\rho_{0}&  &  & & \\
-\overline{\alpha}_{2}\rho_{1}&z+\overline{\alpha}_{2}\alpha_{1}&-\overline{\alpha}_{3}\rho_{2} &-\rho_{3}\rho_{2} & &\\
-\rho_{2}\rho_{1} &\rho_{2}\alpha_{1}&z+\overline{\alpha}_{3}\alpha_{2}&\rho_{3}\alpha_{2}& & \\
  & & -\overline{\alpha}_{4}\rho_{3}&z+\overline{\alpha}_{4}\alpha_{3}& &\\
  & & & & \ddots &\\
  & & & & &z+\overline{\eta}\alpha_{n-2}
\end{array}
\right).
\end{equation*}
Then for unit vectors $\underline{e}_{1}=(1,0)^{T}$ and $\underline{e}_{2}=(0,1)^{T}$, we can derive
\begin{equation*}
T_n(\omega,z;x)\underline{e}_{1}\wedge\underline{e}_{2}=(\sqrt{z})^{-n}\Big(\prod_{j=0}^{n-1}\frac{1}{\rho_j}\Big)t_{1}.
\end{equation*}
Due to the fact that $(z\varphi^{\beta,\eta}_{[1,n-1]})^{*}=(\varphi^{\beta,\eta}_{[1,n-1]})^{*}$ and
$$\Big(\frac{z\varphi^{\beta,\eta}_{[1,n-1]}-\varphi^{\beta,\eta}_{[0,n-1]}}{\alpha_{-1}}\Big)^{*}=z\Big(\frac{z\varphi^{\beta,\eta}_{[1,n-1]}-\varphi^{\beta,\eta}_{[0,n-1]}}{\alpha_{-1}}\Big)^{*},$$
we have
\begin{align*}
|T_n(\omega,z;x)\underline{e}_{1}\wedge\underline{e}_{2}|&\leq \Big(\prod_{j=0}^{n-1}\frac{1}{\rho_j}\Big)\Big(|\varphi^{\beta,\eta}_{[1,n-1]}|+\Big|\frac{z\varphi^{\beta,\eta}_{[1,n-1]}-\varphi^{\beta,\gamma}_{[0,n-1]}}{\alpha_{-1}} \Big|\Big )\\
&=\Big(\prod_{j=0}^{n-1}\frac{1}{\rho_j}\Big)\Big(|\varphi^{\beta,\eta}_{[1,n-1]}|+|\rho_{0}\det \mathcal{P}_{n-1}-\overline{\alpha}_{0}\varphi^{\beta,\eta}_{[1,n-1]}|\Big)\\
&\leq \Big(\prod_{j=0}^{n-1}\frac{1}{\rho_j}\Big)(2|\varphi^{\beta,\eta}_{[1,n-1]}|+|\det \mathcal{P}_{n-1}|).
\end{align*}
Therefore, from \eqref{Lem3.18-(0)} we have
\begin{equation*}
|\varphi^{\beta,\eta}_{[0,n-1]}|\leq 2|\varphi^{\beta,\eta}_{[1,n-1]}|+|\det \mathcal{P}_{n-1}|\leq \Big(\prod_{j=0}^{n-1}\frac{1}{\rho_j}\Big)(2|\varphi^{\beta,\eta}_{[1,n-1]}|+|\det \mathcal{P}_{n-1}|).
\end{equation*}
Suppose
\begin{equation*}
|T_n(\omega,z;x)\underline{e}_{1}\wedge\underline{e}_{2}|\leq \Big(\prod_{j=0}^{n-1}\frac{1}{\rho_j}\Big)(2|\varphi^{\beta,\eta}_{[1,n-1]}|+|\det \mathcal{P}_{n-1}|)<\exp(nL_{n}(z)-100n^{1-\sigma}).
\end{equation*}
For convenience, we write $T_n(\omega,z;x)$ as $T_n$. Since
\begin{equation*}
T_{n}\underline{u}^{+}_{n}\wedge \underline{e}_{2}(\underline{u}^{+}_{n}\cdot\underline{e}_{1})+T_{n}\underline{u}^{-}_{n}\wedge \underline{e}_{2}(\underline{u}^{-}_{n}\cdot\underline{e}_{1})=\|T_{n}\|\underline{v}^{+}_{n}\wedge \underline{e}_{2}(\underline{u}^{+}_{n}\cdot\underline{e}_{1})+\|T_{n}\|^{-1}\underline{v}^{-}_{n}\wedge \underline{e}_{2}(\underline{u}^{-}_{n}\cdot\underline{e}_{1}),
\end{equation*}
one can obtain that
\begin{equation*}
\|T_{n}\||\underline{u}^{+}_{n}\cdot\underline{e}_{1}||\underline{v}^{+}_{n}\wedge \underline{e}_{2}|
-\|T_{n}\|^{-1}|\underline{u}^{-}_{n}\cdot\underline{e}_{1}||\underline{v}^{-}_{n}\wedge \underline{e}_{2}|\leq \exp(nL_{n}(z)-100n^{1-\sigma}).
\end{equation*}
Then the proof of \eqref{Lem3.18-(2)} is the same as the proof of the Lemma 2.9 in \cite{GS08-GAFA}, we just need to replace the use of $f_{N}$ there by our $\varphi^{\beta,\eta}_{[0,n-1]}$ and replace $N$ by $n$.
\end{proof}

Next, we establish a lower bound on the average of the logarithm of the characteristic determinant, ensuring it is close to the finite scale Lyapunov exponent.

\begin{lemma}\label{Lemma3.19}
There exists some constant $\kappa>0$ (depending on $\sigma$ in LDT) such that
\begin{equation}\label{Lem3.19-(1)}
\int_{\mathbb{T}^{d}}\frac{1}{n}\log|\varphi^{\beta,\eta}_{[0,n-1]}(x)|dx>L_{n}(z)-n^{-\kappa}
\end{equation}
for $n\geq N(T,z,\sigma)$.
\end{lemma}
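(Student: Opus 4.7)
The plan is to upgrade the $3$-point bound of Lemma~\ref{Lemma3.18} into a single-point tail estimate for $\phi(x):=\log|\varphi^{\beta,\eta}_{[0,n-1]}(x)|$, and then integrate using the sup bound from Remark~\ref{Remark3.14}. Write $B:=\{x\in\mathbb{T}^d:\phi(x)<nL_n(z)-Kn^{1-\sigma}\}$ for $K=200$. If $x\in B\cap T^{-j_1}B\cap T^{-j_2}B$ for a pair $(j_1,j_2)$ admissible in Lemma~\ref{Lemma3.18}, then each of $|\varphi(x)|,\,|\varphi(T^{j_1}x)|,\,|\varphi(T^{j_2}x)|$ is at most $\exp(nL_n-Kn^{1-\sigma})$, so their sum is below $3\exp(nL_n-Kn^{1-\sigma})<\exp(nL_n-100n^{1-\sigma})$ for large $n$. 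Lemma~\ref{Lemma3.18} therefore yields
\[
\mathrm{mes}(B\cap T^{-j_1}B\cap T^{-j_2}B)\le\exp(-n^{s}),\qquad s=\tfrac{1-\sigma}{2d}\wedge\sigma,
\]
uniformly over all $(j_1,j_2)\in [l_0,n^{(1-\sigma)/8}]^2$ satisfying the hypotheses of Lemma~\ref{Lemma3.18}.

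Averaging the previous inequality over admissible $(j_1,j_2)\in [l_0,M]^2$ with $M:=n^{(1-\sigma)/8}$, and writing $\bar b_M(x):=\frac{1}{M-l_0}\sum_{j=l_0}^{M}\mathbf{1}_B(T^jx)$, produces $\int_{\mathbb{T}^d}\mathbf{1}_B(x)\,\bar b_M(x)^2\,dx\le\exp(-n^s)+o(1)$. The Diophantine condition~\eqref{standard-DC} on $\omega$ furnishes a quantitative Weyl-type equidistribution for the orbits $\{T^jx\}$, so after a standard smooth mollification of $\mathbf{1}_B$ one has $\bar b_M(x)=\mathrm{mes}(B)+\text{(polynomially small error in $M$)}$. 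Combining this with the Jensen-type inequality $\int\mathbf{1}_B\bar b_M^2\,dx\ge\mathrm{mes}(B)^{-1}\bigl(\int\mathbf{1}_B\bar b_M\,dx\bigr)^2$ and the approximation $\int\mathbf{1}_B\bar b_M\,dx\approx \mathrm{mes}(B)^2$, one absorbs the decorrelation error and concludes $\mathrm{mes}(B)\le\exp(-n^{\kappa_1})$ for some $\kappa_1>0$ depending on $\sigma$ and the Diophantine exponent $q$.

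With the tail estimate in hand, decompose
\[
\int_{\mathbb{T}^d}\phi\,dx\ge (nL_n-Kn^{1-\sigma})\bigl(1-\mathrm{mes}(B)\bigr)+\int_B\phi\,dx.
\]
The remainder is estimated via Cauchy-Schwarz, $\bigl|\int_B\phi\,dx\bigr|\le\sqrt{\mathrm{mes}(B)}\,\|\phi\|_{L^2}$, with $\|\phi\|_{L^2}=O(n^2)$ obtained by combining Remark~\ref{Remark3.14} (which bounds $\|\phi^+\|_{L^2}=O(n)$) with a layer-cake estimate of $\phi^-$ using Lemma~\ref{Lemma3.17} (which gives $\|\phi^-\|_{L^2}=O(n^2)$). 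Since $\sqrt{\mathrm{mes}(B)}\le\exp(-n^{\kappa_1}/2)$, the remainder is $o(1)$. Dividing through by $n$ and choosing any $\kappa\in(0,\sigma)$ yields $\int n^{-1}\phi\,dx\ge L_n(z)-n^{-\kappa}$ for $n\ge N(T,z,\sigma)$.

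The main obstacle is the decorrelation step that converts the triple-intersection bound into a bound on $\mathrm{mes}(B)^3$: since $\mathbf{1}_B$ is merely measurable, one must first approximate it by a trigonometric polynomial and exploit the Diophantine property of $\omega$ to control the oscillatory sums $\frac{1}{M}\sum_j e^{2\pi i k\cdot j\omega}$ by $|k|^q/(pM)$. The key challenge is balancing the mollification scale $\delta$ against the polynomial error in equidistribution so that the total error remains subdominant to $\mathrm{mes}(B)^3$.
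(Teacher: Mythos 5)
Your route is genuinely different from the paper's, which simply runs the proof of Lemma 2.10 of Goldstein--Schlag \cite{GS08-GAFA} with $\varphi^{\beta,\eta}_{[0,n-1]}$ in place of $f_N$, feeding in Lemma \ref{Lemma3.17} and Lemma \ref{Lemma3.18} and bounding the average directly by the subharmonicity machinery; but your route has a genuine gap at its central step, the passage from the triple-intersection bound to $\mathrm{mes}(B)\le\exp(-n^{\kappa_1})$. The set $B$ is a sublevel set of $\log|\varphi^{\beta,\eta}_{[0,n-1]}|$, a function whose derivatives are of size $e^{Cn}$; $B$ may have $e^{Cn}$ connected components and its boundary is resolved only at scale $e^{-Cn}$. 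Mollifying $\mathbf{1}_B$ at scale $\delta$ perturbs averages by the measure of a $\delta$-neighborhood of $\partial B$, which can be of order $e^{Cn}\delta$, so to keep that error below $\exp(-n^{\kappa_1})$ you must take $\delta\le e^{-cn}$ and hence trigonometric approximants with frequencies $|k|\gtrsim e^{cn}$; for such $k$ the Weyl sums over the admissible orbit length $M=n^{(1-\sigma)/8}$ are useless (your bound $|k|^q/(pM)$ is enormous). Thus the claim $\bar b_M(x)=\mathrm{mes}(B)+\text{polynomially small error}$ is unjustified, and even if it held, a polynomially small error dominates the quantity $\mathrm{mes}(B)$ you are trying to bound, so the absorption step could never produce an exponential bound. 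In fact no argument using only the conclusion of Lemma \ref{Lemma3.18} over shifts $j\le n^{(1-\sigma)/8}$ can beat a small fixed polynomial power: a set that is (almost) moved off itself by all admissible shifts satisfies every triple-intersection bound, and elementary counting (H\"older applied to $\sum_{k}\mathbf{1}_B(T^{kl_0}x)$) gives only $\mathrm{mes}(B)\lesssim (l_0/M)^{1/2}+\exp(-n^{s}/3)$. Note moreover that the tail estimate you are trying to prove first is essentially Lemma \ref{Lemma3.20}/Remark \ref{Remark3.22}, which the paper deduces \emph{from} Lemma \ref{Lemma3.19}; obtaining it beforehand would require a genuinely new argument that the proposal does not supply.

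The gap is fatal for your final step as well. Your $L^2$ bound $\|\log|\varphi^{\beta,\eta}_{[0,n-1]}|\|_{L^2}=O(n^2)$ is fine (the layer-cake use of Lemma \ref{Lemma3.17} works, since its tail bound kicks in only at thresholds $t\gtrsim n^2$), but then the Cauchy--Schwarz remainder is $\sqrt{\mathrm{mes}(B)}\,O(n^2)$, so after dividing by $n$ you need at least $\mathrm{mes}(B)\lesssim n^{-2-2\kappa}$; the polynomial bound actually extractable from Lemma \ref{Lemma3.18} has exponent smaller than $(1-\sigma)/16$, and the exponential bound you assert is not established. By contrast, the cited Goldstein--Schlag argument never needs a measure estimate for $B$ at the threshold $nL_n-Cn^{1-\sigma}$: it uses Lemma \ref{Lemma3.18} to produce points where $\log|\varphi^{\beta,\eta}_{[0,n-1]}|$ is within $O(n^{1-\sigma})$ of $nL_n$, the uniform upper bound of Remark \ref{Remark3.14}, and Lemma \ref{Lemma3.17} to control the very negative values, and it exploits subharmonicity (Riesz representation/Cartan-type splitting) rather than equidistribution of a polynomially short orbit. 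To repair your write-up you would have to replace the decorrelation step by that subharmonic argument, at which point you are back to the paper's proof.
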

\begin{proof}
Replace $f_{N}(x)$ in the proof of Lemma 2.10 in \cite{GS08-GAFA} by $\varphi^{\beta,\eta}_{[0,n-1]}(x)$, with the aid of the Lemma \ref{Lemma3.17} and Lemma \ref{Lemma3.18}, one can get the statement.
\end{proof}

\begin{lemma}\label{Lemma3.20}
From the Lemma \ref{Lemma3.11}, we have the upper bound
\begin{equation}\label{Lem3.20-(1)}
\sup\limits_{\mathbb{T}^{d}}\log\|M_{n}(\omega,z;x)\|\leq nL_{n}(\omega,z)+Cn^{1-\tau}.
\end{equation}
Fix a large positive integer $n\geq N_{0}$, then for some small constant $\nu$ depending on $\sigma$ and $\tau$,
\begin{equation}\label{Lem3.20-(2)}
\mathrm{mes}\Big\{x\in \mathbb{T}^{d}:\frac{1}{n}\log|\varphi^{\beta,\eta}_{[0,n-1]}(x)|<L_{n}(z)-n^{-\nu}\Big\}\leq \exp(-Cn^{\nu}).
\end{equation}
\end{lemma}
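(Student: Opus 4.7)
Define $u_n(x) := \frac{1}{n}\log|\varphi^{\beta,\eta}_{[0,n-1]}(x)|$. Since $\alpha$ extends analytically to $\mathbb{T}^d_h$, so does $\varphi^{\beta,\eta}_{[0,n-1]}$, hence $u_n$ is subharmonic in each variable on that tube. The plan is to sandwich $u_n$ between the pointwise upper bound supplied by Remark~\ref{Remark3.14},
\[
\sup_{x\in\mathbb{T}^d} u_n(x) \le L_n(\omega,z) + Cn^{-\tau},
\]
and the integral lower bound of Lemma~\ref{Lemma3.19},
\[
\int_{\mathbb{T}^d} u_n(x)\,dx \ge L_n(\omega,z) - n^{-\kappa},
\]
and then upgrade this closeness of sup and mean into a pointwise concentration estimate with exponentially small exceptional set.

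Set $\tilde u_n := u_n - L_n(\omega,z)$, so that $\sup \tilde u_n \le Cn^{-\tau}$ and $\langle \tilde u_n\rangle \ge -n^{-\kappa}$. The next step is to invoke the subharmonic large-deviation estimate of Lemma~\ref{Lemma3.1} (Goldstein--Schlag), applied to a rescaled and suitably truncated version of $\tilde u_n$, to obtain
\[
\mathrm{mes}\bigl\{x\in\mathbb{T}^d : |\tilde u_n(x) - \langle \tilde u_n\rangle| > n^{-\tau_1}\bigr\} \le \exp(-c_0 n^{\sigma_1})
\]
for some $\tau_1,\sigma_1\in(0,1)$ depending on $\tau,\kappa,\sigma$, and $d$. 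Combining this with $\langle \tilde u_n\rangle \ge -n^{-\kappa}$ and choosing any $\nu$ smaller than $\min(\tau_1,\sigma_1,\kappa)$ yields, for $n$ large,
\[
\{u_n < L_n(\omega,z) - n^{-\nu}\} \subseteq \bigl\{|\tilde u_n - \langle \tilde u_n\rangle| > n^{-\tau_1}\bigr\},
\]
whose measure is bounded by $\exp(-Cn^\nu)$ after relabeling constants, giving the desired conclusion.

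The principal obstacle is that $\tilde u_n$ is not pointwise bounded in $[-1,1]$ as Lemma~\ref{Lemma3.1} formally demands, because $\log|\varphi^{\beta,\eta}_{[0,n-1]}|$ tends to $-\infty$ at each of the at most $n$ zeros of $\varphi^{\beta,\eta}_{[0,n-1]}$. This is handled by a truncation argument in the spirit of Lemma~\ref{Lemma3.17}: replace $\tilde u_n$ by its truncation $\max(\tilde u_n,-M)$ for an appropriate level $M$, apply Lemma~\ref{Lemma3.1} to the rescaled and truncated function, and use Lemma~\ref{Lemma3.17} to bound the measure of the discarded set $\{\tilde u_n < -M\}$ by an expression dominated by the target $\exp(-Cn^\nu)$. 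A secondary technical point is the verification of the near-invariance hypothesis of Lemma~\ref{Lemma3.1}, which amounts to controlling the ratio $|\varphi^{\beta,\eta}_{[0,n-1]}(x+\omega)/\varphi^{\beta,\eta}_{[0,n-1]}(x)|$ away from the bad set of small-denominator points; the latter is again absorbed into the error. Optimizing the balance of $\tau$, $\kappa$, and $\sigma$ then produces the final $\nu = \nu(\sigma,\tau)$.
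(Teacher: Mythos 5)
Your starting point (the sup upper bound from Lemma \ref{Lemma3.11}/Remark \ref{Remark3.14} together with the integral lower bound of Lemma \ref{Lemma3.19}) agrees with the paper, but the central step of your argument --- feeding $\tilde u_n$ into Lemma \ref{Lemma3.1} --- has a genuine gap. Lemma \ref{Lemma3.1} requires the almost shift-invariance hypothesis \eqref{GS01-(1)}, i.e.\ $\sup_{\theta\in\mathbb{T}^{d}}|u(\theta+\omega)-u(\theta)|<1/n$, as a \emph{uniform} condition over the whole torus; it is used inside the proof of that lemma and cannot be weakened to ``away from a bad set of small-denominator points.'' For $\log\|M_{n}\|$ this hypothesis holds by the cocycle identity, which is exactly why Theorem \ref{LDT-matrix} follows from Lemma \ref{Lemma3.1}; but $\varphi^{\beta,\eta}_{[0,n-1]}$ is (up to bounded factors) a single \emph{entry} of the monodromy, it vanishes at points of $\mathbb{T}^{d}$, and there is no uniform control of $\bigl|\log|\varphi^{\beta,\eta}_{[0,n-1]}(x+\omega)|-\log|\varphi^{\beta,\eta}_{[0,n-1]}(x)|\bigr|$. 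Your proposed repair by truncation does not close this gap: first, the truncated function $\max(\tilde u_n,-M)$ still violates the shift-invariance bound $1/n$ near the zero set; second, to make it admissible for Lemma \ref{Lemma3.1} you must rescale by $M$, and Lemma \ref{Lemma3.16} only permits a truncation level of order $M\sim n^{d+1}$ for $u_n$, so the deviation threshold you recover is of order $Mn^{-\tau}\gg n^{-\nu}$, which is vacuous at the scale required in \eqref{Lem3.20-(2)}. This is not a secondary technicality; it is the reason the LDT for the determinant needs a different mechanism than the LDT for the norm.

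The paper avoids shift-invariance altogether: it uses only the two bounds you already have ($\sup u_n<L_n+Cn^{-\tau}$ and $\langle u_n\rangle>L_n-n^{-\kappa}$), slices the torus, and for the slice averages $g(z)=\int u(z,y)\,dy$ exploits the Riesz representation of subharmonic functions (as in the proof of Lemma \ref{Lemma3.14}) to convert the $L^{1}$ closeness $\|g-\langle g\rangle\|_{1}\lesssim n^{-\nu}$ into a BMO bound $\|g\|_{BMO}\lesssim n^{-\nu/2}$, then applies the John--Nirenberg inequality; repeating the same argument on each good one-dimensional fiber $u(x,\cdot)$ and combining via Fubini yields \eqref{Lem3.20-(2)}. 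If you want to salvage your write-up, you should replace the appeal to Lemma \ref{Lemma3.1} by this Riesz-representation/BMO/John--Nirenberg slicing scheme (the route of Goldstein--Schlag \cite{GS08-GAFA}), keeping your truncation discussion only for the harmless control of the set where $\varphi^{\beta,\eta}_{[0,n-1]}$ is extremely small.
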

\begin{proof}
We just consider the case that $d=2$. The general case is similar.

Let $u=\frac{1}{n}\log|\varphi^{\beta,\eta}_{[0,n-1]}(x)|$, $\langle u \rangle :=\int_{\mathbb{T}^{2}}u(x)dx$. From \eqref{Lem3.19-(1)}, \eqref{Lem3.20-(1)} and \eqref{relation}, one can get
$$\langle u\rangle>L_{n}-n^{-\kappa},\quad \sup\limits_{\mathbb{T}^{2}} u<L_{n}+Cn^{-\tau}.$$
Let $g(z)=\int_{\mathbb{T}}u(z,y)dy$. Then $\sup\limits_{\mathbb{T}}g<\sup\limits_{\mathbb{T}^{2}} u<L_{n}+Cn^{-\tau}$
and $\langle g\rangle=\langle u\rangle>L_{n}-n^{-\kappa}$. This implies that
\begin{equation}\label{Lem3.20-(3)}
\|g-\langle g\rangle\|_{1}\leq C n^{-\nu}
\end{equation}
for some constant $C$ and $\nu$ (smaller than $\tau$ and $\kappa$).

Since $\sup\limits_{\mathbb{T}}g\geq \langle g\rangle >\frac{\gamma}{2}>0$ for large $n$ and $\sup\limits_{T^{2}_{h}}u\leq C$, one can conclude from the proof of  Lemma \ref{Lemma3.14} that there is the Riesz representation
\begin{equation*}
g(z)=\int_{\mathbb{T}_{h/2}}\log|z-\zeta|d\mu(\zeta)+h(z),
\end{equation*}
where
\begin{equation*}
\mu(\mathbb{T}_{h/2})+\|h\|_{L^{\infty}(\mathbb{T}_{h/2})}\leq C_{h}(m+M),
\end{equation*}
provided that $\sup\limits_{y\in \mathbb{T}}u(z,y)>-m$.

According to the definition of the norm on BMO($\mathbb{T}$), one can derive that
\begin{equation}\label{Lem3.20-(4)}
\|g\|_{BMO(\mathbb{T})}\leq C\|g-\langle g\rangle\|^{1/2}_{1}\leq Cn^{-\frac{\nu}{2}}.
\end{equation}
Let $\Omega=\{x\in \mathbb{T}:g(x)=\int_{\mathbb{T}}u(x,y)dy>L_{n}-n^{-\nu/4}\}$. By \eqref{Lem3.20-(4)}, we have
\begin{equation}\label{Lem3.20-(5)}
\mathrm{mes}(\mathbb{T}\backslash \Omega)\leq \exp(-n^{\nu/4}).
\end{equation}
Fix some $x\in \Omega$. Then
\begin{equation}\label{Lem3.20-(6)}
\sup\limits_{y\in \mathbb{T}}u(x,y)\leq \sup\limits_{\mathbb{T}^{2}}u<L_{n}+Cn^{-\tau}<\int_{\mathbb{T}}u(x,y)dy+Cn^{-\nu/4}.
\end{equation}
Since $\sup\limits_{y\in\mathbb{T}}u(x,y)\geq 0$ and $\sup\limits_{z\in\mathbb{T}_{h}}u(x,z)\leq C$, combining \eqref{Lem3.20-(6)} and the definition of the norm on  BMO($\mathbb{T}$), we have
$$\|u(x,\cdot)\|_{BMO(\mathbb{T})}\leq Cn^{-\nu/8}$$
for any $x\in \Omega$.

Therefore, from the John-Nirenberg inequality,
\begin{equation}\label{Lem3.20-(7)}
\mathrm{mes}\{y\in \mathbb{T}|u(x,y)<L_{n}-Cn^{-\nu/16}\}\leq \exp(-Cn^{\nu/16})
\end{equation}
for large $n$. Then the statement follows from \eqref{Lem3.20-(5)} and \eqref{Lem3.20-(7)} via Fubini.
\end{proof}

\begin{remark}\label{Remark3.22}
From the above lemma, we get the LDT for the determinant $\varphi^{\beta,\eta}_{[0,n-1]}(\omega,z;x)$ as follows
\begin{equation}\label{LDT2}
\mathrm{mes}\{x\in \mathbb{T}^{d}:|\log|\varphi^{\beta,\eta}_{[0,n-1]}(\omega,z;x)|-nL_{n}(\omega,z)|>n^{1-\tau}\}<\exp(-Cn^{\nu}).
\end{equation}
\end{remark}

In what follows, we denote $\mathcal{D}(z_{0},r)=\{z\in \mathbb{C}:|z-z_{0}|<r\}$.

\begin{lemma}\label{Lemma3.25}
Suppose that $x\in \mathbb{T}^{d}$, $\omega\in\mathbb{T}^{d}(p,q)$, $z\in  \partial \mathbb{D}$ and $L(\omega,z)>\gamma>0$. Let $H\gg 1$ and $\tau$, $\nu$ as in LDT. There exists $C_{0}=C_{0}(p,q,z,\gamma)$, such that for any $n\geq N(p,q,z,\gamma)$, if
\begin{equation*}
\log|\varphi^{\beta,\eta}_{[0,n-1]}(x)|\leq nL_{n}(\omega,z)-C_{0}Hn^{1-\tau},
\end{equation*}
then there exists $x'\in \mathbb{C}^{d}$, $|x'-x|\lesssim \exp(-(H+n^{\nu}/d))$ such that $\varphi^{\beta,\eta}_{[0,n-1]}(x')=0$. Furthermore,
\begin{equation*}
\|(\mathcal{E}^{\beta,\eta}_{[0,n-1]}(\omega,z;x)-z)^{-1}\|\geq C\exp(H+n^{\nu}/d)
\end{equation*}
for some $C$.
\end{lemma}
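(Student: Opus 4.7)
The plan is to combine the upper bound on $\log|\varphi^{\beta,\eta}_{[0,n-1]}|$ from Corollary~\ref{Coro3.12} (equivalently Remark~\ref{Remark3.14}) with the LDT lower bound from Lemma~\ref{Lemma3.20}, and then locate a zero of $\varphi^{\beta,\eta}_{[0,n-1]}$ close to $x$ via the multidimensional Cartan estimate from the appendix. Concretely, set $\phi(y):=\varphi^{\beta,\eta}_{[0,n-1]}(\omega,z;x+y)$ on the polydisk $\mathcal{P}=D(0,1/n)^{d}\subset\mathbb{C}^{d}$. Then Corollary~\ref{Coro3.12} / Remark~\ref{Remark3.14} yield $\sup_{y\in\mathcal{P}}\log|\phi(y)|\leq nL_{n}(\omega,z)+Cn^{1-\tau}=:M$, while Lemma~\ref{Lemma3.20} provides $\log|\varphi^{\beta,\eta}_{[0,n-1]}(y)|\geq nL_{n}(\omega,z)-n^{1-\tau}$ for every $y\in\mathbb{T}^{d}$ outside a set of measure at most $\exp(-Cn^{\nu})$. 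The hypothesis $\log|\phi(0)|\leq nL_{n}-C_{0}Hn^{1-\tau}$ then says that $\phi(0)$ is exponentially smaller than the typical value, and the goal is to convert this gap into a nearby zero.

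To do this I would apply Cartan's multidimensional estimate to $\phi$ on $\mathcal{P}$. That estimate covers the sublevel set $\{y\in\mathcal{P}':\log|\phi(y)|<M-L\}$, for a slightly shrunken polydisk $\mathcal{P}'\subset\mathcal{P}$, by a union of polydisks whose total radius in each complex coordinate decays exponentially in $L$. Choosing $L\sim C_{0}Hn^{1-\tau}$ with $C_{0}$ sufficiently large (depending on the constants from Corollary~\ref{Coro3.12} and Lemma~\ref{Lemma3.20}), and feeding in the LDT measure bound $\exp(-Cn^{\nu})$ for the bad set in $\mathcal{P}'\cap\mathbb{T}^{d}$, one concludes that the sublevel set is contained in a union of polydisks of total diameter $\lesssim\exp(-(H+n^{\nu}/d))$. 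Since $0$ lies in this sublevel set by hypothesis, there must be a zero $x'\in\mathbb{C}^{d}$ of $\varphi^{\beta,\eta}_{[0,n-1]}$ with $|x'-x|\lesssim\exp(-(H+n^{\nu}/d))$. The factor $1/d$ in the exponent is characteristic of the $d$-dimensional Cartan estimate, reflecting the distribution of a $d$-dimensional measure bound across $d$ coordinate directions; calibrating the Cartan estimate to produce precisely this radius (and no worse) is the main technical obstacle, since it requires carefully balancing the polydisk sizes against the LDT exceptional measure.

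The resolvent bound then follows at once from the existence of this complex zero. Since $\det(z-\mathcal{E}^{\beta,\eta}_{[0,n-1]}(\omega;x'))=\varphi^{\beta,\eta}_{[0,n-1]}(\omega,z;x')=0$, there is a unit vector $v\in\mathbb{C}^{n}$ with $\mathcal{E}^{\beta,\eta}_{[0,n-1]}(\omega;x')v=zv$. The analyticity of $\alpha$ on $\mathbb{T}^{d}_{h}$ makes $x\mapsto\mathcal{E}^{\beta,\eta}_{[0,n-1]}(\omega;x)$ Lipschitz on the relevant domain (for $n$ large, $|x-x'|$ is well inside the analyticity strip), so $\|\mathcal{E}^{\beta,\eta}_{[0,n-1]}(x)-\mathcal{E}^{\beta,\eta}_{[0,n-1]}(x')\|\leq C|x-x'|$. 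Hence
\begin{equation*}
\|(\mathcal{E}^{\beta,\eta}_{[0,n-1]}(x)-z)v\|=\|(\mathcal{E}^{\beta,\eta}_{[0,n-1]}(x)-\mathcal{E}^{\beta,\eta}_{[0,n-1]}(x'))v\|\leq C|x-x'|,
\end{equation*}
and therefore
\begin{equation*}
\|(\mathcal{E}^{\beta,\eta}_{[0,n-1]}(\omega,z;x)-z)^{-1}\|\geq \frac{1}{\|(\mathcal{E}^{\beta,\eta}_{[0,n-1]}(x)-z)v\|}\geq \frac{1}{C|x-x'|}\geq C^{-1}\exp(H+n^{\nu}/d),
\end{equation*}
which is the claimed bound.
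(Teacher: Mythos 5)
Your reduction of the resolvent bound to the existence of the complex zero $x'$ is fine (testing against an eigenvector of $\mathcal{E}^{\beta,\eta}_{[0,n-1]}(x')$ is essentially the paper's Neumann-series argument in cleaner form), but the main step --- producing the zero --- has a genuine gap. You apply the multidimensional Cartan estimate on the polydisk $D(0,1/n)^{d}$ centered at $x$ and then assert that, because $x$ lies in the deep sublevel set, it must lie in a Cartan polydisk that contains a zero. That last inference is exactly what the $d\geq 2$ Cartan estimate does \emph{not} give: in Lemma \ref{Lemma3.23} the refinement ``keep only the disks containing a zero of $\varphi$'' is stated only for $d=1$, and for several variables the exceptional set is defined inductively with no such property. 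Moreover the quantitative calibration does not close: the $d$-dimensional statement puts the bad set in $\mathrm{Car}_{d}(H^{1/d},K)$, so to force covering radii of size $\exp(-(H+n^{\nu}/d))$ you would need a Cartan parameter of order $(H+n^{\nu}/d)^{d}$, which in turn would require the hypothesis deficit to be of order $(H+n^{\nu}/d)^{d}\,n^{1-\tau}$ rather than the assumed $C_{0}Hn^{1-\tau}$. Finally, Cartan's lemma requires a lower bound on $\log|\phi|$ \emph{at the center} of the polydisk, and your center is $x$ itself, where the function is small by hypothesis; the LDT measure bound alone, fed in ``somewhere,'' cannot substitute for this.

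The paper avoids all three problems by a one-variable slicing argument, and this also explains the factor $1/d$, which you misattribute to a $d$-dimensional Cartan estimate. One first uses the LDT for the determinant (Remark \ref{Remark3.22}): since the exceptional set has measure at most $\exp(-Cn^{\nu})$, a ball of radius $\sim\exp(-n^{\nu}/d)$ around $x$ cannot be contained in it, so there is $x_{0}$ with $|x-x_{0}|\lesssim\exp(-n^{\nu}/d)$ and $\log|\varphi^{\beta,\eta}_{[0,n-1]}(x_{0})|>nL_{n}-n^{1-\tau}=:m$; this is where $n^{\nu}/d$ enters. Then one restricts $\varphi^{\beta,\eta}_{[0,n-1]}$ to the complex line through $x_{0}$ in the direction $x-x_{0}$, rescaled so that $x$ corresponds to some $\zeta_{0}\in\mathcal{D}(0,1/7)$, and applies the \emph{one-dimensional} Cartan estimate with lower bound $m$ at the center $x_{0}$ and upper bound $M=nL_{n}$: the bad set consists of at most $CHn^{1-\tau}$ disks of total radius $<e^{-H}$, each of which may be taken to contain a zero. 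Since $\log|\phi(\zeta_{0})|\leq nL_{n}-C_{0}Hn^{1-\tau}$ falls below the Cartan threshold $M-CH(M-m)$ for $C_{0}$ large, $\zeta_{0}$ lies in such a disk, giving a zero $\zeta'$ with $|\zeta'-\zeta_{0}|<2e^{-H}$, hence a zero $x'$ of the determinant with $|x'-x|\lesssim\exp(-H)\exp(-n^{\nu}/d)$. Your plan as written would need to be reorganized along these lines to be correct.
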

\begin{proof}
From Remark \ref{Remark3.22}, there exists $x_{0}$ outside the set in \eqref{LDT2} with $|x-x_{0}|\lesssim\exp(-n^{\nu}/d)$ such that
\begin{equation*}
\log|\varphi^{\beta,\eta}_{[0,n-1]}(x)|>nL_{n}(\omega,z)-n^{1-\tau}=:m.
\end{equation*}
Let
\begin{equation*}
\phi(\zeta)=\varphi^{\beta,\eta}_{[0,n-1]}\Big(x_{0}+\frac{C\exp(-n^{\nu}/d)\zeta}{|x-x_{0}|}(x-x_{0})\Big)
\end{equation*}
and $\zeta_{0}\in \mathcal{D}(0,\frac{1}{7})$ such that $\phi(\zeta_{0})=\varphi^{\beta,\eta}_{[0,n-1]}(x)$.

From the assumption, one has
\begin{equation*}
\log|\varphi^{\beta,\eta}_{[0,n-1]}(x)|\leq nL_{n}(\omega,z)-C_{0}Hn^{1-\tau}<nL_{n}(\omega,z)=:M.
\end{equation*}
According to the Cartan's estimate, there exists a set $\mathcal{B}\in \mathrm{Car}_{1}(H,CHn^{1-\tau})$ such that
\begin{equation*}
\log|\phi(\zeta)|>M-CH(M-m)=CHn^{1-\tau}
\end{equation*}
for all $\zeta\in \mathcal{D}(0,\frac{1}{6})\backslash \mathcal{B}$.

On the other hand,
\begin{equation*}
\log|\phi(\zeta_{0})|=\log|\varphi^{\beta,\eta}_{[0,n-1]}(x)|\leq nL_{n}(\omega,z)-C_{0}Hn^{1-\tau}.
\end{equation*}
Thus, there exists $\zeta_{j}\in \mathcal{B}$ such that $\zeta_{0}\in \mathcal{D}(\zeta_{j},r_{j})\subset \mathcal{B}$, where $r_{j}<\exp(-H)$. Then Lemma \ref{Lemma3.23} implies that there exists $\zeta'\in\mathcal{D}(\zeta_{j},r_{j})$
such that $\phi(\zeta')=0$.

Taking $x'=x_{0}+\frac{C\exp(-n^{\nu}/d)\zeta'}{|x-x_{0}|}(x-x_{0})$, then we can obtain $\varphi^{\beta,\eta}_{[0,n-1]}(x')=0$.

Due to the structure of the matrix $\mathcal{E}^{\beta,\eta}_{[0,n-1]}$, there exists some constant $C$ such that
\begin{equation*}
\|\mathcal{E}^{\beta,\eta}_{[0,n-1]}(\omega,z;x)-\mathcal{E}^{\beta,\eta}_{[0,n-1]}(\omega,z;x')\|\leq C|x-x'|.
\end{equation*}
In addition, if
\begin{equation*}
\|(\mathcal{E}^{\beta,\eta}_{[0,n-1]}(\omega,z;x')-E)^{-1}\| \|\mathcal{E}^{\beta,\eta}_{[0,n-1]}(\omega,z;x)-\mathcal{E}^{\beta,\eta}_{[0,n-1]}(\omega,z;x')\|<1,
\end{equation*}
then we have
\begin{align*}
&\|(\mathcal{E}^{\beta,\eta}_{[0,n-1]}(\omega,z;x')-E)^{-1}\| \|\mathcal{E}^{\beta,\eta}_{[0,n-1]}(\omega,z;x)-\mathcal{E}^{\beta,\eta}_{[0,n-1]}(\omega,z;x')\| \\
&=\|(\mathcal{E}^{\beta,\eta}_{[0,n-1]}(\omega,z;x')-E)^{-1}\| \|(\mathcal{E}^{\beta,\eta}_{[0,n-1]}(\omega,z;x)-E)-(\mathcal{E}^{\beta,\eta}_{[0,n-1]}(\omega,z;x')-E)\| \\
&<1
\end{align*}
and
\begin{equation*}
\|E-(\mathcal{E}^{\beta,\eta}_{[0,n-1]}(\omega,z;x')-E)^{-1}(\mathcal{E}^{\beta,\eta}_{[0,n-1]}(\omega,z;x)-E)\|<1,
\end{equation*}
where $E$ denotes the identity matrix.

Thus, $E-[E-(\mathcal{E}^{\beta,\eta}_{[0,n-1]}(\omega,z;x')-E)^{-1}(\mathcal{E}^{\beta,\eta}_{[0,n-1]}(\omega,z;x)-E)]$
is invertible and this implies that $\mathcal{E}^{\beta,\eta}_{[0,n-1]}(\omega,z;x)-E$ is invertible. Then the second statement follows.
\end{proof}
As a consequence, we have the spectral form of LDT.
\begin{lemma}\label{spectrl-form-(LDT)}
Assume $x\in \mathbb{T}^{d}$, $\omega\in \mathbb{T}^{d}(p,q)$, $z\in\partial \mathbb{D}$ and $L(\omega,z)>\gamma>0$. Let $\tau$, $\nu$ as in LDT. If $n\geq N(p,q,z,\gamma)$ and
\begin{equation*}
\|(\mathcal{E}^{\beta,\eta}_{[0,n-1]}-z)^{-1}\|\leq C\exp(n^{\nu/2}),
\end{equation*}
then
\begin{equation*}
\log|\varphi^{\beta,\eta}_{[0,n-1]}(x)|>nL_{n}(\omega,z)-n^{1-\tau/2}.
\end{equation*}
\end{lemma}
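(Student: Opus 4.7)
The plan is to deduce this spectral form of LDT as a direct contrapositive of Lemma \ref{Lemma3.25}. That lemma takes a ``small'' value of $|\varphi^{\beta,\eta}_{[0,n-1]}(x)|$ and converts it (via the Cartan estimate and a nearby zero of the characteristic determinant) into a ``large'' lower bound on the resolvent norm. Here we are given a ``small'' upper bound on the resolvent norm, and we want to conclude that $|\varphi^{\beta,\eta}_{[0,n-1]}(x)|$ cannot be too small. So a contradiction argument is natural.

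Suppose, toward contradiction, that
\begin{equation*}
\log|\varphi^{\beta,\eta}_{[0,n-1]}(x)|\leq nL_{n}(\omega,z)-n^{1-\tau/2},
\end{equation*}
while simultaneously $\|(\mathcal{E}^{\beta,\eta}_{[0,n-1]}-z)^{-1}\|\leq C\exp(n^{\nu/2})$. To invoke Lemma \ref{Lemma3.25} I would select the auxiliary parameter $H:=n^{\tau/2}/C_{0}$, where $C_{0}=C_{0}(p,q,z,\gamma)$ is the constant appearing there. With this choice $C_{0}Hn^{1-\tau}=n^{1-\tau/2}$, so the standing hypothesis of Lemma \ref{Lemma3.25} is exactly met for our $x$, and the conclusion delivers
\begin{equation*}
\|(\mathcal{E}^{\beta,\eta}_{[0,n-1]}(\omega,z;x)-z)^{-1}\|\geq C\exp\bigl(H+n^{\nu}/d\bigr)\geq C\exp\bigl(n^{\nu}/d\bigr).
\end{equation*}
For $n\geq N(p,q,z,\gamma)$ sufficiently large, $n^{\nu}/d>n^{\nu/2}$, which contradicts the assumed resolvent bound. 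This forces the desired lower bound on $\log|\varphi^{\beta,\eta}_{[0,n-1]}(x)|$.

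The argument is essentially one line once Lemma \ref{Lemma3.25} is in hand; the only thing to check is the parameter arithmetic, which is routine since $\nu$ and $\tau$ are fixed positive constants and powers $n^{\nu/2}$ are eventually dominated by $n^{\nu}/d$. In particular, there is no need to re-enter the Cartan/Riesz machinery here, because all of that work was already absorbed into the production of the constants $C_{0}$ and $\nu$ at the level of Lemma \ref{Lemma3.25}. The ``main obstacle'' is therefore not really an obstacle at all in this proof—the genuine difficulty was concentrated earlier, in proving the LDT for $\varphi^{\beta,\eta}_{[0,n-1]}$ (Remark \ref{Remark3.22}) and upgrading it through Cartan to Lemma \ref{Lemma3.25}. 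What one should double-check at the writing stage is only that the choice $H=n^{\tau/2}/C_{0}$ is compatible with whatever implicit admissibility range for $H$ was used in Lemma \ref{Lemma3.25} (since $H$ there is described as $H\gg 1$, this only requires $n$ large, which is part of our hypothesis).
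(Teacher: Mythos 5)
Your proof is correct and follows essentially the same route as the paper: both deduce the statement from Lemma \ref{Lemma3.25} by contraposition, choosing the parameter $H$ so that the smallness threshold for $\log|\varphi^{\beta,\eta}_{[0,n-1]}|$ matches $n^{1-\tau/2}$ (you take $H=n^{\tau/2}/C_{0}$, the paper takes $H=\tfrac{d-1}{d}n^{\nu/2}$) and then playing the resulting resolvent lower bound against the assumed upper bound $C\exp(n^{\nu/2})$. Your parameter choice even makes the final exponent comparison ($n^{\nu}/d$ versus $n^{\nu/2}$ for large $n$) a bit cleaner than the paper's bookkeeping, but the underlying argument is the same.
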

\begin{proof}
Let $n^{\nu/2}=H+\frac{n^{\nu/2}}{d}$, then $H=\frac{d-1}{d}n^{\nu/2}$. Applying the above lemma, one can obtain
\begin{align*}
\log|\varphi^{\beta,\eta}_{[0,n-1]}(x)|&>nL_{n}(\omega,z)-C_{0}Hn^{1-\tau}=nL_{n}(\omega,z)-C_{0}\frac{d-1}{d}n^{\nu/2}n^{1-\tau}\\
&>nL_{n}(\omega,z)-C_{0}\frac{d-1}{d}n^{\nu/2}n^{\frac{1}{2}-\frac{\tau}{2}}>nL_{n}(\omega,z)-C_{0}\frac{d-1}{d}n^{1-\frac{\tau}{2}}\\
&>nL_{n}(\omega,z)-n^{1-\frac{\tau}{2}}.
\end{align*}
\end{proof}
\subsection{Poisson Formula for CMV Matrix}
For $z\in \mathbb{C}$, $\beta, \eta\in\partial \mathbb{D}$, we can define the polynomials
\begin{equation*}
\varphi^{\beta,\eta}_{[a,b]}(z):=\det(z-\mathcal{E}^{\beta,\eta}_{[a,b]}), \quad \phi^{\beta,\eta}_{[a,b]}(z):=(\rho_{a}\cdots\rho_{b})^{-1}\varphi^{\beta,\eta}_{[a,b]}(z).
\end{equation*}
Note that when $a>b$, $\phi^{\beta,\eta}_{[a,b]}(z)=1$.

Since the equation $\mathcal{E}u=zu$ is equivalent to $(z\mathcal{L}^{*}-\mathcal{M})u=0$. Then the associated finite-volume Green's functions are as follows
\begin{align*}
G^{\beta,\eta}_{[a,b]}(z)=\big(z(\mathcal{L}^{\beta,\eta}_{[a,b]})^{*}-\mathcal{M}^{\beta,\eta}_{[a,b]}\big)^{-1},\\
G^{\beta,\eta}_{[a,b]}(j,k;z)=\langle \delta_{j},G^{\beta,\eta}_{[a,b]}(z)\delta_{k}\rangle,\quad j,k\in [a,b].
\end{align*}
According to \cite[Proposition 3.8]{Kruger13-IMRN} and \cite[Section B.1]{Zhu-arXiv}, for $\beta, \eta\in\partial \mathbb{D}$, the Green's function has the following expression
\begin{equation*}
|G^{\beta,\eta}_{[a,b]}(j,k;z)|=\frac{1}{\rho_{k}}\Big| \frac{\phi^{\beta,\cdot}_{[a,j-1]}(z)\phi^{\cdot,\eta}_{[k+1,b]}(z)}{\phi^{\beta,\eta}_{[a,b]}(z)}\Big|,\quad a\leq j\leq k\leq b,
\end{equation*}
where ``$\cdot$'' stands for the unchanged Verblunsky coefficient.

From \cite[Lemma 3.9]{Kruger13-IMRN}, if $u$ satisfies $\mathcal{E}u=zu$, Poisson formula reads
\begin{align*}
u(m)=&G^{\beta,\eta}_{[a,b]}(a, m;z)
\begin{cases}
(z\overline{\beta}-\alpha_{a})u(a)-\rho_{a}u(a+1), \quad & a\text{ even}\\
(z\alpha_{a}-\beta)u(a)+z\rho_{a}u(a+1),& a \text { odd}
\end{cases}\\
&+G^{\beta,\eta}_{[a,b]}(m,b;z)
\begin{cases}
(z\overline{\eta}-\alpha_{b})u(b)-\rho_{b}u(b-1), \quad & b\text{ even}\\
(z\alpha_{b}-\eta)u(b)+z\rho_{b-1}u(b-1),& b \text { odd}
\end{cases}
\end{align*}
for $a<m<b$.
\begin{lemma}\label{covering-lemma}
Let $x, \omega\in \mathbb{T}^{d}$, $z\in \partial \mathbb{D}$ and $[a,b]\subset \mathbb{Z}$. If for any $m\in [a+1,b-1]$, there exists an interval $I_{m}=[a_{m},b_{m}]\subset[a+1,b-1]$ containing $m$ such that
\begin{small}
\begin{equation*}
|G^{\beta,\eta}_{I_{m}}(a_{m},m;z)|
\begin{cases}
|z\overline{\beta}-\alpha_{a}|+\rho_{a}, & a\text{ even}\\
|z\alpha_{a}-\beta|+\rho_{a},& a \text { odd}
\end{cases}
+|G^{\beta,\eta}_{I_{m}}(m,b_{m};z)|
\begin{cases}
|z\overline{\eta}-\alpha_{b}|+\rho_{b},  &b\text{ even}\\
|z\alpha_{b}-\eta|+\rho_{b-1},& b \text { odd}
\end{cases}
<1,
\end{equation*}
\end{small}
then $z\notin \sigma(\mathcal{E}^{\beta,\eta}_{[a,b]})$.
\end{lemma}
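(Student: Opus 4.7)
The plan is to argue by contradiction. Suppose $z \in \sigma(\mathcal{E}^{\beta,\eta}_{[a,b]})$. Since $\mathcal{E}^{\beta,\eta}_{[a,b]}$ is a finite (unitary) matrix, $z$ is an eigenvalue; pick a normalized eigenvector $u$ with $\|u\|_{\infty}=1$, and let $m^{\ast}$ be a site at which the maximum is attained.

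The heart of the proof is to show that for every $m \in [a+2,b-2]$ one has $|u(m)|<1$. Given such an $m$, the hypothesis supplies an interval $I_{m}=[a_{m},b_{m}]\subset[a+2,b-2]$ with $m\in I_{m}$ for which the displayed inequality holds; in particular, $z\notin\sigma(\mathcal{E}^{\beta,\eta}_{I_{m}})$, so the Green's function $G^{\beta,\eta}_{I_{m}}(\cdot,\cdot;z)$ is well defined. Because $I_{m}$ is separated from each endpoint of $[a,b]$ by at least two sites, the equation $\mathcal{E}^{\beta,\eta}_{[a,b]}u=zu$ coincides, at every site of $I_{m}$ and at its two immediate neighbours $a_{m}-1$ and $b_{m}+1$, with the unmodified extended CMV recurrence $\mathcal{E}u=zu$. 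The Poisson formula displayed just before the lemma is therefore applicable with $[a_{m},b_{m}]$ in place of $[a,b]$, yielding
\[
u(m)=G^{\beta,\eta}_{I_{m}}(a_{m},m;z)\,\mathcal{B}_{\mathrm{L}}+G^{\beta,\eta}_{I_{m}}(m,b_{m};z)\,\mathcal{B}_{\mathrm{R}},
\]
where $\mathcal{B}_{\mathrm{L}}$ and $\mathcal{B}_{\mathrm{R}}$ are the parity-dependent boundary combinations at $a_{m}$ and $b_{m}$ appearing in that formula. The pointwise bounds $|\mathcal{B}_{\mathrm{L}}|\leq |z\overline{\beta}-\alpha_{a_{m}}|+\rho_{a_{m}}$ (or its odd counterpart $|z\alpha_{a_{m}}-\beta|+\rho_{a_{m}}$) and the analogous bound for $\mathcal{B}_{\mathrm{R}}$, which follow from $\|u\|_{\infty}=1$, $|z|=1$ and the unitarity $|\beta|=|\eta|=1$, then combine with the covering inequality to give $|u(m)|<1$.

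This already produces the contradiction whenever $m^{\ast}\in[a+2,b-2]$. To eliminate the residual possibility $m^{\ast}\in\{a,a+1,b-1,b\}$, I would use the eigenvalue equations in the two boundary $\Theta$-blocks of $\mathcal{E}^{\beta,\eta}_{[a,b]}=\mathcal{L}\mathcal{M}$: since the modified boundary coefficients $\beta,\eta$ lie on $\partial\mathbb{D}$, these blocks are invertible and express $(u(a),u(a+1))$ and $(u(b-1),u(b))$ as bounded linear combinations of the interior values $u(a+2),u(a+3)$ and $u(b-3),u(b-2)$; combined with the strict bound $|u|<1$ already proven on $[a+2,b-2]$, this forces $|u|<1$ at the four boundary sites as well, contradicting $|u(m^{\ast})|=1$. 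The main obstacle is notational rather than conceptual: one must carefully track the four parity cases for $(a_{m},b_{m})$ in the Poisson formula and the two parity cases for $(a,b)$ in the boundary-block analysis. The substantive step is the interior Poisson argument, and once the existence of $G^{\beta,\eta}_{I_{m}}(\cdot,\cdot;z)$ is in hand it follows directly from the covering inequality.
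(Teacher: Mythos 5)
Your interior argument is exactly the paper's proof: argue by contradiction, apply the Poisson formula on the sub-interval $I_{m}\subset[a+2,b-2]$ (which is legitimate because the rows of $\mathcal{E}^{\beta,\eta}_{[a,b]}$ indexed by $[a+2,b-2]$ involve neither the modified coefficients $\beta,\eta$ nor any site outside $[a,b]$, so the eigenvector satisfies the unmodified CMV recurrence there), bound the parity-dependent boundary combinations by $|z\overline{\beta}-\alpha_{a_{m}}|+\rho_{a_{m}}$ etc., and invoke the covering hypothesis. (You silently read the hypothesis with $\alpha_{a_{m}},\rho_{a_{m}},\alpha_{b_{m}},\rho_{b_{m}}$ rather than the literal $\alpha_{a},\rho_{a},\alpha_{b},\rho_{b}$; that is the intended reading, since those are the coefficients the Poisson formula on $I_{m}$ actually produces.)

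The part that goes beyond the paper --- ruling out a sup-norm maximizer at the four boundary sites --- is not established as written, and this is a genuine gap. From ``the boundary $\Theta$-blocks are invertible, so $(u(a),u(a+1))$ is a bounded linear combination of interior values'' you cannot conclude $|u(a)|<1$ from $|u|<1$ on $[a+2,b-2]$: a bounded linear map applied to vectors of sup-norm $<1$ gives something $<1$ only if the relevant coefficient bound is at most $1$, and the map you would actually get has the form $-(B-z)^{-1}B'$ (solving the two boundary rows for the boundary values), whose norm is not controlled by unitarity of the $\Theta$-blocks and can exceed $1$. A clean repair is available within your own scheme: apply your interior estimate at a site where $|u|$ is maximal \emph{over} $[a+2,b-2]$; since $a_{m},a_{m}+1,b_{m}-1,b_{m}$ all lie in $[a+2,b-2]$, the covering inequality gives $\max_{[a+2,b-2]}|u|\le\theta\,\max_{[a+2,b-2]}|u|$ with $\theta<1$, hence $u\equiv 0$ on $[a+2,b-2]$. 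Then the eigenvalue equations at rows $a+2$ and $a+3$, whose only entries outside $[a+2,b-2]$ sit in column $a+1$ with coefficients that are products of nonvanishing $\rho$'s, force $u(a+1)=0$, and the row at $a$, whose diagonal entry has modulus $|\alpha\cdot\beta|<1=|z|$, then forces $u(a)=0$; the right edge is symmetric. This yields $u=0$, the desired contradiction. (For comparison, the paper avoids the issue only by maximizing over the interior sites $(a,b)$ and treating just the case $m\in[a+2,b-2]$, so its own write-up leaves the boundary cases implicit; your instinct to address them is right, but the step needs the quantitative argument above rather than ``bounded linear combination''.)
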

\begin{proof}
Assume to the contrary that $z\in \sigma(\mathcal{E}^{\beta,\eta}_{[a,b]})$ and let $u$ be a corresponding eigenvector, i.e., $\mathcal{E}^{\beta,\eta}_{[a,b]}u=zu$. Suppose that $|u(m)|=\max\limits_{n\in(a,b)}|u(n)|$. If $m\in [a+1,b-1]$, then there is a vector $\tilde{u}$ satisfying $\mathcal{E}^{\beta,\eta}_{[a,b]}\tilde{u}=z\tilde{u}$ and $\tilde{u}|_{[a+1,b-1]}=u|_{[a+1,b-1]}$. Thus, from the hypothesis and the Poisson formula, one can get that
\begin{equation*}
|u(m)|\leq \max\{|u(a_{m})|,|u(a_{m}+1)|,|u(b_{m})|,|u(b_{m}-1)|\},
\end{equation*}
which is a contradiction.
\end{proof}

Next, we use the LDT for the characteristic determinant to bound Green's functions and their operator norms.

\begin{lemma}\label{Lemma3.28}
Suppose that $x_{0}\in \mathbb{T}^{d}$, $\omega_{0}\in \mathbb{T}^{d}(p,q)$, $z_{0}\in \partial \mathbb{D}$ and $L(\omega_{0},z_{0})>\gamma>0$. Let $K\in \mathbb{R}$ and $\tau$ be as in LDT. There exists $C_{0}=C_{0}(p,q,z_{0},\gamma)$ such that if $n\geq N(p,q,z_{0},\gamma)$ and
\begin{equation}\label{Lem3.28-(1)}
\log|\varphi^{\beta,\eta}_{[0,n-1]}(\omega_{0},z_{0};x_{0})|>nL_{n}(\omega_{0},z_{0})-K,
\end{equation}
then for any $(\omega,z,x)\in \mathbb{T}^{d}\times\partial \mathbb{D}\times\mathbb{T}^{d}$ with $|x-x_{0}|, |\omega-\omega_{0}|, |z-z_{0}|<\exp(-(K+C_{0}n^{1-\tau}))$ we have
\begin{equation}\label{Lem3.28-(2)}
|G^{\beta,\eta}_{[0,n-1]}(j,k;z)|\leq \exp(-\frac{\gamma}{2}|k-j|+K+2C_{0}n^{1-\tau})
\end{equation}
and
\begin{equation}\label{Lem3.28-(3)}
\|G^{\beta,\eta}_{[0,n-1]}(z)\|\leq \exp(K+3C_{0}n^{1-\tau}).
\end{equation}
\end{lemma}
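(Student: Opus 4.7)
The plan is to obtain the Green's function bound via the explicit product representation stated in the excerpt, namely
\[
|G^{\beta,\eta}_{[0,n-1]}(j,k;z)|=\frac{1}{\rho_{k}}\,\frac{|\phi^{\beta,\cdot}_{[0,j-1]}(z)\,\phi^{\cdot,\eta}_{[k+1,n-1]}(z)|}{|\phi^{\beta,\eta}_{[0,n-1]}(z)|}\qquad(j\leq k),
\]
converted into $\varphi$'s by $\phi=(\prod\rho_i)^{-1}\varphi$. After the $\rho$-factors cancel, one is left with the factor $\rho_j\cdots\rho_{k-1}\leq 1$ in front, so it suffices to bound the ratio $|\varphi^{\beta,\cdot}_{[0,j-1]}\varphi^{\cdot,\eta}_{[k+1,n-1]}|/|\varphi^{\beta,\eta}_{[0,n-1]}|$ from above. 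The case $j>k$ is treated analogously by the symmetric version of the Kr\"uger--Zhu formula.

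First I would transfer the lower bound on the denominator from the reference point to the nearby triple. By the hypothesis \eqref{Lem3.28-(1)} together with Remark \ref{Remark3.14} (or equivalently Corollary \ref{Coro3.13}) applied at $(\omega_0,z_0,x_0)$, the Lipschitz estimate gives
\[
\log|\varphi^{\beta,\eta}_{[0,n-1]}(\omega,z;x)|\geq nL_n(\omega_0,z_0)-K-C_0 n^{1-\tau}
\]
for all $(\omega,z,x)$ in the prescribed neighborhood, provided the one-step perturbation contribution $\exp(-(K+C_0 n^{1-\tau}))\cdot\exp(nL+Cn^{1-\tau})$ is at most $\tfrac12$, which is exactly how the constant $C_0$ is chosen. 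Next, for the numerator I would apply the uniform upper bound of Remark \ref{Remark3.14} to each of $\varphi^{\beta,\cdot}_{[0,j-1]}$ and $\varphi^{\cdot,\eta}_{[k+1,n-1]}$, obtaining
\[
\log|\varphi^{\beta,\cdot}_{[0,j-1]}(\omega,z;x)|\leq jL_j(\omega_0,z_0)+Cj^{1-\tau},\quad
\log|\varphi^{\cdot,\eta}_{[k+1,n-1]}(\omega,z;x)|\leq (n-k-1)L_{n-k-1}(\omega_0,z_0)+Cn^{1-\tau}.
\]

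To combine these, I would invoke Lemma \ref{Lemma3.4} so that $mL_m(\omega_0,z_0)=mL(\omega_0,z_0)+O((\log m)^{1/\sigma})$ for each relevant $m\in\{j,n-k-1,n\}$. This gives
\[
\log(\text{numerator})-\log(\text{denominator})\leq -(k-j+1)L(\omega_0,z_0)+K+2C_0 n^{1-\tau},
\]
absorbing the logarithmic error into the $n^{1-\tau}$ slack. Since $L(\omega_0,z_0)>\gamma$ and the perturbed $L(\omega,z)>\gamma/2$ by Lemma \ref{Lemma3.8} (after shrinking $\varepsilon_0$ if necessary), this yields \eqref{Lem3.28-(2)}. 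Finally, the operator-norm bound \eqref{Lem3.28-(3)} follows from summing the entrywise estimate,
\[
\|G^{\beta,\eta}_{[0,n-1]}(z)\|\leq \sum_{j,k}|G^{\beta,\eta}_{[0,n-1]}(j,k;z)|\leq Cn\exp(K+2C_0 n^{1-\tau})\leq \exp(K+3C_0 n^{1-\tau})
\]
via the geometric decay in $|k-j|$ and the convergent row sum.

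The main obstacle will be bookkeeping: carefully ensuring that (i) the pointwise upper bound of Remark \ref{Remark3.14} is in fact valid at the perturbed $(\omega,z,x)$ for the sub-intervals $[0,j-1]$ and $[k+1,n-1]$ rather than only on $[0,n-1]$ (which forces an application of the sub-mean-value/Cartan route of Lemma \ref{Lemma3.11} to each subinterval uniformly, where one must verify that the hypothesis $L(\omega,z)>\gamma/2$ passes to shorter windows), and (ii) that the two $Cn^{1-\tau}$ error terms produced by the perturbation and by the subadditive comparison $nL_n-nL$ add up to at most $2C_0 n^{1-\tau}$ with a single constant $C_0=C_0(p,q,z_0,\gamma)$. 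Once these are arranged, the exponential off-diagonal decay is immediate and the remaining operator-norm bound is a routine geometric-series argument.
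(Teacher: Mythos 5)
Your proposal follows essentially the same route as the paper's proof: the Kr\"uger--Zhu formula with the $\rho$-cancellation, the lower bound on $|\varphi^{\beta,\eta}_{[0,n-1]}|$ at the perturbed triple via Remark \ref{Remark3.14}, the upper bound on the two sub-determinants via Lemma \ref{Lemma3.11} together with Lemma \ref{Lemma3.4} and the continuity of $L$ to replace $L_n$ by $L>\gamma/2$, and finally summation of the exponentially decaying entries for \eqref{Lem3.28-(3)}. The bookkeeping issues you flag (short subintervals, choice of a single $C_0$) are handled implicitly in the paper in the same way, so your argument is correct and not materially different.
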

\begin{proof}
Take $|x-x_{0}|, |\omega-\omega_{0}|, |z-z_{0}|<\exp(-(K+C_{0}n^{1-\tau}))$ with $C$ large enough.

According to Remark \ref{Remark3.14}, we have
\begin{align*}
\log|\varphi^{\beta,\eta}_{[0,n-1]}(\omega,z;x)|&\geq\log|\varphi^{\beta,\eta}_{[0,n-1]}(\omega_{0},z_{0};x_{0})|-1\geq nL_{n}(\omega_{0},z_{0})-K-1\\
&\geq nL_{n}(\omega,z)-K-2\geq nL(\omega,z)-K-2.
\end{align*}
Thus, according to Lemma \ref{Lemma3.11}, for $0\leq j\leq k\leq n-1$, one can obtain
\begin{align*}
|G^{\beta,\eta}_{[0,n-1]}(j,k;z)|&=\frac{1}{\rho_{k}}\Big|\frac{\phi^{\beta,\cdot}_{[0,j-1]}(z)\phi^{\cdot,\eta}_{[k+1,n-1]}(z)}{\phi^{\beta,\eta}_{[0,n-1]}(z)}\Big|\\
&=\frac{1}{\rho_{k}}\Big|\frac{(\rho_{0}\cdots\rho_{j-1})^{-1}\varphi^{\beta,\cdot}_{[0,j-1]}(z)(\rho_{k+1}\cdots\rho_{n-1})^{-1}\varphi^{\cdot,\eta}_{[k+1,n-1]}(z)}{(\rho_{0}\cdots\rho_{n-1})^{-1}\varphi^{\beta,\eta}_{[0,n-1]}(z)}\Big|\\
&=(\rho_{j}\cdots\rho_{k-1})\Big|\frac{\varphi^{\beta,\cdot}_{[0,j-1]}(z)\varphi^{\cdot,\eta}_{[k+1,n-1]}(z)}{\varphi^{\beta,\eta}_{[0,n-1]}(z)}\Big|\\
&\leq\Big|\frac{\varphi^{\beta,\cdot}_{[0,j-1]}(z)\varphi^{\cdot,\eta}_{[k+1,n-1]}(z)}{\varphi^{\beta,\eta}_{[0,n-1]}(z)}\Big|\\
&\leq\exp(-(k-j)L(\omega,z)+Cn^{1-\tau}+K+2)\\
&\leq\exp(-\frac{\gamma}{2}(k-j)+K+2C_{0}n^{1-\tau}).
\end{align*}
Therefore, $|G^{\beta,\eta}_{[0,n-1]}(j,k;z)|\leq\exp(-\frac{\gamma}{2}|k-j|+K+2C_{0}n^{1-\tau})$ for any choice of $j, k$ and then estimate \eqref{Lem3.28-(3)} follows.
\end{proof}
Based on the above analysis, we can obtain the covering form of LDT.
\begin{lemma}\label{covering-form-(LDT)}
Suppose that $n\gg 1$, $x_{0}\in \mathbb{T}^{d}$, $\omega_{0}\in \mathbb{T}^{d}(p,q) $, $z_{0}\in \partial \mathbb{D}$ and $L(\omega_{0},z_{0})>\gamma>0$. Let $\tau, \nu$ be as in LDT. Suppose that for each point $m\in [0,n-1]$, there exists an interval $I_{m}\subset [0,n-1]$ such that:

(i) $\mathrm{dist}(m, [0,n-1]\backslash I_{m})\geq |I_{m}|/100$;

(ii) $|I_{m}|\geq C(p,q,z_{0},\gamma)$;

(iii) $\log|\varphi^{\beta,\eta}_{I_{m}}(\omega_{0},z_{0};x_{0})|>|I_{m}|L_{|I_{m}|}(\omega_{0},z_{0})-|I_{m}|^{1-\tau/4}$.\\
Then for any $(\omega,z,x)\in \mathbb{T}^{d}\times \partial \mathbb{D}\times\mathbb{T}^{d} $ such that
$$|x-x_{0}|,\, |\omega-\omega_{0}|,\, |z-z_{0}|<\exp(-2\max\limits_{m}|I_{m}|^{1-\tau/4}),$$
we have
$$\mathrm{dist}(z,\sigma(\mathcal{E}^{\beta,\eta}_{[0,n-1]}))\geq \exp (-2\max\limits_{m}|I_{m}|^{1-\tau/4}).$$
In addition, if $\omega\in \mathbb{T}^{d}(p,q)$ and $\max\limits_{m}|I_{m}|\leq n^{\nu/2}$, then
$$\log|\varphi^{\beta,\eta}_{[0,n-1]}|>nL_{n}(\omega,z)-n^{1-\tau/2}.$$
\end{lemma}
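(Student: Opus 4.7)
The plan is to combine the pointwise Green's function estimate of Lemma \ref{Lemma3.28} on each local interval $I_m$ with the covering lemma (Lemma \ref{covering-lemma}) to exclude $z$ from the spectrum on a small disk, and then feed the resulting resolvent bound into the spectral form of LDT (Lemma \ref{spectrl-form-(LDT)}) to obtain the quantitative lower bound on $\log|\varphi^{\beta,\eta}_{[0,n-1]}|$.

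\medskip

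\noindent\textbf{Step 1: Local Green's function estimates.} For each $m\in [0,n-1]$, hypothesis (iii) says
$$\log|\varphi^{\beta,\eta}_{I_m}(\omega_0,z_0;x_0)|>|I_m|L_{|I_m|}(\omega_0,z_0)-|I_m|^{1-\tau/4}.$$
I would apply Lemma \ref{Lemma3.28} on $I_m$ with the choice $K=|I_m|^{1-\tau/4}$. Note that $|I_m|^{1-\tau/4}$ dominates the error $C_0|I_m|^{1-\tau}$ for $|I_m|\geq C(p,q,z_0,\gamma)$ by hypothesis (ii). This gives, for every $(\omega,z,x)$ with $|x-x_0|,\,|\omega-\omega_0|,\,|z-z_0|<\exp(-2|I_m|^{1-\tau/4})$, the bound
$$|G^{\beta,\eta}_{I_m}(j,k;z)|\leq \exp\bigl(-\tfrac{\gamma}{2}|k-j|+3|I_m|^{1-\tau/4}\bigr).$$

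\medskip

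\noindent\textbf{Step 2: Apply the covering lemma.} Condition (i) gives $|m-a_m|,|m-b_m|\geq |I_m|/100$, so each of $|G^{\beta,\eta}_{I_m}(a_m,m;z)|$ and $|G^{\beta,\eta}_{I_m}(m,b_m;z)|$ is bounded by $\exp(-\gamma|I_m|/200+3|I_m|^{1-\tau/4})\leq \exp(-\gamma|I_m|/400)$ once $|I_m|$ is taken large enough. The prefactors $|z\overline{\beta}-\alpha_a|+\rho_a$ etc. are uniformly $O(1)$, so the sum in Lemma \ref{covering-lemma} is much less than $1$. Hence $z\notin\sigma(\mathcal{E}^{\beta,\eta}_{[0,n-1]})$. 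Since the same hypotheses remain valid when $z$ is replaced by any $z'$ in the disk of radius $\exp(-2\max_m|I_m|^{1-\tau/4})$ around $z$, this disk is entirely in the resolvent set, yielding
$$\dist\bigl(z,\sigma(\mathcal{E}^{\beta,\eta}_{[0,n-1]})\bigr)\geq \exp\bigl(-2\max_m|I_m|^{1-\tau/4}\bigr).$$
(Endpoint parity issues in the Poisson formula are harmless here because condition (i) keeps each $I_m$ well inside $[0,n-1]$; if necessary, a small shrinking of the covering to $[2,n-3]$ absorbs the restriction in Lemma \ref{covering-lemma}.)

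\medskip

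\noindent\textbf{Step 3: From distance to characteristic determinant.} Since $\mathcal{E}^{\beta,\eta}_{[0,n-1]}$ is unitary, $\|(\mathcal{E}^{\beta,\eta}_{[0,n-1]}-z)^{-1}\|$ equals the inverse of the spectral distance, so
$$\|(\mathcal{E}^{\beta,\eta}_{[0,n-1]}-z)^{-1}\|\leq \exp\bigl(2\max_m|I_m|^{1-\tau/4}\bigr).$$
Under the additional assumption $\omega\in\mathbb{T}^d(p,q)$ and $\max_m|I_m|\leq n^{\nu/2}$, the exponent $2n^{\nu(1-\tau/4)/2}<n^{\nu/2}$ for $n$ large (since $\nu(1-\tau/4)/2<\nu/2$), so the hypothesis of Lemma \ref{spectrl-form-(LDT)} holds with room to spare. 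Applying it yields $\log|\varphi^{\beta,\eta}_{[0,n-1]}(x)|>nL_n(\omega,z)-n^{1-\tau/2}$, as required.

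\medskip

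\noindent\textbf{Main obstacle.} The technical heart of the argument is Step 2: one must juggle (a) the stability radius produced by Lemma \ref{Lemma3.28}, which is a uniform $\exp(-2\max_m|I_m|^{1-\tau/4})$, against (b) the sum condition in the covering lemma for each $m$ separately, and (c) the parity/boundary subtleties of the CMV Poisson formula emphasized in the remark following Lemma \ref{covering-lemma}. Matching the radii so that a single disk in $(\omega,z,x)$ works simultaneously for all intervals $I_m$, while keeping the distance exponent above $\max_m|I_m|^{1-\tau/4}$ (and not $|I_m|^{1-\tau}$ from $C_0$), is the delicate bookkeeping step.
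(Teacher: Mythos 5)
Your proposal follows essentially the same route as the paper's proof: apply Lemma \ref{Lemma3.28} on each local window $I_{m}$ (with $K\simeq|I_{m}|^{1-\tau/4}$) to get off-diagonal Green's function decay, feed this into the covering lemma to exclude the whole small disk from $\sigma(\mathcal{E}^{\beta,\eta}_{[0,n-1]})$, and then convert the resulting distance (resolvent) bound into the determinant lower bound via the spectral form of LDT. The only differences are cosmetic constant bookkeeping (the paper works with radius $\exp(-\tfrac{3}{2}|I_{m}|^{1-\tau/4})$ and halves it, while you use $\exp(-2\max_{m}|I_{m}|^{1-\tau/4})$ directly), so the argument is correct and matches the paper.
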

\begin{proof}
According to Lemma \ref{Lemma3.28}, from (iii) we can obtain that
\begin{equation*}
|G^{\beta,\eta}_{[0,n-1]}(m,k;z)|\leq \exp(-\frac{\gamma}{2}|m-k|+\frac{3}{2}|I_{m}|^{1-\tau/4})
\end{equation*}
provided
$$|x-x_{0}|,\, |\omega-\omega_{0}|,\, |z-z_{0}|<\exp(-\frac{3}{2}|I_{m}|^{1-\tau/4}).$$
Then Lemma \ref{covering-lemma} implies that $z\notin \sigma(E^{\beta,\eta}_{[0,n-1]})$ for any
$|z-z_{0}|<\exp(-\frac{3}{2}|I_{m}|^{1-\tau/4})$.

Thus, if $|z-z_{0}|<\exp(-\frac{3}{2}|I_{m}|^{1-\tau/4})/2$, then
\begin{equation*}
\mathrm{dist}(z,\sigma(\mathcal{E}^{\beta,\eta}_{[0,n-1]}))\geq \exp(-\frac{3}{2}|I_{m}|^{1-\tau/4})/2.
\end{equation*}
This implies the first statement.

If $\max\limits_{m}|I_{m}|\leq n^{\nu/2}$, we have that
\begin{equation*}
\mathrm{dist}(z,\sigma(\mathcal{E}^{\beta,\eta}_{[0,n-1]}))\geq \exp(-n^{\nu/2}).
\end{equation*}
Then the second statement can be shown according to the spectral form of LDT.
\end{proof}

\subsection{Finite Scale Localization}
In this section, we derive the finite scale localization independently of the elimination of resonances.
\begin{theorem}\label{finite-scale-localization}
Assume $x_{0}\in \mathbb{T}^{d}$, $\omega_{0}\in \mathbb{T}^{d}(p,q)$, $z_{0}\in \partial \mathbb{D}$ and $L(\omega_{0},z_{0})>\gamma>0$. Let $\tau, \nu$ be as in LDT and $l, n$ be integers such that $C(p,q,z_{0},\gamma)\leq l \leq n^{\nu/2}$. Denote $z_{j}^{\Lambda}(\omega,x)$, $u_{j}^{\Lambda}(\omega,x)$ be the eigenpairs of $\mathcal{E}_{\Lambda}^{\beta,\eta}$ with $u_{j}^{\Lambda}(\omega,x)$ a unit vector, where $\mathcal{E}_{\Lambda}^{\beta,\eta}$ is the restriction to a finite interval $\Lambda$ of the extended matrix $\mathcal{E}$ and $\beta, \eta \in \partial\mathbb{D}$. Suppose that there exists an interval $I=[n',n'']\subset[0,n-1]$ such that
\begin{equation}\label{local-localization-(3)}
\log\big|\varphi_{[0,l-1]}^{\beta,\eta}(\omega_{0},z_{0};x_{0}+(m-1)\omega_{0})\big|>lL_{l}(\omega_{0},z_{0})-l^{1-\tau/4}
\end{equation}
for any $m\in[0,n-l]\backslash I$. Then for any $(\omega,x)\in \mathbb{T}^{d}(p,q)\times\mathbb{T}^{d}$ satisfying $|x-x_{0}|, |\omega-\omega_{0}|<\exp(-l)$ and any eigenvalue $|z_{j}^{[0,n-1]}(\omega,x)-z_{0}|<\exp(-l)$, we have
\begin{equation}\label{local-localization-(2)}
|u_{j}^{[0,n-1]}(\omega,x;s)|<\exp(-\frac{\gamma}{12}\mathrm{dist}(s,I))
\end{equation}
provided $\mathrm{dist}(s,I)\geq l^{2/\nu}$.
\end{theorem}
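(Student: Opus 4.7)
The plan is to combine the Green's-function bound of Lemma \ref{Lemma3.28} with a Poisson-formula iteration that propagates the unit eigenvector inward from $s$ toward the resonant interval $I$ (or toward the boundary of $[0,n-1]$). Throughout, fix $(\omega,x)\in\mathbb{T}^d(p,q)\times\mathbb{T}^d$ and $z=z_j^{[0,n-1]}(\omega,x)$ with $|x-x_0|,|\omega-\omega_0|,|z-z_0|<\exp(-l)$, and let $u=u_j^{[0,n-1]}(\omega,x)$ be the corresponding unit eigenvector of $\mathcal{E}_{[0,n-1]}^{\beta,\eta}$.

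First I would translate the input hypothesis into a statement about intervals of length $l$ inside $[0,n-1]$: by the shift covariance $\alpha_{n+m-1}(x_0)=\alpha_n(x_0+(m-1)\omega_0)$, we have $\varphi^{\beta,\eta}_{[0,l-1]}(\omega_0,z_0;x_0+(m-1)\omega_0)=\varphi^{\beta,\eta}_{[m-1,m+l-2]}(\omega_0,z_0;x_0)$, so \eqref{local-localization-(3)} supplies, for every $m\in[0,n-l]\setminus I$, an interval $J_m=[m-1,m+l-2]$ on which the characteristic determinant is close to optimal. Given any $s$ with $\mathrm{dist}(s,I)\ge l^{2/\nu}\gg l$, I can therefore pick $m_s$ so that $I_s:=J_{m_s}$ contains $s$ in its middle third, avoids $I$, and lies inside $[0,n-1]$. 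Apply Lemma \ref{Lemma3.28} with $K=l^{1-\tau/4}$: since $\exp(-l)\le \exp(-(K+C_0l^{1-\tau}))$, the perturbed parameters are admissible and
\[
|G^{\beta,\eta}_{I_s}(j,k;z)|\le \exp\!\Big(-\tfrac{\gamma}{2}|j-k|+3C_0 l^{1-\tau/4}\Big),\qquad j,k\in I_s.
\]
In particular, for the two endpoints $a_s,b_s$ of $I_s$ one has $|G^{\beta,\eta}_{I_s}(a_s,s;z)|,|G^{\beta,\eta}_{I_s}(s,b_s;z)|\le \exp(-\gamma l/7)$ for $l$ large.

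Next I invoke the Poisson formula on $I_s$ (legitimate because $I_s\subset[0,n-1]$ and $\mathcal{E}_{[0,n-1]}^{\beta,\eta}u=zu$): the value $u(s)$ is expressed as a bounded linear combination of $u(a_s),u(a_s+1),u(b_s-1),u(b_s)$ with coefficients multiplied by $G^{\beta,\eta}_{I_s}(a_s,s;z)$ and $G^{\beta,\eta}_{I_s}(s,b_s;z)$. This yields the one-step inequality
\[
|u(s)|\le C\exp(-\gamma l/7)\max\bigl\{|u(a_s)|,|u(a_s+1)|,|u(b_s-1)|,|u(b_s)|\bigr\}.
\]
Setting $v(t):=|u(t)|$, the inequality holds for every $t$ with $\mathrm{dist}(t,I)\ge l$. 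Iterating gives, after $k$ steps, $v(s)\le (C\exp(-\gamma l/7))^k v(t_k)$ where the reference point $t_k$ lies within $kl$ of $s$. I continue as long as every successive reference point still satisfies $\mathrm{dist}(t_j,I)\ge l$; the iteration must stop either when some $t_j$ enters the $l$-neighbourhood of $I$ or when $t_j$ exits $[0,n-1]$, and in either case $v(t_k)\le \|u\|_{\ell^2}=1$. A direct diameter count shows the iteration terminates in at most $k\le \mathrm{dist}(s,I)/l+O(1)$ steps at worst when moving toward $I$; the remaining case (always moving away from $I$) terminates in at most $n/l$ steps at the boundary. Combining,
\[
|u(s)|\le (C\exp(-\gamma l/7))^{\mathrm{dist}(s,I)/l-O(1)}\le \exp\!\Big(-\tfrac{\gamma}{12}\mathrm{dist}(s,I)\Big),
\]
once $l$ is large enough that $\log C\ll \gamma l$ and $\mathrm{dist}(s,I)\ge l^{2/\nu}$ absorbs the $O(1)$ correction.

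The main obstacle is controlling the branching of the iteration: at each step the maximum may lie on the side of $I_s$ farther from $I$, which on its face could drive the reference point away. I expect to handle this by carrying the maximum over all four boundary sites through the tree of length $k$, noting that because $\|u\|_{\ell^2}=1$ the terminal bound $v(t_k)\le 1$ is valid irrespective of direction, and because any branch of length $k$ explores at most the interval $[s-kl,s+kl]$, either $I$ or the endpoint $\{0,n-1\}$ is hit within $O(\mathrm{dist}(s,I)/l)$ iterations. A minor bookkeeping issue is that the Poisson formula in the paper refers to $\mathcal{E} u=zu$ on the extended matrix; I would either replace $u$ by its natural extension-by-zero (incurring only boundary corrections at $\{0,n-1\}$ that vanish by the unitary modification in $\mathcal{E}^{\beta,\eta}_{[0,n-1]}$) or reproduce the derivation for the modified finite restriction. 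The constants $\gamma/7$ and $\gamma/12$ can then be loosened or tightened as needed; the slack is enough to swallow the multiplicative factor $C$ and the additive term $3C_0 l^{1-\tau/4}$ coming from Lemma \ref{Lemma3.28}.
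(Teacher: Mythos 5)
Your overall strategy is genuinely different from the paper's: you run an iterated, scale-$l$ Green's-function/Poisson contraction and try to accumulate $\sim\mathrm{dist}(s,I)/l$ factors of $\exp(-\gamma l/7)$, whereas the paper never iterates. It takes the single long interval $J=[s-d,s+d]\cap[0,n-1]$ with $d=\mathrm{dist}(s,I)$, upgrades the scale-$l$ hypothesis to the bound $\log|\varphi^{\beta,\eta}_{J}|\ge |J|L_{|J|}-|J|^{1-\tau/2}$ via the covering form of LDT (Lemma \ref{covering-form-(LDT)}, whose condition (iii) is exactly what the hypothesis supplies through shift covariance), and then applies Lemma \ref{Lemma3.28} and the Poisson formula \emph{once} on $J$, getting $|u(s)|\le 6\exp(-\frac{\gamma}{2}d+C|J|^{1-\tau/2})<\exp(-\frac{\gamma}{12}d)$ directly.

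The gap in your version is the termination of the iteration at the edge of $[0,n-1]$. Your argument needs a \emph{lower} bound on the number of contraction steps (at least $\sim d/l$ of them), but you only give upper bounds ("terminates in at most $\dots$ steps"). If the chain of reference points exits at $\{0,n-1\}$ after few steps, you are left with only those few factors. This is not a vacuous worry: take $I$ in the middle of $[0,n-1]$ and $s$ within $O(l)$ of the endpoint $0$, so $d\sim n/2$ and the theorem demands $|u(s)|<\exp(-\frac{\gamma}{12}d)$ with $d$ enormous; your chain can reach the $l/3$-neighbourhood of $0$ in one or two steps (indeed, for such $s$ you cannot even choose a window with $s$ in its middle third inside $[0,n-1]$), and you only obtain $|u(s)|\lesssim\exp(-c\gamma l)$, far weaker than required. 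To close this you must show that near $\{0,n-1\}$ the one-step bound survives with reference points only on the interior side — i.e., that the boundary term of the Poisson formula at a window endpoint coinciding with $0$ or $n-1$ is absent because the boundary modification $\beta,\eta$ of $\mathcal{E}^{\beta,\eta}_{[0,n-1]}$ matches that of the window — so that the chain can never terminate at the edge and must travel all the way to $I$. Your proposed fix of "extension by zero with boundary corrections that vanish" does not do this as stated: the zero-extension of an eigenvector of $\mathcal{E}^{\beta,\eta}_{[0,n-1]}$ does not satisfy the extended-matrix equation in the rows near $0$ and $n-1$, which is precisely where the problem sits (away from those rows the equations agree, as the remark after Lemma \ref{covering-lemma} explains, and your interior steps are fine, including the bookkeeping of the error terms $Cl^{1-\tau/4}$ per step). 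The paper's single-interval argument confines this boundary subtlety to one term of one Poisson formula instead of to every excursion of an uncontrolled random-walk-like chain, which is why it closes.
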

\begin{proof}
Take $x, \omega, z=z_{j}^{[0,n-1]}(\omega,x)$ satisfying the assumptions and $s\in[0,n-1]\backslash I$ such that $d=\mathrm{dist}(s,I)\geq l^{2/\nu}$. Suppose $k\in[0,n']$. Let $J=[s-d,s+d]\cap[0,n-1]=:[t,n']$.
Obviously, $|J|\geq l^{2/\nu}$.

According to the covering form of LDT, we can obtain
\begin{equation}\label{local-localization-(3)}
\log|\varphi_{J}^{\beta,\eta}(\omega,z;x)|\geq |J|L_{|J|}(\omega,z)-|J|^{1-\tau/2}.
\end{equation}
Let $u=u_{j}^{[0,n-1]}(\omega,x)$. From Poisson formula, for $s>t$,
\begin{align*}
u(s)=&G^{\beta,\eta}_{J}(s,t;z)
\begin{cases}
(z\overline{\beta}-\alpha_{t})u(t)-\rho_{t}u(t+1), \quad & t\text{ even,}\\
(z\alpha_{t}-\beta)u(t)+z\rho_{t}u(t+1),& t \text { odd,}
\end{cases}\\
&+G^{\beta,\eta}_{J}(s,n';z)
\begin{cases}
(z\overline{\eta}-\alpha_{n'})u(n')-\rho_{n'}u(n'-1), \quad & n'\text{ even,}\\
(z\alpha_{n'}-\eta)u(n')+z\rho_{n'-1}u(n'-1),& n' \text { odd}.
\end{cases}
\end{align*}
From Lemma \ref{Lemma3.28}  we have
\begin{equation*}
|G^{\beta,\eta}_{J}(t, s;z)|\leq \exp(-\frac{\gamma}{2}|s-t|+|J|^{1-\tau/2}+2C_{0}|J|^{1-\tau})
\end{equation*}
and
\begin{equation*}
|G^{\beta,\eta}_{J}(s,n';z)|\leq \exp(-\frac{\gamma}{2}|s-n'|+|J|^{1-\tau/2}+2C_{0}|J|^{1-\tau}).
\end{equation*}
Therefore, we can get that
\begin{align*}
|u(s)|&\leq 6\exp(-\frac{\gamma}{2}d+|J|^{1-\tau/2}+2C_{0}|J|^{1-\tau})\\
& <6\exp(-\frac{\gamma}{2}d+C|J|^{1-\tau/2})<\exp(-\frac{\gamma}{12}d).
\end{align*}
For $s\in [n'',n-1]$, the statement can be shown analogously.
\end{proof}

To complement the finite scale localization of eigenfunctions, we next establish a separation result for eigenvalues.

\begin{lemma}\label{Lemma4.2}
Assume $x_{0}\in \mathbb{T}^{d}(p,q)$, $z_{0}\in \partial \mathbb{D}$ and $L(\omega_{0},z_{0})>\gamma>0$. Let $\tau, \nu$ be as in LDT and $l, n$ be integers such that $C(p,q,z_{0},\gamma)\leq l \leq n^{\nu/2}$. Assume that there exists an interval $I=[n',n'']\subset[0,n-1]$ such that \eqref{finite-scale-localization} holds and $|I|\geq l^{2/\nu}+\log n$. Then for any $(\omega,x)\in \mathbb{T}^{d}(p,q)\times\mathbb{T}^{d}$ satisfying $|x-x_{0}|, |\omega-\omega_{0}|<\exp(-l)$ and any eigenvalue $|z_{j}^{[0,n-1]}(\omega,x)-z_{0}|<\exp(-l)/2$, we have
\begin{equation}\label{Lem4.2-(1)}
|z_{j}^{[0,n-1]}(\omega,x)-z_{k}^{[0,n-1]}(\omega,x)|\geq \exp(-C|I|)
\end{equation}
for any $k\neq j$, where $C=C(\alpha_{i},z_{0})$.
\end{lemma}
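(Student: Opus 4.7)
The strategy is a contradiction argument that combines the finite scale localization of both eigenvectors with the classical adjugate identity for the derivative of the characteristic determinant, closed up by the LDT-type lower bound on a principal minor.

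Suppose, toward contradiction, that $z_{j}:=z_{j}^{[0,n-1]}(\omega,x)$ and $z_{k}:=z_{k}^{[0,n-1]}(\omega,x)$ are two distinct eigenvalues with $|z_{j}-z_{0}|,|z_{k}-z_{0}|<\exp(-l)/2$ and $|z_{j}-z_{k}|<\exp(-C|I|)$, where $C=C(\alpha_{i},z_{0})$ is a large constant to be fixed. Let $u^{(j)},u^{(k)}$ denote the $\ell^{2}$-normalized eigenvectors. The hypotheses are exactly those of Theorem~\ref{finite-scale-localization}, so applying it to both eigenpairs gives
$$|u^{(j)}(s)|,\ |u^{(k)}(s)|<\exp\!\bigl(-\tfrac{\gamma}{12}\,\mathrm{dist}(s,I)\bigr),\qquad\mathrm{dist}(s,I)\geq l^{2/\nu}.$$
Since $|I|\geq l^{2/\nu}+\log n$ and $I\subset [0,n-1]$, at least one endpoint $p\in\{0,n-1\}$ satisfies $\mathrm{dist}(p,I)\gtrsim|I|$ after swapping endpoints if necessary, and hence $|u^{(j)}(p)|^{2}\leq\exp(-\tfrac{\gamma}{12}|I|)$.

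The next step is the adjugate identity: for the simple eigenvalue $z_{j}$ of $A:=\mathcal{E}^{\beta,\eta}_{[0,n-1]}$ with eigenvector $u^{(j)}$,
$$\mathrm{adj}(z_{j}I-A)=\partial_{z}\varphi^{\beta,\eta}_{[0,n-1]}(\omega,z_{j};x)\cdot u^{(j)}(u^{(j)})^{*},$$
so taking the $(p,p)$-entry yields
$$\bigl|\det\bigl((z_{j}I-A)^{(p,p)}\bigr)\bigr|=\bigl|\partial_{z}\varphi^{\beta,\eta}_{[0,n-1]}(z_{j})\bigr|\cdot|u^{(j)}(p)|^{2}.$$
The $(p,p)$-minor is, up to the boundary correction imposed by the 5-diagonal CMV structure, the characteristic determinant of $\mathcal{E}$ restricted to the shorter interval $[0,n-1]\setminus\{p\}$. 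Because $p$ is an endpoint, the hypothesis $\log|\varphi^{\beta,\eta}_{[0,l-1]}(\omega_{0},z_{0};x_{0}+(m-1)\omega_{0})|>lL_{l}-l^{1-\tau/4}$ of Theorem~\ref{finite-scale-localization} is inherited by this shorter interval, and combining the covering form of LDT (Lemma~\ref{covering-form-(LDT)}) with the spectral form (Lemma~\ref{spectrl-form-(LDT)}) gives the lower bound
$$\log\bigl|\det\bigl((z_{j}I-A)^{(p,p)}\bigr)\bigr|\geq (n-1)L_{n-1}(\omega,z_{j})-(n-1)^{1-\tau/2}.$$
Rearranging delivers $|\partial_{z}\varphi^{\beta,\eta}_{[0,n-1]}(z_{j})|\geq \exp\bigl((n-1)L_{n-1}+\tfrac{\gamma}{12}|I|-(n-1)^{1-\tau/2}\bigr)$.

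To conclude, use the factorization $\partial_{z}\varphi^{\beta,\eta}_{[0,n-1]}(z_{j})=(z_{j}-z_{k})\prod_{m\ne j,k}(z_{j}-z_{m})$. An upper bound on the remaining product is obtained by applying the maximum principle to $Q(z):=\varphi^{\beta,\eta}_{[0,n-1]}(z)/[(z-z_{j})(z-z_{k})]$ on a disk around $z_{j}$ and invoking the upper LDT bound of Remark~\ref{Remark3.14}, giving $\prod_{m\ne j,k}|z_{j}-z_{m}|\leq \exp(nL_{n}+Cn^{1-\tau})$. Dividing, together with $(n-1)L_{n-1}-nL_{n}=O(\log n)$ via Lemma~\ref{Lemma3.4} and absorbing subexponential errors into $C$,
$$|z_{j}-z_{k}|\geq \exp\!\bigl(\tfrac{\gamma}{12}|I|-C'(\log n+n^{1-\tau})\bigr),$$
which, using $|I|\geq\log n$, contradicts $|z_{j}-z_{k}|<\exp(-C|I|)$ once $C$ is chosen sufficiently large in terms of $\alpha_{i}$ and $z_{0}$.

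\textbf{Main obstacle.} The most delicate step is the LDT lower bound on the $(p,p)$-minor determinant at the specific energy $z=z_{j}$: since $z_{j}$ is itself an eigenvalue of $A$ depending on $(\omega,x)$, it is not an a priori generic point where LDT automatically applies, and one cannot rule out directly that $z_{j}$ coincides with an eigenvalue of the minor (an interlacing-type obstruction, compounded by the non-unitarity of the CMV principal submatrix). To justify it, one must exploit the fact that $u^{(j)}$ is concentrated on $I$ while $p$ is at distance $\gtrsim|I|$ from $I$: a Green's function comparison along the lines of Lemma~\ref{Lemma3.28} shows that $z_{j}$ stays at distance $\geq\exp(-C'|I|)$ from the spectrum of the minor, so that the spectral form of LDT does apply. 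Handling the CMV boundary data that arises upon removing a single endpoint from the 5-diagonal structure is the technical heart of this step.
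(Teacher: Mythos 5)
There is a genuine gap, and it sits exactly where you flag your ``main obstacle'': the lower bound $\log|\det((z_jI-A)^{(p,p)})|\geq (n-1)L_{n-1}(\omega,z_j)-(n-1)^{1-\tau/2}$ is not available from the paper's machinery. The large deviation estimates (Remark~\ref{Remark3.22}, Lemma~\ref{spectrl-form-(LDT)}, Lemma~\ref{covering-form-(LDT)}) are proved only for $\varphi^{\beta,\eta}_{[a,b]}=\det(z-\mathcal{E}^{\beta,\eta}_{[a,b]})$ with unitary boundary data $\beta,\eta\in\partial\mathbb{D}$, i.e.\ for determinants of unitary CMV restrictions. The principal minor obtained by deleting one row and column of $\mathcal{E}^{\beta,\eta}_{[0,n-1]}$ is not such an object: it is not unitary and cannot be rewritten as $\mathcal{E}^{\beta',\eta}_{[1,n-1]}$ by adjusting the boundary datum, because the deleted site leaves behind entries involving the original $\alpha$'s, $\rho$'s that do not correspond to any admissible boundary condition. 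Moreover, to invoke the spectral form of LDT at the specific point $z=z_j$ you need $\mathrm{dist}(z_j,\sigma(\text{minor}))\gtrsim\exp(-n^{\nu/2})$; your proposed fix only aims at distance $\exp(-C'|I|)$, which can be vastly smaller (the hypotheses allow $|I|$ of order $n$), and such a statement is itself an interlacing-type separation between the full matrix and its minor -- precisely the kind of result the paper points out is unavailable for unitary matrices. So the central inequality of your scheme is neither proved nor reducible to the cited lemmas.

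Even if that step were granted, the quantitative conclusion would not follow. Every scale-$n$ input you use carries a deviation of size $n^{1-\tau}$ or $n^{1-\tau/2}$ (the upper bound of Remark~\ref{Remark3.14}, the claimed minor bound), so your final estimate is $|z_j-z_k|\geq\exp(\frac{\gamma}{12}|I|-C'n^{1-\tau/2})$ up to constants. Since the lemma only assumes $|I|\geq l^{2/\nu}+\log n$, the window $|I|$ may be polylogarithmic in $n$, and then this lower bound is far weaker than the required $\exp(-C|I|)$; no choice of $C=C(\alpha_i,z_0)$ repairs it, because the deficit $n^{1-\tau/2}$ grows with $n$ while $|I|$ need not. (Also, your endpoint choice $\mathrm{dist}(p,I)\gtrsim|I|$ fails when $|I|$ is comparable to $n$.) The paper's own proof avoids determinant identities altogether: assuming $|z_j-z_k|<\exp(-C|I|)$, it applies Theorem~\ref{finite-scale-localization} to both eigenvectors outside a neighborhood $\tilde I$ of $I$, and inside $\tilde I$ uses the Poisson formula together with the Lipschitz dependence of the Green's function on $z$ (Lemma~\ref{Lemma3.28}) to conclude $\|u_1-u_2\|^2<\|u_1\|^2+\|u_2\|^2$, contradicting orthogonality of eigenvectors of the unitary matrix; the scale $\exp(-C|I|)$ enters only through perturbation in $z$ on the window of size $\approx|I|$, which is why that route reaches the $|I|$-dependent separation that a global characteristic-determinant argument at scale $n$ cannot.
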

\begin{proof}
We argue by contradiction. Let $z_{1}=z_{j}^{[0,n-1]}(\omega,x)$, $z_{2}=z_{k}^{[0,n-1]}(\omega,x)$ and $u_{1}=u_{j}^{[0,n-1]}(\omega,x)$, $u_{2}=u_{k}^{[0,n-1]}(\omega,x)$ be the corresponding eigenfunctions and assume
$$|z_{1}-z_{2}|<\exp(-C|I|).$$
Since $u_{1}, u_{2}$ are eigenfunctions corresponding to different eigenvalues, they are orthogonal and
$$\|u_{1}-u_{2}\|^{2}=\|u_{1}\|^{2}+\|u_{2}\|^{2}.$$
Let $\tilde{I}=\{s\in [0,n-1]: \mathrm{dist}(s,I)<l^{2/\nu}\}$. According to Theorem \ref{finite-scale-localization}, one can obtain
\begin{equation*}
\sum_{s\notin \tilde{I}}|u_{i}(s)|^{2}\leq \|u_{i}\|^{2}\sum_{s\notin \tilde{I}}\exp(-\frac{\gamma}{12}\mathrm{dist}(s,I))\leq \|u_{i}\|^{2}\exp(-\frac{\gamma}{24} l^{2/\nu})
\end{equation*}
(we used the fact that $l$ is taken to be large enough).

Therefore,
\begin{equation}\label{Lem4.2-(2)}
\sum_{s\notin \tilde{I}}|u_{1}(s)-u_{2}(s)|^{2}\leq 2\exp(-\frac{\gamma}{24} l^{2/\nu})(\|u_{1}\|^{2}+\|u_{2}\|^{2}).
\end{equation}
For those $s\in \tilde{I}:=(I_{1},I_{2})$, applying Poisson formula, we have
\begin{align*}
&|u_{1}(s)-u_{2}(s)|\\
& \leq 2\big|G_{[I_{1},I_{2}]}^{\beta,\eta}(I_{1},s;z_{1})-G_{[I_{1},I_{2}]}^{\beta,\eta}(I_{1},s;z_{2})\big|(|u_{1}(I_{1})|+|u_{1}(I_{1}+1)|+|u_{2}(I_{1})|+|u_{2}(I_{1}+1)|)\\
&\quad+2\big|G_{[I_{1},I_{2}]}^{\beta,\eta}(s,I_{2};z_{1})-G_{[I_{1},I_{2}]}^{\beta,\eta}(s,I_{2};z_{2})\big|(|u_{1}(I_{2}-1)|+|u_{1}(I_{2})|+|u_{2}(I_{2})|+|u_{2}(I_{2}-1)|)\\
& \leq 2(\|u_{1}\|+\|u_{2}\|)(\big|G_{[I_{1},I_{2}]}^{\beta,\eta}(I_{1},s;z_{1})-G_{[I_{1},I_{2}]}^{\beta,\eta}(I_{1},s;z_{2})\big|+\big|G_{[I_{1},I_{2}]}^{\beta,\eta}(s,I_{2};z_{1})-G_{[I_{1},I_{2}]}^{\beta,\eta}(s,I_{2};z_{2})\big|).
\end{align*}
Note that $|z_{j}^{[0,n-1]}(\omega,x)-z_{0}|<\exp(-l)/2$, then Lemma \ref{Lemma3.28} can be applied. From \eqref{Lem3.28-(2)}, one can obtain that
\begin{equation*}
\big|G_{[I_{1},I_{2}]}^{\beta,\eta}(I_{1},s;z_{1})-G_{[I_{1},I_{2}]}^{\beta,\eta}(I_{1},s;z_{2})\big|+\big|G_{[I_{1},I_{2}]}^{\beta,\eta}(s,I_{2};z_{1})-G_{[I_{1},I_{2}]}^{\beta,\eta}(s,I_{2};z_{2})\big|<\frac{1}{4}.
\end{equation*}
Thus, we have
\begin{equation}\label{Lem4.2-(3)}
\sum_{s\in \tilde{I}}|u_{1}(s)-u_{2}(s)|^{2}<(\|u_{1}\|^{2}+\|u_{2}\|^{2}).
\end{equation}
Combining \eqref{Lem4.2-(2)} and \eqref{Lem4.2-(3)}, we can obtain that
\begin{equation*}
\|u_{1}-u_{2}\|^{2}<\|u_{1}\|^{2}+\|u_{2}\|^{2},
\end{equation*}
which contradicts the fact that $u_{1}, u_{2}$ are orthogonal.
\end{proof}
\section{Removing Double Resonances by Using Semialgebraic Sets}\label{section4}

In this section, semialgebraic sets will be introduced by approximating the Verblunsky coefficients $\alpha_{i}$ with a polynomial $\tilde{\alpha}_{i}$. Then we obtain a result on elimination of double resonances using semialgebraic sets. More precisely, given $n\geq 1$, by truncating $\alpha_{i}$'s Fourier series and the Taylor series of the trigonometric functions, one can obtain a polynomial $\tilde{\alpha}_{i}$ of degree $\lesssim n^{4}$ such that
\begin{equation}\label{VCA}
\|\alpha_{i}-\tilde{\alpha}_{i}\|_{\infty}\lesssim \exp(-n^{2}).
\end{equation}
Let $\tilde{\mathcal{E}}_{[0,n-1]}^{\beta,\eta}$ be the matrix with this truncated Verblunsky coefficient and the refined boundary condition $\beta, \eta\in \partial\mathbb{D}$. If we let $z_{j}^{[0,n-1]}(x,\omega)$ and $\tilde{z}_{j}^{[0,n-1]}(x,\omega)$ be the eigenvalues of $\mathcal{E}_{[0,n-1]}^{\beta,\eta}$ and $\tilde{\mathcal{E}}_{[0,n-1]}^{\beta,\eta}$ respectively, then
\begin{equation}\label{eigenvalue-estimate}
|z_{j}^{[0,n-1]}(x,\omega)-\tilde{z}_{j}^{[0,n-1]}(x,\omega)|\leq \|\mathcal{E}_{[0,n-1]}^{\beta,\eta}(x,\omega)-\tilde{\mathcal{E}}_{[0,n-1]}^{\beta,\eta}(x,\omega)\|\lesssim 8\|\alpha_{i}-\tilde{\alpha}_{i}\|_{\infty}.
\end{equation}

For the purpose of semialgebraic approximation, we take the following set into consideration
\begin{equation*}
\mathbb{T}_{n}^{d}(p,q)=\{\omega\in\mathbb{T}^{d}: \|k\cdot \omega\|\geq \frac{p}{|k|^{q}},\, \mathrm{for}\,\, \mathrm{all}\,\, \mathrm{nonzero}\,\, k \in \mathbb{Z}^{d}, 0< |k|\leq n\}.
\end{equation*}

We first recall the definition of semialgebraic sets, which are finite unions of sets defined by polynomial equalities and inequalities. This structure is critical for measuring exceptional resonant sets.
\begin{definition 1}\cite[Definition 9.1]{Bourgain-book}\label{semialgebraic-set}
A set $\mathcal{S}\subset \mathbb{R}^{n}$ is called semialgebraic if it is a finite union of sets defined by a finite number of polynomial equalities and inequalities. More precisely, let $\mathcal{P}=\{P_{1},\ldots,P_{s}\}\subset R[X_{1},\ldots,X_{n}]$ be a family of real polynomials whose degrees are bounded by $d$. A (closed) semialgebraic set $\mathcal{S}$ is given by an expression
\begin{equation}\label{semialgebraic(1)}
\mathcal{S}=\bigcup\limits_{j}\bigcap\limits_{l\in \mathcal{L}_{j}}\{R^{n}|P_{l}s_{jl}0\},
\end{equation}
where $\mathcal{L}_{j}\subset\{1,\ldots,s\}$ and $s_{jl}\in \{\geq,\leq,=\}$ are arbitrary. We say that $\mathcal{S}$ has degree at most $sd$, and its degree is the infimum of $sd$ over all representations as in \eqref{semialgebraic(1)}.
\end{definition 1}

To control the number of resonant points along orbits of the shift map, we use the following lemma, which bounds the density of semialgebraic sets under Diophantine frequency shifts.

\begin{lemma}\label{Lemma3.31}\cite[Corollary 9.7]{Bourgain-book}
Let $\mathcal{S}\subset [0,1]^{n}$ be semialgebraic of degree $B$ and $\mathrm{mes}_{n}\mathcal{S}<\eta$. Let $\omega\in T^{n}$ satisfy a DC and $N$ be a large integer,
\begin{equation*}
\log B\ll \log N< \log\frac{1}{\eta}.
\end{equation*}
Then, for any $x_{0}\in T^{n}$
\begin{equation}\label{Lem3.31-(1)}
\#\{k=1,\ldots,N| x_{0}+k\omega\in\mathcal{S}(\mathrm{mod} \,1) \}<N^{1-\delta}
\end{equation}
for some $\delta=\delta(\omega)$.
\end{lemma}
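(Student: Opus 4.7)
The plan is to combine the geometric rigidity of low-degree semialgebraic sets with quantitative equidistribution for Diophantine rotations on $\mathbb{T}^{n}$. First I would invoke a cell decomposition (in the style of Collins or Thom--Milnor) to write $\mathcal{S}$ as a union of at most $B^{C(n)}$ semialgebraic cells, each coverable, up to a lower-dimensional boundary stratum, by an axis-aligned box. Since $\mathrm{mes}_{n}\mathcal{S}<\eta$, the total volume of the covering boxes is $\lesssim \eta$ while their number stays polynomial in $B$.

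Next I would estimate orbit visits to each box using the Erd\H{o}s--Tur\'an--Koksma inequality together with the Diophantine condition. The bound $\|k\cdot\omega\|\geq p/|k|^{q}$ forces polynomial decay of the relevant exponential sums, yielding a discrepancy estimate
\begin{equation*}
D_{N}\big(\{x_{0}+k\omega\}_{k=1}^{N}\big)\lesssim N^{-\delta_{0}}
\end{equation*}
for some $\delta_{0}=\delta_{0}(p,q,n)>0$. Hence for each box $\mathcal{B}$ in the cover,
\begin{equation*}
\#\{k\leq N:x_{0}+k\omega\in \mathcal{B}\}\leq N\,\mathrm{mes}_{n}(\mathcal{B})+O(N^{1-\delta_{0}}).
\end{equation*}
Summing over the $\lesssim B^{C(n)}$ boxes gives a total count bounded by $N\eta+B^{C(n)}N^{1-\delta_{0}}$. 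The assumption $\log B\ll\log N<\log(1/\eta)$ yields $\eta<1/N$ and $B^{C(n)}<N^{\delta_{0}/2}$, so both terms are $O(N^{1-\delta})$ for a suitable $\delta>0$, which is the claimed bound.

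The main obstacle is controlling the boundary layers introduced by approximating curved semialgebraic cells by axis-aligned boxes: a direct cover picks up a thin tube around each cell boundary whose orbit visits could swamp the interior estimate. I would address this by inducting on the ambient dimension $n$, using that each boundary stratum is itself semialgebraic of controlled degree, so the inductive hypothesis bounds visits to a tubular neighbourhood one dimension down; balancing box-sizes against stratum dimensions then makes both the interior and the boundary contributions of order $N^{1-\delta}$.
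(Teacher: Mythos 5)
The paper offers no proof of this lemma at all -- it is quoted verbatim from Bourgain's book (Corollary 9.7) -- so your proposal must stand on its own, and it has a genuine gap at the covering step. The discrepancy half is fine: under the Diophantine condition, Erd\H{o}s--Tur\'an--Koksma does give $D_{N}\lesssim N^{-\delta_{0}}$ with $\delta_{0}=\delta_{0}(p,q,n)>0$, hence $\#\{k\le N: x_{0}+k\omega\in\mathcal{B}\}\le N\,\mathrm{mes}(\mathcal{B})+O(N^{1-\delta_{0}})$ for an axis-parallel box $\mathcal{B}$. What fails is the claim that a cell decomposition lets you cover $\mathcal{S}$ by at most $B^{C(n)}$ axis-aligned boxes of total volume $\lesssim\eta$. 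Consider the tilted band $\mathcal{S}=\{(x,y)\in[0,1]^{2}: x\le y\le x+\eta\}$: it is a single cylindrical cell of bounded degree and measure $\sim\eta$, yet any cover by boxes of total volume $\lesssim\eta$ needs $\gtrsim 1/\eta$ boxes, while any cover by $O(1)$ boxes has total volume $\sim 1$. Since $\eta$ may be far smaller than any power of $1/B$ or $1/N$, you cannot have ``polynomially many in $B$'' and ``total volume $\lesssim\eta$'' simultaneously, so the bound $N\eta+B^{C(n)}N^{1-\delta_{0}}$ does not follow from the argument as written; note the problem already occurs for the open top-dimensional cells, not only for the boundary layers you flag.

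The missing ingredient is precisely the quantitative semialgebraic estimate that Bourgain's Chapter 9 supplies: a degree-$B$ semialgebraic set of measure $<\varepsilon^{n}$ can be covered by at most $B^{C(n)}\varepsilon^{1-n}$ balls of radius $\varepsilon$, equivalently the $\rho$-neighbourhood of its boundary strata has measure $\lesssim B^{C}\rho$ (Yomdin--Gromov/Wongkew-type bounds, or Bourgain's covering lemma). With such a tube bound your grid-box strategy can be repaired: covering $\mathcal{S}$ by the grid boxes of side $\rho$ that it meets, one gets at most $\rho^{-n}(\eta+B^{C}\rho)$ boxes, and choosing $\rho=N^{-\delta_{0}/(2n)}$ together with $\eta<1/N$ and $\log B\ll\log N$ yields $N^{1-\delta}$. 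Your concluding inductive fix does not close the gap on its own: to apply the induction to the boundary layer you must already know that a tube around a lower-dimensional semialgebraic stratum has measure $\lesssim B^{C}\rho$ and is itself semialgebraic of degree $B^{C}$ (effective Tarski--Seidenberg), and that tube-measure bound is exactly the nontrivial input being assumed. So the approach is salvageable, but as proposed the key quantitative step is asserted rather than proved, and it is the heart of the matter in Bourgain's own argument.
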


Next, we extend this density control to intersections of semialgebraic sets, which is necessary for handling multiple resonant conditions simultaneously.

\begin{lemma}\cite[Lemma 1.18]{Bourgain-GAFA-2007}\label{Lemma-NDR-1}
Let $\mathcal{A}\subset [0,1]^{s+r}$ be semialgebraic of degree $B$ and such that for each $t\in [0,1]^{r}$, $\mathrm{mes}_{s}(\mathcal{A}(t))<\eta$. Then
\begin{equation*}
\{(x_{1},\ldots,x_{2^{r}}):\mathcal{A}(x_{1})\cap\ldots\cap\mathcal{A}(x_{2^{r}})\neq \emptyset\}\subset [0,1]^{s^{2^{r}}}
\end{equation*}
is semialgebraic of degree at most $B^{C}$ and measure at most
\begin{equation*}
\eta_{r}=B^{C}\eta^{s^{-r}2^{-r(r-1)/2}} \,\, with \,\, C=C(r).
\end{equation*}
\end{lemma}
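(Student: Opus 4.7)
I would prove this by induction on $r$, tracking the semialgebraic structure and degree bound separately from the measure estimate.

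For the semialgebraic structure, observe that the target set equals the projection
$$
\pi_{(x_1,\ldots,x_{2^r})}\bigl\{(y,x_1,\ldots,x_{2^r})\in[0,1]^{s+r\cdot 2^r}:(y,x_i)\in\mathcal{A},\;i=1,\ldots,2^r\bigr\}
$$
onto the last $r\cdot 2^r$ coordinates. By the Tarski--Seidenberg theorem, projections of semialgebraic sets are semialgebraic, with degree controlled polynomially in $B$; the polynomial exponent depends only on $r$, yielding the degree bound $B^{C(r)}$.

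For the measure estimate I would induct on $r$. In the base case $r=1$, one must show that
$$
\mathrm{mes}\bigl\{(x_1,x_2)\in[0,1]^{2}:\mathcal{A}(x_1)\cap\mathcal{A}(x_2)\neq\emptyset\bigr\}\leq B^{C}\eta^{1/s}.
$$
A naive Fubini bound is insufficient because even when each fiber $\mathcal{A}(t)$ is small, the set of $(x_1,x_2)$ with non-empty intersection can be large for a merely measurable $\mathcal{A}$. The crucial input is the semialgebraic cell-decomposition machinery (Yomdin--Gromov, or equivalently the covering/width lemmas in \cite[Chapter 9]{Bourgain-book}): a degree-$B$ semialgebraic subset of $[0,1]^{s+1}$ whose $s$-fibers have measure less than $\eta$ can be covered by $B^{C}$ boxes, each of $y$-width at most $\eta^{1/s}$. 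Applied to the auxiliary set $\{(y,x_1,x_2):(y,x_i)\in\mathcal{A}\}$, this turns the fiber-measure hypothesis into a width control that survives projection, yielding the $\eta^{1/s}$ bound.

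For the inductive step $r-1\mapsto r$, I would split $(x_1,\ldots,x_{2^r})$ into two halves of $2^{r-1}$ points each, apply the induction hypothesis within each half, and then use the $r=1$ bound to couple the two halves via a common fiber. The exponent $s^{-r}2^{-r(r-1)/2}$ accumulates in the expected way: each level of the recursion contributes a fresh factor of $s^{-1}$ from the width lemma, and the triangular factor $2^{-r(r-1)/2}$ arises from the combinatorial pairing, with a square-root loss at each of the $r-1$ doubling steps summing to the exponent $\sum_{k=1}^{r-1}k=r(r-1)/2$. The degree grows only polynomially in $B$ at each step, remaining of the form $B^{C(r)}$.

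The main obstacle is the measure propagation rather than the semialgebraic bookkeeping. In arbitrary measure-theoretic generality, neither taking intersections nor projecting can be controlled the way the lemma claims, and it is only the degree-$B$ semialgebraic hypothesis, invoked through the Yomdin--Gromov-type width lemma, that prevents catastrophic measure inflation. Executing the induction so that the degree stays polynomial in $B$ and the exponent accumulates to exactly $s^{-r}2^{-r(r-1)/2}$ is the technical heart of the argument.
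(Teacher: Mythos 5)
The paper never proves this statement: it is imported verbatim as \cite[Lemma 1.18]{Bourgain-GAFA-2007}, so the only benchmark is Bourgain's own proof, and your overall architecture (induction on $r$, a base case driven by quantitative semialgebraic geometry, degree control via Tarski--Seidenberg) does match its shape. However, two of your key steps have genuine gaps. First, you have the variables reversed: in the lemma $x_{i}\in[0,1]^{s}$ and $\mathcal{A}(x_{i})\subset[0,1]^{r}$, so the relevant auxiliary set is $\{(x_{1},\ldots,x_{2^{r}},t)\in[0,1]^{s2^{r}+r}:(x_{i},t)\in\mathcal{A}\ \text{for all }i\}$ projected onto the $x$-block of dimension $s2^{r}$, and the $r=1$ base case concerns a subset of $[0,1]^{2s}$, not of $[0,1]^{2}$. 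Second, and more seriously, the ``width lemma'' you lean on is false as stated: the tube $\{(x,t):|x_{1}-t|<\eta/2\}\subset[0,1]^{s+1}$ has all $s$-dimensional fibers of measure at most $\eta$, yet it cannot be covered by $B^{C}$ boxes that are $\eta^{1/s}$-thin in any fixed direction, since the tube has full extent in every coordinate and any such cover needs at least $\eta^{-1/s}$ boxes. The true engine behind the $\eta^{1/s}$ bound is a Yomdin--Gromov type cell decomposition, packaged e.g.\ as the projection--transversality decomposition (Lemma \ref{Lemma7.1} here), and converting it into $\mathrm{mes}\{(x_{1},x_{2})\in[0,1]^{2s}:\mathcal{A}(x_{1})\cap\mathcal{A}(x_{2})\neq\emptyset\}\leq B^{C}\eta^{1/s}$ requires an actual argument (for instance slicing the $t$-range into $\sim\eta^{-1/s}$ pieces and controlling $\mathrm{mes}\bigl(\bigcup_{t\in I}\mathcal{A}(t)\bigr)$ through that decomposition), none of which appears in your sketch.

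The inductive step is also not set up correctly. Applying the induction hypothesis ``within each half'' does not type-check: the hypothesis concerns sets in $[0,1]^{s+(r-1)}$, while each half of your tuple still interacts with $\mathcal{A}\subset[0,1]^{s+r}$, and the defining constraint is a single common $t$ for all $2^{r}$ points, which does not factor through the halves. The working recursion goes the other way: first pair points, applying the $r=1$ case fiberwise in $t'\in[0,1]^{r-1}$, to form $\mathcal{A}_{2}=\{(x_{1},x_{2},t'):\exists\,t_{r}\ \text{with}\ (x_{i},t',t_{r})\in\mathcal{A},\ i=1,2\}\subset[0,1]^{2s+(r-1)}$, whose $t'$-fibers have measure at most $B^{C}\eta^{1/s}$ and which contains the set of interest (a common $t$ for all points yields a common $t'$ together with some $t_{r}$ for each pair); then invoke the induction hypothesis with $s$ replaced by $2s$ and $\eta$ by $B^{C}\eta^{1/s}$. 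This also exposes the flaw in your exponent bookkeeping: ``a square-root loss at each of the $r-1$ doubling steps'' would only produce $2^{-(r-1)}$, whereas the actual cost of the $k$-th elimination is the exponent $1/(2^{k-1}s)$, because the fiber dimension has doubled $k-1$ times, and $\prod_{k=1}^{r}(2^{k-1}s)^{-1}=s^{-r}2^{-r(r-1)/2}$, which is exactly the exponent in the statement.
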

Note that in the above lemma $\mathcal{A}(t)=\{x\in [0,1]^{s}:(x,t)\in\mathcal{A} \}$ and $\mathcal{A}(x)=\{t\in [0,1]^{r}:(x,t)\in\mathcal{A} \}$.

To control resonant frequencies, we use the following lemma, which bounds the measure of frequencies for which orbits pass through a semialgebraic set. This is key for excluding exceptional frequencies that support double resonances.

\begin{lemma}\cite[Lemma 1.20]{Bourgain-GAFA-2007}\label{Lemma-NDR-2}
Let $\mathcal{A}\subset[0,1]^{rs}$ be a semialgebraic set of degree $B$ and $\mathrm{mes}_{rs}(\mathcal{A})<\eta$. Let $\mathcal{N}_{1},\ldots,\mathcal{N}_{s-1}\subset \mathbb{Z}$ be finite sets with the property that
\begin{equation*}
|n_{i}|>(B|n_{i-1}|)^{C},\,\, if\,\, n_{i}\in \mathcal{N}_{i} \,\, and\,\, n_{i-1}\in \mathcal{N}_{i-1},\,\, 2\leq i\leq s-1,
\end{equation*}
where $C=C(s,r)$. Assume also
\begin{equation*}
\max\limits_{n\in \mathcal{N}_{s-1}}|n|^{C}<\frac{1}{\eta}.
\end{equation*}
Then
\begin{equation*}
\mathrm{mes}\{\omega\in[0,1]^{r}:(\omega,n_{1}\omega,\ldots,n_{s-1}\omega)\in \mathcal{A}\,\, for\,\, some\,\, n_{i}\in \mathcal{N}_{i}\}<B^{C}(\min\limits_{n\in \mathcal{N}_{1}}|n|)^{-1}.
\end{equation*}
\end{lemma}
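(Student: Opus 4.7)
The plan is to prove the bound by induction on $s$, reducing at each step to the fibered semialgebraic geometry of low-measure sets and using the super-exponential spacing $|n_i|>(B|n_{i-1}|)^C$ to absorb the semialgebraic-degree blow-up at every stage. The regime $\max_{n\in\mathcal{N}_{s-1}}|n|^C<1/\eta$ is exactly what allows the semialgebraic equidistribution results of Lemmas \ref{Lemma3.31} and \ref{Lemma-NDR-1} to be invoked repeatedly without loss.

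For the base case $s=2$, I would fix $n_1\in\mathcal{N}_1$ and study the slice
$$\mathcal{A}_{n_1}:=\{\omega\in[0,1]^r : (\omega,\, n_1\omega\bmod 1)\in\mathcal{A}\}.$$
Since $\omega\mapsto n_1\omega\bmod 1$ is piecewise affine on $|n_1|^r$ fundamental cells, $\mathcal{A}_{n_1}$ is semialgebraic of degree at most $B|n_1|^C$. A cell-by-cell application of Lemma \ref{Lemma3.31}, combined with Fubini slicing of $\mathcal{A}$ in the second $\mathbb{R}^r$-factor, yields $\mathrm{mes}(\mathcal{A}_{n_1})\lesssim B^C|n_1|^{-1}$. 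Partitioning $\mathcal{N}_1$ dyadically by the scale of $|n_1|$ and summing over scales then produces the asserted bound $B^C(\min_{n\in\mathcal{N}_1}|n|)^{-1}$, with the dominant contribution coming from the smallest dyadic block.

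For the inductive step, assume the result for $s-1$ and fix $n_{s-1}\in\mathcal{N}_{s-1}$. Form the fibered set
$$\mathcal{A}^{(s-1)}_{n_{s-1}}:=\{(\omega,x_1,\ldots,x_{s-2})\in[0,1]^{r(s-1)} : (\omega,x_1,\ldots,x_{s-2},\, n_{s-1}\omega\bmod 1)\in\mathcal{A}\}.$$
This set is semialgebraic of degree at most $B|n_{s-1}|^C$, and a Fubini argument in the last factor (using that $\omega\mapsto n_{s-1}\omega\bmod 1$ is measure-preserving on each affine cell) controls its $r(s-1)$-dimensional measure by $\eta|n_{s-1}|^C$, within the regime in which Lemma \ref{Lemma-NDR-1} applies. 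Invoking the inductive hypothesis on $\mathcal{A}^{(s-1)}_{n_{s-1}}$ with the inflated degree $B|n_{s-1}|^C$ and the sets $\mathcal{N}_1,\ldots,\mathcal{N}_{s-2}$ is admissible precisely because the spacing $|n_{s-1}|>(B|n_{s-2}|)^C$ preserves the separation requirement between the inflated degree and $\mathcal{N}_{s-2}$. This yields a per-$n_{s-1}$ bound of $B^C|n_{s-1}|^C(\min_{n\in\mathcal{N}_1}|n|)^{-1}$, and summing over $n_{s-1}\in\mathcal{N}_{s-1}$ while invoking $\max|n|^C<1/\eta$ closes the induction.

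The main obstacle is the delicate tension between the exploding semialgebraic degree and the shrinking fiber measures across inductive levels. The super-exponential spacing is calibrated precisely so that the degree inflation introduced at one step does not overrun the measure gain earned at the previous step, and the accumulated constant $C=C(s,r)$ depends only on the recursion depth $s$ and the ambient dimension $r$, never on the particular choice of the $\mathcal{N}_i$. Carefully tracking this constant through the recursion, and verifying that the Fubini/equidistribution substitution at each step stays within the hypothesis $\log B\ll\log|n|<\log(1/\eta)$ needed by Lemma \ref{Lemma3.31}, is the most technical part of the argument.
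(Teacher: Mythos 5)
First, a point of comparison: the paper does not prove this lemma at all --- it is imported verbatim from Bourgain (GAFA 2007, Lemma 1.20) --- so your proposal can only be judged on its own merits; the closest in-paper model of the correct mechanism is the proof of Lemma \ref{Lemma7.2} via the projection/transversality decomposition of Lemma \ref{Lemma7.1}. Measured against that, both load-bearing steps of your argument fail. In the base case, Lemma \ref{Lemma3.31} is an orbit-counting statement (how many $k\le N$ with $x_0+k\omega\in\mathcal{S}$ for a \emph{fixed} Diophantine $\omega$) and cannot yield a measure bound in the $\omega$ variable; the tool that does is Lemma \ref{Lemma7.1}, with the graph $\{(\omega,n_1\omega)\}$ viewed as $\lesssim|n_1|^{r}$ steep planes. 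Worse, even granting $\mathrm{mes}(\mathcal{A}_{n_1})\lesssim B^{C}|n_1|^{-1}$ for each fixed $n_1$, your summation over $\mathcal{N}_1$ is wrong: in the intended application (Lemma \ref{Lemma9.1}) the $\mathcal{N}_i$ are full integer ranges, so a dyadic block at scale $2^{j}$ contains about $2^{j}$ elements and contributes $\sim B^{C}$, not $B^{C}2^{-j}$; the sum $\sum_{n\in\mathcal{N}_1}|n|^{-1}$ is of size $\log(\max|n|/\min|n|)$, nowhere near $(\min_{n\in\mathcal{N}_1}|n|)^{-1}$. The actual bound is not obtained by summing $|n|^{-1}$: one removes the small-projection piece $\mathrm{Proj}_{\omega}\mathcal{S}_1$ \emph{once}, at cost $B^{C}\varepsilon$ with $\varepsilon\sim(\min_{\mathcal{N}_1}|n|)^{-1}$, uniformly in $n$, and the transverse piece contributes per $n$ at most (number of planes)$\,\times B^{C}\varepsilon^{-1}\eta^{1/(2r)}$, which is summable over all of $\mathcal{N}_1$ only because $\eta<(\max|n|)^{-C}$.

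The inductive step has a more serious gap. The claim $\mathrm{mes}\bigl(\mathcal{A}^{(s-1)}_{n_{s-1}}\bigr)\le \eta\,|n_{s-1}|^{C}$ does not follow from Fubini or from measure preservation of $\omega\mapsto n_{s-1}\omega$: the substitution $x_{s-1}=n_{s-1}\omega$ restricts $\mathcal{A}$ to a measure-zero graph, and Fubini says nothing about that restriction. It is in fact false: take $\mathcal{A}$ to be a product of a $\kappa$-box in $\omega$ centered at $\omega^{0}$ and a $\kappa$-box in $x_{s-1}$ centered at $n_{s-1}\omega^{0}$ (bounded degree), so $\eta\sim\kappa^{2r}$, while the fibered set has measure $\sim(\kappa/|n_{s-1}|)^{r}$, which exceeds $\eta|n_{s-1}|^{C}$ as soon as $\kappa$ is small --- and all hypotheses of the lemma can be met. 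Any true bound here must again use the degree-versus-frequency separation (transversality), not Fubini; note also that Lemma \ref{Lemma-NDR-1} is announced but never actually used in your argument. Finally, even with such a bound, the induction as set up cannot close: substituting $n_{s-1}\omega$ inflates the degree to $\sim B|n_{s-1}|^{r}$, which is \emph{larger} than every remaining $|n_i|$, $i\le s-2$, precisely because of the spacing $|n_{s-1}|>(B|n_{s-2}|)^{C}$; so the inductive hypothesis' requirement $|n_i|>(B'|n_{i-1}|)^{C}$ with the inflated degree $B'$ is destroyed, not preserved --- your remark on this point is backwards. The standard proof avoids substitution altogether: it peels the scales from the top by intersecting the \emph{fixed} set with steep planes and projecting (Tarski--Seidenberg keeps the degree at $B^{C}$), paying only in measure, which the hypothesis $\max_{n\in\mathcal{N}_{s-1}}|n|^{C}<1/\eta$ is calibrated to absorb.
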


\begin{lemma}\cite[Lemma 9.9]{Bourgain-book}\label{Lemma7.1}
Let $\mathcal{S} \subset[0,1]^{d}$, $d=d_{1}+d_{2}$, be a semialgebraic set of degree $B$ and $\mathrm{mes}_{d}(\mathcal{S})<\vartheta$, $\log B\ll \log\frac{1}{\vartheta}$. We denote $(x,\omega)\in [0,1]^{d_{1}}\times[0,1]^{d_{2}}$ the product variable. Fix $\varepsilon>\vartheta^{\frac{1}{d}}$. Then there is a decomposition
$$\mathcal{S}=\mathcal{S}_{1}\cup \mathcal{S}_{2}$$
$\mathcal{S}_{1}$ satisfying
$$\mathrm{mes}_{d_{2}}(\mathrm{Proj}_{\omega}\mathcal{S}_{1})<B^{C}\varepsilon$$
and $\mathcal{S}_{2}$ satisfying the transversality property
$$\mathrm{mes}_{d_{1}}(\mathcal{S}_{2}\cap L)<B^{C}\varepsilon^{-1}\vartheta^{\frac{1}{d}}$$
for any $d_{1}$-dimensional plane $L$ s.t. $\max\limits_{1\leq j\leq d_{2}}|\mathrm{Proj}_{L}(e_{j})|<\frac{1}{100}\varepsilon$ (we denote by $e_{1},\ldots,e_{d_{2}}$ the $\omega$-coordinate vectors).
\end{lemma}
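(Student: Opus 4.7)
The plan is to use the Yomdin--Gromov algebraic reparametrization to flatten $\mathcal{S}$ into a controlled number of smooth pieces, and then split these pieces according to how transverse the projection $\pi_\omega \colon \mathbb{R}^{d_1+d_2} \to \mathbb{R}^{d_2}$ is to each piece. First I would apply the standard cell decomposition / Yomdin--Gromov algebraic lemma for semialgebraic sets of degree $B$ to write
\[
\mathcal{S}=\bigcup_{\alpha} \mathcal{S}^{(\alpha)}, \qquad \#\{\alpha\} \le B^{C_0},
\]
where each cell $\mathcal{S}^{(\alpha)}$ is a smooth semialgebraic piece of some dimension $k_\alpha \le d$ admitting a $C^1$ parametrization $\phi_\alpha : [0,1]^{k_\alpha} \to \mathcal{S}^{(\alpha)}$ with bounded Jacobian. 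This converts the combinatorial bound $B$ into uniform differential estimates.

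Next, fix a threshold $\lambda \simeq \varepsilon$ and classify each cell by the size of the $d_2$-dimensional Jacobian $J^{(\alpha)}$ of $\pi_\omega\circ\phi_\alpha$: put $\mathcal{S}^{(\alpha)}$ into $\mathcal{S}_1$ if $J^{(\alpha)} \gtrsim \lambda^{d_2}$ somewhere on the chart, and into $\mathcal{S}_2$ otherwise (with low-dimensional cells, $k_\alpha<d_2$, sent to $\mathcal{S}_1$ at no cost in $d_2$-measure). For the $\mathcal{S}_1$ estimate, a cell assigned there has projection Jacobian bounded below almost everywhere on the chart by semialgebraic triviality, so the coarea formula and Wongkew's theorem (converting $d$-dimensional smallness into $k_\alpha$-dimensional Hausdorff smallness for bounded-degree semialgebraic sets) give
\[
\lambda^{d_2}\,\mathrm{mes}_{d_2}(\pi_\omega \mathcal{S}^{(\alpha)}) \lesssim \mathrm{mes}_{k_\alpha}(\mathcal{S}^{(\alpha)}) \lesssim B^{C_0}\vartheta^{k_\alpha/d}.
\]
Summing over the at most $B^{C_0}$ cells and using the hypothesis $\varepsilon > \vartheta^{1/d}$ yields $\mathrm{mes}_{d_2}(\mathrm{Proj}_\omega \mathcal{S}_1) < B^{C}\varepsilon$, as required.

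For $\mathcal{S}_2$, on each cell assigned there the tangent planes make an angle $\lesssim \lambda \simeq \varepsilon$ with the $x$-hyperplane $\mathbb{R}^{d_1}\times\{0\}$. Given any admissible $d_1$-plane $L$ with $\max_{1\le j\le d_2}|\mathrm{Proj}_L(e_j)|<\varepsilon/100$, the plane $L$ also makes an angle $<\varepsilon/100$ with that $x$-hyperplane, so both $\mathcal{S}_2$ and $L$ are nearly horizontal. One flattens $L$ to $\mathbb{R}^{d_1}$ by a change of coordinates whose inverse derivative in the normal direction is $O(\varepsilon^{-1})$; the slice $\mathcal{S}_2\cap L$ is then identified with a semialgebraic set of degree $\le B^{C}$ whose ambient $d$-measure is at most $\vartheta$, and a second application of Wongkew's lemma bounds its $d_1$-volume by $B^{C}\varepsilon^{-1}\vartheta^{1/d}$, the factor $\varepsilon^{-1}$ being exactly the stretching introduced when straightening $L$. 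The main obstacle is this last step: one needs a bound uniform over the continuous family of admissible planes $L$, which forces a quantitative control of how the semialgebraic constants (in Wongkew's theorem and in the cell decomposition) deteriorate under arbitrary but small tilts of $L$, and it is for this uniformity that one cannot use pointwise Fubini and must pass through the Yomdin--Gromov machinery.
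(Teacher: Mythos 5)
You should first be aware that the paper does not prove this statement at all: it is quoted verbatim from Bourgain's book (Lemma 9.9), so the only meaningful comparison is with the proof given there. Your proposal, however, has a genuine gap at its first step, the bound for $\mathcal{S}_1$. You place a cell into $\mathcal{S}_1$ when the $d_2$-dimensional Jacobian of $\pi_\omega\circ\phi_\alpha$ is $\gtrsim\varepsilon^{d_2}$ and then claim, via coarea, that $\varepsilon^{d_2}\,\mathrm{mes}_{d_2}(\pi_\omega\mathcal{S}^{(\alpha)})\lesssim \mathrm{mes}_{k_\alpha}(\mathcal{S}^{(\alpha)})$. This inequality is false whenever $k_\alpha>d_2$: the coarea formula controls the integral of the $(k_\alpha-d_2)$-dimensional measure of the fibers, not the measure of the image, and the fibers may be very short. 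Concretely, take $d_1=d_2=1$ and $\mathcal{S}=[0,\vartheta]\times[0,1]$ (bounded degree, measure $\vartheta$): it is a single smooth full-dimensional cell on which the Jacobian of the $\omega$-projection is identically $1$, so your rule puts it into $\mathcal{S}_1$, yet $\mathrm{mes}_1(\mathrm{Proj}_\omega\mathcal{S}_1)=1\gg B^{C}\varepsilon$ when $\varepsilon\sim\vartheta^{1/2}$ is small; your chain would force $\varepsilon\lesssim B^{C}\vartheta$, contradicting $\varepsilon>\vartheta^{1/d}$. The example also shows the dichotomy itself, not just the estimate, is wrong: this slab is everywhere tangent to the $\omega$-directions, yet it must be placed in $\mathcal{S}_2$ (and it does satisfy the transversality bound, since any admissible line meets it in length $\lesssim\vartheta\leq B^{C}\varepsilon^{-1}\vartheta^{1/2}$). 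In fact, under your rule every full-dimensional cell — which is where the mass of $\mathcal{S}$ lives — lands in $\mathcal{S}_1$, so the $\mathcal{S}_1$ projection bound cannot hold. The auxiliary claim that a semialgebraic Jacobian large ``somewhere'' is bounded below almost everywhere on the cell is also unjustified.

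The $\mathcal{S}_2$ half is not solid either: smallness of the ambient $d$-dimensional measure of a set never controls the $d_1$-measure of its intersection with one fixed plane (Fubini only controls almost every slice), so the step ``straighten $L$, note the set has measure $\leq\vartheta$, apply Wongkew'' cannot by itself produce a bound valid for every admissible $L$, and the factor $\varepsilon^{-1}$ remains heuristic. The argument in the cited source is different in kind: Yomdin--Gromov enters only through the covering statement that a degree-$B$ semialgebraic set of measure $<\vartheta$ is covered by at most $B^{C}\vartheta^{(1-d)/d}$ balls of radius $\vartheta^{1/d}$; one then splits according to the multiplicity with which the $\omega$-projections of these balls pile up, taking $\mathcal{S}_1$ to be the part of $\mathcal{S}$ lying over the exceptional $\omega$-set of high multiplicity (Chebyshev gives $\mathrm{mes}_{d_2}(\mathrm{Proj}_\omega\mathcal{S}_1)<B^{C}\varepsilon$), while the bound for $\mathrm{mes}_{d_1}(\mathcal{S}_2\cap L)$ comes from counting how many low-multiplicity balls a nearly horizontal plane $L$ can meet (its $\omega$-projection has diameter $\lesssim\varepsilon$), each ball contributing $\lesssim\vartheta^{d_1/d}$ to the slice. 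To repair your scheme you would have to replace the tangent-plane/Jacobian criterion by such a measure/multiplicity stopping rule in the $\omega$-variable; reparametrization plus coarea alone cannot yield the stated decomposition.
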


From Corollary \ref{Coro3.13}, we can get the following statements.
\begin{coro}\label{Coro3.30}
Assume $\omega\in \mathbb{T}^{d}(p,q)$, $z\in \partial \mathbb{D}$ and $L(\omega,z)>\gamma>0$. Then for any $x\in \mathbb{T}^{d}$ and $n\geq 1$ we have
\begin{equation*}
\|M_{n}(\omega,z;x)-\tilde{M}_{n}(\omega,z;x)\|\leq 8\|\alpha_{i}-\tilde{\alpha}_{i}\|_{\infty}\exp(nL(\omega,z)+Cn^{1-\tau}),
\end{equation*}
where $C=C(p,q,z,\gamma)$. In particular,
$$\big|\log\|M_{n}(\omega,z;x)\|-\log\|\tilde{M}_{n}(\omega,z;x)\|\big|\leq 8\|\alpha_{i}-\tilde{\alpha}_{i}\|_{\infty}\exp(nL(\omega,z)+Cn^{1-\tau}),$$
\begin{small}
$$\big|\log|\varphi_{[0,n-1]}^{\beta,\eta}(\omega,z;x)|-\log|\tilde{\varphi}_{[0,n-1]}^{\beta,\eta}(\omega,z;x)|\big|\leq 8\|\alpha_{i}-\tilde{\alpha}_{i}\|_{\infty}\frac{\exp(nL(\omega,z)+Cn^{1-\tau})}{\max\{|\varphi_{[0,n-1]}^{\beta,\eta}(\omega,z;x)|,|\tilde{\varphi}_{[0,n-1]}^{\beta,\eta}(\omega,z;x)|\}},$$
\end{small}
provided the right-hand sides are less than $\frac{1}{2}$.
\end{coro}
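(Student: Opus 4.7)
The plan is to mimic the telescoping argument of Lemma \ref{Lemma3.5}, but instead of estimating the per-step cocycle factor by the crude bound $\tfrac{2}{\sqrt{1-\|\alpha\|_h^2}}$, I would substitute the sharp upper bound on $\|M_{k}\|$ coming from Lemma \ref{Lemma3.11} (equivalently, Corollary \ref{Coro3.12}). This is exactly the refinement that upgrades Lemma \ref{Lemma3.5} to Corollary \ref{Coro3.13}, so the corollary at hand is the natural analogue for the perturbation $\alpha \to \tilde{\alpha}$ in place of a perturbation in $(\omega,z,x)$.

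First I would establish the single-step estimate. From the explicit formula for the Szeg\H{o} cocycle $M(\omega,z;x)$ and the fact that $|\alpha|,|\tilde{\alpha}|<1$, a direct computation shows
\begin{equation*}
\|M(\omega,z;x)-\tilde{M}(\omega,z;x)\|\leq C_{0}\|\alpha_{i}-\tilde{\alpha}_{i}\|_{\infty},
\end{equation*}
with $C_{0}$ depending only on $\|\alpha\|_{h}$. Next, expand the difference by a standard telescoping identity:
\begin{equation*}
M_{n}-\tilde{M}_{n}=\sum_{j=1}^{n}M_{n-j}(\omega,z;x+j\omega)\bigl(M(\omega,z;x+(j-1)\omega)-\tilde{M}(\omega,z;x+(j-1)\omega)\bigr)\tilde{M}_{j-1}(\omega,z;x).
\end{equation*}
By Lemma \ref{Lemma3.11} together with Lemma \ref{Lemma3.4} we have $\|M_{k}(\omega,z;\cdot)\|\leq \exp(kL(\omega,z)+Ck^{1-\tau})$ uniformly in $x$; the analogous bound holds for $\tilde{M}_{k}$ because the hypothesis $\|\alpha_{i}-\tilde{\alpha}_{i}\|_{\infty}\lesssim \exp(-n^{2})$ from \eqref{VCA} implies $\tilde{L}$ agrees with $L$ up to an exponentially small error, so that Lemma \ref{Lemma3.11} applied to $\tilde{M}_{k}$ gives the same exponential envelope. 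Combining these three factors, summing over $j$ from $1$ to $n$, and absorbing the $n$ in front into $\exp(Cn^{1-\tau})$ yields
\begin{equation*}
\|M_{n}(\omega,z;x)-\tilde{M}_{n}(\omega,z;x)\|\leq 8\|\alpha_{i}-\tilde{\alpha}_{i}\|_{\infty}\exp(nL(\omega,z)+Cn^{1-\tau}),
\end{equation*}
which is the first claim.

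For the logarithmic form, I would use $\bigl|\log\|M_{n}\|-\log\|\tilde{M}_{n}\|\bigr|\leq \bigl|\|M_{n}\|/\|\tilde{M}_{n}\|-1\bigr|\cdot 2$ whenever the right-hand side is below $\tfrac12$, exactly as at the end of Lemma \ref{Lemma3.5}; dividing the matrix estimate by $\|\tilde{M}_{n}\|\geq 1$ (and symmetrically by $\|M_{n}\|\geq 1$) yields the stated bound. For the determinant form, I would invoke the representation \eqref{relation} expressing $\varphi^{\beta,\eta}_{[0,n-1]}$ as (a scalar multiple of) a specific entry of $M_{n}$; the same identity holds for $\tilde{\varphi}^{\beta,\eta}_{[0,n-1]}$ and $\tilde{M}_{n}$. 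The matrix-norm estimate then transfers to the corresponding entries, and the logarithmic passage is the same linearization $|\log(u/v)|\leq 2|u/v-1|$, producing the factor $\max\{|\varphi^{\beta,\eta}_{[0,n-1]}|,|\tilde{\varphi}^{\beta,\eta}_{[0,n-1]}|\}$ in the denominator.

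The main obstacle, as in Lemma \ref{Lemma3.5} and Corollary \ref{Coro3.13}, is certifying that the upper envelope $\exp(kL+Ck^{1-\tau})$ applies equally to $\tilde{M}_{k}$ on all of $\mathbb{T}^{d}$; this is where one must observe that the exponentially small perturbation \eqref{VCA} of the sampling function does not spoil either the LDT of Theorem \ref{LDT-matrix} or the deterministic sup bound of Lemma \ref{Lemma3.11} at the scales considered, so the same $C=C(p,q,z,\gamma)$ serves both cocycles. Once this uniformity is in hand, the telescoping is routine and the three stated inequalities follow together.
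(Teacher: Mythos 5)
Your proposal is correct and coincides with the paper's (essentially unwritten) argument: the paper obtains Corollary \ref{Coro3.30} directly from Corollary \ref{Coro3.13}, whose own proof is the telescoping of Lemma \ref{Lemma3.5} upgraded with the sharp envelope of Lemma \ref{Lemma3.11}/Corollary \ref{Coro3.12}, exactly as you do, with the determinant statement read off from \eqref{relation} as in Remark \ref{Remark3.14}. The only details to tighten are the envelope for $\tilde{M}_{k}$ --- better justified by a crude a priori comparison, $\|\tilde{M}_{k}-M_{k}\|\leq n C_{\alpha}^{n}\|\alpha_{i}-\tilde{\alpha}_{i}\|_{\infty}\leq \exp(-n^{2}/2)$ via \eqref{VCA}, than by the slightly circular appeal to $\tilde{L}\approx L$ --- and the fact that the scalar prefactors $\prod_{j}\rho_{j}^{-1}$ and $\alpha_{-1}$ in \eqref{relation} also shift under the truncation, which only affects the constant.
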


We now show that the set of phases where the transfer matrix norm deviates from its LDT average is contained in a semialgebraic set of small measure.

\begin{lemma}\label{Lemma3.32}
Let $n\geq 1$, $\omega\in \mathbb{T}^{d}_{n}(p,q)$, $z\in \partial \mathbb{D}$, $\sigma, \tau$ be as in LDT, and
\begin{equation*}
\mathcal{B}_{n}:=\{x\in \mathbb{T}^{d}:|\log\|M_{n}(\omega,z;x)\|-nL_{n}(\omega,z)|\geq 4n^{1-\tau}\}.
\end{equation*}
Then there exists a semialgebraic set $\mathcal{S}_{n}$ such that $\mathcal{B}_{n}\subset\mathcal{S}_{n}$, $\mathrm{deg}(\mathcal{S}_{n})\leq n^{C}$, and $\mathrm{mes}(\mathcal{S}_{n})<\exp(-C_{0}n\sigma)$ provided $n\geq n(p,q,z)$.
\end{lemma}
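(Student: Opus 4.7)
The strategy is to exhibit an explicit semialgebraic set $\mathcal{S}_n$ built from the polynomial approximation $\tilde{\alpha}(\cdot)$ introduced in \eqref{VCA}, together with an analogous polynomial $r(\cdot)$ approximating $1/\rho(\cdot)=(1-|\alpha(\cdot)|^{2})^{-1/2}$, which is analytic on $\mathbb{T}^d_h$ since $\|\alpha\|_h<1$ and hence admits such an approximation of the same degree $\lesssim n^4$ and accuracy $\lesssim\exp(-n^2)$. After parametrizing $\mathbb{T}^d$ by $(u_j,v_j)=(\cos 2\pi x_j,\sin 2\pi x_j)$ subject to $u_j^2+v_j^2=1$, the entries of the approximate cocycle $\tilde{M}(\omega,z;\cdot)$ become polynomials of degree $\lesssim n^4$ in $(u,v)$ (the shifts $x\mapsto x+j\omega$ alter only coefficients, not degrees), and consequently the entries of the product $\tilde{M}_n(\omega,z;x)$ are polynomials in $(u,v)$ of total degree at most $n^{C}$.

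Next I would invoke Corollary \ref{Coro3.30}, suitably extended to absorb the approximation of $1/\rho$, to obtain
\[
\sup_{x\in\mathbb{T}^d}\big|\log\|M_n(\omega,z;x)\|-\log\|\tilde{M}_n(\omega,z;x)\|\big|\leq 8\|\alpha-\tilde{\alpha}\|_{\infty}\exp\bigl(nL(\omega,z)+Cn^{1-\tau}\bigr)<1
\]
for all $n\geq n(p,q,z)$, since the $\exp(-n^2)$ accuracy dominates the exponential growth factor. Setting
\[
\mathcal{S}_n:=\Big\{x\in\mathbb{T}^d:\big|\log\|\tilde{M}_n(\omega,z;x)\|-nL_n(\omega,z)\big|\geq 3n^{1-\tau}\Big\},
\]
the inclusion $\mathcal{B}_n\subset\mathcal{S}_n$ is immediate from the preceding display, while the reverse comparison yields $\mathcal{S}_n\subset\{x:|\log\|M_n\|-nL_n|>n^{1-\tau}\}$, whose measure is bounded by $\exp(-C_0 n^{\sigma})$ via Theorem \ref{LDT-matrix}.

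It remains to recognize $\mathcal{S}_n$ as semialgebraic of degree $\leq n^{C}$. For any $2\times 2$ matrix $A$ with $B:=A^{*}A$, one has $\|A\|^2=\tfrac12\bigl(\mathrm{tr}(B)+\sqrt{\mathrm{tr}(B)^2-4\det(B)}\bigr)$, and so for any $T>0$ the inequality $\|A\|^2\geq T$ is equivalent to the single polynomial inequality $T\cdot\mathrm{tr}(B)\geq T^2+\det(B)$ (and analogously for $\|A\|^2\leq T$). Applied with $A=\tilde{M}_n$ and $T=\exp(2nL_n\pm 6n^{1-\tau})$, the two halves of the condition defining $\mathcal{S}_n$ become polynomial inequalities in $(u,v)$ of degree $\leq n^{C}$, which together with the $d$ equations $u_j^2+v_j^2=1$ present $\mathcal{S}_n$ as a semialgebraic set of the required degree in the sense of Definition 1. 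The main technical obstacle I anticipate is the consistent polynomial approximation of the non-polynomial prefactor $1/\rho$ in the Szeg\H{o} cocycle and the careful propagation of the $\exp(-n^2)$ accuracy through the $n$-fold product; this is the step where the finite-scale Diophantine assumption $\omega\in\mathbb{T}^d_n(p,q)$ is required, via the scale-$n$ LDT underlying Corollary \ref{Coro3.30}.
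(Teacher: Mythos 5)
Your proposal is correct and follows essentially the same route as the paper: truncate $\alpha$ to the polynomial $\tilde\alpha$ of \eqref{VCA}, define $\mathcal{S}_n$ by the same deviation condition for the approximate monodromy $\tilde M_n$ with a slightly relaxed threshold, deduce both inclusions $\mathcal{B}_n\subset\mathcal{S}_n$ and $\mathcal{S}_n\subset\{x:|\log\|M_n\|-nL_n|>n^{1-\tau}\}$ from Corollary \ref{Coro3.30}, and read off the measure bound from Theorem \ref{LDT-matrix}. The only real divergence is in how semialgebraicity is certified: the paper uses the Hilbert--Schmidt norm, for which $\|\tilde M_n\|_{\mathrm{HS}}^2$ is a ratio of polynomials with positive denominator (so the degree bound is immediate, at the harmless cost of a $\tfrac12\log 2$ shift between the two norms), whereas you keep the operator norm and encode $\|A\|^2\gtrless T$ via the characteristic polynomial of $A^*A$, supplemented by a polynomial approximation of $1/\rho$. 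Note that your equivalence $\|A\|^2\geq T\Leftrightarrow T\,\mathrm{tr}(B)\geq T^2+\det(B)$ is only valid when $T$ also dominates the smaller singular value; this is automatic here since $\det(\tilde M_n^*\tilde M_n)\approx 1$ while $T=\exp(2nL_n\pm O(n^{1-\tau}))\gg 1$, but it should be said explicitly, and the paper's HS-norm choice sidesteps the point entirely.
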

\begin{proof}
Take $\tilde{\alpha}_{n}$ as in \eqref{VCA} and let
\begin{equation*}
\mathcal{S}_{n}:=\{x\in \mathbb{T}^{d}:|\log\|\tilde{M}_{n}(\omega,z;x)\|_{\mathrm{HS}}-nL_{n}(\omega,z)|\geq 2n^{1-\tau}\},
\end{equation*}
with $\|\cdot\|_{\mathrm{HS}}$ standing for the Hilbert-Schmidt norm. Obviously, $\mathrm{deg}(\mathcal{S}_{n})\leq n^{C}$. According to Corollary \ref{Coro3.30}, $\mathcal{B}_{n}\subset\mathcal{S}_{n}$. In addition,
\begin{equation*}
\mathcal{S}_{n}\subset\{x\in \mathbb{T}^{d}:|\log\|M_{n}(\omega,z;x)\|-nL_{n}(\omega,z)|> n^{1-\tau}\}
\end{equation*}
follows from Corollary \ref{Coro3.30}.

Therefore, by LDT, the measure estimate of $\mathcal{S}_{n}$ follows.
\end{proof}

We extend the LDT average to time-averages along the shift orbit, ensuring that the average of the transfer matrix norm over a long orbit is close to the finite scale Lyapunov exponent.

\begin{lemma}\label{Lemma3.33}
Let $\sigma, \tau$  be as in LDT. Then for any $n\geq n_{0}(p,q)$,
\begin{equation*}
C(p,q)\log n\leq \log J<n^{\sigma},
\end{equation*}
$x\in \mathbb{T}^{d}$, $\omega\in \mathbb{T}^{d}_{J}(p,q)$, $z\in \partial \mathbb{D}$, we have
\begin{equation*}
\big|\frac{1}{J}\sum_{j=1}^{J}\log \|M_{n}(\omega,z;x+j\omega)\|-nL_{n}(\omega,z)\big|\leq 5n^{1-\tau}.
\end{equation*}
\end{lemma}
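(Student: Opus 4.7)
The plan is to combine the semialgebraic approximation of the exceptional set (Lemma \ref{Lemma3.32}) with the Bourgain--type equidistribution statement for Diophantine shifts (Lemma \ref{Lemma3.31}). Fix $\omega\in\mathbb{T}^d_J(p,q)$, $z\in\partial\mathbb{D}$, and form the semialgebraic set $\mathcal{S}_n\subset\mathbb{T}^d$ provided by Lemma \ref{Lemma3.32}, which contains
$$\mathcal{B}_n=\{x\in\mathbb{T}^d:|\log\|M_n(\omega,z;x)\|-nL_n(\omega,z)|\ge 4n^{1-\tau}\},$$
and satisfies $\deg(\mathcal{S}_n)\le n^{C}$ and $\mathrm{mes}(\mathcal{S}_n)<\exp(-C_0 n^\sigma)$.

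Next, I would apply Lemma \ref{Lemma3.31} to $\mathcal{S}_n$ with degree bound $B=n^{C}$, measure bound $\eta=\exp(-C_0 n^\sigma)$, and shift length $N=J$. The hypothesis $\log B\ll\log N<\log(1/\eta)$ of Lemma \ref{Lemma3.31} translates into
$$C\log n\ll\log J<C_0 n^\sigma,$$
which is exactly the range assumed on $J$ (after absorbing $C,C_0$ into the constant $C(p,q)$). The Diophantine condition $\omega\in\mathbb{T}^d_J(p,q)$ is needed here so that the shift $x\mapsto x+\omega$ satisfies the finite-order DC up to scale $J$. Lemma \ref{Lemma3.31} then yields $\delta=\delta(p,q)>0$ and an exceptional set
$$\mathcal{J}_{\mathrm{bad}}:=\{1\le j\le J:x+j\omega\in\mathcal{S}_n\pmod 1\},\qquad |\mathcal{J}_{\mathrm{bad}}|<J^{1-\delta}.$$

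Now I would split the Birkhoff sum along the good and bad indices. For $j\notin\mathcal{J}_{\mathrm{bad}}$ we have $x+j\omega\notin\mathcal{S}_n\supset\mathcal{B}_n$, so
$$|\log\|M_n(\omega,z;x+j\omega)\|-nL_n(\omega,z)|\le 4n^{1-\tau}.$$
For $j\in\mathcal{J}_{\mathrm{bad}}$ the trivial bounds \eqref{log-Mn} and \eqref{Ln-bound} give
$$|\log\|M_n(\omega,z;x+j\omega)\|-nL_n(\omega,z)|\le C(z)n.$$
Averaging,
$$\Big|\frac1J\sum_{j=1}^J\log\|M_n(\omega,z;x+j\omega)\|-nL_n(\omega,z)\Big|\le 4n^{1-\tau}+\frac{J^{1-\delta}}{J}\,C(z)n=4n^{1-\tau}+C(z)\,n\,J^{-\delta}.$$
Choosing the constant $C(p,q)$ in the lower bound $\log J\ge C(p,q)\log n$ large enough that $J^{-\delta}\le n^{-\tau}/C(z)$ gives $C(z)nJ^{-\delta}\le n^{1-\tau}$ and therefore the claimed bound $5n^{1-\tau}$.

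The main obstacle is verifying that the exponent $\delta$ supplied by Lemma \ref{Lemma3.31} depends only on the Diophantine data $(p,q)$ and not on $\omega$ itself, so that the threshold $C(p,q)\log n\le\log J$ can be chosen uniformly; this is a standard (but delicate) feature of the Bourgain semialgebraic counting argument and is why the hypothesis is phrased in terms of $\mathbb{T}^d_J(p,q)$ rather than a pointwise DC. The other subtlety is ensuring that the two sides of the compound inequality on $\log J$ sit inside the range permitted by Lemma \ref{Lemma3.31}, which forces the upper constraint $\log J<n^\sigma$ to match the LDT measure decay rate $\exp(-C_0 n^\sigma)$; this is precisely the way $\sigma$ from the LDT enters the statement.
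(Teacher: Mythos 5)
Your proposal is correct and follows essentially the same route as the paper: approximate the deviation set by the semialgebraic set of Lemma \ref{Lemma3.32}, invoke the Bourgain counting bound of Lemma \ref{Lemma3.31} to get at most $J^{1-\delta}$ bad shifts, and split the Birkhoff average into good indices (deviation $\leq 4n^{1-\tau}$) and bad indices (trivial bound $C(z)n$), absorbing the bad contribution into the extra $n^{1-\tau}$. The only cosmetic difference is that the paper writes the two one-sided estimates with explicit weights $\tfrac{J-J^{1-\delta}}{J}$ and $\tfrac{J^{1-\delta}}{J}$ rather than your single two-sided bound.
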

\begin{proof}
Let $\mathcal{B}_{n}$ and $\mathcal{S}_{n}$ be as in Lemma \ref{Lemma3.33}. For any $x\in \mathbb{T}^{d}\backslash \mathcal{S}_{n}$, we have that
\begin{equation*}
nL_{n}(\omega,z)-4n^{1-\tau}\leq \log \|M_{n}(\omega,z;x)\|\leq nL_{n}(\omega,z)+4n^{1-\tau}.
\end{equation*}
While for $x\in \mathcal{S}_{n}$, one can obtain that
\begin{equation*}
0\leq\log \|M_{n}(\omega,z;x)\|\leq C(\alpha_{i},z)n.
\end{equation*}
Select $J$ in accordance with the required assumptions. Combining Lemma \ref{Lemma3.31}, one can get
\begin{align*}
&\frac{1}{J}\sum_{j=1}^{J}\log \|M_{n}(\omega,z;x+j\omega)\|-nL_{n}(\omega,z)\\
&\leq\frac{J-J^{1-\delta}}{J}(nL_{n}(\omega,z)+4n^{1-\tau})+\frac{J^{1-\delta}}{J}Cn-nL_{n}(\omega,z)\\
&\leq \frac{J-J^{1-\delta}}{J}(4n^{1-\tau})+\frac{J^{1-\delta}}{J}Cn\\
&\leq 5n^{1-\tau}.
\end{align*}
On the other hand,
\begin{align*}
\frac{1}{J}\sum_{j=1}^{J}\log \|M_{n}(\omega,z;x+j\omega)\|-nL_{n}(\omega,z)&\geq\frac{J-J^{1-\delta}}{J}(nL_{n}(\omega,z)-4n^{1-\tau})-nL_{n}(\omega,z)\\
&= \frac{J-J^{1-\delta}}{J}(-4n^{1-\tau})+\frac{J^{1-\delta}}{J}(-nL_{n}(\omega,z))\\
&\geq -5n^{1-\tau}.
\end{align*}
Thus, we complete the proof.
\end{proof}

We characterize the set of resonant $(\omega,z,x)$ where the characteristic determinant is small as a semialgebraic set of small measure.

\begin{lemma}\label{Lemma3.34}
Let $n\geq 1$ and $\tau, \nu$ be as in LDT. Let $\mathcal{B}_{n}$ be the set of $(\omega,z,x)\in \mathbb{T}^{d}(p,q)\times \partial \mathbb{D}\times \mathbb{T}^{d}$ such that $L(\omega,z)>\gamma>0$ and
\begin{equation*}
\log|\varphi_{[0,n-1]}^{\beta,\eta}(\omega,z;x)|\leq nL_{n}(\omega,z)-n^{1-\tau/2}.
\end{equation*}
Then there exists a semialgebraic set $\mathcal{S}_{n}$ such that $\mathcal{B}_{n}\subset \mathcal{S}_{n}$, $\mathrm{deg}(\mathcal{S}_{n})\leq n^{C(p,q)}$, and $\mathrm{mes}(\mathcal{S}_{n})<\exp(-n^{\nu}/2)$, provided $n\geq N_{0}(\alpha_{i},p,q,\gamma)$. Furthermore,
\begin{equation*}
\mathrm{mes}(\mathcal{S}_{n}(\omega,z))<\exp(-n^{\nu}),\quad \mathcal{S}_{n}(\omega,z)=\{x:(\omega,z,x)\in \mathcal{S}_{n}\}.
\end{equation*}
\end{lemma}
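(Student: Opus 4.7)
The plan is to adapt the polynomial-approximation scheme of Lemma \ref{Lemma3.32} to the characteristic determinant, with $(\omega, z)$ now playing the role of joint variables along with $x$. Replacing each Verblunsky coefficient by the polynomial truncation $\tilde{\alpha}_i$ from \eqref{VCA}, set
$$
\tilde{\varphi}^{\beta,\eta}_{[0,n-1]}(\omega, z; x) := \det\bigl(z - \tilde{\mathcal{E}}^{\beta,\eta}_{[0,n-1]}(\omega; x)\bigr),
$$
which is polynomial in $(\omega, z, x)$ of total degree $\leq n^{C}$, and Corollary \ref{Coro3.30} controls $|\log|\varphi^{\beta,\eta}_{[0,n-1]}| - \log|\tilde{\varphi}^{\beta,\eta}_{[0,n-1]}||$.

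To eliminate the non-polynomial threshold $nL_n(\omega, z)$ appearing in the definition of $\mathcal{B}_n$, I use $\sup_{x'}\log|\tilde{\varphi}^{\beta,\eta}_{[0,n-1]}(\omega, z; x')|$ as a polynomial surrogate, approximated over a grid. Fix a grid $\mathcal{G} = \{x_k\} \subset \mathbb{T}^d$ of spacing $n^{-C_1}$ (polynomially many nodes) and define
$$
\mathcal{S}_n := \bigcup_{x_k \in \mathcal{G}} \Bigl\{(\omega, z, x) : |\tilde{\varphi}^{\beta,\eta}_{[0,n-1]}(\omega, z; x)|^{2} \leq e^{-n^{1-\tau/2}} |\tilde{\varphi}^{\beta,\eta}_{[0,n-1]}(\omega, z; x_k)|^{2} \Bigr\}.
$$
This is a union of polynomially many sets of polynomial degree, hence semialgebraic with $\deg(\mathcal{S}_n) \leq n^{C(p,q)}$.

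To verify $\mathcal{B}_n \subset \mathcal{S}_n$, I combine Lemma \ref{Lemma3.11} and Lemma \ref{Lemma3.19} (transferred from $\varphi$ to $\tilde{\varphi}$ via Corollary \ref{Coro3.30}) to conclude that $\sup_{x'} \log|\tilde{\varphi}^{\beta,\eta}_{[0,n-1]}(\omega, z; x')| = nL_n(\omega, z) + O(n^{1-\tau})$. A Bernstein-type gradient bound for the polynomial $\tilde{\varphi}$ of degree $\leq n^C$ then forces some $x_k \in \mathcal{G}$ with $\log|\tilde{\varphi}^{\beta,\eta}_{[0,n-1]}(\omega, z; x_k)| \geq nL_n(\omega, z) - 2n^{1-\tau}$, provided $C_1$ is taken polynomial in $C$. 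For $(\omega, z, x) \in \mathcal{B}_n$, Corollary \ref{Coro3.30} gives $\log|\tilde{\varphi}(\omega, z; x)| \leq nL_n(\omega, z) - n^{1-\tau/2} + o(1)$, whence the gap $\log|\tilde{\varphi}(\omega, z; x_k)| - \log|\tilde{\varphi}(\omega, z; x)| \geq \tfrac{1}{2} n^{1-\tau/2}$, which places $(\omega, z, x)$ in $\mathcal{S}_n$.

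For the measure estimates, Remark \ref{Remark3.22} yields, for each fixed $(\omega, z)$ with $L(\omega, z) > \gamma$, that $\mathrm{mes}\{x : \log|\varphi^{\beta,\eta}_{[0,n-1]}(\omega, z; x)| \leq nL_n(\omega, z) - n^{1-\tau}\} < \exp(-Cn^{\nu})$. The fiber $\mathcal{S}_n(\omega, z)$ is contained in this set (after transferring $\tilde{\varphi} \leftrightarrow \varphi$), giving the fiber bound $\mathrm{mes}(\mathcal{S}_n(\omega, z)) < \exp(-n^{\nu})$; integrating in $(\omega, z)$ over the bounded parameter space yields $\mathrm{mes}(\mathcal{S}_n) < \exp(-n^{\nu}/2)$. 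The principal technical obstacle is balancing the grid spacing $n^{-C_1}$ against the Bernstein gradient of $\tilde{\varphi}$, so that the union over $\mathcal{G}$ keeps polynomial degree while still capturing a node whose log-value reaches within $O(n^{1-\tau})$ of $nL_n(\omega, z)$; this replacement of the analytic quantity $nL_n(\omega, z)$ by a fully polynomial expression is what makes the semialgebraic formulation go through.
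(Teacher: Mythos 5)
Your high-level strategy (truncate the Verblunsky coefficients as in \eqref{VCA} and replace the non-polynomial threshold $nL_n(\omega,z)$ by a polynomial surrogate) is indeed the paper's strategy, but the surrogate you chose does not work, and this is where the real content of the lemma lies. Your containment argument $\mathcal{B}_n\subset\mathcal{S}_n$ rests on the claim that a fixed grid of spacing $n^{-C_1}$, with $C_1$ a constant, contains a node $x_k$ with $\log|\tilde\varphi^{\beta,\eta}_{[0,n-1]}(\omega,z;x_k)|\geq nL_n(\omega,z)-2n^{1-\tau}$ for \emph{every} relevant $(\omega,z)$. A Bernstein bound cannot deliver this: $|\tilde\varphi|$ and its gradient are of size $e^{nL_n(\omega,z)(1+o(1))}$, so to transfer largeness from a point where $\log|\tilde\varphi|\geq nL_n-n^{1-\tau}$ to a nearby grid node within an additive error $n^{1-\tau}$ in the logarithm you need spacing of order $\exp(-cn^{1-\tau})$, i.e.\ exponentially many nodes, which destroys the degree bound $\deg(\mathcal{S}_n)\leq n^{C(p,q)}$; ``$C_1$ polynomial in $C$'' is false (the required $C_1$ grows like $n^{1-\tau}/\log n$). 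Nor can the LDT of Remark \ref{Remark3.22} rescue it: it only says the exceptional set of $x$ has measure $\leq\exp(-Cn^{\nu})$, and for a particular $(\omega,z)$ that tiny set may perfectly well contain all the (measure-zero) grid nodes, so no node need be good. This is precisely why the paper does not sample at a fixed grid but along the orbit $x+j\omega$, $1\leq j\leq J=n^{C}$, of the \emph{same} point $x$: Lemma \ref{Lemma3.32} puts the deviation set of $\log\|\tilde M_n\|_{\mathrm{HS}}$ inside a semialgebraic set of degree $n^{C}$ and Bourgain's Lemma \ref{Lemma3.31} bounds the number of orbit points it can capture, yielding the uniform-in-$x$ approximation $\big|\frac1J\sum_{j\leq J}\log\|\tilde M_n(\omega,z;x+j\omega)\|_{\mathrm{HS}}-nL_n(\omega,z)\big|\leq 5n^{1-\tau}$ of Lemma \ref{Lemma3.33} for $\omega\in\mathbb{T}^d_J(p,q)$. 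That mechanism is absent from your argument and is not replaceable by a grid.

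There is a second gap in the measure estimates. Your $\mathcal{S}_n$ carries no semialgebraic surrogate of the condition $L(\omega,z)>\gamma$, yet the fiber bound $\mathrm{mes}(\mathcal{S}_n(\omega,z))<\exp(-n^{\nu})$ is asserted (and is later needed, e.g.\ in Lemma \ref{Lemma5.3}) for \emph{all} $(\omega,z)$. On fibers where the Lyapunov exponent is small, neither Remark \ref{Remark3.22} nor Lemma \ref{Lemma3.11} applies, and a polynomial of degree $n^{C}$ can be smaller than $e^{-\frac12 n^{1-\tau/2}}$ times its grid maximum on a set of $x$ of measure far exceeding $\exp(-n^{\nu})$ (a Remez/Cartan bound only gives a gain of order $\exp(-cn^{1-\tau/2}/n^{C})$, which is useless). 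The paper's definition includes the extra inequality $\frac{1}{nJ}\sum_{j\leq J}\log\|\tilde M_n(\omega,z;x+j\omega)\|_{\mathrm{HS}}\geq\frac{\gamma}{2}$ exactly to encode positivity semialgebraically: via Corollary \ref{Coro3.30} and Lemma \ref{Lemma3.33} it forces $L_n(\omega,z)\geq\frac{\gamma}{4}$ on every fiber of $\mathcal{S}_n$, so the LDT applies there, while Lemma \ref{Lemma3.33} simultaneously guarantees $\mathcal{B}_n\subset\mathcal{S}_n$. Without both ingredients --- the orbit average as threshold and the orbit average as positivity condition --- your construction neither contains $\mathcal{B}_n$ with the stated degree bound nor admits the stated fiber and total measure estimates.
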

\begin{proof}
Take $\tilde{\alpha}_{i}$ as in \eqref{VCA} and let $\mathcal{S}_{n}$ be the set of
\begin{equation*}
(\omega,z,x)\in \mathbb{T}_{J}^{d}(p,q)\times \partial \mathbb{D}\times \mathbb{T}^{d}, \quad J=n^{C(a,b)}
\end{equation*}
such that
\begin{equation}\label{Lem3.34-(1)}
\frac{1}{nJ}\sum_{j=1}^{J}\log\|\tilde{M}_{n}(\omega,z;x+jw)\|_{\mathrm{HS}}\geq \frac{\gamma}{2}
\end{equation}
and
\begin{equation}\label{Lem3.34-(2)}
\log|\varphi_{[0,n-1]}^{\beta,\eta}(\omega,z;x)|\leq \frac{1}{J}
\sum_{j=1}^{J}\log\|\tilde{M}_{n}(\omega,z;x+jw)\|_{\mathrm{HS}}-n^{1-\tau/2}/2.
\end{equation}
Obviously, $\mathrm{deg}(\mathcal{S}_{n})\leq n^{C}$. According to Corollary \ref{Coro3.30} and Lemma \ref{Lemma3.32}, $\mathcal{B}_{n}\subset\mathcal{S}_{n}$.

Combining Corollary \ref{Coro3.30}, \eqref{Lem3.34-(1)} and \eqref{Lem3.34-(2)}, we have $L_{n}(\omega,z)\geq\frac{\gamma}{4}$ and
$$\log|\varphi_{[0,n-1]}^{\beta,\eta}(\omega,z;x)|\leq nL_{n}(\omega,z)-n^{1-\tau/2}/4.$$
Therefore, $\mathrm{mes}(\mathcal{S}_{n}(\omega,z))<\exp(-n^{\nu})$ follows from LDT. In addition, $$\mathrm{mes}(\mathcal{S}_{n})<2\pi \exp(-n^{\nu})<\exp(-n^{\nu}/2).$$
\end{proof}

We use the semialgebraic characterization to show that for most frequencies, resonant phases are rare along orbits.
\begin{lemma}\label{Lemma5.3}
Let $l\geq 1$, $\gamma>0$, and $\tau, \nu$ be as in LDT. Let $\mathcal{N}_{1},\ldots,\mathcal{N}_{s-1}\subset \mathbb{Z}$, $s=2^{2d+1}$, be finite sets with the property that
\begin{equation*}
|n_{i}|>(l^{C(d)}|n_{i-1}|)^{C'(d)},\,\, if\,\, n_{i}\in \mathcal{N}_{i}\,\,and\,\, n_{i-1}\in \mathcal{N}_{i-1},\,\, 2\leq i\leq s-1
\end{equation*}
and
\begin{equation*}
\max\limits_{n\in \mathcal{N}_{s-1}} |n|<\exp(cl^{\nu}),\,\,c=c(d).
\end{equation*}
For any $l\geq l_{0}(\alpha_{i},p,q,\gamma)$ there exists a set $\Omega_{l}$, based on the selection of finite sets $\mathcal{N}_{i}$, such that
\begin{equation*}
\mathrm{mes}(\Omega_{l})\leq l^{C(p,q)}(\min\limits_{n\in \mathcal{N}_{1}}|n|)^{-1}
\end{equation*}
and the subsequent statement is valid. For any $x\in \mathbb{T}^{d}$, $\omega\in \mathbb{T}^{d}(p,q)\backslash \Omega_{l}$, $z\in \partial \mathbb{D}$, if $L(\omega,z)>\gamma$ and
\begin{equation*}
\log|\varphi_{[0,l-1]}^{\beta,\eta}(\omega,z;x)|\leq lL_{l}(\omega,z)-l^{1-\tau/2},
\end{equation*}
then there exists $i\in \{1,\ldots,s-1\}$, depending on $ \omega, z, x$, such that
\begin{equation*}
\log|\varphi_{[0,l-1]}^{\beta,\eta}(\omega,z;x+(n-1)\omega)|> lL_{l}(\omega,z)-l^{1-\tau/2}, \,\,for \,\,all\,\, n\in \mathcal{N}_{i}.
\end{equation*}
\end{lemma}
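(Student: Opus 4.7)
The plan is to encode the ``bad'' set for the characteristic determinant as a semialgebraic set, use Bourgain's semialgebraic transversality estimates (Lemmas \ref{Lemma-NDR-1}, \ref{Lemma7.1}) to control the measure of the associated multi-shift resonance set, and finally transfer this bound to the frequency variable via Lemma \ref{Lemma-NDR-2}.

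First, Lemma \ref{Lemma3.34} (applied with $n=l$) supplies a semialgebraic set $\mathcal{S}_l\subset\mathbb{T}^d_J(p,q)\times\partial\mathbb{D}\times\mathbb{T}^d$ of degree $\leq l^{C(p,q)}$ that contains the set where $\log|\varphi^{\beta,\eta}_{[0,l-1]}(\omega,z;x)|\leq lL_l(\omega,z)-l^{1-\tau/2}$, with phase-fibre $\mathrm{mes}(\mathcal{S}_l(\omega,z))<\exp(-l^\nu)$. Writing $m_i=n_i-1$ and $\mathcal{M}_i=\{n-1:n\in\mathcal{N}_i\}$, a frequency $\omega$ violates the desired statement exactly when there exist $z,x$ and $m_i\in\mathcal{M}_i$ with $(\omega,z,x),(\omega,z,x+m_i\omega)\in\mathcal{S}_l$ for every $i$. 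This prompts us to introduce
\[\mathcal{D}_l=\{(\omega,\xi_1,\dots,\xi_{s-1})\in(\mathbb{T}^d)^s:\exists (z,x),\ (\omega,z,x)\in\mathcal{S}_l,\ (\omega,z,x+\xi_i)\in\mathcal{S}_l\ \forall i\},\]
so that $\Omega_l=\{\omega:(\omega,m_1\omega,\dots,m_{s-1}\omega)\in\mathcal{D}_l\text{ for some }m_i\in\mathcal{M}_i\}$. By quantifier elimination, $\mathcal{D}_l$ is semialgebraic with $\deg(\mathcal{D}_l)\leq l^{C_d}$.

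Next we estimate $\mathrm{mes}(\mathcal{D}_l)$. For fixed $(\omega,z)$, the corresponding fibre of $\mathcal{D}_l$ consists of $(s-1)$-tuples $(\xi_1,\dots,\xi_{s-1})$ such that the translates $\mathcal{S}_l(\omega,z)-\xi_i$ together with $\mathcal{S}_l(\omega,z)$ share a common point. Applying Lemma \ref{Lemma-NDR-1} to $\mathcal{S}_l\subset[0,1]^{2d+1}$ (viewed with the phase $x$ as the ``$s$-coordinate'' of measure $<\exp(-l^\nu)$ and $(\omega,z)$ as the ``$r$-coordinate'') and iterating across all $2d+1$ ambient directions, while using Lemma \ref{Lemma7.1} to keep the $\omega$-direction transversal to the projection, one obtains
\[\mathrm{mes}(\mathcal{D}_l)\leq l^{C_d}\exp(-c_d l^\nu)\]
for $s=2^{2d+1}$; the doubling in $s$ reflects the cost of absorbing each of the $2d+1$ ambient coordinates in a binary fibre-intersection step.

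Finally we apply Lemma \ref{Lemma-NDR-2} with $r=d$ and the value of $s$ just fixed. The growth condition on $\mathcal{N}_i$ translates (after a harmless adjustment of the constant $C'$) into the corresponding growth on $\mathcal{M}_i$, and $\max_{n\in\mathcal{N}_{s-1}}|n|\leq\exp(cl^\nu)$ yields $\max|m|^{C}<1/\mathrm{mes}(\mathcal{D}_l)$. Lemma \ref{Lemma-NDR-2} then gives
\[\mathrm{mes}(\Omega_l)\leq l^{C(p,q)}\bigl(\min_{m\in\mathcal{M}_1}|m|\bigr)^{-1}\lesssim l^{C(p,q)}\bigl(\min_{n\in\mathcal{N}_1}|n|\bigr)^{-1},\]
and for $\omega\in\mathbb{T}^d(p,q)\setminus\Omega_l$ the desired dichotomy follows directly from the construction of $\Omega_l$ together with the inclusion of the ``bad'' set in $\mathcal{S}_l$. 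The principal obstacle is Step 3: converting the single-fibre estimate on $\mathcal{S}_l$ into the joint estimate on $\mathcal{D}_l$ for $s=2^{2d+1}$ shifts requires an iterated use of Bourgain's fibre-intersection lemma, with the transversality decomposition of Lemma \ref{Lemma7.1} playing an essential role in preventing the $\omega$-direction from degrading the bound.
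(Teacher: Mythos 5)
Your overall architecture (the semialgebraic envelope $\mathcal{S}_l$ from Lemma \ref{Lemma3.34}, Bourgain's intersection lemma to control a tuple set, then Lemma \ref{Lemma-NDR-2} along the orbit $(\omega,n_1\omega,\ldots,n_{s-1}\omega)$) is the same as the paper's, but the decisive step --- the measure bound $\mathrm{mes}(\mathcal{D}_l)\leq l^{C_d}\exp(-c_d l^{\nu})$ --- is only asserted, and the mechanism you describe would not produce it. Applying Lemma \ref{Lemma-NDR-1} to $\mathcal{S}_l$ with the phase $x$ as the small-fibre coordinate and $(\omega,z)$ as the parameter yields a bound on tuples of \emph{phase points} $(x_1,\ldots,x_{2^{d+1}})$ admitting a \emph{free} common $(\omega,z)$; it says nothing about your $\mathcal{D}_l$, in which the common point's frequency is pinned to the first tuple slot and the phases are shifts $x+\xi_i$ of a single existentially quantified base point $x$. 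The phrase ``iterating across all $2d+1$ ambient directions'' does not correspond to any statement of Lemma \ref{Lemma-NDR-1}, and Lemma \ref{Lemma7.1} plays no role here at all (in the paper it enters only in Lemma \ref{Lemma7.2}, for the elimination between two boxes); invoking it ``to keep the $\omega$-direction transversal'' is reaching for the wrong tool. Your explanation of $s=2^{2d+1}$ is also off: it is not the cost of an iteration, but comes from a \emph{single} application of Lemma \ref{Lemma-NDR-1} in which the space of common points is the full $(\omega,z,x)$-space of dimension $r=2d+1$, whence $2^{r}=2^{2d+1}$ slots.

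The paper resolves exactly the difficulty you ran into by an auxiliary shift variable: it sets $\mathcal{T}_l=\{(\omega,z,x,y):(\omega,z,x+y-\omega)\in\mathcal{S}_l\}$, whose $y$-fibres are translates of $\mathcal{S}_l(\omega,z)$ and hence of measure $<\exp(-l^{\nu})$; one application of Lemma \ref{Lemma-NDR-1} (first group $y\in\mathbb{T}^d$, second group $(\omega,z,x)$) shows that $\mathcal{A}=\{(y_1,\ldots,y_s):\mathcal{T}_l(y_1)\cap\cdots\cap\mathcal{T}_l(y_s)\neq\emptyset\}$ has degree at most $l^{C}$ and measure at most $\exp(-cl^{\nu})$, and then $\Omega_l=\{\omega:(\omega,n_1\omega,\ldots,n_{s-1}\omega)\in\mathcal{A}\ \text{for some}\ n_i\in\mathcal{N}_i\}$ is estimated by Lemma \ref{Lemma-NDR-2}; the evaluation $y_1=\omega$ turns $x+y-\omega$ into the unshifted point $x$, and $y_{i+1}=n_i\omega$ into $x+(n_i-1)\omega$, which is how the shifts $x+(n-1)\omega$ in the statement arise. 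Your $\mathcal{D}_l$ could in principle be salvaged by slicing in $\omega$: for each fixed $\omega$ apply Lemma \ref{Lemma-NDR-1} to $\{(\xi,(z,x)):(\omega,z,x)\in\mathcal{S}_l,\ (\omega,z,x+\xi)\in\mathcal{S}_l\}$, whose $\xi$-fibres over $(z,x)$ have measure $<\exp(-l^{\nu})$, then integrate in $\omega$ and verify the degree bound for $\mathcal{D}_l$ before invoking Lemma \ref{Lemma-NDR-2} --- but that argument is not the one you wrote, so as it stands the key estimate is a genuine gap.
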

\begin{proof}
Let $\mathcal{B}_{l}$, $\mathcal{S}_{l}$ be the sets from \ref{Lemma3.34}. Then $\mathcal{S}_{l}$ is semialgebraic, $\mathcal{B}_{l}\subset\mathcal{S}_{l}$, $\mathrm{deg}(\mathcal{S}_{l})\leq l^{C(p,q)}$, $\mathrm{mes}(\mathcal{S}_{l}(\omega,z))<\exp(-l^{\nu})$.

Let $\mathcal{T}_{l}$ be the set of $(\omega,z,x,y)\in \mathbb{T}^{d}_{l^{C}}(p,q)\times \partial \mathbb{D}\times\mathbb{T}^{d}\times\mathbb{T}^{d}$ such that $(\omega,z,x+y-\omega)\in\mathcal{S}_{l}$. Then $\mathcal{T}_{l}$ is a semialgebraic set and
\begin{equation*}
\mathrm{deg}(\mathcal{T}_{l})\leq l^{C},\,\, \mathrm{mes}(\mathcal{T}_{l}(\omega,z,x))<\exp(-l^{\nu}).
\end{equation*}
According to Lemma \ref{Lemma-NDR-1},
\begin{equation*}
\mathcal{A}:=\{(y_{1},\ldots,y_{s}):\mathcal{T}_{l}(y_{1})\cap\cdots\cap\mathcal{T}_{l}(y_{s})\neq \emptyset\}
\end{equation*}
is semialgebraic of degree at most $l^{C}$ and measure at most $\exp(-cl^{\nu})$, where $c=c(d)$.

Let $\Omega_{l}=\{\omega:(\omega,n_{1}\omega,\ldots,n_{s-1}\omega)\in \mathcal{A}\,\,\mathrm{for}\,\,\mathrm{some}\,\,n_{i}\in \mathcal{N}_{i}\}$. Then we can obtain the conclusion with the aid of Lemma \ref{Lemma-NDR-2}.
\end{proof}

We extend the previous result to two resonant phases, ensuring that for most frequencies, two distinct resonant phases cannot have long orbits without non-resonant points.
\begin{coro}\label{Coro5.4}
Let $l\geq 1$ and $\tau, \nu$ be as in LDT. Let
$\mathcal{N}_{1},\ldots,\mathcal{N}_{(s-1)^{2}}\subset \mathbb{Z}$ be finite sets, with $s=2^{2d+1}$, possessing the property that
\begin{equation*}
|n_{i}|>(l^{C(d)}|n_{i-1}|)^{C'(d)},\,\, if\,\, n_{i}\in \mathcal{N}_{i}\,\,and\,\, n_{i-1}\in \mathcal{N}_{i-1},\,\, 2\leq i\leq (s-1)^{2}
\end{equation*}
and
\begin{equation*}
\max\limits_{n\in \mathcal{N}_{(s-1)^{2}}} |n|<\exp(cl^{\nu}),\,\,c=c(d).
\end{equation*}
For any $l\geq l_{0}(\alpha_{i},p,q,\gamma)$ there exists a set $\Omega_{l}$, based on the selection of finite sets $\mathcal{N}_{i}$, such that
\begin{equation*}
\mathrm{mes}(\Omega_{l})\leq l^{C(p,q)}(\min\limits_{n\in \mathcal{N}_{1}}|n|)^{-1}
\end{equation*}
and the following statement holds. For any $x_{1}, x_{2}\in \mathbb{T}^{d}$, $\omega\in \mathbb{T}^{d}(p,q)\backslash \Omega_{l}$, $z\in \partial \mathbb{D}$, if $L(\omega,z)>\gamma$ and
\begin{equation*}
\log|\varphi_{[0,l-1]}^{\beta,\eta}(\omega,z;x_{j})|\leq lL_{l}(\omega,z)-l^{1-\tau/2},\,\, j=1,2,
\end{equation*}
then there exists $i\in \{1,\ldots,(s-1)^{2}\}$, depending on $ \omega, z, x_{1}, x_{2}$, such that
\begin{equation*}
\log|\varphi_{[0,l-1]}^{\beta,\eta}(\omega,z;x_{j}+(n-1)\omega)|> lL_{l}(\omega,z)-l^{1-\tau/2}, \,\,for \,\,all\,\, n\in \mathcal{N}_{i}.
\end{equation*}
\end{coro}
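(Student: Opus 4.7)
The plan is to deduce Corollary \ref{Coro5.4} from Lemma \ref{Lemma5.3} by a pigeonhole argument on the phase index $j\in\{1,2\}$, followed by a union bound over the finite collection of $(s-1)$-subsets of $\{1,\ldots,(s-1)^2\}$. The underlying idea is that asking for the conclusion to fail at both phases simultaneously forces many shifts to fail at at least one of the two phases, and the pigeonhole principle then reduces this to a one-phase statement already covered by Lemma \ref{Lemma5.3}.

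I would argue by contradiction. Fix $(\omega,z,x_1,x_2)$ with $\omega\in\mathbb{T}^d(p,q)\setminus\Omega_l$ (the exceptional set is assembled below), $L(\omega,z)>\gamma$, both $x_1,x_2$ bad, yet the conclusion failing. Then for every $i\in\{1,\ldots,(s-1)^2\}$ there exist $n_i\in\mathcal{N}_i$ and $j_i\in\{1,2\}$ with
\begin{equation*}
\log|\varphi_{[0,l-1]}^{\beta,\eta}(\omega,z;x_{j_i}+(n_i-1)\omega)|\le lL_l(\omega,z)-l^{1-\tau/2}.
\end{equation*}
By the pigeonhole principle, at least $\lceil(s-1)^2/2\rceil\ge s-1$ of these indices share a common label $j_*\in\{1,2\}$ (this uses only $s=2^{2d+1}\ge 3$). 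Fix any $(s-1)$-element subset $I=\{i_1<\cdots<i_{s-1}\}$ among such indices and consider the relabeled sub-family $\mathcal{N}'_k:=\mathcal{N}_{i_k}$ for $k=1,\ldots,s-1$.

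Next I would invoke Lemma \ref{Lemma5.3} for the phase $x_{j_*}$ with this sub-family. The separation condition $|n'_k|>(l^{C(d)}|n'_{k-1}|)^{C'(d)}$ follows from iterating the original chain across the gap $i_{k-1}\to i_k$, which only strengthens the inequality, and the upper bound $\max_{n\in\mathcal{N}'_{s-1}}|n|<\exp(cl^\nu)$ is inherited trivially. Since $x_{j_*}$ is bad by hypothesis, Lemma \ref{Lemma5.3} produces an exceptional set $\Omega_l^{I,j_*}$ of measure at most $l^{C(p,q)}(\min_{n\in\mathcal{N}'_1}|n|)^{-1}\le l^{C(p,q)}(\min_{n\in\mathcal{N}_1}|n|)^{-1}$, and for $\omega$ outside this set there is some $k\in\{1,\ldots,s-1\}$ such that every $n\in\mathcal{N}_{i_k}$ makes $x_{j_*}+(n-1)\omega$ good. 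This directly contradicts our choice of $n_{i_k}\in\mathcal{N}_{i_k}$ forcing $x_{j_*}+(n_{i_k}-1)\omega$ to be bad.

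Setting $\Omega_l:=\bigcup_{j_*\in\{1,2\}}\bigcup_I\Omega_l^{I,j_*}$, where $I$ ranges over all $(s-1)$-subsets of $\{1,\ldots,(s-1)^2\}$, closes the argument. The number of pairs $(I,j_*)$ is the purely combinatorial quantity $2\binom{(s-1)^2}{s-1}$, which depends only on $d$, so the stated bound $\mathrm{mes}(\Omega_l)\le l^{C(p,q)}(\min_{n\in\mathcal{N}_1}|n|)^{-1}$ is preserved after absorbing this count into the implicit constant. The main (and rather mild) obstacle is verifying that each relabeled sub-family meets the hypotheses of Lemma \ref{Lemma5.3}: this amounts to confirming that the separation chain is stable under passage to subsequences, which holds automatically since skipping indices only amplifies the gaps. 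An alternative route would be to redo the semialgebraic argument of Lemma \ref{Lemma5.3} with $\mathcal{T}_l$ enlarged to carry both phases $x_1,x_2$ and then apply Lemma \ref{Lemma-NDR-1} with $r=4d+2$ (so $2^r\ge(s-1)^2+1$), but the pigeonhole reduction above seems cleaner.
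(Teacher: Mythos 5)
Your argument is correct, and it reaches the conclusion by a different combinatorial reduction than the paper, although both proofs rest entirely on Lemma \ref{Lemma5.3}. The paper argues constructively with a two-level block decomposition: it partitions $\{1,\ldots,(s-1)^2\}$ into $s-1$ consecutive blocks of length $s-1$, applies Lemma \ref{Lemma5.3} once to the family of block unions $\bigl\{\bigcup_{j=1}^{s-1}\mathcal{N}_{k(s-1)+j}\bigr\}_k$ at the phase $x_1$ (selecting a block good for $x_1$), and then applies Lemma \ref{Lemma5.3} to the $s-1$ sets inside that block at the phase $x_2$; the index $i=k(s-1)+j$ then works for both phases, and $\Omega_l$ is a union of only about $s$ exceptional sets (this route needs the mild check that unions of consecutive blocks still satisfy the separation hypothesis). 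You instead argue by contradiction and pigeonhole: if the conclusion fails, at least $(s-1)^2/2\ge s-1$ indices fail at a common phase $x_{j_*}$, and Lemma \ref{Lemma5.3} applied to the corresponding $(s-1)$-subfamily contradicts this. The price is that $\Omega_l$ must be a union over all $\binom{(s-1)^2}{s-1}$ subfamilies — a much larger, but still $d$-dependent, constant number of applications, so the measure bound survives after enlarging $C$ (and the union over $j_*$ is actually redundant, since the exceptional set of Lemma \ref{Lemma5.3} depends only on the family, not on the phase). Your two inheritance checks are the right ones and do hold: the gap condition forces $\min_{\mathcal{N}_i}|n|>\max_{\mathcal{N}_{i-1}}|n|$, so any subfamily keeps the separation chain, has first set with minimum at least $\min_{\mathcal{N}_1}|n|$ (giving the stated measure bound), and has last set bounded by $\max_{\mathcal{N}_{(s-1)^2}}|n|<\exp(cl^\nu)$. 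In short: the paper's proof is leaner in the size of $\Omega_l$ and avoids the contradiction scheme; yours avoids forming unions of blocks and checking their separation, at the cost of a larger (harmless) combinatorial union.
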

\begin{proof}
Let $\Omega_{l}$ be the union of the sets obtained from Lemma \ref{Lemma5.3} with the following choices of finite sets:
\begin{equation}\label{Coro5.4-(1)}
\mathcal{N}_{k(s-1)+1},\ldots,\mathcal{N}_{(k+1)(s-1)},\,\,k=0,\ldots,s-2,
\end{equation}
\begin{equation}\label{Coro5.4-(2)}
\mathop{\cup}\limits_{j=1}^{s-1}\mathcal{N}_{j}, \mathop{\cup}\limits_{j=1}^{s-1}\mathcal{N}_{(s-1)+j},\ldots,\mathop{\cup}\limits_{j=1}^{s-1}\mathcal{N}_{(s-2)(s-1)+j}.
\end{equation}
It is evident that $\Omega_{l}$ complies with the prescribed measure bound. Let $\omega\in \mathbb{T}^{d}(p,q)\backslash\Omega_{l}$.

From Lemma \ref{Lemma5.3} and \eqref{Coro5.4-(2)}, there exists $k\in \{0,\ldots,s-2\}$ such that
\begin{equation*}
\log|\varphi_{[0,l-1]}^{\beta,\eta}(\omega,z;x_{1}+(n-1)\omega)|> lL_{l}(\omega,z)-l^{1-\tau/2}
\end{equation*}
for all $n\in \mathop{\cup}\limits_{j=1}^{s-1}\mathcal{N}_{k(s-1)+j}$. Besides, from  \ref{Lemma5.3} and \eqref{Coro5.4-(1)}, there exists $j\in \{1,\ldots,s-1\}$ such that
\begin{equation*}
\log|\varphi_{[0,l-1]}^{\beta,\eta}(\omega,z;x_{2}+n\omega)|> lL_{l}(\omega,z)-l^{1-\tau/2}
\end{equation*}
for all $n\in \mathop{\cup}\limits_{j=1}^{s-1}\mathcal{N}_{k(s-1)+j}$. Taking $i=k(s-1)+j$, the conclusion follows.
\end{proof}

For our purpose, we need the following Wegner's estimate.
\begin{lemma}\label{Wegner's-estimate}
Let $\nu$ be as in LDT. Assume $x_{0}\in \mathbb{T}^{d}$, $\omega_{0}\in \mathbb{T}^{d}(p,q) $, $z_{0}\in \partial \mathbb{D}$ and $L(\omega_{0},z_{0})>\gamma>0$. Let $l, n$ be integers such that $(2\log n)^{1/\nu}\leq Cl \leq n$. Then for any $n\geq N_{0}(p,q,\gamma)$ there exists  a set $\mathcal{B}_{n,\omega_{0},z_{0}}$, $\mathrm{mes}(\mathcal{B}_{n,\omega_{0},z_{0}})<\exp(-Cl^{\nu}/2)$ such that for any $x\in \mathbb{T}^{d}\backslash \mathcal{B}_{n,\omega_{0},z_{0}}$ and any $(\omega,z)\in \mathbb{T}^{d}\times \partial \mathbb{D}$, $|x-x_{0}|$, $|\omega-\omega_{0}|$, $|z-z_{0}|<\exp(-l)$, we have
\begin{equation*}
\mathrm{dist}(z,\sigma(\mathcal{E}_{[0,n-1]}^{\beta,\eta}))\geq \exp(-l).
\end{equation*}
\end{lemma}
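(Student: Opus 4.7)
The plan is to deduce this Wegner-type estimate as an immediate consequence of the covering form of LDT (Lemma \ref{covering-form-(LDT)}), applied with intervals of length precisely $l$.

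First I would, for each $m \in [0, n-1]$, fix a canonical interval $I_m \subset [0, n-1]$ of length $l$ placing $m$ in its central portion (for example $I_m := [m - \lfloor l/2\rfloor, m - \lfloor l/2\rfloor + l - 1]$ in the interior of $[0,n-1]$, with the obvious shift towards the nearest endpoint when $m$ is near the boundary), so that condition (i) of Lemma \ref{covering-form-(LDT)}, namely $\mathrm{dist}(m, [0,n-1]\setminus I_m) \geq |I_m|/100$, is built in. I then take as the exceptional set
\begin{equation*}
\mathcal{B}_{n,\omega_0,z_0} := \bigcup_{m=0}^{n-1}\Big\{x_0 \in \mathbb{T}^d : \log|\varphi^{\beta,\eta}_{I_m}(\omega_0, z_0; x_0)| \leq |I_m|\, L_{|I_m|}(\omega_0, z_0) - |I_m|^{1-\tau/4}\Big\}.
\end{equation*}
Using $\varphi^{\beta,\eta}_{[a,\, a+l-1]}(\omega_0, z_0; x_0) = \varphi^{\beta,\eta}_{[0,\, l-1]}(\omega_0, z_0; x_0 + a\omega_0)$, each event in this union is a translate (in $x_0$) of the bad set for the interval $[0, l-1]$, whose measure is bounded by $\exp(-C l^\nu)$ by the LDT of Remark \ref{Remark3.22}. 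A union bound gives $\mathrm{mes}(\mathcal{B}_{n,\omega_0,z_0}) \leq n \exp(-C l^\nu)$, and the hypothesis $(2\log n)^{1/\nu} \leq C l$ converts this to $\mathrm{mes}(\mathcal{B}_{n,\omega_0,z_0}) \leq \exp(-C l^\nu/2)$, as required.

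With the bad set defined, I would invoke Lemma \ref{covering-form-(LDT)} for any $x_0 \notin \mathcal{B}_{n,\omega_0,z_0}$: every $m \in [0,n-1]$ admits its interval $I_m$, satisfying all three hypotheses of that lemma, with $\max_m |I_m| = l$. The admissible perturbation tolerance is $\exp(-2 l^{1-\tau/4})$, and this exceeds the hypothesized scale $\exp(-l)$ precisely when $l^{\tau/4} \geq 2$, which holds for $l$ large. Consequently, any $(\omega, z, x)$ with $|x - x_0|,\, |\omega - \omega_0|,\, |z - z_0| < \exp(-l)$ falls within the admissible perturbation window, and the covering lemma delivers $\mathrm{dist}(z, \sigma(\mathcal{E}_{[0,n-1]}^{\beta,\eta})) \geq \exp(-2 l^{1-\tau/4}) \geq \exp(-l)$, which is the claim.

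The only substantive issue is the compatibility of scales: $l$ must be large enough that $l^{\tau/4} \geq 2$ (so the covering-lemma tolerance dominates $\exp(-l)$), large enough compared with $\log n$ to close the union bound with the required margin, at least the constant $C(p,q,z_0,\gamma)$ demanded by Lemma \ref{covering-form-(LDT)}, and at most $n$ (so that the intervals $I_m$ genuinely lie in $[0,n-1]$). All four constraints are met inside the window $(2\log n)^{1/\nu}/C \leq l \leq n$ stipulated by the hypothesis, provided $n \geq N_0(p,q,\gamma)$; no new analytic ingredient beyond the LDT for $\varphi^{\beta,\eta}_{[0,l-1]}$ and the covering form of LDT is needed.
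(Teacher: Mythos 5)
Your proposal is correct and follows essentially the same route as the paper: you define the exceptional set via the large deviation estimate for $\varphi^{\beta,\eta}$ on length-$l$ windows sliding over $[0,n-1]$, use the hypothesis $(2\log n)^{1/\nu}\leq Cl$ to close the union bound, and conclude with the covering form of LDT (Lemma \ref{covering-form-(LDT)}). Your treatment of the interval placement (condition (i)) and of the tolerance comparison $\exp(-l)\leq\exp(-2l^{1-\tau/4})$ is in fact more explicit than the paper's own argument.
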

\begin{proof}
Take $\omega, z$ satisfying the assumptions. Let $\mathcal{B}_{n,\omega_{0},z_{0}}$ be the set of $x$ such that
\begin{equation*}
\log|\varphi^{\beta,\eta}_{[0,l-1]}(x+(m-1)\omega_{0},\omega_{0},z_{0})|<lL_{l}(\omega_{0},z_{0})-l^{1-\tau},\quad m\in [0,n-1].
\end{equation*}
According to LDT, we can obtain
\begin{equation*}
\mathrm{mes}(\mathcal{B}_{n,\omega_{0},z_{0}})<\exp(-Cl^{v})<n\exp(-Cl^{v})<\exp(-\frac{Cl^{v}}{2}).
\end{equation*}
By the covering form of LDT, for any $x\notin \mathcal{B}_{n,\omega_{0},z_{0}}$, we have
\begin{equation*}
\mathrm{dist}(z,\sigma(\mathcal{E}_{[0,n-1]}^{\beta,\eta}))\geq \exp(-l).
\end{equation*}
\end{proof}

We extend the Wegner estimate to control resonances between two distant intervals, showing that for most frequencies and phases, at least one of the intervals has a bounded Green's function (i.e., no resonance). This directly eliminates double resonances.
\begin{lemma}\label{Lemma7.2}
Let $\varpi\in (0,1)$, $\bar{l}\geq 1$ and $\nu$ be as in LDT. Let $\Lambda_{0}, \Lambda_{1}$ be intervals in $\mathbb{Z}$ such that $\bar{l}/10\leq |\Lambda_{0}|, |\Lambda_{1}|\leq 10\bar{l}$ and $0\in \Lambda_{0}, \Lambda_{1}$.
For any $x_{0}\in \mathbb{T}^{d}$, $\bar{l}\geq \bar{l_{0}}(\alpha_{i},p,q,\varpi,\gamma)$, $1\leq t\leq \exp(c_{0}\bar{l}^{\varpi\nu})$, there exists a set $\Omega_{\bar{l},t_{0},x_{0}}$, $\mathrm{mes}(\Omega_{\bar{l},t_{0},x_{0}})<\bar{l}^{C(p,q)}/t_{0}$ such that the following statement holds. For all $\omega\in \mathbb{T}^{d}(p,q)\backslash\Omega_{\bar{l},t_{0},x_{0}}$, $z\in \partial \mathbb{D}$, such that $L(\omega,z)>\gamma$, we have
\begin{equation*}
\min(\|(\mathcal{E}_{\Lambda_{0}}^{\beta,\eta}(\omega,x_{0})-z)^{-1}\|,\|(\mathcal{E}_{\Lambda_{1}}^{\beta,\eta}(\omega,x_{0}+t\omega)-z)^{-1}\|)\leq\exp(\bar{l}^{\varpi})
\end{equation*}
for all $t_{0}\leq |t|\leq \exp(c_{0}\bar{l}^{\varpi\nu})$.
\end{lemma}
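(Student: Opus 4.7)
The strategy is to translate the resolvent bound into a simultaneous smallness condition on two characteristic determinants via the spectral form of the LDT, encode the resulting ``double resonance'' event as a semialgebraic set in $(\omega, z, x_1, x_2)$, and then exploit the Diophantine shift $t\omega$ together with semialgebraic projection/transversality to estimate the measure of bad $\omega$. By the contrapositive of Lemma \ref{spectrl-form-(LDT)}, the inequality $\|(\mathcal{E}^{\beta,\eta}_{\Lambda_i}(\omega,x)-z)^{-1}\| > \exp(\bar{l}^{\varpi})$ forces
$$\log|\varphi^{\beta,\eta}_{\Lambda_i}(\omega,z;x)| < |\Lambda_i|\, L_{|\Lambda_i|}(\omega,z) - c\bar{l}^{\varpi},$$
so the conclusion of the lemma can fail at some $(z,t)$ only if both determinants are simultaneously small at $(\omega, z, x_0)$ and $(\omega, z, x_0+t\omega)$.

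Applying the polynomial approximation \eqref{VCA} together with Corollary \ref{Coro3.30} and the construction of Lemma \ref{Lemma3.34}, this simultaneous smallness event is majorized by a semialgebraic set
$$\mathcal{S} \subset \mathbb{T}^{d}(p,q)\times\partial\mathbb{D}\times\mathbb{T}^d\times\mathbb{T}^d$$
of degree at most $\bar{l}^{C(p,q)}$, with the slicewise measure bound $\mathrm{mes}\{(x_1,x_2):(\omega,z,x_1,x_2)\in\mathcal{S}\} < \exp(-\bar{l}^{\nu})$ uniform in $(\omega,z)$. I then eliminate the continuous $z$-variable: any witnessing $z$ must lie within $\exp(-\bar{l}^{\varpi})$ of some eigenvalue of $\mathcal{E}^{\beta,\eta}_{\Lambda_0}(\omega,x_0)$, of which there are at most $|\Lambda_0|\leq 10\bar{l}$. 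Each such eigenvalue is a semialgebraic function of $\omega$ (the characteristic determinant is polynomial in $(\omega,z)$ after approximation), and substituting these choices into $\mathcal{S}$ produces a semialgebraic set $\mathcal{A}\subset \mathbb{T}^d_{\omega}\times \mathbb{T}^d_{y}$ of degree $\bar{l}^C$ with $\mathrm{mes}(\mathcal{A}(\omega))\lesssim \bar{l}\exp(-\bar{l}^{\nu})$, and
$$\Omega_{\bar{l},t_0,x_0} \subset \bigl\{\omega : (\omega, x_0 + t\omega) \in \mathcal{A}\ \text{for some}\ t_0\leq |t| \leq \exp(c_0\bar{l}^{\varpi\nu})\bigr\}.$$

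Finally, I invoke Lemma \ref{Lemma7.1} at scale $\varepsilon = \exp(-c\bar{l}^{\varpi\nu})$ to split $\mathcal{A}$ into a piece with small projection onto the $\omega$-coordinate and a transversal piece. On the transversal piece, the Diophantine condition $\omega \in \mathbb{T}^d(p,q)$ converts the small $y$-measure of the fibers into an $\omega$-measure of order $\bar{l}^{C}/|t|$ for each admissible $t$. Summing over $|t| \geq t_0$ produces a geometric series bounded by $\bar{l}^{C(p,q)}/t_0$, yielding the announced measure estimate for $\Omega_{\bar{l},t_0,x_0}$.

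The main obstacle will be the $z$-elimination step together with the fact that both the witnessing eigenvalue $z = z_j^{\Lambda_0}(\omega,x_0)$ and the shift direction $t\omega$ depend on $\omega$, which prevents a direct application of Lemma \ref{Lemma7.1} to an $\omega$-independent set. To handle this, I partition $\mathbb{T}^d(p,q)$ into cells of diameter $\exp(-\bar{l})$ and use the Lipschitz estimates \eqref{eigenvalue-estimate}, Remark \ref{Remark3.14}, and Corollary \ref{Coro3.13} to freeze the eigenvalue within each cell, so that the transversality lemma applies to a genuinely $\omega$-independent slice; the exponential gaps between successive admissible $|t|$ absorb the Lipschitz losses and preserve the $1/t_0$ gain in the union bound.
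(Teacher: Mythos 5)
The overall skeleton you chose (semialgebraic description of the double-resonance event, elimination of the spectral variable through eigenvalue proximity, then Bourgain's decomposition Lemma \ref{Lemma7.1} along the planes carrying $(\omega,\{x_{0}+t\omega\})$) is indeed the paper's skeleton, but your first reduction has a genuine gap. You invoke ``the contrapositive of Lemma \ref{spectrl-form-(LDT)}'' to claim that $\|(\mathcal{E}^{\beta,\eta}_{\Lambda_i}(\omega,x)-z)^{-1}\|>\exp(\bar l^{\varpi})$ forces $\log|\varphi^{\beta,\eta}_{\Lambda_i}(\omega,z;x)|<|\Lambda_i|L_{|\Lambda_i|}(\omega,z)-c\bar l^{\varpi}$. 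The contrapositive of that lemma runs the other way (small determinant $\Rightarrow$ large resolvent); the implication you need is the converse, and it is not true in the generality required: since $\mathcal{E}^{\beta,\eta}_{\Lambda_i}$ is unitary, a resolvent norm larger than $\exp(\bar l^{\varpi})$ only says that one eigenvalue lies within $\exp(-\bar l^{\varpi})$ of $z$, which yields $\log|\varphi^{\beta,\eta}_{\Lambda_i}(z)|\le|\Lambda_i|\log 2-\bar l^{\varpi}$, and this need not drop below the LDT deviation threshold $|\Lambda_i|L_{|\Lambda_i|}-|\Lambda_i|^{1-\tau}$ unless $L$ exceeds $\log 2$ (nothing of the sort is assumed; $\gamma$ may be small, and $\varpi$ may be $<1-\tau$). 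Moreover the event $\{\log|\varphi|<|\Lambda_i|L_{|\Lambda_i|}-c\bar l^{\varpi}\}$ is not an LDT small-measure event for small $\varpi$, so neither the LDT nor the Lemma \ref{Lemma3.34} construction gives your uniform slicewise bound $\exp(-\bar l^{\nu})$. The paper avoids this step entirely: it keeps the two resolvent inequalities themselves (for the truncated matrices $\tilde{\mathcal{E}}$) as the semialgebraic conditions, observes that on this set some eigenvalue of $\mathcal{E}^{\beta,\eta}_{\Lambda_0}(\omega,x_0)$ lies within $8\exp(-\bar l^{\varpi})$ of $\sigma(\mathcal{E}^{\beta,\eta}_{\Lambda_1}(\omega,x))$, and bounds the measure of $\mathcal{S}=\mathrm{Proj}_{(\omega,x)}\tilde{\mathcal{S}}$ by a union over the at most $10\bar l$ eigenvalues combined with Wegner's estimate (Lemma \ref{Wegner's-estimate}); the $z$-variable is then simply projected out, so no eigenvalue parametrization, no freezing of $z$ on $\exp(-\bar l)$-cells in $\omega$, and none of the attendant cell-count blow-up is needed.

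Your quantitative use of Lemma \ref{Lemma7.1} also does not close. With $\varepsilon=\exp(-c\bar l^{\varpi\nu})$ the transversality hypothesis fails: the planes $L_i$ carrying $(\omega,\{x_0+t\omega\})$ satisfy $|\mathrm{Proj}_{L_i}e_j|\approx 1/|t|$, which is not $<\varepsilon/100$ for the admissible range $|t|\le\exp(c_0\bar l^{\varpi\nu})$, and the requirement $\varepsilon>\vartheta^{1/d}$ is also in jeopardy. The paper takes $\varepsilon=200/t_0$; then transversality applies to every $|t|\ge t_0$, each $t$ contributes only $\lesssim|t|^{d}\bar l^{C}t_0\exp(-c\bar l^{\varpi\nu})$, which stays negligible even after summing over all integers $t$ in the range, and the announced bound $\bar l^{C(p,q)}/t_0$ comes once and for all from $\mathrm{mes}(\mathrm{Proj}_{\omega}\mathcal{S}_1)<B^{C}\varepsilon$. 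Your alternative accounting, a per-$t$ bound of order $\bar l^{C}/|t|$ summed ``as a geometric series'' to $\bar l^{C}/t_0$, is arithmetically wrong: $\sum_{|t|\ge t_0}1/|t|$ is a harmonic tail and grows like $\log$ of the upper cutoff, not like $1/t_0$. Finally, the ``exponential gaps between successive admissible $|t|$'' that you rely on to absorb Lipschitz losses do not exist in this lemma: $t$ runs over all integers in $[t_0,\exp(c_0\bar l^{\varpi\nu})]$; the sparse scales $\mathcal{N}_k$ enter only later (Corollary \ref{Coro5.4}, Lemma \ref{Lemma9.1}), not here.
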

\begin{proof}
Fix $x_{0}\in \mathbb{T}^{d}$. Let $\mathcal{B}$ be the set of $(\omega,z,x)\in \mathbb{T}^{d}(p,q)\times \partial \mathbb{D}\times \mathbb{T}^{d}$, such that $L(\omega,z)>\gamma$,
\begin{equation*}
\|(\mathcal{E}_{\Lambda_{0}}^{\beta,\eta}(\omega,x_{0})-z)^{-1}\|>\exp(\bar{l}^{\varpi}),\,\, \mathrm{and}\,\,\|(\mathcal{E}_{\Lambda_{1}}^{\beta,\eta}(\omega,x)-z)^{-1}\|>\exp(\bar{l}^{\varpi}).
\end{equation*}
Let $\tilde{\alpha}_{i}$ be as in \eqref{VCA}. Then $\mathcal{B}$ is contained in a semialgebraic set $\tilde{\mathcal{S}}$ of
\begin{equation*}
(\omega,z,x)\in \mathbb{T}_{J}^{d}(p,q)\times \partial \mathbb{D}\times \mathbb{T}^{d},\,\,J=\bar{l}^{C(p,q)}
\end{equation*}
satisfying
\begin{equation*}
\frac{1}{|\Lambda_{1}|J}\sum_{j=1}^{J}\log\|\tilde{M}_{|\Lambda_{1}|}(\omega,z;x+j\omega)\|_{\mathrm{HS}}\geq\frac{\gamma}{2},
\end{equation*}
\begin{equation*}
\|(\tilde{\mathcal{E}}_{\Lambda_{0}}^{\beta,\eta}(\omega,x_{0})-z)^{-1}\|>\frac{1}{2}\exp(\bar{l}^{\varpi}),\,\, \mathrm{and}\,\,\|(\tilde{\mathcal{E}}_{\Lambda_{1}}^{\beta,\eta}(\omega,x)-z)^{-1}\|>\frac{1}{2}\exp(\bar{l}^{\varpi}).
\end{equation*}
It is clear that the degree of $\tilde{\mathcal{S}}$ is bounded above by $\bar{l}^{C}$. Furthermore, for $(\omega,z,x)\in\tilde{\mathcal{S}}$ one can obtain that
$$L_{|\Lambda_{1}|}(\omega,z)\geq\frac{\gamma}{4},$$
\begin{equation}\label{Lem7.2-(1)}
\|(\mathcal{E}_{\Lambda_{0}}^{\beta,\eta}(\omega,x_{0})-z)^{-1}\|>\frac{1}{4}\exp(\bar{l}^{\varpi}),\,\, \mathrm{and}\,\,\|(\mathcal{E}_{\Lambda_{1}}^{\beta,\eta}(\omega,x)-z)^{-1}\|>\frac{1}{4}\exp(\bar{l}^{\varpi}).
\end{equation}
For $(\omega,z,x)$ satisfying \eqref{Lem7.2-(1)} and some $z_{0}\in \sigma(\mathcal{E}_{\Lambda_{0}}^{\beta,\eta}(\omega,x_{0}))$, we have $\mathrm{dist}(z_{0},\sigma(\mathcal{E}_{\Lambda_{1}}^{\beta,\eta}(\omega,x)))<8\exp(-\bar{l}^{\varpi})$. Taking a union over $z_{0}$ and according to Lemma \ref{Wegner's-estimate}, we have
$$\mathrm{mes}(\mathrm{Proj}_{(\omega,x)}\tilde{\mathcal{S}})<\exp(-c\bar{l}^{\varpi\nu}).$$
Set $\mathcal{S}:=\mathrm{Proj}_{(\omega,x)}\tilde{\mathcal{S}}.$ Let $\mathcal{S}=\mathcal{S}_{1}\cup\mathcal{S}_{2}$ be the decomposition of $\mathcal{S}$ corresponding to Lemma \ref{Lemma7.1} with $\varepsilon=\frac{200}{t_{0}}$. In order to get the conclusion, we just need that $(\omega,\{x_{0}+t\omega\})\notin \mathcal{S}$.

Let $T$ be integers in the range $t_{0}\leq|t|\leq\exp(c_{0}\bar{l}^{\varpi\nu})$. Define
\begin{equation*}
\Omega_{\bar{l},t_{0},x_{0}}(\Lambda_{0},\Lambda_{1})=\mathrm{Proj}_{\omega}\mathcal{S}_{1}\cup(\mathop{\cup}\limits_{t\in T}\Omega_{\bar{l},t,x_{0}}(\Lambda_{0},\Lambda_{1})),
\end{equation*}
\begin{equation*}
\Omega_{\bar{l},t,x_{0}}(\Lambda_{0},\Lambda_{1}):=\{\omega:(\omega,\{x_{0}+t\omega\})\in\mathcal{S}_{2}\},
\end{equation*}
\begin{equation*}
\Omega_{\bar{l},t_{0},x_{0}}=\mathop{\cup}\limits_{\Lambda_{0},\Lambda_{1}}\Omega_{\bar{l},t_{0},x_{0}}(\Lambda_{0},\Lambda_{1}).
\end{equation*}
Note that $\{\omega:(\omega,\{x_{0}+t\omega\})\in\mathcal{S}_{1}\}\subset \mathrm{Proj}_{\omega}\mathcal{S}_{1}$. From the assumptions, there are less than $C\bar{l}^{4}$ possible choices for $\Lambda_{0}, \Lambda_{1}$. Then we only need to estimate the measure of $\Omega_{\bar{l},t_{0},x_{0}}$. Note that the set of $(\omega,\{x_{0}+t\omega\})$, $\omega\in[0,1]^{d}$, is contained in a union of planes $L_{i}$, $i\leq |t|^{d}$. The planes $L_{i}$ are parallel to $(\omega,t\omega)$, $\omega\in \mathbb{R}^{d}$, then
\begin{equation*}
|\mathrm{Proj}_{L_{i}}e_{j}|\leq\frac{1}{|t|}\leq\frac{1}{t_{0}}<\frac{\varepsilon}{100}\,\,\mathrm{for}\,\,\mathrm{all}\,\,i,\,j,
\end{equation*}
where $e_{j}$ are as in Lemma \ref{Lemma7.1}. By Lemma \ref{Lemma7.1},
$$\mathrm{mes}(\mathrm{Proj}_{\omega}\mathcal{S}_{1})<\bar{l}^{C}/t_{0},$$
\begin{equation*}
\mathrm{mes}(\Omega_{\bar{l},t,x_{0}}(\Lambda_{0},\Lambda_{1}))=\sum_{i}\mathrm{mes}(\mathcal{S}_{2}\cap L_{i})\lesssim |t|^{d}\bar{l}^{C}t_{0}\exp(-c\bar{l}^{\varpi\nu})\leq\exp(-c'\bar{l}^{\varpi\nu}),
\end{equation*}
then we can obtain the conclusion.
\end{proof}

We further extend the result to hold for most phases, ensuring that for most frequencies and phases, double resonances between distant intervals are excluded.
\begin{coro}\label{Coro7.3}
We use the notation and assumptions of Lemma \ref{Lemma7.2}. For any $\bar{l}\geq \bar{l}_{0}(\alpha_{i},p,q,\varpi,\gamma)$ and $1\leq t_{0}\leq\exp(c_{0}\bar{l}^{\varpi\nu})$, there exists a set $\Omega_{\bar{l},t_{0}}$, $\mathrm{mes}(\Omega_{\bar{l},t_{0}})<\bar{l}^{C(p,q)}t_{0}^{-1/2}$, such that for any $\omega\notin \Omega_{\bar{l},t_{0}}$ there exists a set $\mathcal{B}_{\bar{l},t_{0},\omega}$, $\mathrm{mes}(\mathcal{B}_{\bar{l},t_{0},\omega})<\bar{l}^{C(p,q)}t_{0}^{-1/2}$, and the following statement holds. For any $\omega\in \mathbb{T}^{d}(p,q)\backslash\Omega_{\bar{l},t_{0}}$, $x\in \mathbb{T}^{d}\backslash\mathcal{B}_{\bar{l},t_{0},\omega}$, $z\in \partial \mathbb{D}$, such that $L(\omega,z)>\gamma$, we have
\begin{equation*}
\min(\|(\mathcal{E}_{\Lambda_{0}}^{\beta,\eta}(\omega,x)-z)^{-1}\|,\|(\mathcal{E}_{\Lambda_{1}}^{\beta,\eta}(\omega,x+t\omega)-z)^{-1}\|)\leq\exp(\bar{l}^{\varpi})
\end{equation*}
for all $t_{0}\leq |t|\leq\exp(c_{0}\bar{l}^{\varpi\nu})$.
\end{coro}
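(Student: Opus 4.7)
The corollary is a standard Fubini/Chebyshev upgrade of Lemma \ref{Lemma7.2}: that lemma already contains all the analytic content (semialgebraic approximation, Wegner-type bound, transversal slicing from Lemma \ref{Lemma7.1}), so my only job is to convert its $x_{0}$-dependent frequency exceptional set $\Omega_{\bar{l},t_{0},x_{0}}$ into a uniform set $\Omega_{\bar{l},t_{0}}$ at the price of introducing, for each surviving $\omega$, a small exceptional set of phases $\mathcal{B}_{\bar{l},t_{0},\omega}$.

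First I would introduce the joint bad set
\[
\mathcal{W}:=\bigl\{(\omega,x)\in\mathbb{T}^{d}(p,q)\times\mathbb{T}^{d}:\text{the conclusion of Corollary \ref{Coro7.3} fails at }(\omega,x)\bigr\},
\]
i.e., the pairs $(\omega,x)$ for which there exist $z\in\partial\mathbb{D}$ with $L(\omega,z)>\gamma$ and an integer $t$ with $t_{0}\leq|t|\leq\exp(c_{0}\bar{l}^{\varpi\nu})$ such that both resolvent norms $\|(\mathcal{E}_{\Lambda_{0}}^{\beta,\eta}(\omega,x)-z)^{-1}\|$ and $\|(\mathcal{E}_{\Lambda_{1}}^{\beta,\eta}(\omega,x+t\omega)-z)^{-1}\|$ exceed $\exp(\bar{l}^{\varpi})$. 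Since $\Lambda_{0},\Lambda_{1},\beta,\eta$ are fixed and $t$ ranges over a finite integer set, $\mathcal{W}$ is Borel (one may also use the semialgebraic approximation of Lemma \ref{Lemma3.34} to exhibit this structure explicitly). Applying Lemma \ref{Lemma7.2} with $x_{0}:=x$ for each fixed $x\in\mathbb{T}^{d}$, the $x$-section $\mathcal{W}_{x}:=\{\omega:(\omega,x)\in\mathcal{W}\}$ is contained in $\Omega_{\bar{l},t_{0},x}$ and hence has measure at most $\bar{l}^{C(p,q)}/t_{0}$. Fubini's theorem therefore yields
\[
\mathrm{mes}(\mathcal{W})\leq\bar{l}^{C(p,q)}/t_{0}.
\]

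Finally I would define
\[
\Omega_{\bar{l},t_{0}}:=\bigl\{\omega\in\mathbb{T}^{d}(p,q):\mathrm{mes}(\mathcal{W}^{\omega})>\bar{l}^{C(p,q)}t_{0}^{-1/2}\bigr\},\qquad\mathcal{W}^{\omega}:=\{x:(\omega,x)\in\mathcal{W}\},
\]
and for each $\omega\notin\Omega_{\bar{l},t_{0}}$ set $\mathcal{B}_{\bar{l},t_{0},\omega}:=\mathcal{W}^{\omega}$. Chebyshev's inequality applied to the bound on $\mathrm{mes}(\mathcal{W})$ gives $\mathrm{mes}(\Omega_{\bar{l},t_{0}})\leq t_{0}^{-1/2}\leq\bar{l}^{C(p,q)}t_{0}^{-1/2}$, and by construction $\mathrm{mes}(\mathcal{B}_{\bar{l},t_{0},\omega})\leq\bar{l}^{C(p,q)}t_{0}^{-1/2}$ for every such $\omega$; for $\omega\notin\Omega_{\bar{l},t_{0}}$ and $x\notin\mathcal{B}_{\bar{l},t_{0},\omega}$ the pair $(\omega,x)$ lies outside $\mathcal{W}$, which is precisely the desired conclusion. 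I do not anticipate a genuine obstacle: the only thing worth being careful about is the measurability of $\mathcal{W}$, and the uniformity of Lemma \ref{Lemma7.2} over all admissible $(z,t)$, which is exactly what permits the existence of bad parameters $(z,t)$ to be absorbed into the single set $\mathcal{W}$ before Fubini is applied.
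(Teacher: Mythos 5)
Your proposal is correct and follows essentially the same route as the paper: both form a joint bad set in $(\omega,x)$, bound its measure by $\bar{l}^{C(p,q)}/t_{0}$ via Lemma \ref{Lemma7.2} applied section-wise in $x$ together with Fubini, and then use Chebyshev to extract $\Omega_{\bar{l},t_{0}}$ and the fibers $\mathcal{B}_{\bar{l},t_{0},\omega}$. The only cosmetic difference is that you take the set where the conclusion fails directly, while the paper uses $\{(x,\omega):\omega\in\Omega_{\bar{l},t_{0},x}\}$, which contains it; the argument and bounds are otherwise identical.
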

\begin{proof}
Let $\mathcal{B}_{\bar{l},t_{0}}$ be the set of $x\in \mathbb{T}^{d}$, $\omega\in\mathbb{T}^{d}(p,q)$ such that $\omega\in \Omega_{\bar{l},t_{0},x}$, with $\Omega_{\bar{l},t_{0},x}$ as in Lemma \ref{Lemma7.2}. With the aid of Chebyshev's inequality, there exists a set $\Omega_{\bar{l},t_{0}}$, $\mathrm{mes}(\Omega_{\bar{l},t_{0}})\leq (\mathrm{mes}(\mathcal{B}_{\bar{l},t_{0}}))^{\frac{1}{2}}$ such that for $\omega\notin\Omega_{\bar{l},t_{0}}$ we have
$\mathrm{mes}(\mathcal{B}_{\bar{l},t_{0},\omega})\leq(\mathrm{mes}(\mathcal{B}_{\bar{l},t_{0}}))^{\frac{1}{2}}$, where $\mathcal{B}_{\bar{l},t_{0},\omega}=\{x:(x,\omega)\in \mathcal{B}_{\bar{l},t_{0}}\}$. Then the conclusion follows by applying Lemma \ref{Lemma7.2}.
\end{proof}

\section{Removing Double Resonances under NDR Condition}\label{section5}
To begin with, we consider the local factorization for $\varphi_{\Lambda}^{\beta,\eta}(\omega,z;x)$ about the spectral variable, where $\Lambda$ is a finite interval in $\mathbb{Z}$. Inspired by \cite[Section 4]{GSV16-arXiv}, we will introduce the theory about ``no double resonances NDR" condition in this section.
\begin{definition 2}
Let $\tau, \nu$ be as in LDT. An interval $\Lambda\subset \mathbb{Z}$ is said to be $(K,l,C)$-NDR with respect to $x_{0}, \omega_{0}, z_{0}$ if there exists a subset $\underline{\Lambda}\subset\Lambda$ with $|\underline{\Lambda}|\leq K$ such that
\begin{equation*}
\log|\varphi_{[0,l-1]}^{\beta,\eta}(\omega_{0},z_{0};x_{0}+(n-1)\omega_{0})|>lL_{l}(\omega_{0},z_{0})-Cl^{1-\tau/3}
\end{equation*}
for all $n\in \Lambda\backslash \underline{\Lambda}$. Moreover, the connected components of $\Lambda\backslash \underline{\Lambda}$ must have length  exceeding $l^{2/\nu}$. If $C=1$, $\Lambda$ is referred to as $(K,l)$-NDR.
\end{definition 2}

For our purpose, we need the following Weierstrass' Preparation Theorem and the perturbation theory of matrices.

Consider an analytic function $f(z,\omega_{1},\ldots,\omega_{d})$ defined in a polydisk
\begin{equation*}
\mathcal{P}_{*}=\mathcal{D}(z_{0},R_{0})\times \prod_{j=1}^{d}\mathcal{D}(\omega_{j,0},R_{0})\,\,z_{0},\,\omega_{j,0}\in \mathbb{C}, \,\, R_{0}>0.
\end{equation*}
\begin{lemma}\cite[Lemma 2.28]{GSV16-arXiv}\label{Weierstrass}
Assume that $f(\cdot,\omega_{1},\ldots,\omega_{d})$ has no zeros on some circle
\begin{equation*}
\{z:|z-z_{0}|=r\}\,\, 0<r<R_{0}/2,
\end{equation*}
for any $\underline{\omega}=(\omega_{1},\ldots,\omega_{d})\in \mathcal{P}=\mathop{\prod}\limits_{j=1}^{d}\mathcal{D}(\omega_{j,0},r_{j,0})$ where $0<r_{j,0}<R_{0}$. Then there exist a polynomial $P(z,\underline{\omega})=z^{k}+a_{k-1}(\underline{\omega})z^{k-1}+\cdots+a_{0}(\underline{\omega})$ with $a_{j}(\underline{\omega})$ analytic in $\mathcal{P}$ and an analytic function $g(z,\underline{\omega})$, $(z,\underline{\omega})\in\mathcal{D}(z_{0},r)\times\mathcal{P}$ so that the following properties hold:\\
(a) $f(z,\underline{\omega})=P(z,\underline{\omega})g(z,\underline{\omega})$ for any $(z,\underline{\omega})\in \mathcal{D}(z_{0},r)\times\mathcal{P}$,\\
(b) $g(z,\underline{\omega})\neq 0 $ for any $(z,\underline{\omega})\in \mathcal{D}(z_{0},r)\times\mathcal{P}$,\\
(c) for any $\underline{\omega}\in\mathcal{P}$, $P(\cdot,\underline{\omega})$ has no zeros in $\mathbb{C}\backslash\mathcal{D}(z_{0},r)$.
\end{lemma}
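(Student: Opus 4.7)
The plan is to realize $P(\cdot,\underline\omega)$ as the monic polynomial whose roots are exactly the zeros of $f(\cdot,\underline\omega)$ inside $\mathcal{D}(z_0,r)$, counted with multiplicity, and then set $g \defeq f/P$. The key point is to show that the coefficients $a_j(\underline\omega)$ of this polynomial depend analytically on $\underline\omega\in\mathcal{P}$, and that the quotient $g$ extends to a nonvanishing analytic function on $\mathcal{D}(z_0,r)\times\mathcal{P}$.

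First I would count zeros. By the hypothesis, $f(\cdot,\underline\omega)$ has no zeros on the circle $\{|z-z_0|=r\}$ for any $\underline\omega\in\mathcal{P}$, so the integer
\begin{equation*}
k(\underline\omega) \defeq \frac{1}{2\pi i}\oint_{|z-z_0|=r}\frac{\partial_z f(z,\underline\omega)}{f(z,\underline\omega)}\,dz
\end{equation*}
is well-defined, integer-valued, and (since the integrand is jointly continuous in $(z,\underline\omega)$ on a compact neighborhood of the circle) locally constant in $\underline\omega$. Because $\mathcal{P}$ is a connected polydisk, $k(\underline\omega)\equiv k$ is constant. So for every $\underline\omega$ the function $f(\cdot,\underline\omega)$ has exactly $k$ zeros $z_1(\underline\omega),\ldots,z_k(\underline\omega)\in\mathcal{D}(z_0,r)$ (with multiplicity).

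Next I would construct the $a_j$ via symmetric functions of the roots. For each nonnegative integer $m$ define the power sum
\begin{equation*}
S_m(\underline\omega) \defeq \frac{1}{2\pi i}\oint_{|z-z_0|=r} z^m\,\frac{\partial_z f(z,\underline\omega)}{f(z,\underline\omega)}\,dz = \sum_{j=1}^{k} z_j(\underline\omega)^m.
\end{equation*}
For each fixed $m$, the integrand is jointly analytic in $(z,\underline\omega)$ in a neighborhood of $\{|z-z_0|=r\}\times\mathcal{P}$, so differentiation under the integral (or Morera's theorem applied slice-wise in each $\omega_j$) shows $S_m$ is analytic on $\mathcal{P}$. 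The Newton--Girard identities then express the elementary symmetric polynomials $e_j(z_1,\ldots,z_k)$ as polynomial combinations of $S_1,\ldots,S_j$, so setting $a_{k-j}(\underline\omega) \defeq (-1)^j e_j(z_1(\underline\omega),\ldots,z_k(\underline\omega))$ gives analytic coefficients, and $P(z,\underline\omega) = \prod_{j=1}^{k}(z-z_j(\underline\omega)) = z^k+a_{k-1}z^{k-1}+\cdots+a_0$. By construction, the zero set of $P(\cdot,\underline\omega)$ lies in $\mathcal{D}(z_0,r)$, proving (c).

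Finally I would verify the factorization. Define $g(z,\underline\omega) \defeq f(z,\underline\omega)/P(z,\underline\omega)$ on the complement of the zeros of $P$ in $\mathcal{D}(z_0,r)\times\mathcal{P}$; away from these zeros it is jointly analytic. Near each zero $(z_*,\underline\omega_*)$, one notes that $f$ and $P$ have the same local divisor in the $z$-variable, so for each fixed $\underline\omega$ close to $\underline\omega_*$ the singularity is removable; joint analyticity then follows because $g$ is locally bounded (as $f$ has exactly $k$ zeros in $\mathcal{D}(z_0,r)$) and separately analytic, hence analytic by Hartogs. Moreover $g\neq 0$ on $\mathcal{D}(z_0,r)\times\mathcal{P}$ because all zeros of $f$ in $\mathcal{D}(z_0,r)$ have been absorbed into $P$. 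The main technical obstacle is the simultaneous local boundedness of $g$ near the loci where $P$ vanishes; this is most cleanly handled by expanding $f$ via the Cauchy integral
\begin{equation*}
f(z,\underline\omega) = \frac{1}{2\pi i}\oint_{|\zeta-z_0|=r}\frac{f(\zeta,\underline\omega)}{\zeta-z}\,d\zeta,
\end{equation*}
writing $f = Pg$ with $g$ represented by the corresponding integral $\frac{1}{2\pi i}\oint \frac{f(\zeta,\underline\omega)}{P(\zeta,\underline\omega)(\zeta-z)}d\zeta$, which is manifestly analytic on $\mathcal{D}(z_0,r)\times\mathcal{P}$ and gives (a) and (b) directly.
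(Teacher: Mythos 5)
Your argument is correct: it is the standard proof of the parametric Weierstrass preparation theorem (zero count via the logarithmic-derivative integral, power sums and Newton's identities to get analytic coefficients $a_j$, and the Cauchy-integral representation $g(z,\underline\omega)=\frac{1}{2\pi i}\oint_{|\zeta-z_0|=r}\frac{f(\zeta,\underline\omega)}{P(\zeta,\underline\omega)(\zeta-z)}\,d\zeta$ giving joint analyticity and nonvanishing of $g$). The paper does not prove this lemma but imports it from \cite{GSV16-arXiv}, and your proof is essentially the same argument used there, so there is nothing to correct.
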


We also use matrix perturbation theory to ensure that eigenvalues of NDR intervals are stable under small perturbations of frequency and phase.
\begin{lemma}\cite[Lemma 2.37]{GSV16-arXiv}\label{perturbation(1)}
Let $H, H_{0}$ be $n\times n$ matrices, $H_{0}$ is Hermitian, $E_{0}\in \mathbb{R}$, $r_{0}>0$. Assume the number of eigenvalues of $H_{0}$ in $(E_{0}-r_{0},E_{0}+r_{0})$ is at most $K$ and
\begin{equation*}
\|H-H_{0}\|\leq\frac{r_{0}}{32(K+1)^{2}}.
\end{equation*}
Then there exists $r_{0}/2<r<r_{0}$, which depends only on $H_{0}$, such that $H$ and $H_{0}$ have the same number of eigenvalues in the disk $\mathcal{D}(E_{0},r)$. Moreover, neither $H$ nor $H_{0}$ have eigenvalues in the region
\begin{equation*}
r-\frac{r_{0}}{8(K+1)}\leq|\zeta-E_{0}|\leq r+\frac{r_{0}}{8(K+1)}.
\end{equation*}
\end{lemma}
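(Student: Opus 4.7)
The plan is to exploit the Hermiticity of $H_{0}$ (all its eigenvalues are real) to select a radius $r$ in $(r_{0}/2,r_{0})$ with a built-in safety margin around the target circle, transfer that margin to $H$ by a Neumann series/resolvent argument, and count eigenvalues via a contour integral that is constant along the homotopy $H_{t}=H_{0}+t(H-H_{0})$.

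Set $\rho:=r_{0}/(8(K+1))$. First I would show by a pigeonhole/counting argument that there exists $r\in(r_{0}/2,\,r_{0}-2\rho)$ such that $H_{0}$ has no eigenvalue $E$ with $|E-E_{0}|\in[r-2\rho,\,r+2\rho]$. Indeed, each of the $\le K$ eigenvalues of $H_{0}$ with $|E-E_{0}|<r_{0}$ forbids $r$ in an interval of length $4\rho$, so the bad $r$-set has total measure at most $4K\rho=2Kr_{0}/(4(K+1))$, while the search interval has length $r_{0}/2-2\rho=(2K+1)r_{0}/(4(K+1))$; the difference is $r_{0}/(4(K+1))>0$. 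Eigenvalues with $|E-E_{0}|\ge r_{0}$ are automatically excluded because $r+2\rho<r_{0}$.

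Next, for any $\zeta\in A:=\{\zeta:|\zeta-E_{0}|\in[r-\rho,r+\rho]\}$ and any eigenvalue $E\in\mathbb{R}$ of $H_{0}$, write $\zeta-E_{0}=r'e^{i\theta}$ and $s=E-E_{0}$. A direct calculation gives
\[
|\zeta-E|^{2}=r'^{2}+s^{2}-2sr'\cos\theta\;\ge\;(r'-|s|)^{2},
\]
since $s\cos\theta\le|s|$. By the choice of $r$, $r'\in[r-\rho,r+\rho]$ and $|s|\notin[r-2\rho,r+2\rho]$, so $|r'-|s||\ge\rho$ and hence $\mathrm{dist}(\zeta,\sigma(H_{0}))\ge\rho$. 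Thus $\|(H_{0}-\zeta)^{-1}\|\le 1/\rho=8(K+1)/r_{0}$ uniformly for $\zeta\in A$. For the homotopy $H_{t}=H_{0}+t(H-H_{0})$, every $t\in[0,1]$ and every $\zeta\in A$,
\[
\bigl\|t(H_{0}-\zeta)^{-1}(H-H_{0})\bigr\|\le \frac{1}{\rho}\|H-H_{0}\|\le\frac{8(K+1)}{r_{0}}\cdot\frac{r_{0}}{32(K+1)^{2}}=\frac{1}{4(K+1)}<1,
\]
so $H_{t}-\zeta=(H_{0}-\zeta)\bigl[I+t(H_{0}-\zeta)^{-1}(H-H_{0})\bigr]$ is invertible. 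Specializing $t=0$ and $t=1$ shows that neither $H_{0}$ nor $H$ has an eigenvalue in the closed annulus $A$, which is the ``moreover'' part of the claim.

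Finally, for the equal eigenvalue count inside $\mathcal{D}(E_{0},r)$, consider
\[
N(t):=\frac{1}{2\pi i}\oint_{|\zeta-E_{0}|=r}\mathrm{tr}\bigl((\zeta-H_{t})^{-1}\bigr)\,d\zeta,
\]
which, by the previous step, is well defined and continuous in $t\in[0,1]$ and takes integer values (the total algebraic multiplicity of eigenvalues inside the contour); hence $N(1)=N(0)$. I expect the only delicate point to be the passage from the one-dimensional gap $|s|\notin[r-2\rho,r+2\rho]$ to a two-dimensional distance bound on the full annulus $A$; the inequality $|\zeta-E|^{2}\ge(r'-|s|)^{2}$ is precisely where Hermiticity of $H_{0}$ is used, and the doubled safety margin $2\rho$ versus $\rho$ in Step 1 is what reconciles the hypothesis constant $r_{0}/(32(K+1)^{2})$ with the annulus half-width $r_{0}/(8(K+1))$ in the conclusion.
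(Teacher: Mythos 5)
Your proof is correct, and it follows essentially the route of the cited argument from \cite{GSV16-arXiv} that the paper relies on: a pigeonhole choice of $r$ producing an eigenvalue-free annulus for $H_{0}$, the Hermitian resolvent identity $\|(H_{0}-\zeta)^{-1}\|=1/\mathrm{dist}(\zeta,\sigma(H_{0}))$ (exactly the ingredient highlighted in the remark after the lemma), a Neumann-series perturbation to keep $H_{t}-\zeta$ invertible on the annulus, and a contour-integral count that is constant along the homotopy. The constants check out ($4K\rho$ bad measure versus search length $(2K+1)r_{0}/(4(K+1))$, and $\rho^{-1}\|H-H_{0}\|\le 1/(4(K+1))<1$), so no changes are needed.
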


\begin{remark}
In \cite{GSV16-arXiv}, during the proof of the above lemma, the authors used the basic property of Hermitian matrix $H_{0}$, that is, $\|(H_{0}-E)^{-1}\|=\frac{1}{\mathrm{dist}(E,\mathrm{spec}(H_{0}))}$ for $E\notin \mathrm{spec}(H_{0})$. Actually, this formula also holds true for unitary matrices. Therefore, we have the following corollary. Since the proof of the corollary below is similar to the above lemma, we will no longer prove it.
\end{remark}

\begin{coro}\label{Coro6.3}
Let $\mathcal{E}_{0}, \mathcal{E}_{1}$ be $n\times n$ matrices, $\mathcal{E}_{0}$ is unitary, $z_{0}\in \partial \mathbb{D}$, $r_{0}>0$. Assume the number of eigenvalues of $\mathcal{E}_{0}$ in $\mathcal{D}(z_{0},r_{0})\cap \partial \mathbb{D}$ is at most $K$ and
\begin{equation*}
\|\mathcal{E}_{1}-\mathcal{E}_{0}\|\leq\frac{r_{0}}{32(K+1)^{2}}.
\end{equation*}
Then there exists $r_{0}/2<r<r_{0}$, which depends only on $\mathcal{E}_{0}$, such that $\mathcal{E}_{1}$ and $\mathcal{E}_{0}$ have the same number of eigenvalues in the arc $\mathcal{D}(z_{0},r)\cap \partial \mathbb{D}$. Moreover, neither $\mathcal{E}_{1}$ nor $\mathcal{E}_{0}$ have eigenvalues in the region
\begin{equation*}
\Big\{\zeta\in\partial \mathbb{D}:r-\frac{r_{0}}{8(K+1)}\leq|\zeta-z_{0}|\leq r+\frac{r_{0}}{8(K+1)}\Big\}.
\end{equation*}
\end{coro}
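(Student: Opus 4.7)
The plan is to mirror the proof of Lemma~\ref{perturbation(1)} almost verbatim, using the remark immediately preceding the corollary as the only substantive modification: for a unitary (hence normal) $\mathcal{E}_{0}$ one still has
\[
\|(\mathcal{E}_{0}-\zeta)^{-1}\| = \frac{1}{\mathrm{dist}(\zeta,\sigma(\mathcal{E}_{0}))}
\]
for every $\zeta\notin\sigma(\mathcal{E}_{0})$, and this is the only property of the unperturbed operator that the Hermitian argument exploits. Since all eigenvalues of $\mathcal{E}_{0}$ lie on $\partial\mathbb{D}$, at most $K$ of them belong to $\mathcal{D}(z_{0},r_{0})$; I parameterize these by their Euclidean distances $d_{1}\leq\cdots\leq d_{s}$ to $z_{0}$ (with $s\leq K$).

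First I would choose the radius $r$ by a pigeonhole argument on the radial distances. Partitioning $(r_{0}/2,r_{0})$ by the values $d_{i}$ produces at most $K+1$ subintervals whose total length is $r_{0}/2$, so one of them has length at least $r_{0}/(2(K+1))$; let $r$ be its midpoint. By construction the annular set
\[
\mathcal{A}_{r}:=\bigl\{\zeta\in\mathbb{C}:\bigl||\zeta-z_{0}|-r\bigr|\leq r_{0}/(8(K+1))\bigr\}
\]
contains no eigenvalue of $\mathcal{E}_{0}$. Moreover every eigenvalue of $\mathcal{E}_{0}$ sits at Euclidean distance at least $r_{0}/(8(K+1))$ from $\mathcal{A}_{r}$: those inside $\mathcal{D}(z_{0},r_{0})\setminus\mathcal{A}_{r}$ by the choice of $r$, and those outside $\mathcal{D}(z_{0},r_{0})$ by the trivial bound $r_{0}-(r+r_{0}/(8(K+1)))\geq r_{0}/(8(K+1))$.

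Second, normality converts this separation into the resolvent estimate $\|(\mathcal{E}_{0}-\zeta)^{-1}\|\leq 8(K+1)/r_{0}$ on $\mathcal{A}_{r}$. Combined with the hypothesis $\|\mathcal{E}_{1}-\mathcal{E}_{0}\|\leq r_{0}/(32(K+1)^{2})$, a Neumann series expansion yields $\|(\mathcal{E}_{0}-\zeta)^{-1}(\mathcal{E}_{1}-\mathcal{E}_{0})\|\leq 1/(4(K+1))<1$, so $\mathcal{E}_{1}-\zeta$ is invertible throughout $\mathcal{A}_{r}$. In particular neither $\mathcal{E}_{0}$ nor $\mathcal{E}_{1}$ has any eigenvalue in $\mathcal{A}_{r}$, and a fortiori none in the annulus of the corollary intersected with $\partial\mathbb{D}$; this gives the second stated conclusion.

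Finally, to equate the eigenvalue counts inside $\mathcal{D}(z_{0},r)$, I would compare characteristic polynomials via
\[
\det(\zeta-\mathcal{E}_{1}) = \det(\zeta-\mathcal{E}_{0})\cdot\det\bigl(I+(\mathcal{E}_{0}-\zeta)^{-1}(\mathcal{E}_{1}-\mathcal{E}_{0})\bigr)
\]
on the circle $|\zeta-z_{0}|=r$. The Neumann bound puts the second factor in the half-plane $\{w:\mathrm{Re}\,w>0\}$, so it has zero winding number along this circle, and the argument principle forces $\det(\zeta-\mathcal{E}_{0})$ and $\det(\zeta-\mathcal{E}_{1})$ to have the same number of zeros inside $\mathcal{D}(z_{0},r)$. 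Since every zero of $\det(\zeta-\mathcal{E}_{0})$ inside $\mathcal{D}(z_{0},r)$ automatically lies on $\partial\mathbb{D}$, the count is precisely the one on $\mathcal{D}(z_{0},r)\cap\partial\mathbb{D}$ that the corollary demands. I do not foresee any real obstacle: the only deviation from the Hermitian setting is that $\sigma(\mathcal{E}_{0})$ is concentrated on a one-dimensional arc rather than scattered on a real interval, but the radial-distance pigeonhole accommodates this without any change of strategy.
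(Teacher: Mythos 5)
Your overall strategy is exactly the one the paper intends: the paper omits the proof of Corollary~\ref{Coro6.3}, pointing to Lemma~\ref{perturbation(1)} together with the remark that $\|(\mathcal{E}_{0}-\zeta)^{-1}\|=1/\mathrm{dist}(\zeta,\sigma(\mathcal{E}_{0}))$ holds for unitary (normal) $\mathcal{E}_{0}$, and your pigeonhole choice of $r$ plus the Neumann-series resolvent bound on the annulus are correct. They give separation at least $r_{0}/(4(K+1))$ of $\sigma(\mathcal{E}_{0})$ from the annulus, hence invertibility of $\mathcal{E}_{1}-\zeta$ there, which is the second assertion of the corollary.

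The gap is in the final counting step. From $\|B(\zeta)\|\leq 1/(4(K+1))$ with $B(\zeta)=(\mathcal{E}_{0}-\zeta)^{-1}(\mathcal{E}_{1}-\mathcal{E}_{0})$ you conclude that $\det(I+B(\zeta))$ lies in the half-plane $\{\mathrm{Re}\,w>0\}$. That implication is false in general: the norm bound controls each eigenvalue $1+\mu_{i}$ of $I+B(\zeta)$ (each factor has argument of size about $1/(4(K+1))$), but the determinant is the product of $n$ such factors, so its argument can be as large as $n\arcsin\bigl(1/(4(K+1))\bigr)$, and $n$ is the matrix dimension, unrelated to $K$ --- in the paper's application $n=|\Lambda|\sim\bar{l}$ while $K\leq\bar{l}^{(1-\tau)/10}$, so this quantity is huge. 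Hence ``values in the right half-plane, therefore zero winding'' does not follow, and zero winding is precisely equivalent to the conclusion you are trying to prove (it equals the difference of eigenvalue counts of $\mathcal{E}_{1}$ and $\mathcal{E}_{0}$ inside $\mathcal{D}(z_{0},r)$), so it cannot be taken for granted. The repair is standard: the homotopy $t\mapsto\det(I+tB(\zeta))$, $t\in[0,1]$, consists of functions nonvanishing on the circle $|\zeta-z_{0}|=r$ (since $\|tB(\zeta)\|<1$ there), is jointly continuous, and starts at the constant $1$, so by homotopy invariance the winding number of $\det(I+B(\zeta))$ is $0$; equivalently, set $\mathcal{E}_{t}=\mathcal{E}_{0}+t(\mathcal{E}_{1}-\mathcal{E}_{0})$ and observe that the integer $\frac{1}{2\pi i}\oint_{|\zeta-z_{0}|=r}\partial_{\zeta}\log\det(\zeta-\mathcal{E}_{t})\,d\zeta$ is continuous in $t$ because no eigenvalue of $\mathcal{E}_{t}$ meets the circle, hence constant. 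With that replacement your argument is complete; note also that identifying the count in $\mathcal{D}(z_{0},r)$ with the count in $\mathcal{D}(z_{0},r)\cap\partial\mathbb{D}$ is automatic only for the unitary matrix $\mathcal{E}_{0}$, which is how the statement should be read, since $\mathcal{E}_{1}$ is an arbitrary matrix.
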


Using the unitary perturbation result (Corollary \ref{Coro6.3}), we show that eigenvalues of NDR intervals are isolated from a fixed circle under small perturbations.
\begin{lemma}\label{Lemma6.4}
Suppose $x_{0}\in \mathbb{T}^{d}$, $\omega_{0}\in \mathbb{T}^{d}(p,q)$, $z_{0}\in \partial \mathbb{D}$ and $L(\omega_{0},z_{0})>\gamma>0$. Let $K\geq 1$, $l\geq l_{0}(\alpha_{i},p,q,z_{0},\gamma)$, $r_{0}=\exp(-l)$. Assume $\Lambda$ is $(K,l)$-NDR with respect to $x_{0}, \omega_{0}, z_{0}$. There exists $\frac{r_{0}}{2}<r<r_{0}$ such that for any $(\omega,x)\in \mathbb{C}^{d}\times\mathbb{C}^{d}$ satisfying
\begin{equation}\label{Lem6.4-(1)}
|x-x_{0}|<\frac{c(\alpha_{i})r_{0}}{(K+1)^{2}},\,\, |\omega-\omega_{0}|<\frac{c(\alpha_{i})r_{0}}{|\Lambda|(K+1)^{2}},
\end{equation}
we have
\begin{equation*}
\mathrm{dist}(\{z\in \partial \mathbb{D}: \varphi_{\Lambda}^{\beta,\eta}(\omega,z;x)=0\}, \{z\in \partial \mathbb{D}: |z-z_{0}|=r\})\geq\frac{r_{0}}{8(K+1)}.
\end{equation*}
\end{lemma}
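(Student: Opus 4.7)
The plan is to combine the NDR structure for the unperturbed data $(\omega_{0},z_{0},x_{0})$ with the perturbation theory for unitary matrices (Corollary \ref{Coro6.3}). The targets we need to reach are (i) a bound of order $K$ on the number of eigenvalues of $\mathcal{E}_{\Lambda}^{\beta,\eta}(\omega_{0},x_{0})$ inside $\mathcal{D}(z_{0},r_{0})\cap\partial\mathbb{D}$, and (ii) the operator-norm bound $\|\mathcal{E}_{\Lambda}^{\beta,\eta}(\omega,x)-\mathcal{E}_{\Lambda}^{\beta,\eta}(\omega_{0},x_{0})\|\leq r_{0}/(32(K+1)^{2})$ needed to invoke Corollary \ref{Coro6.3}.

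First I would verify the perturbation bound. The entries of $\mathcal{E}_{\Lambda}^{\beta,\eta}$ are built from $\alpha(x+n\omega)$ and $\rho(x+n\omega)$ with $n$ ranging over $\Lambda$, and $\alpha$ is real-analytic on $\mathbb{T}^{d}_{h}$. A direct Lipschitz estimate gives
\begin{equation*}
\|\mathcal{E}_{\Lambda}^{\beta,\eta}(\omega,x)-\mathcal{E}_{\Lambda}^{\beta,\eta}(\omega_{0},x_{0})\|\leq C(\alpha_{i})\bigl(|x-x_{0}|+|\Lambda|\,|\omega-\omega_{0}|\bigr).
\end{equation*}
Under hypothesis \eqref{Lem6.4-(1)}, with $c(\alpha_{i})$ chosen small relative to $C(\alpha_{i})$, the right-hand side is $\leq r_{0}/(32(K+1)^{2})$, which is exactly the smallness required by Corollary \ref{Coro6.3}.

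Second, I would show that $\mathcal{E}_{\Lambda}^{\beta,\eta}(\omega_{0},x_{0})$ has at most $K$ eigenvalues in $\mathcal{D}(z_{0},r_{0})\cap\partial\mathbb{D}$. The NDR assumption provides a set $\underline{\Lambda}\subset\Lambda$ with $|\underline{\Lambda}|\leq K$ such that on each connected component $J$ of $\Lambda\setminus\underline{\Lambda}$ (of length $>l^{2/\nu}$) the shifted local determinants $\varphi_{[0,l-1]}^{\beta,\eta}(\omega_{0},z_{0};x_{0}+(n-1)\omega_{0})$ satisfy the lower bound required by Lemma \ref{covering-form-(LDT)}. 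Applying that lemma to each such $J$ gives $\mathrm{dist}(z_{0},\sigma(\mathcal{E}_{J}^{\beta,\eta}(\omega_{0},x_{0})))\geq\exp(-2l^{1-\tau/4})\gg r_{0}=e^{-l}$, so $\mathcal{E}_{J}^{\beta,\eta}(\omega_{0},x_{0})-z$ is invertible throughout $\mathcal{D}(z_{0},r_{0})$. Decomposing $\Lambda=\underline{\Lambda}\sqcup\bigsqcup_{i}J_{i}$ and forming the Schur complement with respect to the invertible block indexed by $\bigsqcup_{i}J_{i}$, the eigenvalues of $\mathcal{E}_{\Lambda}^{\beta,\eta}(\omega_{0},x_{0})-z$ in $\mathcal{D}(z_{0},r_{0})$ are in bijection with the zeros in $\mathcal{D}(z_{0},r_{0})$ of a determinant of a matrix of size $|\underline{\Lambda}|\leq K$, hence there are at most $K$ such eigenvalues.

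With the two inputs in hand, Corollary \ref{Coro6.3} applied to $\mathcal{E}_{0}=\mathcal{E}_{\Lambda}^{\beta,\eta}(\omega_{0},x_{0})$ and $\mathcal{E}_{1}=\mathcal{E}_{\Lambda}^{\beta,\eta}(\omega,x)$ yields a radius $r_{0}/2<r<r_{0}$ (depending only on $\mathcal{E}_{0}$, hence only on $\omega_{0},x_{0},z_{0},\Lambda$) such that neither matrix has an eigenvalue in the annular strip $\{\zeta\in\partial\mathbb{D}:|\,|\zeta-z_{0}|-r|\leq r_{0}/(8(K+1))\}$. Since the eigenvalues of $\mathcal{E}_{\Lambda}^{\beta,\eta}(\omega,x)$ are precisely the zeros of $\varphi_{\Lambda}^{\beta,\eta}(\omega,z;x)$, the desired distance estimate follows. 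The main obstacle I expect is the eigenvalue-counting step: the covering form of LDT gives resolvent decay only for $|z-z_{0}|\lesssim\exp(-2l^{1-\tau/4})$, which is much smaller than $r_{0}=e^{-l}$, so the Schur-complement reduction has to be done carefully, tracking that the reduced determinant is a polynomial in $z$ of degree $\leq|\underline{\Lambda}|$ and not merely controlled on a very small disk.
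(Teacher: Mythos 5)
Your outer structure is the same as the paper's: the printed proof consists of exactly the Lipschitz bound $\|\mathcal{E}_{\Lambda}^{\beta,\eta}(\omega,x)-\mathcal{E}_{\Lambda}^{\beta,\eta}(\omega_{0},x_{0})\|\leq C(|x-x_{0}|+|\Lambda|\,|\omega-\omega_{0}|)\leq r_{0}/(32(K+1)^{2})$ under \eqref{Lem6.4-(1)}, followed by a direct appeal to Corollary \ref{Coro6.3}. The hypothesis of that corollary — that $\mathcal{E}_{\Lambda}^{\beta,\eta}(\omega_{0},x_{0})$ has at most $K$ eigenvalues in $\mathcal{D}(z_{0},r_{0})\cap\partial\mathbb{D}$ — is not verified in the paper; it is in effect what the NDR parameter $K$ is taken to supply. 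Your proposal is more ambitious in that it tries to actually derive this count from the NDR condition, and that is precisely where it breaks down.

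The counting step is a non sequitur. Writing $\mathcal{E}_{\Lambda}^{\beta,\eta}(\omega_{0},x_{0})$ in block form and assuming $z-D$ invertible on $\mathcal{D}(z_{0},r_{0})$ (with $D$ the block over $\Lambda\setminus\underline{\Lambda}$), the zeros of $\varphi_{\Lambda}^{\beta,\eta}$ in the disk do coincide with the zeros of $\det S(z)$, $S(z)=(z-A)-B(z-D)^{-1}C$, a $|\underline{\Lambda}|\times|\underline{\Lambda}|$ analytic matrix function. But $\det S(z)=\varphi_{\Lambda}^{\beta,\eta}(z)/\det(z-D)$ is \emph{not} a polynomial of degree $\leq|\underline{\Lambda}|$ (its numerator has degree $|\Lambda|$), and the number of zeros in a disk of the determinant of a $K\times K$ analytic matrix function is not bounded by $K$; ``matrix of size $\leq K$, hence at most $K$ eigenvalues'' has no force. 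In the Hermitian (Schr\"odinger) setting of \cite{GSV16-arXiv} the analogous count comes from Cauchy interlacing / finite-rank perturbation eigenvalue counting, not from the size of a Schur complement, and the paper explicitly flags (in the remark before Lemma \ref{Lemma6.5}) that no such interlacing is available for unitary matrices — this is exactly why Lemma \ref{Lemma6.5} omits the degree bound on $P$ (property (d) of \cite[Prop.~5.2]{GSV16-arXiv}). There is also a secondary mismatch in your reduction: the covering form of LDT (Lemma \ref{covering-form-(LDT)}) controls the boundary-modified unitary matrices $\mathcal{E}_{J_{i}}^{\beta,\eta}$, whereas the block $D$ appearing in your Schur complement is the plain submatrix of $\mathcal{E}_{\Lambda}^{\beta,\eta}$ over $\bigsqcup_{i}J_{i}$; these differ in finitely many entries at the cut sites by amounts of order one, so even the invertibility of $z-D$ on $\mathcal{D}(z_{0},r_{0})$ requires an extra (finite-rank) argument. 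To close your middle step one would need a genuine unitary analogue of rank-perturbation eigenvalue counting on arcs (decoupling at each cut is a perturbation of rank at most two), which would in any case yield a bound of the form $CK$ rather than $K$; the paper sidesteps all of this by citing Corollary \ref{Coro6.3} directly.
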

\begin{proof}
Due to \eqref{Lem6.4-(1)},
\begin{equation*}
\|\mathcal{E}_{\Lambda}^{\beta,\eta}(\omega,x)-\mathcal{E}_{\Lambda}^{\beta,\eta}(\omega_{0},x_{0})\|\leq C(|x-x_{0}|+|\Lambda||\omega-\omega_{0}|)\leq \frac{r_{0}}{32(K+1)^{2}}.
\end{equation*}
Then the conclusion follows from Corollary \ref{Coro6.3}.
\end{proof}

\begin{remark}
As we know, the restriction of Schr\"{o}dinger operator to a finite interval is  a Hermitian matrix which possesses many good properties. While the restriction of CMV matrix is a unitary matrix after modifying the boundary conditions whose properties are worse than those of a Hermitian matrix. For example, there is a famous Cauchy Interlacing Theorem for the eigenvalues of Hermitian matrices. But we can not expect such a result for unitary matrices. As a result, we can not estimate the number of the roots for $\varphi_{\Lambda}^{\beta,\eta}(\omega,z;x)=0$ in some disk. Consequently, when we study the factorization of $\varphi_{\Lambda}^{\beta,\eta}(\omega,z;x)$, we can not obtain the property of the factor of it as the property (d) in \cite[Proposition 5.2]{GSV16-arXiv}. However, without this property, we still can obtain the last statement of the following lemma.
\end{remark}

We also use matrix perturbation theory to ensure that eigenvalues of NDR intervals are stable under small perturbations of frequency and phase.
\begin{lemma}\label{Lemma6.5}
Suppose $x_{0}\in \mathbb{T}^{d}$, $\omega_{0}\in \mathbb{T}^{d}(p,q)$, $z_{0}\in \partial \mathbb{D}$ and $L(\omega_{0},z_{0})>\gamma>0$. Let $\tau, \nu$ be as in LDT. Let $K\geq 1$, $l\geq l_{0}(\alpha_{i},p,q,z_{0},\gamma)$, $r_{0}=\exp(-l)$. Assume $\Lambda$ is $(K,l)$-NDR with respect to $x_{0}, \omega_{0}, z_{0}$. There exist
$$P(\omega,x,z)=z^{k}+a_{k-1}(\omega,x)z^{k-1}+\cdots+a_{0}(\omega,x)$$
with $a_{j}(\omega,x)$ analytic in the polydisk
$$\mathcal{P}:=\{(\omega,x):|\omega-\omega_{0}|<cr_{0}|\Lambda|^{-1}(K+1)^{-2},|x-x_{0}|<cr_{0}(K+1)^{-2}\},$$
and an analytic function $g(\omega,x,z)$ on $\mathcal{P}\times \mathcal{D}(z_{0},r)$, $r_{0}/2<r<r_{0}$ such that:

(a) $\varphi_{\Lambda}^{\beta,\eta}(\omega,z;x)=P(\omega,x,z)g(\omega,x,z)$ for any $(\omega,x, z)\in \mathcal{P}\times\mathcal{D}(z_{0},r)$;

(b) $g(\omega,x,z)\neq 0 $ for any $(\omega,x,z)\in \mathcal{P}\times \mathcal{D}(z_{0},r)$;

(c) for any $(\omega,x)\in\mathcal{P}$, $P(\omega,x,\cdot)$ has no zeros in $\mathbb{C}\backslash\mathcal{D}(z_{0},r)$;

(d) if $(\omega,x)\in\mathcal{P}\cap(\mathbb{T}^{d}(p,q)\times\mathbb{T}^{d})$, $z\in \mathcal{D}(z_{0},r)$, and $\frac{r_{0}}{16(K+1)}\geq\exp(-|\Lambda|^{\nu/2})$, \\
then
\begin{equation}\label{Lem6.5-(1)}
\log|g(\omega,x,z)|>|\Lambda|L_{|\Lambda|}(\omega,z)-|\Lambda|^{1-\tau/2}.
\end{equation}
\end{lemma}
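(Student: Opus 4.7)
The plan is to combine Lemma~\ref{Lemma6.4} with the Weierstrass preparation theorem (Lemma~\ref{Weierstrass}) to produce the factorization and items (a)--(c), and then to derive (d) by bounding $|\varphi_\Lambda^{\beta,\eta}|$ from below and $|P|$ from above on the boundary circle $\{|z-z_0|=r\}$, extending to the whole disk via the minimum principle for the nowhere-vanishing analytic function $g$. Concretely, Lemma~\ref{Lemma6.4} furnishes some $r_0/2<r<r_0$ such that, uniformly in $(\omega,x)\in\mathcal{P}$, the polynomial $\varphi_\Lambda^{\beta,\eta}(\omega,\cdot\,;x)$ has no zeros in an annulus of width $r_0/(8(K+1))$ around $|z-z_0|=r$; applying Lemma~\ref{Weierstrass} with $\underline{\omega}=(\omega,x)$ and $R_0=r_0$ then yields $\varphi_\Lambda^{\beta,\eta}=Pg$ on $\mathcal{P}\times\mathcal{D}(z_0,r)$ with (a), (b), and (c) immediate.

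For (d), fix $(\omega,x)\in\mathcal{P}\cap(\mathbb{T}^d(p,q)\times\mathbb{T}^d)$ and first treat $z$ on the circle $|z-z_0|=r$. I build covering intervals $\{I_m\}_{m\in\Lambda}$ for Lemma~\ref{covering-form-(LDT)} by selecting, for each $m$, an index $n\in\Lambda\setminus\underline{\Lambda}$ with $|n-(m-l/2)|\le K+1$ and $[n-1,n+l-2]\subset\Lambda$, and setting $I_m=[n-1,n+l-2]$; this $n$ exists because $|\underline{\Lambda}|\le K$ while every connected component of $\Lambda\setminus\underline{\Lambda}$ has length greater than $l^{2/\nu}\gg K$, with boundary points of $\Lambda$ handled by a routine shift. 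The shift identity $\alpha_j(x_0+(n-1)\omega_0)=\alpha_{n+j-1}(x_0)$ converts the NDR bound at $n$ into
\begin{equation*}
\log|\varphi_{I_m}^{\beta,\eta}(\omega_0,z_0;x_0)|>lL_l(\omega_0,z_0)-l^{1-\tau/3}>|I_m|L_{|I_m|}(\omega_0,z_0)-|I_m|^{1-\tau/4},
\end{equation*}
verifying condition (iii); (i) and (ii) are immediate from the choice of $n$. Since $r\le r_0=e^{-l}<\exp(-2l^{1-\tau/4})$ for large $l$, and $l\le|\Lambda|^{\nu/2}$ thanks to the standing hypothesis $r_0/(16(K+1))\ge\exp(-|\Lambda|^{\nu/2})$, both the closeness and size requirements of Lemma~\ref{covering-form-(LDT)} are met, so its second conclusion yields
\begin{equation*}
\log|\varphi_\Lambda^{\beta,\eta}(\omega,z;x)|>|\Lambda|L_{|\Lambda|}(\omega,z)-|\Lambda|^{1-\tau/2}\qquad\text{for }|z-z_0|=r.
\end{equation*}

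The polynomial $P(\omega,x,\cdot)$ is monic of degree $k$ with all its zeros inside $\mathcal{D}(z_0,r)$, so the trivial estimate $|P(\omega,x,z)|\le(2r)^k\le(2r_0)^k\le 1$ holds on $|z-z_0|=r$ for $l$ large, and subtracting $\log|P|\le 0$ from the previous lower bound establishes \eqref{Lem6.5-(1)} on the boundary circle. Property (b) makes $\log|g(\omega,x,\cdot)|$ harmonic on $\mathcal{D}(z_0,r)$ and continuous on its closure, so the minimum principle transfers the bound from the circle to every $z\in\mathcal{D}(z_0,r)$, completing (d). The main technical obstacle is verifying condition (iii) of the covering lemma for every $m\in\Lambda$; it is precisely the length-$l^{2/\nu}$ gap in the NDR definition that secures a usable starting index in a $(K+1)$-neighborhood of $m-l/2$. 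As the remark preceding the lemma emphasizes, the unitary setting deprives us of Cauchy interlacing and hence of any explicit control on $k$, but since (d) concerns only $|g|$ the minimum principle closes the argument without such a count.
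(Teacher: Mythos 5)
Your handling of (a)--(c) coincides with the paper's (Lemma \ref{Lemma6.4} plus the Weierstrass preparation Lemma \ref{Weierstrass}), but your proof of (d) has a genuine gap. You replace the paper's mechanism by the covering form of LDT (Lemma \ref{covering-form-(LDT)}), and to verify its hypothesis (iii) you must produce, for \emph{every} $m\in\Lambda$, a good $l$-window $I_m\subset\Lambda$ containing $m$ well in its interior. Your justification is that ``every connected component of $\Lambda\setminus\underline{\Lambda}$ has length greater than $l^{2/\nu}\gg K$,'' but no such relation between $K$ and $l$ is available: Lemma \ref{Lemma6.5} assumes only $K\geq 1$, the hypothesis in (d) forces merely $16(K+1)\leq\exp(|\Lambda|^{\nu/2}-l)$, and in the lemma's actual applications $K$ is far larger than $l$ (in Lemma \ref{Lemma8.2} one has $K\leq\bar{l}^{(1-\tau)/10}$ with $\bar{l}$ as large as $\exp(l)$, and in Lemma \ref{Lemma9.1} $K_k\sim\exp(\underline{C}_k l^{\nu/2})$). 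The NDR definition bounds the length of the \emph{good} components from below; it does not prevent $\underline{\Lambda}$ from being a block of up to $K\gg l$ consecutive sites, and for $m$ in the interior of such a block every length-$l$ window containing $m$ starts at a site of $\underline{\Lambda}$, so no admissible $I_m$ exists and condition (iii) cannot be verified. Consequently your lower bound for $\log|\varphi_\Lambda^{\beta,\eta}|$ on the circle $|z-z_0|=r$ is not established, and (d) does not follow by your route.

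The paper's proof avoids any covering of $\Lambda$: by Lemma \ref{Lemma6.4} one chooses a circle $|z-z_0|=r'$ with $r-\frac{r_0}{16(K+1)}<r'<r$ whose distance to $\sigma(\mathcal{E}_\Lambda^{\beta,\eta}(\omega,x))$ is at least $\frac{r_0}{16(K+1)}$, and the standing hypothesis $\frac{r_0}{16(K+1)}\geq\exp(-|\Lambda|^{\nu/2})$ --- which you use only to deduce $l\leq|\Lambda|^{\nu/2}$ --- is precisely what allows the \emph{spectral} form of LDT, Lemma \ref{spectrl-form-(LDT)}, to give $\log|\varphi_\Lambda^{\beta,\eta}(\omega,z;x)|>|\Lambda|L_{|\Lambda|}(\omega,z)-|\Lambda|^{1-\tau/2}$ on that circle; combined with $|P|\leq(2r)^k<1$ and the harmonic/subharmonic comparison this yields (d) with no hypothesis relating $K$ to $l$. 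If you rework your argument along these lines, note also that your closing ``minimum principle'' step needs the right-hand side $|\Lambda|L_{|\Lambda|}(\omega,z)$ to be subharmonic in $z$, since the bound being transferred is $z$-dependent; the paper makes this point explicitly.
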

\begin{proof}
By Lemma \ref{Lemma6.4}, all the necessary conditions for Lemma \ref{Weierstrass} are satisfied, which leads to the conclusions (a)-(c). Then we need to verify (d).

Note that $P(\omega,x,z)$ is a product of factors $z-z_{j}^{\Lambda}(\omega,x)$ with $z_{j}^{\Lambda}(\omega,x)\in\mathcal{D}(z_{0},r)\cap \partial \mathbb{D}$, where $z_{j}^{\Lambda}(\omega,x)$ is the eigenvalue of $\mathcal{E}_{\Lambda}^{\beta,\eta}$. Thus, for $z\in \mathcal{D}(z_{0},r)\cap \partial \mathbb{D}$, $|P(\omega,x,z)|\leq (2r)^{k}<1$. Then we obtain
$$\log|g(\omega,x,z)|>\log|\varphi_{\Lambda}^{\beta,\eta}(\omega,z;x)|$$
for any $z\in \mathcal{D}(z_{0},r)\cap \partial \mathbb{D}$.

Let $r-\frac{r_{0}}{16(K+1)}<r'<r$. From Lemma \ref{Lemma6.4}, for any $z\in \{z\in \partial \mathbb{D}: |z-z_{0}|=r'\}$ we have
$$\mathrm{dist}(z,\sigma(\mathcal{E}_{\Lambda}^{\beta,\eta}(\omega,x)))\geq \frac{r_{0}}{16(K+1)}\geq \exp(-|\Lambda|^{\nu/2}).$$
According to the spectral form of LDT, we have
$$\log|\varphi_{\Lambda}^{\beta,\eta}(\omega,z;x)|>|\Lambda|L_{|\Lambda|}(\omega,z)-|\Lambda|^{1-\tau/2}.$$
Therefore, \eqref{Lem6.5-(1)} holds. Since the left-hand side of \eqref{Lem6.5-(1)} is harmonic (in $z$) and the right-hand side is subharmonic, it follows that the estimate holds for all $z\in\mathcal{D}(z_{0},r')\cap \partial \mathbb{D}$. Since this is true for $r'$ arbitrarily close to $r$, the conclusion follows.
\end{proof}

Let us recall some basic theory about the resultant of two polynomials that we will use later.
\begin{definition 3}\cite[Section 2.9]{GSV16-arXiv}
Let $f(z)=a_{k}z^{k}+a_{k-1}z^{k-1}+\cdots+a_{0}$, $g(z)=b_{m}z^{m}+b_{m-1}z^{m-1}+\cdots+b_{0}$ be polynomials with complex coefficients. Let $\zeta_{i}$, $1\leq i\leq k$ and $\eta_{j}$, $1\leq j\leq m$ be the zeros of $f(z)$ and $g(z)$ respectively. The resultant of $f$ and $g$ is defined as follows:
\begin{equation}\label{resultant}
\mathrm{Res}(f,g)=a_{k}^{m}b_{m}^{k}\prod_{i,j}(\zeta_{i}-\eta_{j}).
\end{equation}
\end{definition 3}

We use the following property of resultants to link non-vanishing resultants to uniform lower bounds on the polynomials, ensuring no common zeros.
\begin{lemma}\cite[Lemma 2.29]{GSV16-arXiv}\label{Lemma8.1}
Let $f$, $g$ be polynomials as above and set
\begin{equation*}
s=\max(k, m),\,\,r=\max(\max|\zeta_{i}|,\max|\eta_{j}|).
\end{equation*}
Let $\delta\in (0,1)$. If $|\mathrm{Res}(f,g)|>\delta$ and $r\leq \frac{1}{2}$, then
$$\max(|f(z)|,|g(z)|)>(\delta/2)^{s}\,\,for\,\,all\,\, z.$$
\end{lemma}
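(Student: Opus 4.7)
The plan is to fix an arbitrary $z\in\mathbb{C}$ and derive $\max(|f(z)|,|g(z)|)>(\delta/2)^{s}$ via a dichotomy that separates points close to a root of $f$ from points far from every root of $f$, measured against the local root spacing between $f$ and $g$. The basic identities I will use are
\[
|\mathrm{Res}(f,g)|\;=\;|a_{k}|^{m}\prod_{i=1}^{k}|g(\zeta_{i})|\;=\;|b_{m}|^{k}\prod_{j=1}^{m}|f(\eta_{j})|,
\]
together with the consequence of $r\le 1/2$ that $|\zeta_{i}-\eta_{j}|\le|\zeta_{i}|+|\eta_{j}|\le 1$ for every pair $(i,j)$. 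The latter yields $|g(\zeta_{i})|\le |b_{m}|$, so $|\mathrm{Res}(f,g)|>\delta$ propagates to a lower bound on each individual factor $|g(\zeta_{i})|$; under the monic normalization $|a_{k}|=|b_{m}|=1$ (the setting in which the lemma is invoked, namely on the Weierstrass factors $P(\omega,x,z)$ furnished by Lemma~\ref{Lemma6.5}) this simplifies to $|g(\zeta_{i})|>\delta$ for every $i$.

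With $z$ fixed, I split into two cases. In \emph{Case A}, I assume there exists $i_{0}$ with $|z-\zeta_{i_{0}}|<\tfrac{1}{2}\min_{j}|\zeta_{i_{0}}-\eta_{j}|$. Then the triangle inequality gives $|z-\eta_{j}|\ge\tfrac{1}{2}|\zeta_{i_{0}}-\eta_{j}|$ for every $j$, hence
\[
|g(z)|\;=\;|b_{m}|\prod_{j}|z-\eta_{j}|\;\ge\;2^{-m}|g(\zeta_{i_{0}})|\;>\;2^{-m}\delta\;\ge\;(\delta/2)^{s},
\]
where the last step uses $m\le s$ and $\delta\in(0,1)$.

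In \emph{Case B}, the opposite holds: $|z-\zeta_{i}|\ge\tfrac{1}{2}\min_{j}|\zeta_{i}-\eta_{j}|$ for every $i$. The crucial elementary observation is that $|\zeta_{i}-\eta_{j}|\le 1$ forces $|g(\zeta_{i})|/|b_{m}|=\prod_{j}|\zeta_{i}-\eta_{j}|\le \min_{j}|\zeta_{i}-\eta_{j}|$, so $|z-\zeta_{i}|\ge|g(\zeta_{i})|/(2|b_{m}|)$. Multiplying over $i$ and invoking the first identity,
\[
|f(z)|\;=\;|a_{k}|\prod_{i}|z-\zeta_{i}|\;\ge\;\frac{|a_{k}|}{(2|b_{m}|)^{k}}\prod_{i}|g(\zeta_{i})|\;=\;\frac{|\mathrm{Res}(f,g)|}{2^{k}|a_{k}|^{m-1}|b_{m}|^{k}}\;>\;\frac{\delta}{2^{k}}\;\ge\;(\delta/2)^{s},
\]
again using $k\le s$ and the monic normalization. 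Combining the two cases gives the required bound at every $z$.

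The main obstacle is the bookkeeping for the leading coefficients $|a_{k}|$ and $|b_{m}|$: the chain of inequalities cleanly requires a monic normalization (or a comparable a priori bound on $|a_{k}|^{m-1}|b_{m}|^{k}$); otherwise one must absorb the extra factors into the constant. In the paper this is harmless because the lemma will be applied to the monic factors $P(\omega,x,z)$. The one genuinely nonobvious ingredient is the sharpness-under-$r\le 1/2$ step $\min_{j}|\zeta_{i}-\eta_{j}|\ge |g(\zeta_{i})|/|b_{m}|$, which is precisely what lets Case B close without any control on derivatives of $g$.
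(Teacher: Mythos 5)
Your proof is correct, and there is nothing in the paper to compare it against: the paper quotes this lemma verbatim from \cite{GSV16-arXiv} (Lemma 2.29) without proof, so your argument is a self-contained reconstruction, and it is essentially the standard resultant argument. Your Case A/B split on whether $|z-\zeta_{i_0}|<\tfrac{1}{2}\min_j|\zeta_{i_0}-\eta_j|$ is a mild variant of the usual split on whether $z$ lies within $\delta/2$ of some zero of $f$; both versions rest on the same two observations, namely that $r\le\tfrac{1}{2}$ forces every $|\zeta_i-\eta_j|\le 1$, hence $\prod_j|\zeta_i-\eta_j|\le 1$ and therefore (in the monic case) $|g(\zeta_i)|>\delta$ for every $i$, and that a product of factors each at most $1$ is bounded above by its smallest factor, which is exactly what closes your Case B without any derivative information. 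Your caveat about leading coefficients is not pedantry but a genuine correction to the statement as reproduced here: with the resultant normalized as in the paper's definition, the conclusion is false for general $a_k,b_m$ --- take $f(z)=A(z-a)$, $g(z)=A(z-b)$ with $a=0$, $b=2\delta/A^{2}$ and $A$ large; then $|\mathrm{Res}(f,g)|=2\delta>\delta$ and $r\le\tfrac12$, yet at the midpoint both $|f|$ and $|g|$ equal $\delta/A<(\delta/2)^{s}$ since $s=1$. So the lemma must be read with the monic normalization (or at least $|a_k|,|b_m|\le 1$), which is indeed what holds in its only use in this paper, where it is applied to the monic Weierstrass factors $P_0,P_1$ produced by Lemma \ref{Lemma6.5} inside the proof of Lemma \ref{Lemma8.2}.
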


Finally, we combine the NDR factorization, resultant bounds, and semialgebraic set tools to eliminate double resonances between two distant NDR intervals. We show that for most frequencies and phases, at least one of the intervals has a non-vanishing polynomial factor (no resonance), ruling out common eigenvalues.
\begin{lemma}\label{Lemma8.2}
Suppose $x_{0}\in \mathbb{T}^{d}$, $\omega_{0}\in \mathbb{T}^{d}(p,q)$, $z_{0}\in \partial \mathbb{D}$ and $L(\omega_{0},z_{0})>\gamma$. Let $\tau, \nu$ be as in LDT and $1\leq l\leq \bar{l}\leq\exp(l)$, $1\leq K\leq \bar{l}^{\frac{1-\tau}{10}}$, $\exp(l^{2})\leq|t|\leq\exp(\bar{l}^{\frac{\nu(1-\tau)}{4}})$. Let $\Lambda_{j}$, $j=0,1$, be intervals in $\mathbb{Z}$ such that $\bar{l}/10\leq |\Lambda_{j}|\leq 10\bar{l}$, $0\in \Lambda_{j}$, and $\Lambda_{j}$ is $(K,l)$-NDR with respect to $x_{j}$, $\omega_{0}$, $z_{0}$, $x_{j}:=x_{0}+jt\omega_{0}$, $j=0, 1$. For any $l\geq l_{0}(\alpha_{i},p,q,z_{0},\gamma)$ there exists a set $\Omega_{\bar{l}}$, $\mathrm{mes}(\Omega_{\bar{l}})<\exp(-l^{2}/4)$, such that if $\omega_{0}\notin\Omega_{\bar{l}}$ the following statement holds. There exists a set $\mathcal{B}_{\bar{l},t}(x_{0},\omega_{0},z_{0})$, such that
$$\mathrm{mes}(\mathcal{B}_{\bar{l},t})<\exp(-\bar{l}^{\frac{1-\tau}{6d}})/|t|^{d},$$
and for any $(\omega,x)\in(\mathbb{T}^{d}(p,q)\times\mathbb{T}^{d})\backslash \mathcal{B}_{\bar{l},t}$, $z\in \partial \mathbb{D}$,
\begin{equation}\label{Lem8.2-(1)}
|x-x_{0}|<\exp(-4l),\,\,|\omega-\omega_{0}|<\exp(-4l)/|t|,\,\,|z-z_{0}|<\exp(-l)/2,
\end{equation}
we have
\begin{equation*}
\max_{j=0,1}(\log|\varphi_{\Lambda_{j}}^{\beta,\eta}(\omega,z;x+jt\omega)|-|\Lambda_{j}|L_{|\Lambda_{j}|}(\omega,z)+2|\Lambda_{j}|^{1-\tau/2})>0.
\end{equation*}
\end{lemma}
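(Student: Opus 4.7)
The plan is to combine the Weierstrass factorization of Lemma~\ref{Lemma6.5}, the resultant smallness of Lemma~\ref{Lemma8.1}, a semialgebraic approximation as in Lemma~\ref{Lemma3.34}, and the transversality decomposition of Lemma~\ref{Lemma7.1} to extract the $|t|^{-d}$ scaling. First I would apply Lemma~\ref{Lemma6.5} to each NDR interval $\Lambda_j$ at the base point $(x_j, \omega_0, z_0)$ with $x_j = x_0 + jt\omega_0$ and $r_0 = \exp(-l)$, producing factorizations
\[
\varphi_{\Lambda_j}^{\beta,\eta}(\omega, z; y) = P_j(\omega, y, z)\, g_j(\omega, y, z)
\]
on polydisks of radii $\simeq \exp(-l)/\bar l^{O(1)}$ in $\omega$ and $y$ around $(\omega_0, x_j)$, where $\deg_z P_j \le K$, all zeros of $P_j$ lie in $\mathcal{D}(z_0, r_0)$, and $\log|g_j| \ge |\Lambda_j|L_{|\Lambda_j|}(\omega, z) - |\Lambda_j|^{1-\tau/2}$. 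The bound $|\omega - \omega_0| < \exp(-4l)/|t|$ is chosen precisely so that the shifted evaluation point $y_j := x + jt\omega$ remains inside the polydisk about $x_j$ for both $j = 0, 1$, because $|y_j - x_j| \le |x - x_0| + j|t||\omega - \omega_0| \le 2\exp(-4l)$. Consequently the desired conclusion reduces to showing $\max_{j=0,1}\log|P_j(\omega, y_j, z)| > -|\Lambda_j|^{1-\tau/2}$.

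Next, a failure of this reduced inequality forces a resultant to be small. If both $|P_j(\omega, y_j, z)| \le \exp(-|\Lambda_j|^{1-\tau/2})$ at a common $z \in \mathcal{D}(z_0, \exp(-l)/2)$, Lemma~\ref{Lemma8.1} (contrapositive, with $s \le K \le \bar l^{(1-\tau)/10}$ and root-radius $\le 1/2$) yields
\[
|R(\omega, x)| \le \exp(-c\bar l^{\gamma_0}), \qquad R(\omega, x) := \mathrm{Res}_z\bigl(P_0(\omega, y_0, \cdot),\, P_1(\omega, y_1, \cdot)\bigr),
\]
with $\gamma_0 := 1 - \tau/2 - (1-\tau)/10 > 0$. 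Replacing $\alpha_i$ by its polynomial approximation $\tilde\alpha_i$ from \eqref{VCA} turns $R$ into a polynomial $\tilde R$ of total degree $\le \bar l^{C(p,q)}$ in $(\omega, y_0, y_1)$ and places the bad set inside the semialgebraic slab $\tilde{\mathcal{S}} := \{|\tilde R| \le 2\exp(-c\bar l^{\gamma_0})\} \subset \mathbb{T}^{3d}$.

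For the measure estimate one must first show $\tilde R$ is nontrivial and then slice. For $\omega_0$ outside an exceptional set $\Omega_{\bar l}$ of measure $< \exp(-l^2/4)$---obtained via Lemma~\ref{Lemma-NDR-2} applied to shifts $t\omega_0$ that would make $\sigma(\mathcal{E}_{\Lambda_0}^{\beta,\eta})$ and $\sigma(\mathcal{E}_{\Lambda_1}^{\beta,\eta}(\cdot + t\omega_0))$ coincide inside $\mathcal{D}(z_0, r_0)$---the resultant $\tilde R$ is bounded below at $(\omega_0, x_0, x_0 + t\omega_0)$, and a subharmonic/BMO argument on $\log|\tilde R|$ analogous to the proof of Lemma~\ref{Lemma3.20} gives $\mathrm{mes}(\tilde{\mathcal{S}}) \le \exp(-c'\bar l^{\gamma_0/(3d)})$. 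I would then invoke Lemma~\ref{Lemma7.1} on $\tilde{\mathcal{S}}$ with transversality parameter $\varepsilon = 200/|t|$, the splitting chosen so that the diagonal slice $y_1 = y_0 + t\omega$ falls under the transversal case: when reparametrized by $y_0$ at fixed $y_1$, the map $y_0 \mapsto \omega = (y_1 - y_0)/t$ has slope $1/|t|$, and the Jacobian of the change of variables $(y_0, y_1) \leftrightarrow (\omega, y_0)$ contributes the $|t|^{-d}$ factor. The resulting decomposition $\tilde{\mathcal{S}} = \tilde{\mathcal{S}}_1 \cup \tilde{\mathcal{S}}_2$ yields a projection of $\tilde{\mathcal{S}}_1$ absorbed into an enlarged $\Omega_{\bar l}$ and a diagonal-slice bound $\le \bar l^C \exp(-c'\bar l^{\gamma_0/(3d)})/|t|^d$, which after tidying exponents gives the claimed $\exp(-\bar l^{(1-\tau)/(6d)})/|t|^d$.

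The main obstacle is this final step: matching the transversality parameter, the semialgebraic degree, and the resultant-smallness exponent so that the final rate $(1-\tau)/(6d)$ and the $|t|^{-d}$ scaling both close out cleanly. A secondary technical point is ensuring nontriviality of $\tilde R$ at $(\omega_0, x_0, x_0 + t\omega_0)$ for generic $\omega_0$, which demands that $\sigma(\mathcal{E}_{\Lambda_0}^{\beta,\eta})$ and its translate under the spatial shift by $t\omega_0$ do not generically meet inside $\mathcal{D}(z_0, r_0)$; this is precisely what the removal of $\Omega_{\bar l}$ via Lemma~\ref{Lemma-NDR-2} accomplishes when $t$ is sufficiently large relative to $\bar l$.
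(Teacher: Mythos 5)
Your overall skeleton (Weierstrass factorization from Lemma~\ref{Lemma6.5}, reduction to a lower bound on the polynomial factors $P_j$, a resultant in the spectral variable, and removal of a frequency set guaranteeing separation of the two spectra) is the same as the paper's, and your observation that $|\omega-\omega_0|<\exp(-4l)/|t|$ keeps $x+jt\omega$ inside the polydisk is correct. However, there is a genuine gap at the heart of your quantitative chain: you assume $\deg_z P_j\le K$. Lemma~\ref{Lemma6.5} gives no bound on the degree $k$, and the remark preceding it states explicitly that, unlike the Schr\"odinger case (property (d) of \cite[Proposition 5.2]{GSV16-arXiv}, which rests on Cauchy interlacing for Hermitian matrices), one cannot count the eigenvalues of the unitary matrix $\mathcal{E}_{\Lambda_j}^{\beta,\eta}$ in a disk; the NDR condition bounds the number of exceptional sites by $K$ but not the number of zeros of $\varphi_{\Lambda_j}^{\beta,\eta}$ in $\mathcal{D}(z_0,r_0)$, so the only available bound is $k_j\le|\Lambda_j|\le 10\bar l$. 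With $s\sim\bar l$ in place of $K$, the contrapositive of Lemma~\ref{Lemma8.1} applied to ``both $|P_j|\le\exp(-|\Lambda_j|^{1-\tau/2})$'' yields only $|R|\le 2\exp(-c\,\bar l^{-\tau/2})$, i.e.\ essentially no smallness, so your exponent $\gamma_0=1-\tau/2-(1-\tau)/10$ is not attainable and the sublevel set you propose to measure is not exceptional. (The paper runs Lemma~\ref{Lemma8.1} in the opposite, forward direction, starting from a lower bound on $R$, precisely with $k_0k_1\le 100\bar l^2$.)

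The second gap is the measure estimate itself. $R(\omega,x)=\mathrm{Res}(P_0(\omega,x,\cdot),P_1(\omega,x+t\omega,\cdot))$ is built from the Weierstrass factors, hence is defined and analytic only on the small polydisk around $(\omega_0,x_0)$; it is not a global function on $\mathbb{T}^{3d}$, so neither a semialgebraic approximation of controlled degree nor a BMO/Lemma~\ref{Lemma3.20}-type sublevel-set bound over the torus applies, and you have no lower bound on the average of $\log|\tilde R|$ to feed such an argument. The paper's route is different at exactly this point: the semialgebraic/transversality machinery (Lemma~\ref{Lemma7.1} with $\varepsilon\sim 1/t_0$) is used once and for all in Lemma~\ref{Lemma7.2}/Corollary~\ref{Coro7.3}, at the level of resolvent norms, to produce $\Omega_{\bar l}$ \emph{and} a bad phase set; one then picks a nearby good phase $x_0'$ with $|x_0'-x_0|<\exp(-l^2/(5d))$ (not $x_0$ itself, a point your sketch glosses over) at which the two restricted spectra are $\exp(-\bar l^{(1-\tau)/3})$-separated, giving $|R(\omega_0,x_0')|\ge\exp(-100\bar l^2\bar l^{(1-\tau)/3})$, and this single-point lower bound is spread over the polydisk by Cartan's estimate (Lemma~\ref{Lemma3.23}); the factor $|t|^{-d}$ in $\mathrm{mes}(\mathcal{B}_{\bar l,t})$ comes from the $\omega$-radius $\exp(-3l)/|t|$ of that polydisk, not from a transversality slicing of a resultant sublevel set. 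To repair your argument you would need to drop the degree bound $K$, recalibrate the thresholds with $k_j\le 10\bar l$ (accepting the weaker bound $\max_j|P_j|>\exp(-\bar l^5)$ as in the paper), and replace the global semialgebraic step by the local Cartan argument anchored at a good phase supplied by Corollary~\ref{Coro7.3}.
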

\begin{proof}
According to Lemma \ref{Lemma3.8}, $L(\omega_{0},z)>\frac{\gamma}{2}$ for any $|z-z_{0}|<\exp(-l)$ (provided $l$ is large enough). Let $\Omega_{\bar{l},t_{0}}$, $\mathcal{B}_{\bar{l},t_{0},\omega_{0}}$ be the sets from Corollary \ref{Coro7.3}, then
$$\mathrm{mes}(\Omega_{\bar{l},t_{0}}), \mathrm{mes}(\mathcal{B}_{\bar{l},t_{0},\omega_{0}})<\exp(-l^{2}/4).$$
Set $\Omega_{\bar{l}}:=\Omega_{\bar{l},t_{0}}$. Suppose $\omega_{0}\notin \Omega_{\bar{l}}$. Let $x_{0}'\in \mathbb{T}^{d}$, $|x_{0}'-x_{0}|<\exp(-l^{2}/(5d))$ be such that $x_{0}'\notin \mathcal{B}_{\bar{l},t_{0},\omega_{0}}$. We can obtain
\begin{equation}\label{Lem8.2-(2)}
\min(\|(\mathcal{E}_{\Lambda_{0}}^{\beta,\eta}(\omega_{0},x_{0}')-z)^{-1}\|,\|(\mathcal{E}_{\Lambda_{1}}^{\beta,\eta}(\omega_{0},x_{0}'+t\omega_{0})-z)^{-1}\|)\leq \exp(\bar{l}^{\frac{1-\tau}{3}})
\end{equation}
for all $|z-z_{0}|<\exp(-l)$ and $\exp(l^{2})\leq|t|\leq\exp(c\bar{l}^{\nu(1-\tau)/3})$.

Let $\varphi_{\Lambda_{j}}^{\beta,\eta}(\omega,z;x)=P_{j}(\omega,x,z)g_{j}(\omega,x,z)$, $j=0,1$, be the factorizations from Lemma \ref{Lemma6.5}. We know that
$$P_{j}(\omega,x,z)=z^{k_{j}}+a_{j,k_{j}-1}(\omega,x)z^{k_{j}-1}+\cdots+a_{j,0}(\omega,x)$$
with $a_{j,i}(\omega,x)$ analytic in a polydisk containing
$$\mathcal{P}_{j}:=\{(\omega,x)\in \mathbb{C}^{2d}:|\omega-\omega_{0}|<\exp(-3l), |x-x_{j}|<\exp(-2l)\},$$
and all the zeros of $P_{j}(\omega,x,\cdot)$, $(\omega,x)\in\mathcal{P}_{j}$, are contained in $\mathcal{D}(z_{0},r)\cap \partial \mathbb{D}$.

Due to $\exp(l^{2})\leq|t|\leq\exp(\bar{l}^{\frac{\nu(1-\tau)}{4}})$, then $\bar{l}\geq l^{\frac{8}{\nu(1-\tau)}}$. In addition,
$$\frac{r_{0}}{16(K+1)}=\frac{\exp(-l)}{16(K+1)}\geq \frac{\exp(-|\Lambda_{j}|^{\frac{\nu(1-\tau)}{8}})}{16(K+1)}\geq \exp(-|\Lambda_{j}|^{\nu/2}).$$
Thus, the condition needed for (d) of Lemma \ref{Lemma6.5} is satisfied. Then we have
\begin{equation}\label{Lem8.2-(3)}
\log|g_{j}(\omega,x,z)|>|\Lambda_{j}|L_{|\Lambda_{j}|}(\omega,z)-|\Lambda_{j}|^{1-\tau/2}
\end{equation}
for any $(\omega,x)\in \mathcal{P}_{j}\cap(\mathbb{T}^{d}(p,q)\times\mathbb{T}^{d})$ and $z\in \mathcal{D}(z_{0},\exp(-l)/2)\cap \partial \mathbb{D}$.

Let $R(\omega,x)=\mathrm{Res}(P_{0}(\omega,x,\cdot),P_{1}(\omega,x+t\omega,\cdot))$. Then $R$ is defined on the polydisk
$$\mathcal{P}:=\{(\omega,x)\in \mathbb{C}^{2d}:|\omega-\omega_{0}|<\exp(-3l)/|t|, |x-x_{j}|<\exp(-3l)\}.$$
According to the definition of the resultant and the properties of $P_{j}$, we get
$$R(\omega,x)=\prod(z_{i}^{\Lambda_{0}}(\omega,x)-z_{j}^{\Lambda_{1}}(\omega,x+t\omega)),$$
where the product is only in terms of eigenvalues contained in $\mathcal{D}(z_{0},\exp(-l))\cap \partial \mathbb{D}$. Then we have that $\sup|R(\omega,x)|\leq (2\exp(-l))^{k_{0}k_{1}}<1$.

From \eqref{Lem8.2-(2)}, for $z\in (\sigma(\mathcal{E}_{\Lambda_{0}}^{\beta,\eta}(\omega_{0},x_{0}'))\cup\sigma(\mathcal{E}_{\Lambda_{1}}^{\beta,\eta}(\omega_{0},x_{0}'+t\omega_{0})))\cap\mathcal{D}(z_{0},\exp(-l))\cap \partial \mathbb{D}$ we have that
$$|z_{i}^{\Lambda_{0}}(\omega_{0},x_{0})-z_{j}^{\Lambda_{1}}(\omega_{0},x_{0}'+t\omega_{0})|\geq\exp(-\bar{l}^{\frac{1-\tau}{3}})$$
for the pairs of eigenvalues contained in $\mathcal{D}(z_{0},\exp(-l))\cap \partial \mathbb{D}$. Therefore,
$$|R(\omega_{0},x_{0}')|\geq(\exp(-\bar{l}^{\frac{1-\tau}{3}}))^{k_{0}k_{1}}\geq\exp(-|\Lambda_{0}||\Lambda_{0}|\bar{l}^{\frac{1-\tau}{3}})\geq\exp(-100\bar{l}^{2}\bar{l}^{\frac{1-\tau}{3}}).$$
Applying Cartan's estimate to $R(\omega,x)$ with $M=0$, $m=-100\bar{l}^{2}\bar{l}^{\frac{1-\tau}{3}}$, $H=\bar{l}^{\frac{1-\tau}{3}}$, one can obtain that
$$\log|R(\omega,x)|>-100C\bar{l}^{2}\bar{l}^{\frac{1-\tau}{3}},$$
which implies that $|R(\omega,x)|>\exp(-100C\bar{l}^{2}\bar{l}^{\frac{1-\tau}{3}})$ for all $(\omega,x)\in(\frac{1}{6}\mathcal{P}\cap \mathbb{R}^{2d})\backslash \mathcal{B}$, where $\mathcal{B}=\mathcal{B}(\Lambda_{0},\Lambda_{1})$ and
$$\mathrm{mes}(\mathcal{B})\leq \frac{C\exp(-6dl)}{|t|^{d}}\exp(-H^{\frac{1}{2d}}).$$
Note that $\frac{1}{6}\mathcal{P}\cap \mathbb{R}^{2d}$ contains the points $(\omega,x)$ satisfying \eqref{Lem8.2-(1)}. Let $\mathcal{B}_{\bar{l},t}=\cup \mathcal{B}(\Lambda_{0},\Lambda_{1})$. According to Lemma \ref{resultant}, one can obtain
$$\max(|P_{0}(\omega,x,z)|,|P_{1}(\omega,x+t\omega,z)|)\geq(\frac{1}{2}\exp(-100C\bar{l}^{2}\bar{l}^{\frac{1-\tau}{3}}))^{10\bar{l}}>\exp(-\bar{l}^{5})$$
for all $z$ and $(\omega,x)\notin \mathcal{B}_{\bar{l},t}$. Therefore, the conclusion follows by applying the factorizations of the determinants and \eqref{Lem8.2-(3)}.
\end{proof}

We extend the above result to all pairs of distant points in a large interval, ensuring that double resonances are excluded uniformly across the interval. This uniform exclusion is critical for extending finite scale localization to full scale localization.
\begin{lemma}\label{Lemma8.3}
Let $\tau, \nu$ be as in LDT, $1\leq l\leq \bar{l}\leq\exp(l)$, $1\leq K\leq\bar{l}^{\frac{1-\tau}{10}}$, and $\exp(l^{2})\leq n$. Let $\Lambda_{j}$, $j=0,1$, be intervals in $\mathbb{Z}$ such that $\bar{l}/10\leq |\Lambda_{j}|\leq 10 \bar{l}$, $0\in \Lambda_{j}$,  $j=0,1$. For $l\geq l_{0}(\alpha_{i},p,q,\gamma)$ there exists $\varrho=\varrho(p,q)$ such that if $n\leq \exp(\bar{l}^{\varrho})$, there exists a set $\Omega_{n,\bar{l}}$, $\mathrm{mes}(\Omega_{n,\bar{l}})<2\exp(-l^{2}/4)$, and for every $\omega\in \mathbb{T}^{d}(p,q)\backslash\Omega_{n,\bar{l}}$ there exists a set $\mathcal{B}_{n,\bar{l},\omega}$, $\mathrm{mes}(\mathcal{B}_{n,\bar{l},\omega})<\exp(-\bar{l}^{\varrho})$ such that the following statement holds. For any $\omega\in \mathbb{T}^{d}(p,q)\backslash\Omega_{n,\bar{l}}$, $x\in \mathbb{T}^{d}\backslash\mathcal{B}_{n,\bar{l},\omega}$, $z\in \partial \mathbb{D}$, and $m_{0}, m_{1}\in [0,n-1]$ such that $|m_{0}-m_{1}|\geq\exp(l^{2})$, we have that if $L(\omega,z)>\gamma$ and $\Lambda_{j}$, $j=0,1$, are $(K,l,\frac{1}{2})$-NDR intervals with respect to $x+m_{j}\omega$, $\omega$, $z$, then
$$\max_{j=0,1}(\log|\varphi_{\Lambda_{j}}^{\beta,\eta}(\omega,z;x+m_{j}\omega)|-|\Lambda_{j}|L_{|\Lambda_{j}|}(\omega,z)+2|\Lambda_{j}|^{1-\tau/2})>0.$$
\end{lemma}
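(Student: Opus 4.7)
The plan is to reduce Lemma \ref{Lemma8.3} to the fixed-displacement version Lemma \ref{Lemma8.2} by a covering argument in the base-point space $\mathbb{T}^{d}\times\mathbb{T}^{d}\times\partial\mathbb{D}$, combined with a union over admissible displacements $t=m_{1}-m_{0}$ and interval configurations $(\Lambda_{0},\Lambda_{1})$. The slack in $(K,l,\tfrac{1}{2})$-NDR (as opposed to $(K,l)$-NDR) is what will allow the NDR property at an arbitrary point $(x,\omega,z)$ to transfer to the $(K,l)$-NDR property at a nearby discretization point, which is exactly what Lemma \ref{Lemma8.2} requires.

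First I would fix a discretization scale $\epsilon=\exp(-5l)$ and form a finite $\epsilon$-net in $\mathbb{T}^{d}\times\mathbb{T}^{d}\times\partial\mathbb{D}$ of cardinality at most $\exp(Cl(2d+1))$. For each grid point $(x^{*},\omega^{*},z^{*})$ with $L(\omega^{*},z^{*})>\gamma/2$, each admissible $t$ with $\exp(l^{2})\leq|t|\leq n$, and each of the at most $\bar{l}^{4}$ interval pairs $(\Lambda_{0},\Lambda_{1})$ satisfying the size assumption, I invoke Lemma \ref{Lemma8.2} to produce a set $\mathcal{B}_{\bar{l},t}(x^{*},\omega^{*},z^{*})\subset\mathbb{T}^{d}(p,q)\times\mathbb{T}^{d}$ of measure at most $\exp(-\bar{l}^{(1-\tau)/(6d)})/|t|^{d}$. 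Let $\mathcal{B}^{\mathrm{tot}}$ be the union of all these sets. Summing $1/|t|^{d}$ over $\exp(l^{2})\leq|t|\leq n$ costs at most a factor $\bar{l}^{C}$ (from $\log n\leq\bar{l}^{\varrho}$ when $d=1$, and a convergent factor when $d\geq 2$), so
\begin{equation*}
\mathrm{mes}(\mathcal{B}^{\mathrm{tot}})\leq \exp(Cl(2d+1))\cdot\bar{l}^{C}\cdot\exp\bigl(-\bar{l}^{(1-\tau)/(6d)}\bigr),
\end{equation*}
and choosing $\varrho<(1-\tau)/(7d)$ makes this bounded by $\exp(-\bar{l}^{\varrho'})$ for some $\varrho'>\varrho$.

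Next I would apply Chebyshev's inequality to the $\omega$-projection of $\mathcal{B}^{\mathrm{tot}}$, obtaining a set $\Omega'$ with $\mathrm{mes}(\Omega')<\exp(-\bar{l}^{\varrho'}/2)$ such that for any $\omega\notin\Omega'$ the $\omega$-slice $\mathcal{B}_{n,\bar{l},\omega}$ has measure below $\exp(-\bar{l}^{\varrho'}/2)<\exp(-\bar{l}^{\varrho})$. I then set $\Omega_{n,\bar{l}}$ to be the union of $\Omega'$, the $\epsilon$-thickening of the $\omega$-exceptional set $\Omega_{\bar{l}}$ from Lemma \ref{Lemma8.2}, and the $\epsilon$-neighborhood of $\{\omega^{*}:L(\omega^{*},z^{*})\leq\gamma/2\}$; outer regularity together with Lemma \ref{Lemma3.8} yields $\mathrm{mes}(\Omega_{n,\bar{l}})<2\exp(-l^{2}/4)$. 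Finally, for $(\omega,x,z)$ with $\omega\notin\Omega_{n,\bar{l}}$ and $x\notin\mathcal{B}_{n,\bar{l},\omega}$ satisfying the NDR hypotheses, I take the nearest grid point $(x^{*},\omega^{*},z^{*})$ within $\epsilon$. Remark \ref{Remark3.14} and Lemma \ref{Lemma3.7} show that both $\log|\varphi^{\beta,\eta}_{[0,l-1]}|$ and $lL_{l}$ change by at most $\exp(-l)$ under such a perturbation, so the $(K,l,\tfrac{1}{2})$-NDR hypothesis at $(x+m_{j}\omega,\omega,z)$ upgrades to the $(K,l)$-NDR property at $(x^{*}+m_{j}\omega^{*},\omega^{*},z^{*})$. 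Since by construction $\omega^{*}\notin\Omega_{\bar{l}}$ and $(\omega,x)\notin\mathcal{B}_{\bar{l},m_{1}-m_{0}}(x^{*},\omega^{*},z^{*})$, Lemma \ref{Lemma8.2} applied at the grid point (with the phase translation $x\mapsto x+m_{0}\omega$ and $t=m_{1}-m_{0}$) delivers the claimed inequality.

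The main obstacle is calibrating $\varrho$: the combinatorial factor $\exp(Cl(2d+1))\cdot\bar{l}^{C}$ coming from the net cardinality, the range of $t$, and the interval pairs must be absorbed into the exponential gain $\exp(-\bar{l}^{(1-\tau)/(6d)})$ from Lemma \ref{Lemma8.2}, while simultaneously the hypothesis $n\leq\exp(\bar{l}^{\varrho})$ must be respected. Once this calibration is fixed, the NDR-transfer step is a routine continuity argument in which the slack between $(K,l,\tfrac{1}{2})$-NDR and $(K,l)$-NDR absorbs all perturbation errors from moving to the grid point.
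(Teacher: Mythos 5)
Your overall strategy (discretize the base data, invoke Lemma \ref{Lemma8.2} at grid points, take a union over displacements $t$ and interval pairs, then Chebyshev in $\omega$) is the same as the paper's, but the discretization scale you chose breaks the two steps on which everything hinges. Lemma \ref{Lemma8.2} only applies to $(\omega,x)$ with $|\omega-\omega_{0}|<\exp(-4l)/|t|$, and in the present setting $|t|=|m_{1}-m_{0}|\geq\exp(l^{2})$, so the frequency must be matched to accuracy at most $\exp(-l^{2}-4l)$, which is far finer than your net scale $\epsilon=\exp(-5l)$; the nearest grid point in your net simply does not satisfy the hypothesis \eqref{Lem8.2-(1)}. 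The same mismatch kills the NDR transfer at the far endpoint: the relevant phase is $x+m_{0}\omega+t\omega$, and replacing $\omega$ by the nearest grid point $\omega^{*}$ perturbs it by roughly $|t|\,|\omega-\omega^{*}|$, which with $\epsilon=\exp(-5l)$ and $|t|$ as large as $n\leq\exp(\bar{l}^{\varrho})$ is not small at all, so Remark \ref{Remark3.14} and Lemma \ref{Lemma3.7} give you nothing and the slack between $(K,l,\tfrac12)$-NDR and $(K,l)$-NDR cannot absorb the error. This is exactly why the paper's $\omega$-cover has radius $\exp(-l^{3/2})/n$ (hence cardinality $\sim n^{d}\exp(dl^{3/2})$ rather than your $\exp(Cl(2d+1))$); with the correct $n$-dependent net your measure arithmetic has to be redone, and it is precisely there that the hypothesis $n\leq\exp(\bar{l}^{\varrho})$ and the choice of $\varrho$ are consumed (your calibration $\varrho<(1-\tau)/(7d)$ is done against the wrong combinatorial factor, and it also drops the $\nu$ appearing in the exponent $\bar{l}^{\nu(1-\tau)/(6d)}$ used in the paper and the constraint $n\leq\exp(\bar{l}^{\nu(1-\tau)/4})$ inherited from the range of $t$ in Lemma \ref{Lemma8.2}).

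There is a second genuine gap: your exceptional set controls the wrong point. To apply Lemma \ref{Lemma8.2} ``with the phase translation $x\mapsto x+m_{0}\omega$'', the condition you must verify is $(\omega,\,x+m_{0}\omega)\notin\mathcal{B}_{\bar{l},t}(x^{*},\omega^{*},z^{*})$, not $(\omega,x)\notin\mathcal{B}_{\bar{l},t}$, and $m_{0}$ ranges over all of $[0,n-1]$. Hence $\mathcal{B}_{n,\bar{l},\omega}$ must be built from the pulled-back copies $S_{m}(\mathcal{B}_{\bar{l},t})$, $S_{m}(x,\omega)=(x-m\omega,\omega)$, unioned over $m\in[0,n-1]$, as in the paper's proof; your $\mathcal{B}^{\mathrm{tot}}$ omits this union, and adding it costs another factor $n$ in the measure that again has to be absorbed through $n\leq\exp(\bar{l}^{\varrho})$. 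A smaller issue: the measure of a fixed-$\epsilon$ thickening of $\Omega_{\bar{l}}$ is not controlled by outer regularity (a thin set can have a fat $\epsilon$-neighborhood), so your bound $\mathrm{mes}(\Omega_{n,\bar{l}})<2\exp(-l^{2}/4)$ is not justified as stated; the paper avoids this by covering $\mathbb{T}^{d}\backslash\Omega_{\bar{l}}$ with balls centered outside $\Omega_{\bar{l}}$ and simply adjoining $\Omega_{\bar{l}}$ (together with the Chebyshev set) to $\Omega_{n,\bar{l}}$.
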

\begin{proof}
Let $\Omega_{\bar{l}}$ be the set from Lemma \ref{Lemma8.2} (with $\frac{\gamma}{2}$ instead of $\gamma$). Let
$$\{x: |x-x_{k}|<\exp(-l^{3/2})\},\,\,k\lesssim \exp(dl^{3/2})$$
$$\{\omega: |\omega-\omega_{k'}|<\exp(-l^{3/2})/n\},\,\,k\lesssim n^{d}\exp(dl^{3/2})$$
$$\{z: |z-z_{k''}|<\exp(-l^{3/2})\},\,\,k''\lesssim \exp(l^{3/2})$$
be covers of $\mathbb{T}^{d}$, $\mathbb{T}^{d}\backslash \Omega_{\bar{l}}$ and $\partial \mathbb{D}$, respectively. Note that if $|\omega-\omega_{k'}|<\exp(-l^{3/2})/n$, $|z-z_{k''}|<\exp(-l^{3/2})$, and $L(\omega,z)>\gamma$, then $L(\omega_{k'},z_{k''})>\frac{\gamma}{2}$ (provided $l$ is large enough) by using Lemma \ref{Lemma3.8}. If $L(\omega_{k'},z_{k''})>\frac{\gamma}{2}$, we let $\mathcal{B}_{\bar{l},t}(x_{k},\omega_{k'},z_{k''})$ be the set from Lemma \ref{Lemma8.2} (with $\frac{\gamma}{2}$ instead of $\gamma$). Otherwise we let $\mathcal{B}_{\bar{l},t}(x_{k},\omega_{k'},z_{k''})$ be the empty set.

Let $\mathcal{B}_{n,\bar{l}}=\mathop{\cup}\limits_{k,k',k'',t,m}S_{m}(\mathcal{B}_{\bar{l},t}(x_{k},\omega_{k'},z_{k''}))$,
where $S_{m}(x,\omega)=(x-m\omega,\omega)$, $m\in [0,n-1]$, $|t|\in [\exp(l^{2}),n]$. Then
$$\mathrm{mes}(\mathcal{B}_{n,\bar{l}})\leq C c^{d+2}\exp((2d+1)l^{3/2})\exp(-\bar{l}^{\frac{\nu(1-\tau)}{6d}}).$$
From the restriction of $|t|$ in Lemma \ref{Lemma8.2}, we require that $n\leq\exp(\bar{l}^{\frac{\nu(1-\tau)}{4}})$. Take $\varrho=\varrho(\tau,\nu)=\varrho(p,q)$ such that if $n\leq\exp(\bar{l}^{\varrho})$, the above restriction is satisfied and
$$\mathrm{mes}(\mathcal{B}_{n,\bar{l}})<\exp(-2\bar{l}^{\varrho}).$$
According to Chebyshev's inequality, there exists a set $\tilde{\Omega}_{n,\bar{l}}$, $\mathrm{mes}(\tilde{\Omega}_{n,\bar{l}})<\exp(-\bar{l}^{\varrho})$ such that for $\omega\notin \tilde{\Omega}_{n,\bar{l}}$ we have $\mathrm{mes}(\mathcal{B}_{n,\bar{l},\omega})<\exp(-\bar{l}^{\varrho})$, where $\mathcal{B}_{n,\bar{l},\omega}=\{x:(x,\omega)\in\mathcal{B}_{n,\bar{l}}\}$.

Set $\Omega_{n,\bar{l}}=\Omega_{\bar{l}}\cup\tilde{\Omega}_{n,\bar{l}}$. Let $x, \omega, z, m_{j}, \Lambda_{j}$ be as in the assumptions. Then there exist $x_{k}, \omega_{k'}, z_{k''}$ such that
$$|x+m_{0}\omega-x_{k}|<\exp(-l^{3/2}),\,\,|\omega-\omega_{k'}|<\exp(-l^{3/2})/n,\,\,|z-z_{k''}|<\exp(-l^{3/2}).$$
Due to $L(\omega,z)>\gamma$, then we have $L(\omega_{k'},z_{k''})>\frac{\gamma}{2}$. Since $\Lambda_{j}$, $j=0,1$, are $(K,l,\frac{1}{2})$-NDR intervals with respect to $x+m_{j}\omega, \omega, z$, then $\Lambda_{j}$, $j=0,1$, are
$(K,l)$-NDR intervals with respect to $x_{k}+jt\omega_{k'}, \omega_{k'}, z_{k''}, t=m_{1}-m_{0}$.

The choice of above exceptional sets implies that $(x+,m_{0}\omega,\omega)\notin \mathcal{B}_{\bar{l},t}(x_{k},\omega_{k'},z_{k''})$. Then according to Lemma \ref{Lemma8.2}, we have
$$\max_{j=0,1}(\log|\varphi_{\Lambda_{j}}^{\beta,\eta}(\omega,z;x+m_{0}\omega+jt\omega)|-|\Lambda_{j}|L_{|\Lambda_{j}|}(\omega,z)+2|\Lambda_{j}|^{1-\tau/2})>0.$$
Due to $t=m_{1}-m_{0}$, then $x+m_{0}\omega+jt\omega=x+m_{0}\omega+j(m_{1}-m_{0})\omega=x+m_{j}\omega$, $j=0,1$.

Thus, the conclusion follows.
\end{proof}

\section{Full Scale Localization}\label{section6}
In this section, we formulate the full scale localization. To do that, we require the following lemmas.
\begin{lemma}\cite[Section 2.40]{GSV16-arXiv}
Let $A$ be an $N\times N$ Hermitian matrix. Let $E, \varepsilon\in \mathbb{R}$, $\varepsilon>0$ and suppose there exists $\phi\in \mathbb{R}^{N}$, $\|\phi\|=1$, such that
$$\|(A-E)\phi\|<\varepsilon.$$
Then the following statements hold:

(a) There exists a normalized eigenvector $\psi$ of $A$ with an eigenvalue $E_{0}$ such that
$$E_{0}\in (E-\varepsilon\sqrt{2},E+\varepsilon\sqrt{2}),$$
$$|\langle\phi,\psi\rangle|\geq(2N)^{-1/2}.$$

(b) If in addition there exists $\hat{\varepsilon}>\varepsilon$ such that the subspace of the eigenvectors of $A$ with eigenvalues falling into the interval $(E-\hat{\varepsilon},E+\hat{\varepsilon})$ is at most of dimension one, then there exists a normalized eigenvector $\psi$ of $A$ with an eigenvalue $E_{0}\in (E-\varepsilon,E+\varepsilon)$, such that
$$\|\phi-\psi\|<\sqrt{2}\hat{\varepsilon}^{-1}\varepsilon.$$
\end{lemma}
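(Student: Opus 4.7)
My plan is to reduce both parts to a single computation in the spectral basis. Since $A$ is Hermitian, I will fix an orthonormal eigenbasis $\{\psi_j\}_{j=1}^N$ with real eigenvalues $\{E_j\}$, expand $\phi = \sum_{j=1}^N c_j \psi_j$ with $\sum_j |c_j|^2 = 1$, and rewrite the hypothesis as the single identity
$$\sum_{j=1}^N |c_j|^2 (E_j - E)^2 \;=\; \|(A-E)\phi\|^2 \;<\; \varepsilon^2. \qquad (\ast)$$
Both (a) and (b) will then follow by splitting this sum according to how close $E_j$ is to $E$.

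For part (a) I will set $J := \{j : |E_j - E| < \varepsilon\sqrt{2}\}$, observe that each $j \notin J$ contributes at least $2\varepsilon^2|c_j|^2$ to $(\ast)$, and deduce $\sum_{j \notin J} |c_j|^2 < 1/2$. This forces $\sum_{j \in J} |c_j|^2 > 1/2$, and since $|J| \leq N$ the pigeonhole principle produces an index $j_0 \in J$ with $|c_{j_0}|^2 > 1/(2N)$. Taking $\psi := \psi_{j_0}$ and $E_0 := E_{j_0}$ gives both conclusions of (a) simultaneously.

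For part (b) I will first argue that at least one eigenvalue lies in $(E-\varepsilon, E+\varepsilon)$: otherwise $(E_j-E)^2 \geq \varepsilon^2$ for every $j$, which combined with $(\ast)$ yields the contradiction $\varepsilon^2 \leq \sum_j|c_j|^2(E_j-E)^2 < \varepsilon^2$. Such an eigenvalue also lies in $(E-\hat\varepsilon, E+\hat\varepsilon)$, so by the one-dimensionality hypothesis it is the unique eigenvalue there; call its index $j_0$. For every $j \neq j_0$ we then have $(E_j-E)^2 \geq \hat\varepsilon^2$, and $(\ast)$ yields
$$\hat\varepsilon^2 \sum_{j \neq j_0} |c_j|^2 \;\leq\; \sum_{j \neq j_0} |c_j|^2 (E_j-E)^2 \;<\; \varepsilon^2, \quad\text{so}\quad |c_{j_0}|^2 \;>\; 1 - \varepsilon^2/\hat\varepsilon^2.$$
After multiplying $\psi_{j_0}$ by a unit phase so that $c_{j_0}$ becomes real and non-negative, I will set $\psi := \psi_{j_0}$ and compute
$$\|\phi - \psi\|^2 \;=\; 2 - 2c_{j_0} \;\leq\; 2\bigl(1 - \sqrt{1 - \varepsilon^2/\hat\varepsilon^2}\bigr) \;\leq\; 2\varepsilon^2/\hat\varepsilon^2,$$
where the last inequality is the elementary bound $1-\sqrt{1-t}\leq t$ on $[0,1]$.

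The statement is a staple of finite-dimensional spectral perturbation theory, so I expect no genuine obstacle. The only step that requires mild care is the phase adjustment of $\psi_{j_0}$ in part (b); without it the scalar $c_{j_0}$ would be complex and the clean identity $\|\phi-\psi_{j_0}\|^2 = 2-2c_{j_0}$ would have to be replaced by $\|\phi-\psi_{j_0}\|^2 = 2-2\,\mathrm{Re}\,c_{j_0}$, which is not strong enough on its own to yield the claimed $\sqrt{2}\,\varepsilon/\hat\varepsilon$ bound.
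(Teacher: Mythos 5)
Your proposal is correct, including the key phase adjustment that makes the identity $\|\phi-\psi_{j_0}\|^{2}=2-2c_{j_0}$ valid in part (b) (and note the first inequality in your final chain is in fact strict, which is what gives the strict bound claimed in the lemma). The paper itself gives no proof of this statement — it is quoted from \cite[Section 2.40]{GSV16-arXiv} — and your spectral-decomposition argument is essentially the same as the one in that reference, so there is nothing further to compare.
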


From the above lemma and its proof, it is obvious that this Lemma can also be applicable to a unitary matrix. Then we have the following corollary. Since the proof is similar to the above lemma, we no longer prove it.
\begin{coro}\label{eigenvector}
Let $A$ be an $N\times N$ unitary matrix. Let $z\in \partial \mathbb{D}$, $\tilde{\varepsilon}>0\in \mathbb{R}$, and suppose there exists $\phi\in \mathbb{C}^{N}$, $\|\phi\|=1$, such that
$$\|(A-z)\phi\|<\tilde{\varepsilon}.$$
Then the following statements hold:

(a) There exists a normalized eigenvector $\psi$ of $A$ with an eigenvalue $z_{0}$ such that
$$z_{0}\in \mathcal{D}(z,\tilde{\varepsilon}\sqrt{2})\cap \partial \mathbb{D},$$
$$|\langle\phi,\psi\rangle|\geq(2N)^{-1/2};$$

(b) If in addition there exists $\hat{\varepsilon}>\tilde{\varepsilon}$ such that the subspace of the eigenvectors of $A$ with eigenvalues falling into the interval $\mathcal{D}(z,\hat{\varepsilon})\cap \partial \mathbb{D}$ is at most of dimension one, then there exists a normalized eigenvector $\psi$ of $A$ with an eigenvalue $z_{0}\in \mathcal{D}(z,\tilde{\varepsilon})\cap \partial \mathbb{D}$, such that
$$\|\phi-\psi\|<\sqrt{2}\hat{\varepsilon}^{-1}\tilde{\varepsilon}.$$
\end{coro}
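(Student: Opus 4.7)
The plan is to diagonalise $A$ and reduce both statements to an elementary weighted $\ell^{2}$ calculation, exactly mirroring the Hermitian argument of the previous lemma. Since $A$ is unitary, the spectral theorem gives an orthonormal basis $\{\psi_{j}\}_{j=1}^{N}$ of $\mathbb{C}^{N}$ with $A\psi_{j}=z_{j}\psi_{j}$, $z_{j}\in\partial\mathbb{D}$. Expanding $\phi=\sum_{j}c_{j}\psi_{j}$ with $\sum_{j}|c_{j}|^{2}=1$, orthonormality yields the key identity
\begin{equation*}
\|(A-z)\phi\|^{2}=\sum_{j=1}^{N}|z_{j}-z|^{2}\,|c_{j}|^{2}<\tilde{\varepsilon}^{2}.
\end{equation*}

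For part (a) I argue by contradiction: suppose no index $j$ satisfies both $|z_{j}-z|<\tilde{\varepsilon}\sqrt{2}$ and $|c_{j}|\geq(2N)^{-1/2}$. Split the index set into $S_{1}=\{j:|z_{j}-z|\geq\tilde{\varepsilon}\sqrt{2}\}$ and $S_{2}=\{j:|c_{j}|<(2N)^{-1/2}\}$; by assumption $S_{1}\cup S_{2}=\{1,\dots,N\}$. The indices in $S_{2}$ contribute at most $N\cdot(2N)^{-1}=1/2$ to $\sum|c_{j}|^{2}$, so $\sum_{j\in S_{1}}|c_{j}|^{2}\geq 1/2$. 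Plugging into the displayed identity gives $\|(A-z)\phi\|^{2}\geq 2\tilde{\varepsilon}^{2}\cdot\tfrac{1}{2}=\tilde{\varepsilon}^{2}$, a contradiction. Picking any $j$ realising both bounds and setting $\psi=\psi_{j}$, $z_{0}=z_{j}$ delivers $|\langle\phi,\psi\rangle|=|c_{j}|\geq(2N)^{-1/2}$ and $z_{0}\in\mathcal{D}(z,\tilde{\varepsilon}\sqrt{2})\cap\partial\mathbb{D}$.

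For part (b) the one-dimensionality hypothesis says that at most one index $j_{0}$ satisfies $|z_{j_{0}}-z|<\hat{\varepsilon}$. For every $j\neq j_{0}$ one has $|z_{j}-z|^{2}\geq\hat{\varepsilon}^{2}$, so the displayed identity forces $\hat{\varepsilon}^{2}\sum_{j\neq j_{0}}|c_{j}|^{2}<\tilde{\varepsilon}^{2}$, and hence $|c_{j_{0}}|^{2}>1-\tilde{\varepsilon}^{2}/\hat{\varepsilon}^{2}$. Adjusting the global phase of $\psi_{j_{0}}$, I take $\psi=(c_{j_{0}}/|c_{j_{0}}|)\,\psi_{j_{0}}$ so that $\langle\phi,\psi\rangle=|c_{j_{0}}|$ is real and positive; then
\begin{equation*}
\|\phi-\psi\|^{2}=2-2|c_{j_{0}}|\leq 2(1-|c_{j_{0}}|^{2})<2\tilde{\varepsilon}^{2}/\hat{\varepsilon}^{2},
\end{equation*}
which gives the claimed bound $\|\phi-\psi\|<\sqrt{2}\,\hat{\varepsilon}^{-1}\tilde{\varepsilon}$. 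The eigenvalue bound $z_{0}=z_{j_{0}}\in\mathcal{D}(z,\tilde{\varepsilon})\cap\partial\mathbb{D}$ follows from $|z_{j_{0}}-z|^{2}|c_{j_{0}}|^{2}\leq\|(A-z)\phi\|^{2}<\tilde{\varepsilon}^{2}$ together with $|c_{j_{0}}|$ being essentially $1$; any harmless correction factor $(1-\tilde{\varepsilon}^{2}/\hat{\varepsilon}^{2})^{-1/2}$ can either be absorbed into constants or removed by a cosmetic shrinkage of $\tilde{\varepsilon}$.

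There is no substantive obstacle: the remark preceding the corollary already points out that $\|(A-z)^{-1}\|=\mathrm{dist}(z,\sigma(A))^{-1}$ holds for unitary $A$ just as for Hermitian matrices, and beyond that the proof uses only the existence of an orthonormal eigenbasis and the nonnegativity of the weights $|z_{j}-z|^{2}$. The only mild care needed is to fix the phase of the selected eigenvector in part (b), since unlike the Hermitian case the eigenvectors of $A$ are a priori determined only up to a unimodular scalar; this is exactly what the choice $\psi=(c_{j_{0}}/|c_{j_{0}}|)\psi_{j_{0}}$ takes care of.
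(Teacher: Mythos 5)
Your spectral-decomposition argument is the natural one and is essentially what the paper intends (it omits the proof, deferring to the Hermitian case in \cite{GSV16-arXiv}); part (a) and the eigenvector estimate in part (b) are complete and correct. The one genuine gap is the eigenvalue localization in part (b): the corollary asserts $z_{0}\in\mathcal{D}(z,\tilde\varepsilon)\cap\partial\mathbb{D}$, but your estimate $|z_{j_{0}}-z|^{2}|c_{j_{0}}|^{2}<\tilde\varepsilon^{2}$ combined with $|c_{j_{0}}|^{2}>1-\tilde\varepsilon^{2}/\hat\varepsilon^{2}$ only yields $|z_{j_{0}}-z|<\tilde\varepsilon\,(1-\tilde\varepsilon^{2}/\hat\varepsilon^{2})^{-1/2}$, which is strictly larger than $\tilde\varepsilon$. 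You cannot ``absorb the correction into constants'' here, because the quantitative form $\mathcal{D}(z,\tilde\varepsilon)$ is precisely what is being proved (and is the form the paper relies on when it invokes the corollary in Lemma \ref{Lemma9.4}); nor can you ``shrink $\tilde\varepsilon$'', since $\tilde\varepsilon$ is fixed by the hypothesis $\|(A-z)\phi\|<\tilde\varepsilon$.

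The fix is simply not to discard the off-resonant part of the sum when bounding the eigenvalue. With $t=|c_{j_{0}}|^{2}$ and $|z_{j}-z|\geq\hat\varepsilon$ for $j\neq j_{0}$,
\[
\tilde\varepsilon^{2}>\sum_{j}|z_{j}-z|^{2}|c_{j}|^{2}\;\geq\;|z_{j_{0}}-z|^{2}\,t+\hat\varepsilon^{2}(1-t)\;\geq\;\min\bigl(|z_{j_{0}}-z|^{2},\hat\varepsilon^{2}\bigr),
\]
and since $\tilde\varepsilon<\hat\varepsilon$ the minimum cannot equal $\hat\varepsilon^{2}$, so $|z_{j_{0}}-z|<\tilde\varepsilon$ exactly as claimed. (The same display with no admissible index would give $\tilde\varepsilon^{2}>\hat\varepsilon^{2}$, so it also supplies the existence of $j_{0}$, which you use only implicitly.) With this replacement your proof is complete and coincides with the intended argument.
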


Next, we show that for most frequencies and phases, eigenfunctions of large finite intervals are localized to a small subinterval, with exponential decay away from this localization center.
\begin{lemma}\label{Lemma9.1}
Let $\varepsilon\in(0,\frac{1}{5})$ and $\tau, \nu$ be as in LDT. For $n\geq N_{0}(\alpha_{i},p,q,\gamma,\varepsilon)$, there exists a set $\Omega_{n}$,
$$\mathrm{mes}(\Omega_{n})<\exp(-(\log n)^{\varepsilon\nu}),$$
such that for $\omega\in \mathbb{T}^{d}(p,q)\backslash\Omega_{n}$ there exists a set $\mathcal{B}_{n,\omega}$,
$$\mathrm{mes}(\mathcal{B}_{n,\omega})<\exp(-(\exp(\log n)^{\varepsilon\nu})^{\varrho})$$
and the following holds for any $\omega\in \mathbb{T}^{d}(p,q)\backslash\Omega_{n}$, $x\in \mathbb{T}^{d}(p,q)\backslash\mathcal{B}_{n,\omega}$ and $z\in \partial \mathbb{D}$ such that $L(\omega,z)>\gamma$, where $\varrho$ is as in Lemma \ref{Lemma8.3}. Let $1\ll \underline{C}_{1}\ll \overline{C}_{1}\ll \cdots\ll \underline{C}_{(s-1)^{2}}\ll \overline{C}_{(s-1)^{2}}$, $s=2^{2d+1}$, be constants such that the sets
$$\mathcal{N}_{k}=\{N\in \mathbb{Z}: \exp(\underline{C}_{k}l^{\nu/2})\leq |N|\leq\exp(\overline{C}_{k}l^{\nu/2})\}$$
satisfy the assumptions of Corollary \ref{Coro5.4}, where $l=\lceil(\log n)^{2\varepsilon}\rceil$.

If $m_{0}\in [0,n-1]$ is such that
$$\log|\varphi_{\Lambda}^{\beta,\eta}(\omega,z;x)|\leq |\Lambda|L_{|\Lambda|}(\omega,z)-|\Lambda|^{1-\tau/4}$$
for all intervals $\Lambda\subset[0,n-1]$ satisfying $\mathrm{dist}(m_{0},[0,n-1]\backslash\Lambda)>|\Lambda|/100$, $|\Lambda|\geq(\log n)^{\varepsilon}$, then for any $\bar{l}\geq (\exp(\overline{C}_{(s-1)^{2}}(\log n)^{3\varepsilon\nu/2}))^{2/\nu}$ we have
\begin{equation}\label{Lem9.1-(1)}
\log|\varphi_{\Lambda}^{\beta,\eta}(\omega,z;x+(m-1)\omega)|> \bar{l}L_{\bar{l}}(\omega,z)-|\Lambda|^{1-\tau/2}
\end{equation}
for any $m\in [0,n-\bar{l}]$ such that $|m-m_{0}|\geq \bar{l}+2\exp(l^{2})$.
\end{lemma}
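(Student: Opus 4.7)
The plan is to apply the covering form of the LDT (Lemma~\ref{covering-form-(LDT)}) to the determinant on the left of \eqref{Lem9.1-(1)}. Concretely, I need to exhibit, for each position $m'\in[0,\bar{l}-1]$, a covering subinterval $I_{m'}\subset[0,\bar{l}-1]$ containing $m'$ of suitable geometric size such that $\log|\varphi_{I_{m'}}^{\beta,\eta}(\omega,z;x+(m-1)\omega)|$ satisfies the LDT lower bound at scale $|I_{m'}|$ with deviation $|I_{m'}|^{1-\tau/4}$. Positions would be split into two types: ``good'' $m'$ for which a short interval of length of order $l=\lceil(\log n)^{2\varepsilon}\rceil$ around $m'$ already satisfies the LDT lower bound by direct application of Theorem~\ref{LDT-matrix} at the phase $x+(m+m'-1)\omega$ (after excluding a set of $x$ of measure $\ll\exp(-l^{\nu})$, which is absorbed into $\mathcal{B}_{n,\omega}$); and ``bad'' $m'$ where no such short interval works.

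For the bad $m'$ I would invoke the NDR machinery of Sections 6 and 8. First, using the hypothesis at $m_0$ together with the contrapositive of Lemma~\ref{covering-form-(LDT)}, I construct a $(K,l,\tfrac12)$-NDR interval $\Lambda_0\subset[0,n-1]$ with $\bar{l}/10\le|\Lambda_0|\le10\bar{l}$ containing $m_0$ and whose determinant $\varphi_{\Lambda_0}^{\beta,\eta}(\omega,z;x)$ violates the LDT lower bound. The NDR property---that the set $\underline{\Lambda}_0\subset\Lambda_0$ of positions where the length-$l$ shifted determinant is small has cardinality at most $K=(s-1)^2$---is checked through Corollary~\ref{Coro5.4}: any two positions in $\Lambda_0$ where the length-$l$ determinant is simultaneously small would, for $\omega$ outside a set of measure $<\exp(-(\log n)^{\varepsilon\nu})$ absorbed into $\Omega_n$, force one of the shift sets $\mathcal{N}_i$ to make both determinants large, which caps $|\underline{\Lambda}_0|$. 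Mirroring the construction around $m+m'-1$ produces a $(K,l,\tfrac12)$-NDR interval $\Lambda_1\subset[0,n-1]$ of the same scale, containing $m+m'-1$.

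The distance hypothesis $|m-m_0|\ge\bar{l}+2\exp(l^2)$ yields $\mathrm{dist}(\Lambda_0,\Lambda_1)\ge\exp(l^2)$, so Lemma~\ref{Lemma8.3} applies to the pair $(\Lambda_0,\Lambda_1)$ (after enlarging the exceptional sets by the $\Omega_{n,\bar{l}}$ and $\mathcal{B}_{n,\bar{l},\omega}$ of that lemma). It forces at least one of $\varphi_{\Lambda_0}^{\beta,\eta}$, $\varphi_{\Lambda_1}^{\beta,\eta}$ to satisfy the LDT lower bound with deviation $2|\Lambda_j|^{1-\tau/2}$. By construction $\Lambda_0$ violates this bound, hence $\Lambda_1$ satisfies it, and reindexing $\Lambda_1$ back to a subinterval of $[0,\bar{l}-1]$ supplies the required $I_{m'}$. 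Feeding all the $I_{m'}$ (good and bad) into Lemma~\ref{covering-form-(LDT)} then yields \eqref{Lem9.1-(1)}.

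The main obstacle will be the scale and measure bookkeeping. One must match $l=\lceil(\log n)^{2\varepsilon}\rceil$ with the geometric progression $\mathcal{N}_k$ of Corollary~\ref{Coro5.4} so that $\bigcup_k\mathcal{N}_k$ reaches distances of order $\exp(\overline{C}_{(s-1)^{2}}(\log n)^{3\varepsilon\nu/2})$, which is precisely the lower bound imposed on $\bar{l}$ and controls how far resonant shifts can propagate. Second, one must ensure the constant $\tfrac12$ in the $(K,l,\tfrac12)$-NDR property survives through the factorization of Lemma~\ref{Lemma6.5} and the resultant bound underlying Lemma~\ref{Lemma8.3}, so that the latter is genuinely applicable at scale $\bar{l}$. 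Finally, the exceptional sets produced by the many applications of Theorem~\ref{LDT-matrix}, Corollary~\ref{Coro5.4}, and Lemma~\ref{Lemma8.3}---one for essentially each choice of $m'$ and each pair $(\Lambda_0,\Lambda_1)$---must be unioned and still fit within $\mathrm{mes}(\Omega_n)<\exp(-(\log n)^{\varepsilon\nu})$ and $\mathrm{mes}(\mathcal{B}_{n,\omega})<\exp(-(\exp(\log n)^{\varepsilon\nu})^{\varrho})$; ensuring this is the delicate part.
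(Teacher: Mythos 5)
There is a genuine gap in how you treat the ``good'' positions $m'$. You propose to certify them by a direct application of the scale-$l$ LDT at the phase $x+(m+m'-1)\omega$, discarding for each $m'$ a phase set of measure $\ll\exp(-l^{\nu})$ and ``absorbing'' it into $\mathcal{B}_{n,\omega}$. This cannot work: with $l=\lceil(\log n)^{2\varepsilon}\rceil$ one has $\exp(-l^{\nu})=\exp(-(\log n)^{2\varepsilon\nu})$, which is already vastly larger than the permitted bound $\mathrm{mes}(\mathcal{B}_{n,\omega})<\exp(-(\exp(\log n)^{\varepsilon\nu})^{\varrho})$, which is doubly exponentially small in $(\log n)^{\varepsilon\nu}$; moreover the conclusion must hold for all admissible $m$ and $\bar{l}$, so you would have to union such sets over $\sim n$ positions, and $n\exp(-c(\log n)^{2\varepsilon\nu})\to\infty$ since $2\varepsilon\nu<1$, so the union need not even be small — and these sets depend on $z$, whereas $\mathcal{B}_{n,\omega}$ must serve all $z$ with $L(\omega,z)>\gamma$ simultaneously. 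The paper avoids any per-position phase exclusion: the good/bad dichotomy is supplied by Corollary \ref{Coro5.4} itself. Since the hypothesis at $m_{0}$ forces $\log|\varphi^{\beta,\eta}_{l}(\omega,z;x+(m_{0}'-1)\omega)|\leq lL_{l}(\omega,z)-l^{1-\tau/2}$ for an interval $\Lambda_{0}\ni m_{0}$ of length $l$, Corollary \ref{Coro5.4} (for $\omega\notin\Omega_{l}$, a frequency set only) yields for each $m'$ that either the scale-$l$ determinant at $m'$ is large at the given $x$ — no exclusion needed — or both $m_{0}'$ and $m'$ carry $(K_{k},l,\frac12)$-NDR intervals $[-\bar{l}_{k},\bar{l}_{k}]$ for a \emph{common} $k$ (a common scale is needed for Lemma \ref{Lemma8.3}, and is exactly what the corollary, as opposed to two separate applications of Lemma \ref{Lemma5.3}, provides; your ``mirroring the construction'' does not guarantee it). Then Lemma \ref{Lemma8.3}, whose exceptional sets $\Omega_{n,\bar{l}_{k}}$, $\mathcal{B}_{n,\bar{l}_{k},\omega}$ are already uniform over the positions $m_{0},m_{1}$ and involve only the $(s-1)^{2}$ scales $\bar{l}_{k}$, gives the large-deviation bound at scale $\bar{l}_{k}$ at $m'$, the $m_{0}$-alternative being excluded by the hypothesis, as you correctly note.

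A secondary inaccuracy: the cardinality of the NDR exceptional set $\underline{\Lambda}_{0}$ is not capped by $(s-1)^{2}$, and Corollary \ref{Coro5.4} does not cap it that way. What the corollary gives is largeness for all shifts $N$ with $\exp(\underline{C}_{k}l^{\nu/2})\leq|N|\leq\exp(\overline{C}_{k}l^{\nu/2})$, so the bad set inside $[-\bar{l}_{k},\bar{l}_{k}]$ is only known to lie in $\{|N|<K_{k}\}$ with $K_{k}=\lceil\exp(\underline{C}_{k}l^{\nu/2})\rceil$, i.e.\ it may have size $\sim K_{k}$; this is precisely why the constants $\underline{C}_{k},\overline{C}_{k}$ must be tuned so that $K_{k}\leq\bar{l}_{k}^{(1-\tau)/10}$ before Lemma \ref{Lemma8.3} can be invoked. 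Once the dichotomy is taken from Corollary \ref{Coro5.4}, the remainder of your plan — Lemma \ref{Lemma8.3} at separation $\geq\exp(l^{2})$, restriction of NDR to subintervals, and the covering form of LDT with $\max_{m'}|I_{m'}|\leq\bar{l}_{(s-1)^{2}}\leq\bar{l}^{\nu/2}$ — does coincide with the paper's argument.
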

\begin{proof}
Let $\bar{l}_{k}=\lfloor\exp(\overline{C}_{k}l^{\nu/2}) \rfloor$, $K_{k}=\lceil\exp(\underline{C}_{k}l^{\nu/2})\rceil$. Taking the constants $\overline{C}_{k}, \underline{C}_{k}$ such that $K_{k}\leq \bar{l}_{k}^{\frac{1-\tau}{10}}$, then the condition needed for Lemma \ref{Lemma8.3} is satisfied. We choose
$$\Omega_{n}:=\Omega_{l}\cup (\mathop{\cup}\limits_{k}\Omega_{n,\bar{l}_{k}}),\,\,\mathcal{B}_{n,\omega}:=\mathop{\cup}\limits_{k}\mathcal{B}_{n,\bar{l}_{k},\omega},$$
where $\Omega_{l}$ is the set from Corollary \ref{Coro5.4} and $\Omega_{n,\bar{l}_{k}}$, $\mathcal{B}_{n,\bar{l}_{k},\omega}$ are the sets from Lemma \ref{Lemma8.3}. Then one can obtain that
\begin{equation*}
\mathrm{mes}(\Omega_{n})<l^{C(p,q)}\exp(-\underline{C}_{1}l^{\nu/2})+2(s-1)^{2}\exp(-l^{2}/4)<\exp(-l^{\nu/2})<\exp(-(\log n)^{\varepsilon\nu}),
\end{equation*}
\begin{equation*}
\mathrm{mes}(\mathcal{B}_{n,\omega})<(s-1)^{2}\exp(-\bar{l}_{k}^{\varrho})<(s-1)^{2}\exp(-\bar{l}_{1}^{\varrho})<\exp(-(\exp(\log n)^{\varepsilon\nu})\varrho).
\end{equation*}
Let $\Lambda_{0}=[m_{0}', m_{0}'']\subset [0,n-1]$, $|\Lambda_{0}|=l$, be an interval such that $\mathrm{dist}(m_{0},[0,n-1]\backslash\Lambda_{0})>|\Lambda_{0}|/100$. Then this implies that
\begin{equation*}
\log|\varphi_{\Lambda_{0}}^{\beta,\eta}(\omega,z;x)|=\log|\varphi_{l}^{\beta,\eta}(\omega,z;x+(m_{0}'-1)\omega)|> lL_{l}(\omega,z)-l^{1-\tau/2}.
\end{equation*}
According to Corollary \ref{Coro5.4}, for each $m'\in [0,n-1]$ we either have
\begin{equation*}
\log|\varphi_{l}^{\beta,\eta}(\omega,z;x+(m'-1)\omega)|> lL_{l}(\omega,z)-l^{1-\tau/2}
\end{equation*}
(there exists $k\in \{1,\ldots,(s-1)^{2}\}$ such that the above inequality holds for all $m'\in \mathcal{N}_{k}$) or there exists $k=k(m_{0}',m')$ such that $[-\bar{l}_{k},\bar{l}_{k}]$ is $(K_{k},l,\frac{1}{2})$-NDR with respect to both $x+(m_{0}'-1)\omega, \omega, z$ and $x+(m'-1)\omega, \omega, z$.

From Lemma \ref{Lemma8.3}, for any $m'\in [0,n-1]$ such that $|m'-m_{0}'|\geq\exp(l^{2})$, we have
\begin{equation}\label{Lem9.1-(2)}
\log|\varphi_{\Lambda(m')}^{\beta,\eta}(\omega,z;x+(m'-1)\omega)|> |\Lambda(m')|L_{|\Lambda(m')|}(\omega,z)-2|\Lambda(m')|^{1-\tau/2},
\end{equation}
where $\Lambda(m')$  is either $[0,l-1]$ or $[-\bar{l}_{k},\bar{l}_{k}]$ with $k=k(m_{0}',m')$ as above.

According to the definition of NDR intervals, if $[-\bar{l}_{k},\bar{l}_{k}]$ is NDR with respect to $x+(m_{0}'-1)\omega, \omega, z$, then $[-\bar{l}_{k},\bar{l}_{k}]\cap ([0,n-1]-m_{0}'+1)$ is also NDR with respect to $x+(m_{0}'-1)\omega, \omega, z$. Therefore, Lemma \ref{Lemma8.3} guarantees that if $\Lambda(m')=[-\bar{l}_{k},\bar{l}_{k}]$, then \eqref{Lem9.1-(2)} also holds for any subintervals of $[-\bar{l}_{k},\bar{l}_{k}]$ with length exceeding than $\bar{l}_{k}/10$.

Consider $m\in[0,n-\bar{l}]$ such that $|m-m_{0}|\geq \bar{l}+2\exp{l^{2}}$. Then $[m,m+\bar{l}-1]\subset\{m': |m'-m_{0}'|\geq\exp(l^{2})\}$. Every point in $[m,m+\bar{l}-1]$ is contained in an interval of the form $\Lambda(m')\cap[m,m+\bar{l}-1]$ for some $m'\in[m-1,m+\bar{l}-1-l]$, on which the large deviations estimate applies. Note that we aimed to ensure that $m'+[1,l]\subset[m,m+\bar{l}-1]$, since if a large deviations estimate holds on  $m'+[1,l]$, it does not necessarily hold on $m'+[1,l]\cap[m,m+\bar{l}-1]$. On the other hand, we already noted that when a large deviations estimate holds on $m'+[-\bar{l}_{k},\bar{l}_{k}]$ it also holds on $(m'+[-\bar{l}_{k},\bar{l}_{k}])\cap[m,m+\bar{l}-1]$. Since $\bar{l}\geq (\exp(\overline{C}_{(s-1)^{2}}(\log n)^{3\varepsilon\nu/2}))^{2/\nu}$, then
$$|\Lambda(m')|\leq \bar{l}_{(s-1)^{2}}\leq \bar{l}^{\nu/2}$$
as required for the covering form of LDT. Therefore, \eqref{Lem9.1-(1)} follows from \eqref{Lem9.1-(2)} and the covering form of LDT.
\end{proof}

Building on the above lemma, we prove that localized eigenfunctions of large finite intervals decay exponentially away from their localization center, with the decay rate independent of the interval size.
\begin{lemma}\label{Lemma9.2}
Let $\varepsilon\in(0,\frac{1}{5})$, $\nu$ as in LDT, and $\Omega_{n}$, $\mathcal{B}_{n,\omega}$ as in Lemma \ref{Lemma9.1}. For any $n\geq N_{0}(\alpha_{i},p,q,\gamma,\varepsilon)$, $\omega_{0}\in \mathbb{T}^{d}(p,q)\backslash \Omega_{n}$, $x_{0}\in\mathbb{T}^{d}\backslash\mathcal{B}_{n,\omega}$, and any eigenvalue $z_{0}=z_{j}^{[0,n-1]}(\omega_{0},x_{0})$, such that $L(\omega_{0},z_{0})>\gamma$, there exists an interval $I=I(\omega_{0},z_{0},x_{0})\subset[0,n-1]$,
$$|I|<\exp((\log n)^{5\varepsilon}),$$
such that for any $(\omega,x)\in \mathbb{T}^{d}(p,q)\times\mathbb{T}^{d}$,  $|\omega-\omega_{0}|, |x-x_{0}|<\exp(-\exp((\log n)^{2\varepsilon\nu}))$, we have
$$\Big|u_{j}^{[0,n-1]}(\omega,x;s)\Big|<\exp(-\frac{\gamma}{4}\mathrm{dist}(s,I)),$$
provided $\mathrm{dist}(s,I)>\exp((\log n)^{2\varepsilon\nu})$.
\end{lemma}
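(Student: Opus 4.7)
The plan is to combine Lemma \ref{Lemma9.1} (which furnishes the scale-$\bar l$ LDT at every site outside a small resonance window) with Theorem \ref{finite-scale-localization} (which converts such an LDT into exponential decay of the eigenvector). Set $l:=\lceil(\log n)^{2\varepsilon}\rceil$ and
$$\bar l:=\bigl\lceil\exp\bigl(\tfrac{2}{\nu}\overline C_{(s-1)^{2}}(\log n)^{3\varepsilon\nu/2}\bigr)\bigr\rceil,$$
i.e.\ the smallest scale making Lemma \ref{Lemma9.1} applicable. Since $2\varepsilon\nu>3\varepsilon\nu/2$ and $5\varepsilon>3\varepsilon\nu/2$, for $n$ large this $\bar l$ simultaneously satisfies $\bar l\le\exp((\log n)^{5\varepsilon})$, $\bar l\le\exp((\log n)^{2\varepsilon\nu})$, and $\bar l^{2/\nu}\le\exp((\log n)^{2\varepsilon\nu})$; the latter two control the perturbation radius and the range of $s$ in the conclusion.

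Because $z_0=z_j^{[0,n-1]}(\omega_0,x_0)$ lies in $\sigma(\mathcal E^{\beta,\eta}_{[0,n-1]})$, the covering form of LDT (Lemma \ref{covering-form-(LDT)}) precludes the existence of good LDT intervals around \emph{every} $m\in[0,n-1]$, since otherwise $\mathrm{dist}(z_0,\sigma)>0$. Hence there is a resonance center $m_0\in[0,n-1]$ for which the bound
$$\log\bigl|\varphi_\Lambda^{\beta,\eta}(\omega_0,z_0;x_0)\bigr|\le|\Lambda|L_{|\Lambda|}(\omega_0,z_0)-|\Lambda|^{1-\tau/4}$$
holds on every interval $\Lambda\ni m_0$ with $\mathrm{dist}(m_0,[0,n-1]\setminus\Lambda)>|\Lambda|/100$ and $|\Lambda|\ge(\log n)^{\varepsilon}$. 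Set $I:=[m_0-\bar l/2-\exp(l^2),m_0+\bar l/2+\exp(l^2)]\cap[0,n-1]$, so that $|I|<\exp((\log n)^{5\varepsilon})$. Lemma \ref{Lemma9.1} then yields, for every $m\in[0,n-\bar l]\setminus I$,
$$\log\bigl|\varphi_{[0,\bar l-1]}^{\beta,\eta}(\omega_0,z_0;x_0+(m-1)\omega_0)\bigr|>\bar l\,L_{\bar l}(\omega_0,z_0)-\bar l^{1-\tau/2},$$
which is strictly stronger than the hypothesis of Theorem \ref{finite-scale-localization} taken at scale $\bar l$. The perturbation bound forces $|\omega-\omega_0|,|x-x_0|<\exp(-\bar l)$, and for the perturbed eigenvalue matched to $z_0$ one has
$$|z_j^{[0,n-1]}(\omega,x)-z_0|\le\|\mathcal E^{\beta,\eta}_{[0,n-1]}(\omega,x)-\mathcal E^{\beta,\eta}_{[0,n-1]}(\omega_0,x_0)\|<\exp(-\bar l);$$
Theorem \ref{finite-scale-localization} therefore applies and produces $|u_j^{[0,n-1]}(\omega,x;s)|<\exp(-\tfrac{\gamma}{12}\,\mathrm{dist}(s,I))$ on $\{\mathrm{dist}(s,I)\ge\bar l^{2/\nu}\}$, and in particular on the announced range $\mathrm{dist}(s,I)\ge\exp((\log n)^{2\varepsilon\nu})$.

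To sharpen the constant from $\gamma/12$ to $\gamma/4$, I re-examine the Poisson-formula step inside Theorem \ref{finite-scale-localization}. With $d:=\mathrm{dist}(s,I)$ and $J:=[s-d,s+d]\cap[0,n-1]$, Lemma \ref{Lemma3.28} gives $|G_J^{\beta,\eta}(s,\cdot\,;z)|\le\exp(-\tfrac{\gamma}{2}d+C|J|^{1-\tau/2})$, whence $|u_j^{[0,n-1]}(\omega,x;s)|\le 6\exp(-\tfrac{\gamma}{2}d+C|J|^{1-\tau/2})$. For $d\ge\exp((\log n)^{2\varepsilon\nu})$ and $n$ large, the correction $C|J|^{1-\tau/2}\le C(2d)^{1-\tau/2}\le(\gamma/4)d$, which yields the exponent $-(\gamma/4)d$. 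The hardest part of the argument is not analytical but combinatorial: threading the needle between the four constraints on $\bar l$ (applicability of Lemma \ref{Lemma9.1}; $|I|<\exp((\log n)^{5\varepsilon})$; the perturbation $\exp(-\exp((\log n)^{2\varepsilon\nu}))$ dominated by $\exp(-\bar l)$; and $\bar l^{2/\nu}\le\exp((\log n)^{2\varepsilon\nu})$). All four are simultaneously satisfiable because of the strict inequalities $3\varepsilon\nu/2<2\varepsilon\nu$ and $3\varepsilon\nu/2<5\varepsilon$, which hold thanks to $\varepsilon\in(0,1/5)$ and $\nu<1$.
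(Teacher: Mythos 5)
Your proposal is essentially the paper's own proof: find a resonance center $m_{0}$, apply Lemma \ref{Lemma9.1} at the scale $\bar l=\lceil(\exp(\overline C_{(s-1)^{2}}(\log n)^{3\varepsilon\nu/2}))^{2/\nu}\rceil$, and then invoke Theorem \ref{finite-scale-localization} with $\bar l$ in place of $l$. The only structural difference is how $m_{0}$ is produced: the paper takes $m_{0}$ to be the argmax of the eigenvector and shows it is resonant via the argument in the proof of Lemma \ref{covering-lemma}, while you obtain it by contradiction from the covering form of LDT (if every site had a good interval, then $\mathrm{dist}(z_{0},\sigma(\mathcal{E}^{\beta,\eta}_{[0,n-1]}(\omega_{0},x_{0})))>0$, contradicting that $z_{0}$ is an eigenvalue); these are equivalent uses of the same Poisson-formula mechanism, and yours is, if anything, cleaner since Theorem \ref{finite-scale-localization} localizes \emph{every} nearby eigenvector around $I$ regardless of whether $m_{0}$ is the localization center. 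Your explicit sharpening of the decay constant from $\gamma/12$ to $\gamma/4$ (absorbing $C|J|^{1-\tau/2}$ and the prefactor into $(\gamma/4)d$ for $d\ge\exp((\log n)^{2\varepsilon\nu})$) is a point the paper silently skips, and it is needed for the stated conclusion. One bookkeeping correction: Lemma \ref{Lemma9.1} only gives the scale-$\bar l$ estimate for $|m-m_{0}|\ge\bar l+2\exp(l^{2})$, so $I$ must be taken with half-width $\bar l+2\exp(l^{2})$ (as in the paper), not $\bar l/2+\exp(l^{2})$; the bound $|I|<\exp((\log n)^{5\varepsilon})$ and all your scale inequalities survive this enlargement unchanged.
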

\begin{proof}
Let $u$ be a normalized eigenvector corresponding to $z_{0}$. Let $m_{0}$ be such that
$$|u(m_{0})|=\max\limits_{i\in(0,n-1)}|u(i)|.$$
Then $m_{0}$ satisfies the assumptions of Lemma \ref{Lemma9.1} because otherwise we can derive a contradiction based on the proof of Lemma \ref{covering-lemma}. Let $\bar{l}=\lceil (\exp(\overline{C}_{(s-1)^{2}}(\log n)^{3\varepsilon\nu/2}))^{2/\nu}\rceil$. According to Lemma \ref{Lemma9.1} (with $s$ instead of $m$), \eqref{Lem9.1-(1)} holds for $s\in [0,n-1]\backslash I$, $I=[0,n-1]\cap [m_{0}-\bar{l}-2\exp(l^{2}), m_{0}+\bar{l}+2\exp(l^{2})]$, $l=\lceil(\log n)^{2\varepsilon}\rceil$. Then we can obtain the conclusion by applying Theorem \ref{finite-scale-localization} (with $\bar{l}$ instead of $l$).
\end{proof}

To ensure eigenpair convergence, we show that eigenvalues corresponding to localized eigenfunctions are well-separated. This prevents eigenvalue crossings as the interval expands, ensuring each localized eigenfunction converges to a unique eigenfunction of the infinite matrix.
\begin{lemma}\label{Lemma9.3}
Let $\varepsilon\in (0,\frac{1}{5})$ and $\Omega_{n}$, $\mathcal{B}_{n,\omega}$ be as in Lemma \ref{Lemma9.2}. For any $n\geq N_{0}(\alpha_{i},p,q,\gamma,\varepsilon)$, $\omega_{0}\in \mathbb{T}^{d}(p,q)\backslash\Omega_{n}$, $x_{0}\in \mathbb{T}^{d}(p,q)\backslash\mathcal{B}_{n,\omega}$, and any eigenvalue $z_{0}=z_{j}^{[0,n-1](\omega_{0},x_{0})}$, such that $L(\omega_{0},z_{0})>\gamma$, if we take $I=I(\omega_{0},z_{0},x_{0})$ as in Lemma \ref{Lemma9.2}, then
$$\Big|z_{k}^{[0,n-1]}(\omega,x)-z_{j}^{[0,n-1]}(\omega,x)\Big|>\exp(-C|I|),$$
for any $k\neq j$ and any $(\omega,x)\in \mathbb{T}^{d}(p,q)\times\mathbb{T}^{d}$, with
$$|\omega-\omega_{0}|,\,\,|x-x_{0}|<\exp(-\exp((\log n)^{2\varepsilon\nu})).$$
\end{lemma}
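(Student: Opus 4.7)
The plan is to argue by contradiction along exactly the lines of Lemma \ref{Lemma4.2}, with Lemma \ref{Lemma9.2} replacing the finite-scale localization Theorem \ref{finite-scale-localization} and Lemma \ref{Lemma3.28} supplying the Green's function bounds. Set $z_1 = z_j^{[0,n-1]}(\omega,x)$ and $z_2 = z_k^{[0,n-1]}(\omega,x)$ with orthonormal eigenvectors $u_1, u_2$, and suppose toward contradiction that $|z_1 - z_2| \leq \exp(-C|I|)$ for a constant $C = C(\alpha_i,z_0)$ to be fixed at the end. Orthogonality forces $\|u_1 - u_2\|^2 = \|u_1\|^2 + \|u_2\|^2 = 2$, so the task is to establish the strict inequality $\|u_1 - u_2\|^2 < 2$.

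The first step is to concentrate both eigenvectors on a common small region. Since $z_1$ is within $\exp(-l)$ of $z_0$ by continuity of eigenvalues in the perturbation and the smallness of $|\omega-\omega_0|,|x-x_0|$, and since $|z_2 - z_1|$ is much smaller than $\exp(-l)$, the eigenvalue $z_2$ also satisfies $|z_2 - z_0| < \exp(-l)$. Lemma \ref{Lemma3.8} then gives $L(\omega_0, z_2) > \gamma/2$, so Lemma \ref{Lemma9.2} applies to the continuation $z_0^{(k)} = z_k^{[0,n-1]}(\omega_0, x_0)$ of $z_2$ and produces an interval $I^{(k)}$ with $|I^{(k)}| < \exp((\log n)^{5\varepsilon})$. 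Let $\tilde{I} = (I_1,I_2)$ be the $\exp((\log n)^{2\varepsilon\nu})$-neighborhood of $I \cup I^{(k)}$ intersected with $[0,n-1]$. By Lemma \ref{Lemma9.2},
\begin{equation*}
\sum_{s \notin \tilde{I}} |u_i(s)|^2 \leq \exp\bigl(-\tfrac{\gamma}{4}\exp((\log n)^{2\varepsilon\nu})\bigr), \qquad i=1,2,
\end{equation*}
so the exterior contribution to $\|u_1 - u_2\|^2$ is negligible.

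On the interior, Poisson's formula applied on $[I_1,I_2]$ expresses each $u_i(s)$ for $s \in \tilde{I}$ in terms of its boundary values $u_i(I_1),u_i(I_1+1),u_i(I_2-1),u_i(I_2)$ weighted by the Green's functions $G^{\beta,\eta}_{[I_1,I_2]}(\cdot,\cdot;z_i)$. Subtracting the two expansions and using the resolvent identity produces
\begin{equation*}
|u_1(s) - u_2(s)| \leq C(\|u_1\|+\|u_2\|)\,|z_1 - z_2|\,\bigl\|G^{\beta,\eta}_{[I_1,I_2]}(z_1)\bigr\|\,\bigl\|G^{\beta,\eta}_{[I_1,I_2]}(z_2)\bigr\|.
\end{equation*}
Lemma \ref{Lemma3.28} bounds each Green's function norm by $\exp(c|\tilde{I}|^{1-\tau/2})$, and since $|\tilde{I}| \leq C_1|I|$, choosing the constant $C$ in the assumption sufficiently large relative to $c$ forces $\sum_{s \in \tilde{I}}|u_1(s) - u_2(s)|^2 < 1$. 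Combining this with the exterior estimate contradicts $\|u_1 - u_2\|^2 = 2$.

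The main obstacle I anticipate is controlling the relative sizes of $I$ and $I^{(k)}$: both are bounded by $\exp((\log n)^{5\varepsilon})$ by Lemma \ref{Lemma9.2}, but in principle one could be substantially larger than the other, and one must verify that the Green's function exponent $c|\tilde{I}|^{1-\tau/2}$ can be absorbed into $C|I|$ by a single choice of $C$ depending only on $\alpha_i$ and $z_0$. A secondary subtlety is ensuring that Lemma \ref{Lemma9.2} genuinely applies to the second eigenvalue $z_0^{(k)}$: since the exceptional sets $\Omega_n$ and $\mathcal{B}_{n,\omega_0}$ from Lemma \ref{Lemma9.1} are defined uniformly for all $z \in \partial \mathbb{D}$ with $L(\omega_0,z) > \gamma$, one simply redoes the construction with $\gamma/2$ in place of $\gamma$ so that both $z_0$ and $z_0^{(k)}$ are covered.
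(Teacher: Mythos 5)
Your overall strategy (orthogonality plus Poisson/Green's-function estimates) is the right family of argument, but the paper does not re-derive it: its proof of this lemma is a two-line reduction. From the proof of Lemma \ref{Lemma9.2}, the large deviation estimate \eqref{Lem9.1-(1)} at the \emph{reference} triple $(\omega_{0},z_{0},x_{0})$ holds at scale $\bar{l}$ for all positions outside the single interval $I$; this is exactly the hypothesis of Lemma \ref{Lemma4.2} (with $\bar l$ in place of $l$), and Lemma \ref{Lemma4.2} then yields the separation $\exp(-C|I|)$ from \emph{all} other eigenvalues $z_{k}$, with no assumption whatsoever on $z_{k}$. The key structural point you miss is that the input to Theorem \ref{finite-scale-localization}/Lemma \ref{Lemma4.2} is a statement about the determinants at the fixed $z_{0}$, so under the contradiction hypothesis $|z_{2}-z_{1}|<\exp(-C|I|)\ll\exp(-\bar l)$ the second eigenvector $u_{2}$ automatically decays away from the \emph{same} interval $I$; no localization statement for the second eigenvalue is needed.

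This is where your proposal has a genuine gap. You localize $u_{2}$ by invoking Lemma \ref{Lemma9.2} at the reference eigenvalue $z_{k}^{[0,n-1]}(\omega_{0},x_{0})$, which produces a second interval $I^{(k)}$ centered at the maximum of that eigenvector; nothing in your argument constrains \emph{where} $I^{(k)}$ sits, and it may lie far from $I$. Then the region $\tilde I$ (the hull of $I\cup I^{(k)}$ plus margins, which must be an interval for the Poisson formula to apply) can have length comparable to $n$, not $C_{1}|I|$, and the Green's-function cost $\exp(c|\tilde I|^{1-\tau/2})$ from Lemma \ref{Lemma3.28} can no longer be beaten by $|z_{1}-z_{2}|<\exp(-C|I|)$ for any constant $C$ depending only on $\alpha_{i},z_{0}$; you flag the comparability of $|I|$ and $|I^{(k)}|$ as an obstacle, but the real problem is their relative position, and it is not resolved in your write-up (indeed, two nearly degenerate eigenvalues with eigenvectors localized in far-apart regions is precisely the double-resonance scenario, and ruling it out is the content of the lemma, not something you may assume). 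Secondary issues: replacing $\gamma$ by $\gamma/2$ in the construction changes the exceptional sets $\Omega_{n},\mathcal{B}_{n,\omega}$ away from those in the statement, and the identification of $z_{k}^{[0,n-1]}(\omega_{0},x_{0})$ as the ``continuation'' of $z_{k}^{[0,n-1]}(\omega,x)$ glosses an eigenvalue-labeling issue for unitary matrices. All of these disappear if you drop $I^{(k)}$ entirely and argue, as the paper does, that \eqref{Lem9.1-(1)} outside the single interval $I$ feeds Lemma \ref{Lemma4.2} directly.
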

\begin{proof}
According to the proof of Lemma \ref{Lemma9.2}, \eqref{Lem9.1-(1)} holds for $s\in [0,n-1]\backslash I$, $I=[0,n-1]\cap [m_{0}-\bar{l}-2\exp(l^{2}), m_{0}+\bar{l}+2\exp(l^{2})]$, $l=\lceil(\log n)^{2\varepsilon}\rceil$. Then the conclusion follows from Lemma \ref{Lemma4.2}.
\end{proof}

In what follows, when we deal with the difference between two vectors of different dimensions, we make the dimension of a low dimensional vector equal to the dimension of a high dimensional vector by adding the $0$ element. For example, $\xi=(x_{-5},x_{-4},\ldots,x_{-1},x_{0},x_{1},\ldots,x_{4},x_{5})$, $\upsilon=(y_{-7},y_{-6},\ldots,y_{-1},y_{0},y_{1},\ldots,y_{6},y_{7}),$ $n>m$, $\upsilon-\xi=(y_{-7},y_{-6},y_{-5}-x_{-5},\ldots,y_{-1}-x_{-1},y_{0}-x_{0},y_{1}-x_{1},\ldots,y_{5}-x_{5},y_{6},y_{7})$.
When the columns of a matrix are greater than the rows of a vector, we deal with their product in the same way.

\begin{lemma}\label{Lemma9.4}
Let $\varepsilon\in (0,\frac{1}{5})$. With $\Omega_{n}$, $\mathcal{B}_{n,\omega}$ as in Lemma \ref{Lemma9.2}, applied on $[-(n-1),n-1]$ instead of $[0,n-1]$, let $\hat{\Omega}_{N_{0}}=\mathop{\cup}\limits_{n\geq N_{0}}\Omega_{n}$, $\hat{\mathcal{B}}_{N_{0},\omega}=\mathop{\cup}\limits_{n\geq N_{0}}\mathcal{B}_{n,\omega}$. For $n\geq N_{0}(\alpha_{i},p,q,\gamma,\varepsilon)$, $\omega\in \mathbb{T}^{d}(p,q)\backslash\hat{\Omega}_{N_{0}}$, $x\in\mathbb{T}^{d}\backslash\hat{\mathcal{B}}_{N_{0},\omega}$, if $L(\omega,z_{j}^{[-(n-1),n-1]}(\omega,x))>\frac{3\gamma}{2}$ and $I=I(\omega,z_{j}^{[-(n-1),n-1]}(\omega,x),x)$ then for any $n\leq n'\leq \exp((\log n)^{\frac{1}{10\varepsilon}})$ there exists $j_{n'}$, such that
\begin{equation}\label{Lem9.4-(1)}
\Big|z_{j_{n'}}^{[-(n'-1),n'-1]}(\omega,x)-z_{j}^{[-(n-1),n-1]}(\omega,x)\Big|<\exp(-\frac{\gamma}{12}n),
\end{equation}
\begin{equation}\label{Lem9.4-(2)}
\Big\|u_{j_{n'}}^{[-(n'-1),n'-1]}(\omega,x)-u_{j}^{[-(n-1),n-1]}(\omega,x)\Big\|<\exp(-\frac{\gamma}{48}n).
\end{equation}
Furthermore, $I(\omega,z_{j_{n'}}^{[-(n'-1),n'-1]}(\omega,x),x)\subset[-\frac{3}{4}(n'-1),\frac{3}{4}(n'-1)]$ and
$$\Big|u_{j_{n'}}^{[-(n'-1),n'-1]}(\omega,x;s)\Big|<\exp(-\frac{\gamma}{17}\mathrm{dist}(s,I)),\,\,n\leq |s|\leq n'.$$
\end{lemma}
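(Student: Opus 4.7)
The plan is to treat $u := u_j^{[-(n-1),n-1]}(\omega,x)$, extended by zero, as an approximate eigenvector for $\mathcal{E}_{[-(n'-1),n'-1]}^{\beta,\eta}(\omega,x)$, use Corollary~\ref{eigenvector} (the unitary perturbation lemma) to produce a genuine eigenpair $(z_{j_{n'}},u_{j_{n'}}^{[-(n'-1),n'-1]})$ close to $(z,u)$, and then invoke Lemma~\ref{Lemma9.2} on the larger interval to track the location of the new localization region $I_{n'}$.

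\textbf{Step 1: residual estimate.} Let $\phi$ be $u$ extended by zero to $[-(n'-1),n'-1]$. Because $\mathcal{E}_{[-(n'-1),n'-1]}^{\beta,\eta}$ is five-diagonal and differs from $\mathcal{E}_{[-(n-1),n-1]}^{\beta,\eta}$ only in the first and last two rows (where the artificial boundary conditions $\beta,\eta$ were imposed), the residual $(\mathcal{E}_{[-(n'-1),n'-1]}^{\beta,\eta}-z)\phi$ is supported in the eight rows $\pm(n-2),\pm(n-1),\pm n,\pm(n+1)$, with each nonzero entry bounded by a constant multiple of one of $|u(\pm(n-1))|,|u(\pm(n-2))|$. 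By Lemma~\ref{Lemma9.2} each of these values is at most $\exp\bigl(-\tfrac{\gamma}{4}\mathrm{dist}(\pm(n-1),I)\bigr)$. Since $|I|\leq\exp((\log n)^{5\varepsilon})\ll n$ and the strengthened hypothesis $L(\omega,z)>\tfrac{3\gamma}{2}$ forces the eigenvector to be bulk-localized, both distances are of order $n$, yielding
\[
\bigl\|(\mathcal{E}_{[-(n'-1),n'-1]}^{\beta,\eta}-z)\phi\bigr\|\leq \exp(-\gamma n/10).
\]

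\textbf{Step 2: matching eigenpair.} Lemma~\ref{Lemma3.8} gives $L(\omega,z')>\gamma$ on a small neighbourhood of $z$, so Lemma~\ref{Lemma9.2} and Lemma~\ref{Lemma9.3} are usable on the larger interval around $z$. Corollary~\ref{eigenvector}(a) then produces an eigenvalue $z_{j_{n'}}$ of $\mathcal{E}_{[-(n'-1),n'-1]}^{\beta,\eta}$ satisfying $|z_{j_{n'}}-z|\leq\sqrt{2}\exp(-\gamma n/10)<\exp(-\gamma n/12)$, establishing \eqref{Lem9.4-(1)}. For \eqref{Lem9.4-(2)} I apply Corollary~\ref{eigenvector}(b), using the spacing estimate from Lemma~\ref{Lemma9.3} on $[-(n'-1),n'-1]$: all other eigenvalues are at distance at least $\exp(-C|I_{n'}|)\geq\exp\bigl(-C\exp((\log n')^{5\varepsilon})\bigr)$ from $z_{j_{n'}}$. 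Since $n'\leq\exp((\log n)^{1/(10\varepsilon)})$ we get $(\log n')^{5\varepsilon}\leq(\log n)^{1/2}$, so this spacing dominates the residual by many orders of magnitude, and Corollary~\ref{eigenvector}(b) delivers $\|\phi-u_{j_{n'}}^{[-(n'-1),n'-1]}\|<\exp(-\gamma n/48)$.

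\textbf{Step 3: pinning $I_{n'}$ and decay with respect to $I$.} Apply Lemma~\ref{Lemma9.2} on the larger interval to obtain $I_{n'}=I(\omega,z_{j_{n'}},x)$, centred near the max-modulus site $m_0'$ of $u_{j_{n'}}$. Because $\|u_{j_{n'}}-\phi\|<\exp(-\gamma n/48)$ while the unit vector $u$ has $|u(m_0)|\gtrsim(2n)^{-1/2}$ at its max site $m_0\in I$, the site $m_0'$ lies within $O(\exp((\log n')^{2\varepsilon\nu}))$ of $I\subset[-(n-1),n-1]$; hence $I_{n'}\subset[-(n-1),n-1]+O(\exp((\log n')^{5\varepsilon}))$, which is contained in $[-\tfrac{3}{4}(n'-1),\tfrac{3}{4}(n'-1)]$ in view of $n'\geq n$ and the fact that the additive error is negligible relative to $n'$. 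For $n\leq|s|\leq n'$ I combine two pointwise bounds: (i) $|u_{j_{n'}}(s)|=|u_{j_{n'}}(s)-\phi(s)|\leq\exp(-\gamma n/48)$ (since $\phi(s)=0$), and (ii) Lemma~\ref{Lemma9.2} on the larger scale, $|u_{j_{n'}}(s)|\leq\exp(-\tfrac{\gamma}{4}\mathrm{dist}(s,I_{n'}))$. Using $\mathrm{dist}(s,I)\leq\mathrm{dist}(s,I_{n'})+|I_{n'}|+2n$, bound (i) handles the regime $\mathrm{dist}(s,I)\lesssim n$, while bound (ii) handles $\mathrm{dist}(s,I)\gg n$; the gap between $1/4$ and $1/17$ absorbs the $|I_{n'}|$ correction and the ratio $48/17$.

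\textbf{Main obstacle.} The delicate step is Step~3: the Lemma~\ref{Lemma9.2} decay for $u_{j_{n'}}$ is naturally expressed with respect to $I_{n'}$, but the target bound is stated in terms of the original $I$. Converting between them forces one to locate $m_0'$ from the $\ell^2$-closeness to $\phi$ and to argue that $I_{n'}$ inherits the interior-position of $I$. Simultaneously, Step~1's residual bound requires that $I$ itself already sits well inside $[-(n-1),n-1]$, and this is where the hypothesis $L(\omega,z)>\tfrac{3\gamma}{2}$ (rather than merely $L>\gamma$) does the real work: the enlarged Lyapunov gap rules out eigenvectors whose max-modulus site hugs the boundary, a feature that has no analogue in the weaker inputs and must be extracted from the factorization/NDR machinery of Sections~6--8.
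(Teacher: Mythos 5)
Your overall route coincides with the paper's: extend $u_{j}^{[-(n-1),n-1]}$ by zero, bound the residual by its edge values, apply Corollary \ref{eigenvector}(a)/(b) to get the matching eigenpair, and then use Lemma \ref{Lemma9.2} at scale $n'$; your explicit appeal to Lemma \ref{Lemma9.3} to verify the simplicity condition in Corollary \ref{eigenvector}(b) is, if anything, more careful than the paper. But there are two genuine gaps. First, in Step 1 you justify $\mathrm{dist}(\pm(n-1),I)\sim n$ by asserting that $L>\tfrac{3\gamma}{2}$ ``forces the eigenvector to be bulk-localized'' and that this comes out of the NDR machinery of Sections 6--8. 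No such statement exists or follows: a Lyapunov lower bound says nothing about where the localization center sits inside the box. The needed positional information on $I$ is an extra (implicit) hypothesis --- in the only application, Theorem \ref{main-result}, one assumes $I\subset[-\tfrac{n-1}{2},\tfrac{n-1}{2}]$, and the paper's proof of this lemma tacitly uses exactly that when it bounds $|u(\pm(n-k))|$ by $\exp(-\tfrac{\gamma}{4}(\tfrac{n-1}{2}-k))$. The real role of the strengthened bound $\tfrac{3\gamma}{2}$ is different: via Lemma \ref{Lemma3.8} it keeps $L(\omega,z_{j_{n'}})>\gamma$ after the exponentially small shift of the eigenvalue, so that Lemmas \ref{Lemma9.2}--\ref{Lemma9.3} apply at scale $n'$ and the construction can be iterated in Theorem \ref{main-result}. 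As written, your residual estimate rests on an unproved claim.

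Second, in Step 3 your localization of $I_{n'}$ is too weak for the stated conclusion. You place the max-modulus site $m_{0}'$ within a subpolynomial distance of $I\subset[-(n-1),n-1]$ and then claim $I_{n'}\subset[-\tfrac34(n'-1),\tfrac34(n'-1)]$ ``since $n'\ge n$''; but for $n'$ comparable to $n$ (e.g.\ $n'=n$) a negligible enlargement of $[-(n-1),n-1]$ is not contained in $[-\tfrac34(n'-1),\tfrac34(n'-1)]$, so the containment does not follow. The paper argues differently: from \eqref{Lem9.4-(2)} the mass of $u_{j_{n'}}$ outside $[-(n-1),n-1]$ is at most $\exp(-\tfrac{\gamma}{24}n)$, hence more than half of its $\ell^{2}$ mass lies in $[-\tfrac{5(n-1)}{9},\tfrac{5(n-1)}{9}]$, and the Lemma \ref{Lemma9.2} decay at scale $n'$ then pins $I_{n'}\subset[-\tfrac{3(n-1)}{4},\tfrac{3(n-1)}{4}]$, uniformly in $n'$. (Your $m_{0}'$-argument could be repaired by invoking the positional hypothesis $I\subset[-\tfrac{n-1}{2},\tfrac{n-1}{2}]$, but as written you only use $I\subset[-(n-1),n-1]$.) Relatedly, your case split for $n\le|s|\le n'$ is quantitatively off: the bound $|u_{j_{n'}}(s)|\le\exp(-\gamma n/48)$ beats $\exp(-\tfrac{\gamma}{17}\mathrm{dist}(s,I))$ only when $\mathrm{dist}(s,I)\le\tfrac{17}{48}n$, not for all $\mathrm{dist}(s,I)\lesssim n$; once $I_{n'}$ is correctly confined to $[-\tfrac{3(n-1)}{4},\tfrac{3(n-1)}{4}]$, the final estimate follows from Lemma \ref{Lemma9.2} alone, as in the paper.
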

\begin{proof}
According to Lemma \ref{Lemma9.2}, we get that
$$\Big|u_{j}^{[-(n-1),n-1]}(\omega,x;\pm(n-1))\Big|<\exp(-\frac{\gamma}{4}(\frac{n-1}{2}-1)),$$
$$\Big|u_{j}^{[-(n-1),n-1]}(\omega,x;\pm(n-2))\Big|<\exp(-\frac{\gamma}{4}(\frac{n-1}{2}-2)),$$
$$\Big|u_{j}^{[-(n-1),n-1]}(\omega,x;\pm(n-3))\Big|<\exp(-\frac{\gamma}{4}(\frac{n-1}{2}-3)),$$
and
$$\Big|u_{j}^{[-(n-1),n-1]}(\omega,x;\pm(n-4))\Big|<\exp(-\frac{\gamma}{4}(\frac{n-1}{2}-4)).$$
Then we obtain that
$$\Big\|(\mathcal{E}_{[-(n'-1),n'-1]}^{\beta,\eta}(\omega,x)-z_{j}^{[-(n-1),n-1]}(\omega,x))u_{j}^{[-(n-1),n-1]}(\omega,x)\Big\|<8\exp(-\frac{\gamma}{4}(\frac{n-1}{2}-4)).$$
According to Corollary \ref{eigenvector} (with $\tilde{\varepsilon}=8\exp(-\frac{\gamma}{4}(\frac{n-1}{2}-4)))$, there exists $j_{n'}$ such that $z_{j_{n'}}^{[-(n'-1),n'-1]}(\omega,x)\in \mathcal{D}(z_{j}^{[-(n-1),n-1]}(\omega,x);\tilde{\varepsilon}\sqrt{2})\cap \partial \mathbb{D}$. Thus,
$$\Big|z_{j_{n'}}^{[-(n'-1),n'-1]}(\omega,x)-z_{j}^{[-(n-1),n-1]}(\omega,x)\Big|<2\tilde{\varepsilon}\sqrt{2}<\exp(-\frac{\gamma}{12}n).$$
Applying Corollary \ref{eigenvector} again (with $\hat{\varepsilon}=\exp(-\frac{\gamma}{12}n)$), one can obtain that
$$\Big\|u_{j_{n'}}^{[-(n'-1),n'-1]}(\omega,x)-u_{j}^{[-(n-1),n-1]}(\omega,x)\Big\|<\sqrt{2}\hat{\varepsilon}^{-1}\tilde{\varepsilon}<\exp(-\frac{\gamma}{48}n).$$
From \ref{Lem9.4-(2)}, we can get that
$$\mathop{\sum}\limits_{|s|>n-1}\Big|u_{j_{n'}}^{[-(n'-1),n'-1]}(\omega,x;s)\Big|^{2}<\Big\|u_{j_{n'}}^{[-(n'-1),n'-1]}(\omega,x)-u_{j}^{[-(n-1),n-1]}(\omega,x)\Big\|^{2}<\exp(-\frac{\gamma}{24}n),$$
which implies that $\mathop{\sum}\limits_{|s|\leq \frac{5(n-1)}{9}}\Big|u_{j_{n'}}^{[-(n'-1),n'-1]}(\omega,x;s)\Big|^{2}>\frac{1}{2}.$

It follows from Lemma \ref{Lemma9.2} that
$$I(\omega,z_{j_{n'}}^{[-(n'-1),n'-1]}(\omega,x),x)\subset[-\frac{3(n-1)}{4},\frac{3(n-1)}{4}],$$
and
$$\Big|u_{j_{n'}}^{[-(n'-1),n'-1]}(\omega,x;s)\Big|<\exp\Big(-\frac{\gamma}{4}\mathrm{dist}\big(s,[-\frac{3(n-1)}{4},\frac{3(n-1)}{4}]\big)\Big)<\exp(-\frac{\gamma}{17}\mathrm{dist}(s,I))$$
for any $n-1\leq |s|\leq n'-1$.
\end{proof}

Finally, we establish full scale localization by iterating the eigenpair convergence (Lemma 6.6) over a sequence of exponentially expanding intervals $n_{k}=n^{2^{k}}$. We show that the eigenpair sequence converges to an eigenfunction of the infinite MF-QP CMV matrix that decays exponentially everywhere, confirming pure point spectrum and completing the localization proof.
\begin{theorem}\label{main-result}
Let $\varepsilon\in (0,\frac{1}{5})$ and $\hat{\Omega}_{N_{0}}$, $\hat{\mathcal{B}}_{N_{0},\omega}$ be as in Lemma \ref{Lemma9.4}. For any $N_{0}\geq C(\alpha_{i},p,q,\gamma,\varepsilon)$, $\omega\in \mathbb{T}^{d}(p,q)\backslash\hat{\Omega}_{N_{0}}$, $x\in \mathbb{T}^{d}\backslash\hat{\mathcal{B}}_{N_{0},\omega}$, the following statements hold. Let $n_{k}=n^{2^{k}}$. If $L(\omega,z_{j}^{[-(n-1),n-1]}(\omega,x))>2\gamma$ and $I=I(\omega,z_{j}^{[-(n-1),n-1]}(\omega,x),x)\subset[-\frac{n-1}{2},\frac{n-1}{2}]$, then for each $k\geq 1$ there exist $j_{k}$ such that
\begin{equation}\label{main-(1)}
\begin{aligned}
&\Big|z_{j_{k}}^{[-(n_{k}-1),n_{k}-1]}(\omega,x)-z_{j}^{[-(n-1),n-1]}(\omega,x)\Big|<\exp(-\frac{\gamma}{16}n),\\
&\Big\|u_{j_{k}}^{[-(n_{k}-1),n_{k}-1]}(\omega,x)-u_{j}^{[-(n-1),n-1]}(\omega,x)\Big\|<\exp(-\frac{\gamma}{50}n),
\end{aligned}
\end{equation}
\begin{equation}\label{main-(2)}
\Big|u_{j_{k}}^{[-(n_{k}-1),n_{k}-1]}(\omega,x;s)\Big|<\exp(-\frac{\gamma}{60}\mathrm{dist}(s,I)),\,\,\frac{3(n-1)}{4}\leq |s|\leq n_{k}-1.
\end{equation}
Furthermore, for any $k'\geq k\geq 0$,
\begin{equation}\label{main-(3)}
\begin{aligned}
&\Big|z_{j_{k'}}^{[-(n_{k'}-1),n_{k'}-1]}(\omega,x)-z_{j_{k}}^{[-(n_{k}-1),n_{k}-1]}(\omega,x)\Big|<\exp(-\frac{\gamma}{16}n_{k}),\\
&\Big\|u_{j_{k'}}^{[-(n_{k'}-1),n_{k'}-1]}(\omega,x)-u_{j_{k}}^{[-(n_{k}-1),n_{k}-1]}(\omega,x)\Big\|<\exp(-\frac{\gamma}{50}n_{k}).
\end{aligned}
\end{equation}
Particularly, the limits
$$z(\omega,x):=\lim\limits_{k\rightarrow\infty}z_{j_{k}}^{[-(n_{k}-1),n_{k}-1]}(\omega,x),\,\,
u(\omega,x;s):=\lim\limits_{k\rightarrow\infty}u_{j_{k}}^{[-(n_{k}-1),n_{k}-1]}(\omega,x;s),\,\,s\in \mathbb{Z}$$
exist and
\begin{equation}\label{main-(4)}
|u(\omega,x;s)|<\exp(-\frac{\gamma}{60}\mathrm{dist}(s,I)),\,\, |s|\geq\frac{3(n-1)}{4}.
\end{equation}
\end{theorem}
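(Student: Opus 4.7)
The plan is to prove the theorem by induction on $k$, each step consisting of a single application of Lemma \ref{Lemma9.4} along the geometric scales $n_{k+1} = n_k^2$. I set $j_0 := j$, so that \eqref{main-(1)} and \eqref{main-(2)} hold trivially at $k = 0$ from the hypothesis $I \subset [-(n-1)/2, (n-1)/2]$ and with \eqref{main-(2)} reducing to the decay already packaged into the definition of $I$ via Lemma \ref{Lemma9.2}. For the inductive step, suppose $j_k$ has been constructed. I will apply Lemma \ref{Lemma9.4} with the scale pair $(n_k, n_{k+1})$. The Lyapunov hypothesis $L(\omega, z_{j_k}^{[-(n_k-1),n_k-1]}) > 3\gamma/2$ required by that Lemma is inherited from $L(\omega, z_j) > 2\gamma$ and the inductive bound $|z_{j_k} - z_j| < \exp(-\gamma n/16)$ via the continuity estimate of Lemma \ref{Lemma3.8}. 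The growth restriction $n_{k+1} \leq \exp((\log n_k)^{1/(10\varepsilon)})$ is a mild constraint on $n$ and $\varepsilon$.

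Lemma \ref{Lemma9.4} then produces an index $j_{k+1}$ satisfying
\begin{equation*}
\big|z_{j_{k+1}}^{[-(n_{k+1}-1),n_{k+1}-1]} - z_{j_k}^{[-(n_k-1),n_k-1]}\big| < \exp(-\gamma n_k/12),
\end{equation*}
\begin{equation*}
\big\|u_{j_{k+1}}^{[-(n_{k+1}-1),n_{k+1}-1]} - u_{j_k}^{[-(n_k-1),n_k-1]}\big\| < \exp(-\gamma n_k/48),
\end{equation*}
which is the one-step case of \eqref{main-(3)}. A telescoping summation---the series $\sum_i \exp(-\gamma n_i/12)$ being geometrically dominated by its first term since $n_{i+1} = n_i^2$---then yields \eqref{main-(3)} for arbitrary $k' \geq k \geq 0$, and specializing $k = 0$ produces \eqref{main-(1)} after absorbing the geometric sums into the slightly worse rates $\gamma/16$ and $\gamma/50$.

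The decay estimate \eqref{main-(2)} at scale $k+1$ is obtained by combining the inductive bound on $|s| \leq n_k - 1$ with the final conclusion of Lemma \ref{Lemma9.4} on the annulus $n_k \leq |s| \leq n_{k+1} - 1$. Here Lemma \ref{Lemma9.4} phrases decay relative to $I_{k+1} := I(\omega, z_{j_{k+1}}^{[-(n_{k+1}-1),n_{k+1}-1]}, x)$, not relative to the original $I$; however, the same Lemma also yields $I_{k+1} \subset [-3(n_k-1)/4, 3(n_k-1)/4]$, so both $I$ and $I_{k+1}$ sit in an $O(n_k)$-neighborhood of the origin, while $|s|$ is taken at least $3(n-1)/4$ and eventually much larger than $n_k$. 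The discrepancy $|\mathrm{dist}(s, I) - \mathrm{dist}(s, I_{k+1})| = O(n_k)$ is then absorbed by relaxing the exponential rate from $\gamma/17$ down to $\gamma/60$. Once \eqref{main-(3)} is established, the sequences $\{z_{j_k}^{[-(n_k-1),n_k-1]}\}$ and $\{u_{j_k}^{[-(n_k-1),n_k-1]}\}$ are Cauchy in $\mathbb{C}$ and $\ell^2(\mathbb{Z})$ respectively, so the limits $z(\omega,x)$ and $u(\omega, x; \cdot)$ exist, and \eqref{main-(4)} follows by passing to the limit in \eqref{main-(2)}.

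The main obstacle I expect is the careful bookkeeping of the localization intervals $I_k$ across scales, since Lemma \ref{Lemma9.4} always rephrases the decay in terms of the interval $I_{k+1}$ at the current scale rather than the original $I$, and the stated rates $\gamma/16$, $\gamma/50$, $\gamma/60$ must end up being simultaneously consistent with the telescoping losses and the shifts of the localization center. The genuine analytical content---the no-double-resonances elimination, the resultant-based separation of spectra across widely separated translates, and the covering form of the large deviations estimate---is already encapsulated in Lemma \ref{Lemma9.4}, so the remaining task is essentially controlled iteration.
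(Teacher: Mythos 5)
Your proposal is correct and follows essentially the same route as the paper: iterate Lemma \ref{Lemma9.4} along the doubling scales $n_{k+1}=n_k^2$ (with the Lyapunov hypothesis propagated by the continuity estimate of Lemma \ref{Lemma3.8}), telescope the geometric one-step bounds $\exp(-\gamma n_k/12)$, $\exp(-\gamma n_k/48)$ into the rates $\gamma/16$, $\gamma/50$, absorb the drift of the localization centers $I_k\subset[-\tfrac{3}{4}(n_k-1),\tfrac{3}{4}(n_k-1)]$ into the weaker rate $\gamma/60$, and pass to the limit. The only point worth making explicit is that carrying \eqref{main-(2)} on the inner range $|s|\le n_k-1$ requires adding the $\ell^2$-difference $\|u_{j_{k+1}}-u_{j_k}\|$ to the earlier-scale pointwise bound (as the paper does via comparison with $u_{j_{\hat k}}$), which your scheme accommodates since $\exp(-\gamma n_k/50)\le\exp(-\tfrac{\gamma}{60}\mathrm{dist}(s,I))$ there.
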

\begin{proof}
According to Lemma \ref{Lemma9.4}, there exists $j_{k}$, $k\geq 0$, $j_{0}=j$, such that
$$\Big|z_{j_{k+1}}^{[-(n_{k+1}-1),n_{k+1}-1]}(\omega,x)-z_{j_{k}}^{[-(n_{k}-1),n_{k}-1]}(\omega,x)\Big|<\exp(-\frac{\gamma}{12}n_{k}),$$
$$\Big\|u_{j_{k+1}}^{[-(n_{k+1}-1),n_{k+1}-1]}(\omega,x)-u_{j_{k}}^{[-(n_{k}-1),n_{k}-1]}(\omega,x)\Big\|<\exp(-\frac{\gamma}{48}n_{k}),$$
$$L(\omega,z_{j_{k+1}}^{[-(n_{k+1}-1),n_{k+1}-1]}(\omega,x))>\frac{3\gamma}{2},$$
$$I_{k}=I(\omega,z_{j_{k+1}}^{[-(n_{k+1}-1),n_{k+1}-1]}(\omega,x),x)\subset[-\frac{3(n_{k}-1)}{4},\frac{3(n_{k}-1)}{4}],$$
\begin{equation}\label{main-(5)}
\Big|u_{j_{k+1}}^{[-(n_{k+1}-1),n_{k+1}-1]}(\omega,x;s)\Big|<\exp(-\frac{\gamma}{17}\mathrm{dist}(s,I_{k}))<\exp(-\frac{\gamma}{18}\mathrm{dist}(s,I))
\end{equation}
for all $n_{k}-1\leq |s|\leq n_{k+1}-1$. Then for any $k'\geq k \geq 0$, we have
$$\Big|z_{j_{k'}}^{[-(n_{k'}-1),n_{k'}-1]}(\omega,x)-z_{j_{k}}^{[-(n_{k}-1),n_{k}-1]}(\omega,x)\Big|<\sum_{i=k}^{k'-1}\exp(-\frac{\gamma}{12}n_{i})<\exp(-\frac{\gamma}{16}n_{k})$$
and
$$\Big\|u_{j_{k'}}^{[-(n_{k'}-1),n_{k'}-1]}(\omega,x)-u_{j_{k}}^{[-(n_{k}-1),n_{k}-1]}(\omega,x)\Big\|<\sum_{i=k}^{k'-1}\exp(-\frac{\gamma}{48}n_{i})<\exp(-\frac{\gamma}{50}n_{k}).$$
This proves \eqref{main-(1)}, \eqref{main-(3)}, and the existence of the limits $z(\omega,x)$, $u(\omega,x)$.

From \eqref{main-(5)}, we know that \eqref{main-(2)} holds for $n_{k-1}-1\leq |s|\leq n_{k}-1$. Then it is enough to prove that  \eqref{main-(2)} holds for $\frac{3(n-1)}{4}\leq |s|\leq n_{k-1}$. For $1\leq \hat{k}\leq k-2$, due to
$$\Big\|u_{j_{k}}^{[-(n_{k}-1),n_{k}-1]}(\omega,x)-u_{j_{\hat{k}}}^{[-(n_{\hat{k}}-1),n_{\hat{k}}-1]}(\omega,x)\Big\|<\sum_{i=k}^{\hat{k}-1}\exp(-\frac{\gamma}{48}n_{i})<\exp(-\frac{\gamma}{50}n_{k}),$$
we have
\begin{align*}
\big|u_{j_{k}}^{[-(n_{k}-1),n_{k}-1]}(\omega,x;s)\big|&<\big|u_{j_{\hat{k}}}^{[-(n_{\hat{k}}-1),n_{\hat{k}}-1]}(\omega,x;s)\big|+\exp(-\frac{\gamma}{50}n_{k})\\
&<\exp(-\frac{\gamma}{18}\mathrm{dist}(s,I))+\exp(-\frac{\gamma}{50}n_{k})\\
&<\exp(-\frac{\gamma}{60}\mathrm{dist}(s,I)),
\end{align*}
for $n_{\hat{k}-1}-1\leq |s|\leq n_{\hat{k}}-1$. Due to $n_{k}=n^{2^{k}}$, $1\leq \hat{k}\leq k-2$, we have $n-1\leq n_{\hat{k}-1}-1\leq |s|\leq n_{\hat{k}}-1\leq n_{k}-1$. Thus, \eqref{main-(2)} holds for $n-1\leq |s|\leq n_{k}-1$.
According to Lemma \ref{Lemma9.2},
$$\big|u_{j}^{[-(n-1),n-1]}(\omega,x;s)\big|<\exp(-\frac{\gamma}{17}\mathrm{dist}(s,I))\,\,\,\mathrm{for}\,\,\,\frac{3(n-1)}{4}\leq |s|\leq n-1.$$
Therefore,
\begin{align*}
\big|u_{j_{k}}^{[-(n_{k}-1),n_{k}-1]}(\omega,x;s)\big|&<\big|u_{j}^{[-(n-1),n-1]}(\omega,x;s)\big|+\exp(-\frac{\gamma}{50}n_{k})\\
&<\exp(-\frac{\gamma}{17}\mathrm{dist}(s,I))+\exp(-\frac{\gamma}{50}n_{k})\\
&<\exp(-\frac{\gamma}{60}\mathrm{dist}(s,I))
\end{align*}
for $\frac{3(n-1)}{4}\leq |s|\leq n-1$. In conclusion, \eqref{main-(2)} holds for $\frac{3(n-1)}{4}\leq |s|\leq n_{k}-1$.

Taking $k'\rightarrow \infty$ in \eqref{main-(3)}, one can obtain that
$$\Big\|u(\omega,x)-u_{j_{k}}^{[-(n_{k}-1),n_{k}-1]}(\omega,x)\Big\|<\exp(-\frac{\gamma}{50}n_{k}).$$
Similar to the proof of \eqref{main-(2)}, \eqref{main-(4)} follows.
\end{proof}

\section{Appendix}\label{section7}

In order to make the structure of this paper clearer, we put some useful definitions and lemmas in this section.

\subsection{Some Useful Notations}
\begin{definition 4}\cite[Definition 2.3]{BGS02-Acta}
For any positive numbers $a, b$ the notation $a\lesssim b$ means $Ca\leq b$ for some constant $C>0$. By $a\ll b$ we mean that the constant $C$ is very large. If both $a\lesssim b$ and $a\gtrsim b$, then we write $a\asymp b$.
\end{definition 4}
\subsection{Avalanche Principle}
\begin{lemma}\label{avalanche-principle}\cite[Proposition 2.2]{GS01-Annals}
Let $A_{1},\ldots,A_{m}$ be a sequence of arbitrary unimodular $2\times 2$-matrices. Suppose that
\begin{equation}\label{AP-1}
\min_{1\leq j\leq m}\|A_{j}\|\geq \mu\geq m
\end{equation}
and
\begin{equation}\label{AP-2}
\max_{1\leq j< m}\big[\log\|A_{j+1}\|+\log\|A_{j}\|-\log\|A_{j+1}A_{j}\|\big]<\frac{1}{2} \log\mu.
\end{equation}
Then
\begin{equation}\label{AP-3}
\Big|\log\|A_{m}\cdots A_{1}\|+\sum_{j=2}^{m-1}\log\|A_{j}\|-\sum_{j=1}^{m-1}\log\|A_{j+1}A_{j}\|\Big|<C_{A}\frac{m}{\mu},
\end{equation}
where $C_{A}$ is an absolute constant.
\end{lemma}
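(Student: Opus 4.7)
The plan is to exploit the fact that a unimodular $2\times 2$ matrix with large norm acts, up to exponentially small error in its norm, as a rank-one projector onto its top singular direction. For each $A_j$, fix the singular-value decomposition $A_j u_j^\pm = \lambda_j^{\pm 1} v_j^\pm$ with $\lambda_j = \|A_j\|\geq \mu$, $u_j^+\perp u_j^-$, $v_j^+\perp v_j^-$. First I would translate the hypothesis on $\|A_{j+1}A_j\|$ into an angular condition. Expanding $v_j^+ = \cos\theta_j\, u_{j+1}^+ + \sin\theta_j\, u_{j+1}^-$, a direct computation yields
$$\|A_{j+1}A_j\|^2 = \lambda_j^2\bigl(\lambda_{j+1}^2\cos^2\theta_j + \lambda_{j+1}^{-2}\sin^2\theta_j\bigr),$$
so the assumption $\|A_{j+1}A_j\|\geq \lambda_j\lambda_{j+1}\mu^{-1/2}$, together with $\lambda_{j+1}\geq\mu$, forces $|\cos\theta_j|\geq \tfrac{1}{2}\mu^{-1/2}$ and gives the sharp identity
$$\log\|A_{j+1}A_j\| = \log\lambda_j + \log\lambda_{j+1} + \log|\cos\theta_j| + O(\mu^{-2}).$$

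Second, I would propagate the expanding direction through the product by induction on $k$. Let $\Lambda_k := \|A_k\cdots A_1 u_1^+\|$ and $w_k := A_k\cdots A_1 u_1^+/\Lambda_k$, so $w_1 = v_1^+$. I would show inductively that $\langle w_k, v_k^-\rangle = O(\mu^{-2})$. Writing $w_k = a_k u_{k+1}^+ + b_k u_{k+1}^-$, the induction hypothesis together with the small angle $\theta_k$ forces $a_k = \cos\theta_k + O(\mu^{-2})$ and $b_k = O(\mu^{-1/2})$. Then $A_{k+1}w_k = a_k \lambda_{k+1} v_{k+1}^+ + b_k \lambda_{k+1}^{-1} v_{k+1}^-$, whose norm equals $\lambda_{k+1}|a_k|(1+O(\mu^{-4}))$, and after normalization the component of $w_{k+1}$ along $v_{k+1}^-$ has size $|b_k|/(|a_k|\lambda_{k+1}^2) = O(\mu^{-2})$, closing the induction. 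Taking logarithms and using Step~1,
$$\log\Lambda_{k+1} - \log\Lambda_k = \log\lambda_{k+1} + \log|\cos\theta_k| + O(\mu^{-2}) = \log\|A_{k+1}A_k\| - \log\|A_k\| + O(\mu^{-2}).$$

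Third, I would telescope from $k=1$ to $k=m-1$. Using $\log\Lambda_1 = \log\|A_1\|$,
$$\log\Lambda_m = \sum_{k=1}^{m-1}\log\|A_{k+1}A_k\| - \sum_{k=2}^{m-1}\log\|A_k\| + O(m\mu^{-2}).$$
Finally, since $w_m$ lies within $O(\mu^{-2})$ of the top right singular direction of the product, $\log\|A_m\cdots A_1\| = \log\Lambda_m + O(\mu^{-2})$, yielding the claimed identity with total error $O(m\mu^{-2})$; using $\mu\geq m$ this is absorbed into $C_A m/\mu$ for an absolute $C_A$.

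The main obstacle is the bookkeeping in the inductive step: one must verify that angular errors do not amplify with $k$. This succeeds because the fresh misalignment $\sin\theta_k = O(\mu^{-1/2})$ is compressed by the contraction factor $\lambda_{k+1}^{-2}\leq \mu^{-2}$ before contributing to the new contracting component, so the $O(\mu^{-2})$ bound is preserved indefinitely; without this compression the errors would accumulate geometrically and the proof would fail. A secondary care is that every estimate depends only on the scalars $\lambda_j$ and $\theta_j$, so the resulting constant $C_A$ is indeed absolute.
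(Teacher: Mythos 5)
The paper itself offers no proof of this lemma --- it is quoted from \cite[Proposition 2.2]{GS01-Annals} --- so your attempt can only be measured against the standard argument, whose circle of ideas (singular directions $u_j^{\pm}$, $v_j^{\pm}$, the angle $\theta_j$ extracted from \eqref{AP-2}, propagation of the expanding direction, telescoping) you do follow. Before the main point, one repairable error: \eqref{AP-2} together with $\lambda_{j+1}\geq\mu$ only gives $|\cos\theta_j|\gtrsim\mu^{-1/2}$, i.e.\ $\theta_j$ stays away from $\pi/2$ by $\sim\mu^{-1/2}$; it does \emph{not} make $\theta_j$ small, so your claims $b_k=O(\mu^{-1/2})$ and that the fresh misalignment satisfies $\sin\theta_k=O(\mu^{-1/2})$ are false --- $|b_k|$ can be of order $1$. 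The induction still closes, but only with $\langle w_k,v_k^-\rangle=O(\mu^{-3/2})$, and since this error must be compared with $|\cos\theta_k|$, which may be as small as $\sim\mu^{-1/2}$, the per-step error in $\log\Lambda_{k+1}-\log\Lambda_k$ is $O(\mu^{-1})$ rather than $O(\mu^{-2})$; summed over $m$ steps this is still $O(m/\mu)$, so the quantitative conclusion survives. (Also, your first display is the norm of $A_{j+1}A_j u_j^{+}$, not the operator norm of $A_{j+1}A_j$; you need the easy two-sided comparison, e.g.\ via the Frobenius norm, before the hypothesis can be converted into the lower bound on $|\cos\theta_j|$.)

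The genuine gap is the final step. Tracking only the image of $u_1^{+}$ yields $\|A_m\cdots A_1\|\geq\Lambda_m$, i.e.\ only the lower-bound half of \eqref{AP-3}. Your justification of the other half --- that $w_m$ is within $O(\mu^{-2})$ of the top singular direction of the product, hence $\log\|A_m\cdots A_1\|=\log\Lambda_m+O(\mu^{-2})$ --- is a non sequitur: closeness of the \emph{direction} of $Bu_1^{+}$ to the top output direction of $B:=A_m\cdots A_1$ says nothing about $\|Bu_1^{+}\|$ versus $\|B\|$. For $B=\mathrm{diag}(t,t^{-1})$ and $u=(\epsilon,\sqrt{1-\epsilon^{2}})$ with $t^{-1}\ll t\epsilon\ll t$, the image $Bu$ is nearly parallel to the top output direction while $\|Bu\|\approx t\epsilon\ll\|B\|$. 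Moreover the allowed slack in \eqref{AP-3} is only $C_A m/\mu$, which can be arbitrarily small, so not even a bounded multiplicative loss is tolerable here. To close the upper bound you must control the top \emph{input} singular direction of the partial products $P_k=A_k\cdots A_1$; equivalently, show by a second induction that $\|P_k u_1^{-}\|\lesssim\mu^{-1}\|P_k u_1^{+}\|$, whence $\|P_k\|^{2}\leq\|P_k u_1^{+}\|^{2}+\|P_k u_1^{-}\|^{2}=\Lambda_k^{2}(1+O(\mu^{-2}))$. That second induction needs its own case analysis (the image of $u_1^{-}$ may remain aligned with contracting directions for several steps and be released later, exactly when $|\cos\theta_k|$ is borderline), or one can run the equivalent two-sided induction on the norm and both singular directions of $P_k$ as in the original proof. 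As written, the upper-bound half of \eqref{AP-3} is unproved.
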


\subsection{Subharmonic Functions}

\begin{definition 5}\cite[Definition 2.3]{GS08-GAFA}
Suppose $u: \Omega_{1}\times\cdots\Omega_{d}\rightarrow \mathbb{R}\cup \{-\infty\}$ is continuous. Then $u$ is said to be separately subharmonic, if for any $1\leq j\leq d$ and $z_{k}\in\Omega_{k}$ for $k\neq j$ the function
\begin{equation*}
z\rightarrow u(z_{1},\ldots,z_{k-1},z,z_{k+1},\ldots,z_{d})
\end{equation*}
is subharmonic in $z\in \Omega_{j}$.
\end{definition 5}

\begin{lemma}\label{Lemma3.14}\cite[Lemma 2.4]{GS08-GAFA}
Let $u(x_{1},\ldots,x_{d})$ be a separately subharmonic function on $\mathbb{T}_{h}^{d}$. 
Suppose that $\sup\limits_{\mathbb{T}_{h}^{d}}u\leq M$. There are constants $C_{d}$, $C_{d,h}$ such that, if for some $0<\delta<1$ and some $L$
\begin{equation*}
\mathrm{mes}\{x\in \mathbb{T}^{d}:u<-L\}>\delta,
\end{equation*}
then
\begin{equation*}
\sup\limits_{\mathbb{T}^{d}}u\leq C_{h,d}M-\frac{L}{C_{h,d}\log^{d}(C_{d}/\delta)}.
\end{equation*}
\end{lemma}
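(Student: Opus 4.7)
I would prove the lemma by induction on the dimension $d$, reducing the multidimensional estimate to an iterated application of the classical one-dimensional Cartan-type estimate for subharmonic functions on the strip.

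\textbf{Base case $d=1$.} Suppose $u:\mathbb{T}_h\to\mathbb{R}\cup\{-\infty\}$ is subharmonic with $\sup_{\mathbb{T}_h}u\leq M$. After translating so that $\sup_{\mathbb{T}}u\geq 0$ (absorbing the remainder into the $C_{h,1}M$ term), I would invoke the Riesz representation on the slightly narrower strip $\mathbb{T}_{h/2}$: there exist a finite positive Borel measure $\nu$ on $\mathbb{T}_{h/2}$ and a harmonic function $H$ on $\mathbb{T}_{h/2}$ such that $u(z)=\int\log|z-\zeta|\,d\nu(\zeta)+H(z)$, with the bounds $\nu(\mathbb{T}_{h/2})+\|H\|_{L^\infty(\mathbb{T}_{h/2})}\lesssim C_h M$ (obtained from the Jensen/Poisson–Jensen inequality together with $\sup u\geq 0$). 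The classical H. Cartan estimate for logarithmic potentials then gives $\mathrm{mes}\{x\in\mathbb{T}:\int\log|x-\zeta|\,d\nu(\zeta)<-L'\}\leq \exp(-cL'/\nu(\mathbb{T}_{h/2}))$, and combining this with the uniform bound on $H$ yields exactly the sought one-dimensional statement: a lower measure bound $\delta$ for $\{u<-L\}$ forces $\sup_{\mathbb{T}}u\leq C_{h,1}M-L/(C_{h,1}\log(C_1/\delta))$.

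\textbf{Inductive step.} Assume the estimate holds in dimension $d-1$. Given $u$ on $\mathbb{T}_h^d$, define the ``sup in the last variable''
\[
G(x_1,\ldots,x_{d-1}):=\sup_{y\in\mathbb{T}}u(x_1,\ldots,x_{d-1},y),
\]
taken with its upper semicontinuous regularization. Since the supremum of a uniformly bounded family of subharmonic functions (in a single complex variable) is, after usc regularization, again subharmonic, $G$ is separately subharmonic on $\mathbb{T}_h^{d-1}$ and still bounded above by $M$. By Fubini, the hypothesis $\mathrm{mes}\{x\in\mathbb{T}^d:u<-L\}>\delta$ yields that on a set $E\subset\mathbb{T}^{d-1}$ with $\mathrm{mes}(E)>\delta/2$, the one-dimensional slice $\{y:u(x',y)<-L\}$ has measure $>\delta/2$. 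Applying the base case to each such slice, for every $x'\in E$,
\[
G(x')=\sup_{y}u(x',y)\leq C_{h,1}M-\frac{L}{C_{h,1}\log(C_1/\delta)}=:-L_1,
\]
assuming $L\gg M\log(1/\delta)$ (otherwise the final conclusion is vacuous after adjusting constants). Thus $G$ satisfies the hypothesis of the lemma in dimension $d-1$ with the modified parameters $L_1$ and $\delta/2$, and the inductive hypothesis gives
\[
\sup_{\mathbb{T}^{d}}u=\sup_{\mathbb{T}^{d-1}}G\leq C_{h,d-1}M-\frac{L_1}{C_{h,d-1}\log^{d-1}(C_{d-1}/(\delta/2))},
\]
which, after substituting $L_1$, produces the desired bound with $\log^d(C_d/\delta)$ in the denominator and renamed constants $C_d,C_{h,d}$.

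\textbf{Main obstacle.} The delicate point is step 2: justifying that $G$ is separately subharmonic on $\mathbb{T}_h^{d-1}$. The supremum over $y\in\mathbb{T}$ is on the \emph{real} circle, not over a holomorphic family, so the standard ``sup of subharmonic is subharmonic'' theorem must be applied carefully (it uses locally uniform upper boundedness, which we have, plus usc regularization). One needs to check that the usc regularization of $G$ coincides with $G$ almost everywhere on $\mathbb{T}^{d-1}$, so that the measure hypothesis transfers cleanly. An alternative is to replace the supremum by a smoothed maximum such as $\varepsilon^{-1}\log\int e^{\varepsilon u(x',y)}\,dy$ (which preserves separate subharmonicity via differentiation under the integral), and then let $\varepsilon\to\infty$; the bookkeeping of constants in the $\log^d$ factor must be tracked so that neither the subharmonicity argument nor the passage from ``measure of bad set $>\delta$'' to ``$G<-L_1$ on a set of measure $>\delta/2$'' introduces spurious factors that would degrade the final exponent on $\log(1/\delta)$.
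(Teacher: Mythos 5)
This lemma is not proved in the paper at all: it is quoted verbatim from Goldstein--Schlag \cite[Lemma 2.4]{GS08-GAFA}, so the comparison must be with the argument in that source rather than with an in-paper proof. Your base case is the standard one there (Riesz representation of $u$ on a narrower strip with $\nu(\mathbb{T}_{h/2})+\|H\|_{\infty}\lesssim_h M-\sup_{\mathbb{T}}u$, followed by Cartan's estimate for the logarithmic potential), and it is fine once you track that the Riesz bound after your normalization is in terms of the translated sup, which only renames constants. Your inductive step, however, is genuinely different from the usual route: the argument in \cite{GS08-GAFA} (and in \cite{BGS02-Acta} for $d=2$) never forms the supremum function $G$. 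Instead, by Fubini one finds $E\subset\mathbb{T}^{d-1}$ with $\mathrm{mes}(E)>\delta/2$ such that for each $x'\in E$ the one-dimensional lemma, applied in the remaining variable, gives $u(y,x')\le -L_{1}$ for \emph{every} $y\in\mathbb{T}$; then, for each fixed $y\in\mathbb{T}$, the same set $E$ witnesses the measure hypothesis for the separately subharmonic slice $x'\mapsto u(y,x')$, and the $(d-1)$-dimensional inductive hypothesis concludes uniformly in $y$. That scheme buys exactly what your "main obstacle" costs: it never needs the supremum over the real circle to be subharmonic. Your route can nonetheless be pushed through, and more easily than you suggest: the paper's Definition 5 assumes $u$ continuous, so $G(x')=\sup_{y\in\mathbb{T}}u(x',y)$ is continuous (supremum over a compact circle), and a continuous supremum of subharmonic functions inherits the sub-mean value inequality and is therefore subharmonic in each variable; no usc regularization and no a.e.\ identification are needed. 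If you prefer the smoothed maximum $\varepsilon^{-1}\log\int e^{\varepsilon u(x',y)}\,dy$, the correct justification is not "differentiation under the integral" (logarithms of subharmonic functions are not subharmonic in general) but the variational identity $\log\int e^{v(y)}\,dy=\sup_{p}\big(\int p\,v\,dy+\mathrm{Ent}(p)\big)$ over probability densities, which exhibits the smoothed maximum as a continuous supremum of separately subharmonic functions; letting $\varepsilon\to\infty$ at the end recovers $G$. With the $\delta\mapsto\delta/2$ and $L_{1}=L/(C\log(C/\delta))-CM$ bookkeeping you already flag (and the trivial regime $L\lesssim M\log^{d}(1/\delta)$ handled by adjusting $C_{h,d}$), your induction does produce the stated $\log^{d}(C_{d}/\delta)$ loss.
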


\subsection{Cartan's Estimate}
In what follows, we denote $\mathcal{D}(z_{0},r)=\{z\in \mathbb{C}:|z-z_{0}|<r\}$.

\begin{definition 6}\cite[Definition 2.12]{GS08-GAFA}
Let $H\geq 1$. For an arbitrary set $\mathcal{B}\subset \mathcal{D}(z_{0},1)\subset \mathbb{C}$ we say that $\mathcal{B}\in \mathrm{Car}_{1}(H,K)$ if $\mathcal{B}\subset \mathop{\cup}\limits^{j_{0}}_{j=1}\mathcal{D}(z_{j},r_{j})$ with $j_{0}\leq K$, and
\begin{equation}\label{def2-(1)}
\sum_{j}r_{j}<e^{-H}.
\end{equation}
If $d\geq 2$ is an integer and $\mathcal{B}\subset\mathop{\prod}\limits_{j=1}^{d}\mathcal{D}(z_{j,0},1)\subset \mathbb{C}^{d}$, then we define inductively that $\mathcal{B}\in \mathrm{Car}_{d}(H,K)$ if for any $1\leq j\leq d$ there exists $\mathcal{B}_{j}\subset\mathcal{D}(z_{j,0},1)\subset \mathbb{C}$, $\mathcal{B}_{j}\in \mathrm{Car}_{1}(H,K)$ so that $\mathcal{B}^{(j)}_{z}\in\mathrm{Car}_{d-1}(H,K)$ for any $z\in \mathbb{C}\backslash\mathcal{B}_{j}$, here $\mathcal{B}^{(j)}_{z}=\{(z_{1},\ldots,z_{d})\in \mathcal{B}:z_{j}=z\}$.
\end{definition 6}

\begin{lemma}\label{Lemma3.23}\cite[Lemma 2.15]{GS08-GAFA}
Let $\varphi(z_{1},\ldots,z_{d})$ be an analytic function defined on a polydisk $\mathcal{P}=\mathop{\prod}\limits_{j=1}^{d}\mathcal{D}(z_{j,0},1)$, $z_{j,0}\in \mathbb{C}$. Let $M\geq\sup\limits_{z\in\mathcal{P}}\log|\varphi(z)|$, $m\leq\log|\varphi(z_{0})|$, $z_{0}=(z_{1,0},\ldots,z_{d,0})$. Given $H\gg 1$, there exists a set $\mathcal{B}\subset \mathcal{P}$, $\mathcal{B}\in \mathrm{Car}_{d}(H^{1/d},K)$, $K=C_{d}H(M-m)$, such that
\begin{equation}\label{Lem3.23-(1)}
\log|\varphi(z)|>M-C_{d}H(M-m)
\end{equation}
for any $z\in \frac{1}{6}\mathcal{P}\backslash \mathcal{B}$. Furthermore, when $d=1$ we can take $K=C(M-m)$ and keep only the disks of $\mathcal{B}$ containing a zero of $\varphi$ in them.
\end{lemma}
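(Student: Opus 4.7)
The plan is to prove this by induction on the dimension $d$, with the one-dimensional base case reduced to the classical Cartan lemma for polynomials together with a Jensen-type bound on the number of zeros.

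For $d=1$, let $\varphi$ be holomorphic on $\mathcal{D}(z_{1,0},1)$ with $\log|\varphi(z_{1,0})|\ge m$ and $\sup_{\mathcal{D}(z_{1,0},1)}\log|\varphi|\le M$. Jensen's formula comparing $\log|\varphi(z_{1,0})|$ to the circle average of $\log|\varphi|$ on $\partial\mathcal{D}(z_{1,0},r)$ for some $r\in(1/2,1)$ bounds the number $N$ of zeros of $\varphi$ in $\mathcal{D}(z_{1,0},1/2)$ by $C(M-m)$. Factor $\varphi=P\cdot g$, where $P(z)=\prod_{j=1}^{N}(z-w_j)$ is the monic polynomial whose roots (with multiplicity) are these zeros, and $g$ is holomorphic and zero-free on a slightly smaller disk. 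Since $\log|g|$ is harmonic with upper bound $M+O(N)$ and with value $\ge m-O(N)$ at $z_{1,0}$, the Poisson/Harnack estimate gives $\log|g(z)|\ge M-C(M-m)$ uniformly on $\tfrac{1}{6}\mathcal{D}(z_{1,0},1)$. For the polynomial factor $P$, the classical one-variable Cartan lemma shows that $\{z:|P(z)|<\exp(-HN)\}$ is contained in at most $N$ disks whose total radius does not exceed $C\exp(-H)$, and each disk can be enlarged slightly to contain an actual zero of $P$. Outside this set $\mathcal{B}_1\in\mathrm{Car}_1(H,N)$ one has $|P(z)|\ge\exp(-HN)\ge\exp(-CH(M-m))$, and multiplying the two bounds yields \eqref{Lem3.23-(1)} with the sharper $K=N\le C(M-m)$ promised in the furthermore clause.

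For $d\ge 2$, induct on $d$. Fix a direction $j\in\{1,\dots,d\}$ and apply the one-dimensional estimate with Cartan parameter $H^{1/d}$ to the single-variable slice $\psi_j(z):=\varphi(z_{1,0},\dots,z,\dots,z_{d,0})$, where $z$ occupies the $j$-th coordinate. This produces a set $\mathcal{B}_j\in\mathrm{Car}_1(H^{1/d},C(M-m))\subset\mathrm{Car}_1(H^{1/d},K)$ such that $\log|\psi_j(z)|\ge m-CH^{1/d}(M-m)=:m_j$ for every $z\notin\mathcal{B}_j$. For each such $z_j$, the $(d-1)$-dimensional function $\varphi(\cdot,z_j,\cdot)$ on $\prod_{i\ne j}\mathcal{D}(z_{i,0},1)$ has upper bound $M$ and value $\ge m_j$ at its distinguished $(d-1)$-dimensional center, so the induction hypothesis applied with the rescaled parameter $\tilde H=H^{(d-1)/d}$ (chosen so that $\tilde H^{1/(d-1)}=H^{1/d}$) yields a slice set $\mathcal{B}_{z_j}^{(j)}\in\mathrm{Car}_{d-1}(H^{1/d},C_{d-1}\tilde H(M-m_j))\subset\mathrm{Car}_{d-1}(H^{1/d},K)$ off which $\log|\varphi|>M-C_{d-1}\tilde H(M-m_j)\ge M-C_dH(M-m)$. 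Performing this construction for each $j$ verifies the recursive definition of $\mathrm{Car}_d(H^{1/d},K)$ with $K=C_dH(M-m)$, and the lower bound then holds on $\tfrac{1}{6}\mathcal{P}$ off the resulting exceptional set.

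The main obstacle is the bookkeeping across the $d$ induction levels: the one-dimensional Cartan parameter must be set to exactly $H^{1/d}$ at the outermost level and to $H^{(d-1)/d}$ for the inner problem, so that the compounded loss in $\log|\varphi|$ telescopes to $O_d(H(M-m))$ while every slice stays in the uniform class $\mathrm{Car}_{d-1}(H^{1/d},K)$; verifying that the disk-count $K=C_dH(M-m)$ absorbs both the 1D count $C(M-m)$ and the induced inner count $C_{d-1}H^{(d-1)/d}(M-m_j)$ is the crux. A secondary subtlety is the domain shrinkage from the $1/6$-factor produced at each application of the Harnack/Poisson step; this is handled once and for all by carrying out the induction on the ambient polydisk $\mathcal{P}$ and restricting only at the final step to $\tfrac{1}{6}\mathcal{P}$.
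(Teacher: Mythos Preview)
The paper does not prove this lemma; it is quoted verbatim from Goldstein--Schlag \cite[Lemma 2.15]{GS08-GAFA} and placed in the appendix as a black box. Your outline reproduces the standard proof from that reference: the $d=1$ case via Jensen's zero count, factorization $\varphi=Pg$, classical Cartan for $P$, and Harnack for the harmonic $\log|g|$; then induction on $d$ by slicing in one coordinate with Cartan parameter $H^{1/d}$ and applying the hypothesis on each good slice with parameter $H^{(d-1)/d}$, so that $\tilde H^{1/(d-1)}=H^{1/d}$ and the losses telescope to $C_dH(M-m)$. The bookkeeping you flag---that both the outer $1$D count $C(M-m)$ and the inner count $C_{d-1}H^{(d-1)/d}(M-m_j)\le C_{d-1}H^{(d-1)/d}\cdot CH^{1/d}(M-m)$ are absorbed by $K=C_dH(M-m)$---is exactly how the recursive $\mathrm{Car}_d$ structure is verified, and your handling of it is correct.
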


\section*{Acknowledgments}
\addcontentsline{toc}{section}{Acknowledgments}

D.P was supported in part by the NSFC (No. 11571327, 11971059). B.Z was supported by Nankai Zhide Foundation.


\end{document}